\documentclass[12pt,reqno]{amsart}
\usepackage[numbers,sort&compress]{natbib}
\usepackage{amsfonts,bbm}
\usepackage{amssymb,color}
\usepackage{amssymb}
\usepackage{fancyhdr}
\usepackage[titletoc]{appendix}
\usepackage{enumitem}
\usepackage{amsgen}
\usepackage{amscd}
\usepackage{amsmath}
\usepackage{amsthm}
\usepackage{mathrsfs}
\usepackage{cases}
\usepackage{setspace}

\usepackage{accents} % This can substitute perfectly the HARPOON package.

\usepackage{verbatim}

\usepackage[colorlinks=true]{hyperref}
\hypersetup{urlcolor=blue, citecolor=green, linkcolor=blue}

\usepackage[margin=3cm, a4paper]{geometry}

\newtheorem{thm}{Theorem}[section]
\newtheorem{cor}[thm]{Corollary}
\newtheorem{lem}[thm]{Lemma}
\newtheorem{prop}[thm]{Proposition}
\newtheorem{defn}[thm]{ \bf{Definition}}

\theoremstyle{remark}
\newtheorem{remark}[thm]{Remark}

\newcommand{\EQ}[1]{\begin{align*}\begin{split} #1 \end{split}\end{align*}}
\newcommand{\EQn}[1]{\begin{align}\begin{split} #1 \end{split}\end{align}}

\newcommand{\Del}[1]{}

\def\norm#1{\left\|#1\right\|}

\def\wt#1{\widetilde{#1}}

\def\wb#1{\overline{#1}}
\def\pd{\partial}

\newcommand{\ra}{{\rightarrow}}
\newcommand{\hra}{{\hookrightarrow}}

\def\lsm{\lesssim}

\newcommand{\R}{{\mathbb R}}

\newcommand{\F}{{\mathcal{F}}}

\newcommand{\E}{{\mathcal{E}}}

\newcommand{\U}{{\mathcal{U}}}

\newcommand{\TT}{{\mathcal{T}}}

\newcommand{\re}{{\mathrm{Re}}}
\newcommand{\im}{{\mathrm{Im}}}

\def\ep{\varepsilon}

\def\th{\theta}

\def\De{\Delta}

\def\ta{\tau}

\newcommand{\I}{\infty}

\numberwithin{equation}{section}
\begin{document}
	
	\title[NLS]{Global theory for NLS in the weighted spaces I: finite pseudo conformal energy}
	
	\subjclass[2010]{}
	\keywords{}
	
	\author{Yujin Guo}
	\address{(Y. Guo) Center for Applied Mathematics\\
		Tianjin University\\
		Tianjin 300072, China}
	\email{guoyujin1021@163.com}
	
	\author{Jia Shen}
	\address{(J. Shen) School of Mathematical Sciences and LPMC\\
		Nankai University\\
		Tianjin 300071, China}
	\email{shenjia@nankai.edu.cn}
	
	\author{Changping Yang}
	\address{(C. Yang) Center for Applied Mathematics\\
		Tianjin University\\
		Tianjin 300072, China}
	\email{cp\_yang@tju.edu.cn}
	
	\date{}
	
	\begin{abstract}\noindent
		We intend to prove that for defocusing nonlinear Schr\"odinger equations, solutions with finite pseudo conformal energy must be global and scatter. 
		
		This fact is first proved by Bourgain \cite{1999-Bourgain} when $0<s_c<1$: the equation is locally well-posed in $H_x^{s_c}$, and if one further assumes that the initial data satisfies $xu_0\in L_x^2$, then the solution is global and scatters. The $s_c<0$ case remains not studied, and the best result is given by Beceanu, Deng, Soffer, and Wu \cite{2021-Beceanu-Deng-Soffer-Wu}: If the initial data is in $H_x^{s_c}$, radial, and compactly supported, then the solution is globally well-posed.
		
		In this paper, we extend Bourgain's result to the $s_c<0$ case. We prove that if the initial data satisfies $|x|^{-s_c}u_0\in L_x^2$ and $xu_0\in L_x^2$, or if the radial initial data  satisfies $u_0\in \dot{H}_x^{s_c}$ and $xu_0\in L_x^2$, then the solution is global and scatters. Bourgain's result is based on the subcritical a priori control on $L_x^{p+2}$, while our argument relies on the supercritical bound on $\F\dot H_x^1$.
	\end{abstract}
	
	\maketitle
	
	\setcounter{tocdepth}{1}
	
	\tableofcontents

	%\vspace{20pt}
	
	\section{Introduction}
	
	\vspace{0.5cm}
	
	We consider the Cauchy problem for the defocusing nonlinear Schr\"odinger equation (NLS):
	\begin{align}\label{NLS}
		\left\{
		\begin{aligned}
			&iu_t+\frac{1}{2}\Delta u=|u|^p u, \\
			&u(0,x)=u_0(x),
		\end{aligned}
		\right.
	\end{align}
	with $p>0$ and $u(t,x):\mathbb{R}\times\mathbb{R}^d\rightarrow \mathbb{C}$. The equation \eqref{NLS} is invariant under the scaling transform:
	\begin{align}\label{illposedness-scale transform}
		u(t,x)\rightarrow u_{\lambda}(t,x)=\lambda^{-\frac{2}{p}} u(\lambda^{-2} t, \lambda^{-1} x).
	\end{align}
	Denote the scaling critical exponent
	\begin{align*}
		s_c:=\frac{d}{2}-\frac{2}{p},
	\end{align*}
	then the scaling transform leaves  $\dot{H}_x^{s_{c}}$-norm invariant, that is, $\|u(0)\|_{\dot H_x^{s_{c}}}=\|u_{\lambda}(0)\|_{\dot H_x^{s_{c}}}$. Define the homogeneous weighted Sobolev space $\mathcal{F} \dot H_x^{s}(\mathbb{R}^d):= L_x^2(\mathbb{R}^d;|x|^{2s}\mathrm{d} x)$ and the inhomogeneous space $\mathcal{F} H_x^{s}(\mathbb{R}^d):= L_x^2(\mathbb{R}^d;\langle x\rangle ^{2s}\mathrm{d} x)$ for some $s>0$.
	
	Define the pseudo conformal transform as
	\begin{align}
		\mathcal{T} f(t,x) := \frac{1}{(it)^{d/2}} e^{\frac{i|x|^2}{2t}} \bar{f}\big(\frac{1}{t},\frac{x}{t}\big).
	\end{align}
	Then the inverse pseudo conformal transform satisfies $\mathcal{T}^{-1}=\mathcal{T}$. If $u$ is the solution of \eqref{NLS} with initial data $u_0$, then for any $t>0$, $\mathcal{U}=\mathcal{T}u$ solves a nonlinear Schr\"odinger equation
	\begin{align}\label{CNLS-I}
		i \pd_t \mathcal{U}+\frac{1}{2}\Delta \mathcal{U}=t^{\frac{dp}{2}-2}|\mathcal{U}|^p \mathcal{U},
	\end{align}
	with $S(-t)\mathcal{U}$ tending to $\mathcal{F}^{-1}\bar{u}_0$ as $t\rightarrow +\infty$ in a suitable sense. For \eqref{CNLS-I}, we define the pseudo conformal energy
	\begin{align}\label{Pseudo-conformal-energy}
		\mathcal{E}(t):=\frac{1}{4}t^{2-\frac{dp}{2}}\int_{\mathbb{R}^d}|\nabla\mathcal{U}|^2\mathrm{d}x+\frac{1}{p+2}\int_{\mathbb{R}^d}|\mathcal{U}|^{p+2}\mathrm{d}x.
	\end{align}
	Then we obtain 
	\begin{align}\label{derivate for P-c-e}
		\dfrac{\mathrm{d}}{\mathrm{d}t}\mathcal{E}(t)=\frac{1}{4}\big(2-\frac{dp}{2}\big)t^{1-\frac{dp}{2}}\int_{\mathbb{R}^d}|\nabla\mathcal{U}|^2\mathrm{d}x.
	\end{align}

	\subsection{Local theory in the homogeneous weighted space}
	
	In this paper, we study the NLS with initial data in a weighted space, rather than a Sobolev space of negative regularity.  For data in negative regularity Sobolev spaces, there are well-posedness results available in the radial setting \cite{2013-Cho-Hwang-Ozawa,2008-Hidano}; however, some \textit{ill-posedness} results hold for non-radial data in $H_x^s(\mathbb{R}^d)$ whenever $s<0$ \cite{2008-Christ-Colliander-Tao}.
	
	We first present some local results in the homogeneous weighted space. The local well-posedness and small data global well-posedness and scattering in the critical space $\mathcal{F}\dot{H}^{-s_c}(\R^d)$ have been proved by \cite{2015-Masaki,2017-Killip-Masaki-Murphy-Visan}. Now, we extend these results to the fully subcritical, critical, and supercritical cases:
	\begin{thm}[Local theory in the homogeneous weighted space]
		\label{thm:local-s-order-initialdata}
		Assume that $d\ge1$, $0<s<\min \{1, \frac{d}{2}\}$, and $u_0\in\F\dot H_x^s(\R^d)$. Then, the following statements hold:
		\begin{enumerate}
			\item Assume further that $d\leq 6$. Let $\frac{4s}{d+2s}<p\le \frac{4}{d+2s}$. Then there exists some $T> 0$ such that \eqref{NLS} admits a unique solution $u$ with $e^{-\frac12it\De}u\in C([0,T];\F\dot H_x^s(\R^d))$.
			\item Assume further that $d+2s>4$. Let  $\frac{4}{d+2s}<p\le\min\{1, \frac{4}{d-2}\}$. Then \eqref{NLS} is ill-posed.
		\end{enumerate}
	\end{thm}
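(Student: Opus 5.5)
Part (1) is treated in the pseudo-conformal frame, where the weighted space turns into a Sobolev space. Writing $e^{-\frac12 it\De}u = \F^{-1} M(t)\ldots$, or equivalently setting $\U=\TT u$, the condition $u_0\in\F\dot H^s_x$ becomes
\[
S(-t)\U(t)\to \F^{-1}\bar u_0\in \dot H^s_x \quad\text{as } t\to+\infty ,
\]
so local existence of $u$ on $[0,T]$ is the same as solving \eqref{CNLS-I} on $[1/T,\infty)$ with final data in $\dot H^s_x$.

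The plan is to write the Duhamel formula from $t=\infty$,
\[
\U(t)=S(t)\phi - i\int_t^\infty S(t-\tau)\,\tau^{\frac{dp}{2}-2}|\U|^p\U(\tau)\,d\tau ,
\qquad \phi=\F^{-1}\bar u_0 ,
\]
and run a contraction in $\dot H^s$-level Strichartz spaces $L^q_t\dot W^{s,r}_x$ on $[T_0,\infty)$, with $T_0$ large.
- The fractional chain rule handles the nonlinearity. We need $s<1$ and $p>0$, or a Hölder-continuity version of it when $p<1$.
- The time weight $\tau^{\frac{dp}{2}-2}$ is integrable at infinity exactly when the problem is subcritical in the scaling of \eqref{CNLS-I}. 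For $p<\frac{4}{d+2s}$ we have $\frac{dp}{2}-2<-sp$, so Hölder in time gives a factor $T_0^{-\theta}$ with $\theta>0$. This yields smallness for large $T_0$, i.e. small $T$, for arbitrary data.
- At the endpoint $p=\frac{4}{d+2s}$ the weight is exactly critical. Smallness then comes instead from $\|S(t)\phi\|_{L^q([T_0,\infty);\dot W^{s,r})}\to0$ as $T_0\to\infty$, by the standard critical argument.
- The lower bound $p>\frac{4s}{d+2s}$ and the restriction $d\le6$ are used to choose admissible exponent pairs so that the nonlinear Hölder/Sobolev step closes. Here $s<d/2$ is needed for the Sobolev embedding, and $d\le 6$ ensures $p$ can be placed in a range where $r$ and $q$ are admissible.

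Uniqueness follows from the same estimates at the $L^2$ level. Transforming back gives $e^{-\frac12 it\De}u\in C([0,T];\F\dot H^s)$, with continuity at $t=0$ coming from convergence of $S(-t)\U$ as $t\to\infty$.

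For part (2) I would construct a norm-inflation or discontinuity example in the supercritical range $p>\frac{4}{d+2s}$. Since $p\le1$ and $p\le\frac{4}{d-2}$, $\dot H^1$ energy solutions exist. Take Schwartz data $u_0^\lambda$ whose $\F\dot H^s$ norm tends to $0$, obtained by scaling a fixed profile with $\lambda$ and using that the weighted norm is supercritical. Then show, via the small-dispersion (ODE) approximation $u\approx u_0 e^{-it|u_0|^p}$ on short times, that $\|e^{-\frac12 it\De}u(t)\|_{\F\dot H^s}$ becomes large at times $t_\lambda\to0$. Equivalently, in Fourier variables this shows that the phase $e^{-it|u_0|^p}$ generates a large $\dot H^s$ norm of $\hat u$. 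This is in the spirit of Christ–Colliander–Tao. The condition $p\le1$ lets the limited smoothness of $|u|^p$ produce the desired growth while keeping the error controlled; I take the condition $d+2s>4$ to be the one ensuring the nonempty range $\frac{4}{d+2s}<\min\{1,\frac{4}{d-2}\}$.

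The main obstacle is the ill-posedness part: justifying the ODE approximation in the weighted norm. This requires controlling $|x|^s(u-u_{\mathrm{ODE}})$ over the time interval, which I would attempt through energy estimates on $xu$ via the Galilean operator $J=x+it\nabla$.
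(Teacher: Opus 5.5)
Your part (1) is essentially the paper's argument in the subcritical range. Part (2), however, has a genuine gap: the growth mechanism you rely on does not exist.

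The $\mathcal{F}\dot H^s$ norm is $\||x|^sf\|_{L^2}$, and this is unchanged when $f$ is multiplied by a unimodular function of $x$. So the ODE profile $u_0e^{-it|u_0|^p}$ has exactly the weighted norm of $u_0$, and the claim that this phase creates a large $\dot H^s$ norm of $\hat u$ is false. In Christ--Colliander--Tao the phase inflates Sobolev norms because $\nabla e^{-it|\phi_0|^p}$ grows like $t$; the weight $|x|^s$ does not see the phase at all. The free flow in $e^{-\frac12it\Delta}u(t)$ does not rescue this either. After rescaling it acts only for time $\mu^2\tau\to0$ on the window $|\tau|\le c|\log\mu|^c$ where the ODE approximation is valid.

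What the paper proves instead is failure of uniform continuity by \emph{decoherence}. Take $w=e^{-|x|^2}$, $a\in[\frac12,1]$, and $u^{(a,\mu,\lambda)}(t,x)=\lambda^{-2/p}\phi^{(a,\mu)}(\lambda^{-2}t,\lambda^{-1}\mu x)$, and impose $\lambda^{\frac d2-\frac2p+s}\mu^{-\frac d2-s}=\varepsilon$, which is possible precisely because $p>\frac4{d+2s}$.
Then:
\begin{itemize}
\item the data have norm $a\varepsilon\|w\|_{\mathcal{F}\dot H^s}$ and differ by exactly $|a-a'|\varepsilon\|w\|_{\mathcal{F}\dot H^s}$;
\item the ODE phases satisfy $\|\chi_{\{|x|\ge1\}}(\phi^{(a,0)}(T)-\phi^{(a',0)}(T))\|_{L^2}\ge c$ at some rescaled time $T(a,a')$, i.e.\ at original time $\lambda^2T\to0$ (take $\mu$ small so that $T\le c|\log\mu|^c$).
\end{itemize}

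Your announced main obstacle also disappears. Only a \emph{lower} bound on the weighted difference is needed, so one uses $|x|^s\ge\chi_{\{|x|\ge1\}}$. The ODE error is then needed only in $L^2$, where unitarity of $S(-\mu^2T)$ and $\sup_{|t|\le c|\log\mu|^c}\|\phi^{(a,\mu)}(t)-\phi^{(a,0)}(t)\|_{L^2}\le C\mu$ suffice. That bound is a plain Gronwall argument for $v=\phi-\phi^{(0)}$, and this is where $p\le1$ enters. Splitting into $\{|v|\le|\phi^{(0)}|\}$ and its complement gives $|\mathrm{Im}\int(F(\phi^{(0)}+v)-F(\phi^{(0)}))\bar v\,dx|\lesssim\|v\|_{L^2}^2$. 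No weighted or $J(t)$ estimates on $u-u_{\mathrm{ODE}}$ are needed.

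In part (1), the transform, the Duhamel formula from $t=\infty$, and the gain $T_0^{\frac{(d+2s)p-4}{4}}$ are the paper's. For $p<1$, though, your sketch does not close.
\begin{itemize}
\item At the $\dot H^s$ level the derivatives-of-differences estimate only gives a term of order $\|\mathcal{U}-\mathcal{V}\|^p$, so there is no contraction.
\item An $L^2$-level metric is not available in this homogeneous setting. With the $p+1$ factors at $\dot H^s$ scaling and the output at $L^2$ scaling, the weight $\tau^{\frac{dp}2-2}$ is not integrable enough at $\tau=\infty$ once $p(d+2s)>4-2s$. This range is nonempty, e.g.\ $d=3$, $s=\frac12$, $\frac34<p<1$, and there you cannot even show that differences of iterates are finite in such a norm.
\end{itemize}

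The paper's fix is an intermediate metric $\||\nabla|^\theta(\mathcal{U}-\mathcal{V})\|_{L^{q_0}_tL^{r_0}_x}$ with $0<\theta<sp$ and a non-admissible acceptable pair of $\dot H^s$ scaling, $\frac2{q_0}+\frac d{r_0}=\frac d2-(s-\theta)$. This norm is dominated by the $\dot H^s$ Strichartz norm via Sobolev. It closes using the Foschi--Vilela inhomogeneous Strichartz estimates together with Visan's chain rule for the order-$p$ H\"older functions $F_z,F_{\bar z}$. The conditions $p>\frac{4s}{d+2s}$ (needed for $\theta$ to exist) and $d\le6$ come from making these exponents legitimate.

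Finally, the endpoint $p=\frac4{d+2s}$ is not proved in the paper but quoted from Masaki. There the weight is $\tau^{-sp}$, which lies only in weak $L^{1/(sp)}$. Your critical argument would therefore need Lorentz-space Strichartz estimates and, for $p<1$, a stability argument.
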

	\begin{remark} We make several remarks regarding this result:
		\begin{enumerate}
			\item 
			The precise statements of the well-posedness and ill-posedness are given in Sections \ref{section-lwp} and \ref{section-illposed}. %Moreover,  in \cite{2015-Masaki}, Masaki proved the equation \eqref{NLS} is local well posedness when $p=\frac{4}{d+2s}$.
			\item
			The main purpose of this theorem is to identify the criticality of the equation in the weighted space. Note that for fixed $s$, $s_c=-s$ if $p=\frac{4}{d+2s}$, $s_c<-s$ if $p<\frac{4}{d+2s}$, and $s_c>-s$ if $p>\frac{4}{d+2s}$. Thus, $\F\dot H_x^{-s_c}(\R^d)$ is the critical space for the well-posedness theory of NLS. In $\F\dot H_x^s(\R^d)$, we can say that the NLS is $\F\dot H_x^s$-\textit{subcritical} when $p<\frac{4}{d+2s}$, $\F\dot H_x^s$-\textit{critical} when $p=\frac{4}{d+2s}$, and $\F\dot H_x^s$-\textit{supercritical} when $p>\frac{4}{d+2s}$. %in view of the pseudo conformal energy, we can also say that the 3D NLS is \textit{pseudo conformal energy subcritical} when $p<\frac45$, \textit{pseudo conformal energy critical} when $p=\frac45$, and \textit{pseudo conformal energy supercritical} when $p>\frac45$.
			
			\item 
			The restriction $d\leq 6$ in Theorem \ref{thm:local-s-order-initialdata} (1) comes from some technical obstructions. One may follow the stability argument as in \cite{2015-Masaki,2005-Tao-Visan,2011-Li-Zhang,2013-Killip-Visan} to remove it, and we do not pursue this issue here. %However, the time of existence $T$ will depend on the profile of $u_0$, and not only on its norm.
			
			\item 
			Our ill-posedness result does not cover the range $p>1$. This is due to some technical difficulties, and the same restriction also appears in the previous study of ill-posedness in Sobolev space, see \cite{2003-Christ-Colliander-Tao}. %Thus, this leads to the technical assumption $d+2s>4$ in the ill-posedness part.
			\item 
			Recall that the scaling critical regularity plays an important role in the study of well-posedness theory. When $s_c>0$, it is well-known that the local well-posedness holds in $\dot H_x^{s}$ with $s\ge s_c$ (\cite{1989-Cazenave-Weissler}), while there exists some ill-posedness in $\dot H_x^{s}$ when $s<s_c$ (\cite{2003-Christ-Colliander-Tao}). The results in Theorem \ref{thm:local-s-order-initialdata} provide an analogous conclusion for $s_c<0$ under weighted assumptions: the local well-posedness should hold in $\F\dot H_x^{s}$ with $s\le-s_c$, while there exists some ill-posedness in $\F\dot H_x^{s}$ when $s>-s_c$. This also provides an interesting observation: local well-posedness may fail even for more localized data.
		\end{enumerate}
		
	\end{remark}
	
	In order to further extend the local solution obtained in Theorem \ref{thm:local-s-order-initialdata} globally, it is natural to consider the NLS with the Cauchy data $e^{-\frac12iT\De}u(T) \in \F\dot H_x^s(\R^d)$, rather than $u(T) \in \F\dot H_x^s(\R^d)$. This motivates us to consider the following Cauchy problem at $t_0\ne0$:
	\begin{align}\label{NLS-t_0}
		\left\{
		\begin{aligned}
			&iu_t+\frac{1}{2}\Delta u=|u|^p u, \\
			&u(t_0,x)=u_1(x),
		\end{aligned}
		\right.
	\end{align}
	with initial data condition $e^{-\frac12it_0\De}u_1\in \mathcal{F}\dot H_x^s(\R^d)$. We may assume that $t_0>0$; otherwise we can apply the transform $u(t,x)\ \ra\ \wb{ u(-t,x)}$.
	\begin{thm}[Local well-posedness away from the origin]
		\label{thm:local-1order-awayfromorigin}
		Let $1\leq d \leq 6$, $0<s<\min \{1, \frac{d}{2}\}$, and $t_0>0$. Assume also that $\frac{4s}{d+2s}<p < \frac{4}{d-2s}$ and $e^{-\frac12it_0\De}u_1\in \mathcal{F}\dot H_x^s(\R^d)$. Then, there exists some $T=T(\|e^{-\frac12it_0\De}u_1\|_{\mathcal{F}\dot H_x^s(\R^d)})$ with $0<T<t_0$ such that \eqref{NLS-t_0} admits a unique solution $u$ with $e^{-\frac12it\De}u\in C([t_0-T,t_0+T];\F\dot H_x^s(\R^d))$.
	\end{thm}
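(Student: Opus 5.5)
The plan is to conjugate by the linear flow and run a contraction argument in a space built on the Galilean operator $J(t)=x+it\nabla = e^{\frac12 it\De}\,x\,e^{-\frac12 it\De}$, or more precisely its fractional power $|J|^s(t):=e^{\frac12 it\De}|x|^s e^{-\frac12 it\De}$. Then the condition $e^{-\frac12 it\De}u(t)\in\F\dot H^s_x$ is equivalent to $|J|^s(t)u(t)\in L^2_x$. We also have the factorization
\[
|J|^s(t) = M(t)\,(-t^2\De)^{s/2}\,M(-t), \qquad M(t)=e^{\frac{i|x|^2}{2t}},
\]
so $|J|^s(t)$ behaves like a fractional derivative of order $s$, weighted by $|t|^s$, acting on $M(-t)u$. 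Since the nonlinearity $F(u)=|u|^pu$ is gauge invariant, $M(-t)F(u)=F(M(-t)u)$. Hence the fractional chain rule in $\dot H^s$ applies to $|J|^s F(u)$, giving
\[
\big\||J|^s(t)F(u)\big\|_{L^{r'}_x}\lesssim \|u\|_{L^{q}_x}^{p}\,\big\||J|^s(t)u\big\|_{L^{r}_x}
\]
with H\"older exponents $\frac1{r'}=\frac p q+\frac1r$. This holds for all $p>0$ because $s<1$, which is exactly where $s<\min\{1,\frac d2\}$ is used.

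The key point, and the reason the statement needs $t_0>0$ and $T<t_0$, is that on $I=[t_0-T,t_0+T]\subset(0,\I)$ we have $|t|\sim t_0$ bounded above and below. There the Sobolev embedding in the $J$-framework gives
\[
\|u(t)\|_{L^q_x}\lesssim |t|^{-s}\,\big\||J|^s(t)u(t)\big\|_{L^r_x},\qquad \tfrac1q=\tfrac1r-\tfrac sd .
\]
This comes from Sobolev applied to $M(-t)u$ with the factor $t^{-s}$ from the scaling of $(-t^2\De)^{s/2}$. In particular the factor $t^{-s}$ is harmless away from the origin. So the problem behaves like an $\dot H^s$ problem for NLS with $s$ fixed and $p<\frac{4}{d-2s}$, i.e.\ $\dot H^s$-subcritical. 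I will therefore not need the lower bound $p\le \frac{4}{d+2s}$ from Theorem \ref{thm:local-s-order-initialdata}; the upper bound $p<\frac4{d-2s}$ is what gives a gain of a power of $T$. The lower bound $p>\frac{4s}{d+2s}$ will enter only when choosing admissible exponents, where one needs $q$ and $r$ finite and in range; this is where $d\le 6$ may also be needed.

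Concretely, I define
\[
X_T=\Big\{u:\ \||J|^s u\|_{L^\I_tL^2_x\cap L^{a}_tL^{r}_x(I)}\le 2C\|e^{-\frac12it_0\De}u_1\|_{\F\dot H^s_x}\Big\},
\]
where $(a,r)$ is a Schr\"odinger admissible pair, and the map
\[
\Phi(u)(t)=e^{\frac12 i(t-t_0)\De}u_1-i\int_{t_0}^t e^{\frac12 i(t-\tau)\De}F(u(\tau))\,d\tau .
\]
Since $|J|^s$ commutes with the linear flow in the sense $|J|^s(t)e^{\frac12 i(t-\tau)\De}=e^{\frac12i(t-\tau)\De}|J|^s(\tau)$, the Strichartz estimates apply to $|J|^s\Phi(u)$. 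Applying the fractional chain rule bound, then the $J$-Sobolev bound, and then H\"older in time, I expect
\[
\||J|^s\Phi(u)\|\le C\|e^{-\frac12it_0\De}u_1\|_{\F\dot H^s_x}+C\,t_0^{-sp}\,T^{\theta}\,\||J|^s u\|^{p+1},
\]
with $\theta=1-\frac{p(d-2s)}{4}>0$. The same scheme handles differences $F(u)-F(v)$, but only in the weaker norm $L^a_tL^r_x$ without $|J|^s$, because for $p<1$ the difference of $|J|^sF$ is not Lipschitz. I will contract in that weaker metric, in which the ball $X_T$ is complete. Choosing $T$ small depending on the norm of the data (and on $t_0$) closes the argument.

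Continuity in time of $e^{-\frac12 it\De}u\in \F\dot H^s_x$ follows from the Duhamel formula and dominated convergence, and uniqueness follows from the weak-norm contraction.

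The main obstacle I anticipate is the exponent bookkeeping. One must find a single admissible $(a,r)$ with $q<\I$, $r<\frac ds$, and the dual pair for the nonlinear term, valid over the whole range $\frac{4s}{d+2s}<p<\frac4{d-2s}$. One must also justify the fractional chain rule for $|J|^s$ via the $M(t)$ factorization when $p<1$. If a single pair fails, I would first try splitting into $p\ge\frac{4}{d+2s}$ and $p<\frac{4}{d+2s}$ and use different admissible pairs in each case.
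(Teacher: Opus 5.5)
Your overall architecture is the paper's in different clothing. The quantity $|J(t)|^s u$ is $\TT^{-1}|\nabla|^s\TT u$, and your gain $T^{1-\frac{p(d-2s)}{4}}$ is the paper's factor $|I|^{1/q_1}$ with $\frac1{q_1}=\frac{4-(d-2s)p}{4}$. \textbf{The gap is the contraction step for $p<1$.}

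Because $\F\dot H^s_x$ is homogeneous, solutions need not be at the mass level. For $e^{-\frac12it_0\De}u_1=|x|^{-\frac d2-s+\ep}\chi_{\{|x|<1\}}$, the free flow $u^0$ satisfies $|u^0(t,x)|\sim|x|^{-(\frac d2-s)-\ep}$ as $|x|\to\I$. From your ball you only get $u(t)\in L^q_x$ with $\frac dq\le\frac d2-s$. With an admissible $(a,r)$ and the standard Strichartz estimate, the weak distance $\|u-v\|_{L^a_tL^r_x}$ then faces two incompatible requirements.
\begin{itemize}
\item If $\frac dr\le\frac d2-s$, the distance is finite on $X_T$ by your $J$-Sobolev bound. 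But putting $(|u|^p+|v|^p)|u-v|$ into a dual space $L^{\tilde r'}_x$ with $\tilde r'\le2$ needs $p\frac dq+\frac dr\ge\frac d2$, hence $(p+1)(\frac d2-s)\ge\frac d2$.
\item If $\frac dr>\frac d2-s$, the distance is not finite on $X_T$, so you must work in an affine class $u^0+L^a_tL^r_x$. Keeping $\Phi$ in that class requires $\int_{t_0}^t e^{\frac12 i(t-\tau)\De}F(u)\,d\tau\in L^a_tL^r_x$. With Strichartz this means $F(u)\in L^{\tilde r'}_x$ with $\tilde r'\le 2$, which again forces $(p+1)(\frac d2-s)\ge\frac d2$.
\end{itemize}

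So your scheme closes only for $p\ge\frac{2s}{d-2s}$. It breaks down on $\frac{4s}{d+2s}<p<\min\{1,\frac{2s}{d-2s}\}$, which is nonempty whenever $s>\frac d6$; for example $d=3$, $s=\frac34$, $p\in(\frac23,1)$. In that range, for the datum above, $F(u^0)\notin L^\rho_x$ for every $\rho\le 2$. Your uniqueness claim fails for the same reason. Tellingly, your argument never uses $p>\frac{4s}{d+2s}$.

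\textbf{The missing idea is a weak norm with the same spatial homogeneity as $\dot H^s$.} After the pseudo conformal transform (in your language, with $|J(t)|^\theta$), the paper contracts for $p<1$ in $\||\nabla|^\theta(\U-\mathcal V)\|_{L^{q_0}_tL^{r_0}_x}$. Here $0<\theta<s$ and $\frac2{q_0}+\frac d{r_0}=\frac d2-(s-\theta)$ (Definition \ref{defn:lwp-parameters}), so this distance is dominated by the ball norm via Sobolev and no mass-level information is needed.
\begin{itemize}
\item The pairs are non-admissible, so the Duhamel term is handled by the Foschi--Vilela inhomogeneous Strichartz estimate between acceptable pairs (Lemma \ref{2: Lemma-inhomogeneous Strichartz estimates}).
\item The factor $|\nabla|^\theta F_z(u+v)$ is handled by the H\"older-continuous chain rule (Lemma \ref{2: Lemma-Holder continuous}), which needs $\theta<sp$.
\item Acceptability of the dual pair needs $\theta$ above a threshold, namely $s-\frac{(d-2s)p}{4}$ when $p\le\frac{2}{d-2s}$. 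The window for $\theta$ is nonempty exactly when $p>\frac{4s}{d+2s}$.
\item The restriction $d\le 6$ enters in checking $2<q_0,\tilde q<\I$ and the ratio condition.
\end{itemize}
For $p\ge1$ the paper simply contracts in the $\dot H^s$-level norm using Lemma \ref{2: Lemma-H^s,0<s<1}, and you could do the same with $|J(t)|^s$.
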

	
	\begin{remark}
		Compared with Theorem \ref{thm:local-s-order-initialdata}, this result shows that when considering the Cauchy problem in the weighted space away from the origin, one can obtain local well-posedness regardless of the scaling. This general fact is first observed in our previous result \cite{SW24quaNLS}. %Using the notion in this paper, 3D quadratic NLS is pseudo conformal energy supercritical.
	\end{remark}

	\subsection{Global well-posedness with finite pseudo conformal energy}
	Next, we turn to the study of the global theory of NLS.
	
	First, we consider the $s_c\ge0$ case. The NLS is known to be locally well-posed in $H_x^{s_c}$ or $\dot H_x^{s_c}$. A basic question is whether the local solution can be extended globally. It is well-known that the defocusing NLS is global in critical space when $s_c=0$ or $s_c=1$, in which case the critical norm is bounded by a conservation law. However, when $0<s_c<1$ or $s_c>1$, it is not clear if the critical norm has some a priori control. Thus, it seems difficult to study the global well-posedness of defocusing NLS in $H_x^{s_c}$ or $\dot H_x^{s_c}$ when $0<s_c<1$ or $s_c>1$. We refer the reader to the introduction of \cite{SW24quaNLS} for more detailed discussions.
	
	An alternative approach is to find some $X$ such that the defocusing NLS is global in $H_x^{s_c} \cap X$ or $\dot H_x^{s_c} \cap X$.  Bourgain \cite{1999-Bourgain} first gave a global result in the case when $X=\F\dot H_x^1$, which enables the use of pseudo conformal energy: When $0<s_c<1$, the NLS is locally well-posed in $H_x^{s_c}$; if the initial data additionally satisfy $xu_0\in L_x^2$, then the solution is global and scatters. Furthermore, for 3D cubic NLS, Dodson \cite{Dodson20critical} first gave the global well-posedness in the critical space $\dot W_x^{11/7,7/6}$, where $X=\dot W_x^{11/7,7/6}\subset \dot H_x^{s_c}$. Then in \cite{SW20cubicNLS}, we  obtained the global well-posedness and scattering in $\dot H_x^{1/2}$ intersected with $X=\dot W^{s,1}$ for $s>\frac{12}{13}$, in which case the mass is finite. Dodson \cite{Dod23IMRN} gave the global well-posedness and scattering merely in the critical space $\dot B_{1,1}^2$ with radial assumption, where the initial data can possess infinite mass.
	
	Next we turn to the $s_c<0$ case. Tsutsumi \cite{1987-Tsutsumi} showed that both the defocusing and focusing NLS admit global solutions in $L_x^2$, which is a subcritical space. As for the critical space, NLS is locally well-posed in $\F\dot H_x^{-s_c}$, or in $\dot H_x^{s_c}$ with radial data. For the global well-posedness in critical space, Beceanu, Deng, Soffer, and Wu \cite{2021-Beceanu-Deng-Soffer-Wu} extend the local solution with radial $\dot H_x^{s_c}$-data globally, additionally assuming that the initial data are compactly supported. In \cite{SW24quaNLS}, we have proved that the 3D quadratic NLS is global in critical weighted space $\F\dot H_x^{1/2}$ with radial data. For scattering in mass subcritical NLS, we refer the reader to the introduction of \cite{SW24quaNLS,SW23subNLS} for more detailed discussions.
	
	Note that Bourgain's result \cite{1999-Bourgain} illustrates a fact that, for the local solution of NLS, if we further assume that the solution possesses finite pseudo conformal energy, then the solution must be global and scatter. Now, in this paper, we intend to extend Bourgain's result to the $s_c<0$ case. For the mass subcritical NLS, previous results \cite{2015-Masaki,2017-Killip-Masaki-Murphy-Visan,2008-Hidano} show that the local well-posedness holds with one of the following initial data conditions:
	\begin{itemize}
		\item $u_0\in\F \dot H_x^{-s_c}$;
		\item $u_0\in\dot H_x^{s_c}$ and $u_0$ is radial.
	\end{itemize}
	\begin{comment}
		We first define two spaces:
		\begin{defn}
			(1) We say $u\in\mathcal{X}(\R)$, if
			\begin{align*}
				e^{-\frac{1}{2}it\Delta}u\in C(\mathbb{R}; \mathcal{F}\dot{H}^{-s_c}_x\cap\mathcal{F}\dot{H}^{1}_x(\mathbb{R}^3))\text{ and } |J(t)|^{-s_c} u \in L_t^{\frac{4p}{5p-4}} L_x^{\frac{3p}{2-p}} (\mathbb{R} \times \mathbb{R}^3).
			\end{align*}
			(2) We say $u\in\mathcal{Y}(\R)$, if
			\begin{align*}
				e^{-\frac12it\De}u\in C(\mathbb{R};\dot{H}_x^{s_c}\cap\mathcal{F}\dot{H}_x^1(\mathbb{R}^3))\text{ and }u\in L_t^{\frac{2p(p+2)}{4-p}}L_x^{p+2}(\mathbb{R}\times\mathbb{R}^3).
			\end{align*}
		\end{defn}
		\begin{remark}
			The operator $|J(t)|^{s}=e^{-\frac12it\De}|x|^{s}e^{-\frac12it\De}$ is given in \eqref{2-|J(t)|^s}, and $(\frac{4p}{5p-4}, \frac{3p}{2-p})$ is admissible pair. The pair $(q,r)=(\frac{2p(p+2)}{4-p}, p+2)$ is radial-admissible (See Definition \ref{defn:acceptable-admissible}) and satisfies
			\begin{align*}
				\frac{2}{q}+\frac{3}{r}=\frac{3}{2}-s_c.
			\end{align*}
		\end{remark}
	\end{comment}
	We extend the above two kinds of local solutions globally; thus, the global results are divided into two cases. The first one concerns the initial data in critical weighted space:
	\begin{thm}\label{main-theorem1}
		Let $1\leq p<\frac{4}{3}$. For any $u_0\in \mathcal{F}\dot{H}_x^{-s_c}\cap \mathcal{F}\dot{H}_x^1(\mathbb{R}^3)$, there exists a unique solution to \eqref{NLS} such that $e^{-\frac{1}{2}it\Delta}u\in C(\mathbb{R}; \mathcal{F}\dot{H}^{-s_c}_x\cap\mathcal{F}\dot{H}^{1}_x(\mathbb{R}^3))$. Moreover, the solution scatters in $\mathcal{F}\dot{H}_x^{-s_c}\cap \mathcal{F}\dot{H}_x^1(\mathbb{R}^3)$, that is, there exist $u_{\pm}\in \mathcal{F}\dot{H}_x^{-s_c}\cap \mathcal{F}\dot{H}_x^1(\R^3)$ such that 
		\begin{align*}
			\lim_{t\rightarrow \pm\infty}\|e^{-\frac12it\De}u(t)-u_{\pm}\|_{\mathcal{F}\dot{H}_x^{-s_c}\cap \mathcal{F}\dot{H}_x^1(\R^3)}=0.
		\end{align*}
	\end{thm}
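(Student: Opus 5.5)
Throughout, $d=3$ and $1\le p<\frac43$, so $s_c=\frac32-\frac2p\in[-\frac12,0)$ and $0<-s_c\le\frac12<1$. By the symmetry $u(t,x)\mapsto\overline{u(-t,x)}$ it suffices to treat $t\ge0$.

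\textbf{Local solution.} Theorem \ref{thm:local-s-order-initialdata}(1) applies with $s=-s_c$ and $p=\frac{4}{d+2s}$ (the critical case), and gives a solution on $[0,T]$ with $e^{-\frac12it\De}u\in C([0,T];\F\dot H_x^{-s_c})$. Persistence of the extra $\F\dot H^1_x$ regularity should come out of the same contraction argument. Write $J(t)=x+it\nabla$, so that $e^{-\frac12it\De}xe^{\frac12it\De}=J(t)$. Commuting $J$ through the Duhamel formula and using
\[
J(t)(|u|^pu)\sim |u|^pJ(t)u,
\]
which follows from the factorization $J(t)=it\,e^{i|x|^2/2t}\nabla e^{-i|x|^2/2t}$, we can close Strichartz estimates for $Ju$ linearly in $Ju$. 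This is where $p\ge1$ is needed, for the chain rule.

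\textbf{Global a priori bound via the pseudo-conformal transform.} For $t\ge t_1>0$, pass to $\mathcal U=\TT u$ on the time interval $(0,1/t_1]$. There the exponent $\frac{dp}{2}-2=\frac{3p}{2}-2<0$, and $u_0\in\F\dot H^1$ corresponds to $\mathcal U$ having finite $\dot H^1$ near $\tau=0$; more precisely $\|\nabla\mathcal U(\tau)\|_{L^2}=\|J(1/\tau)u\|_{L^2}$. By \eqref{derivate for P-c-e}, $\E$ is increasing in $\tau$ because $2-\frac{3p}{2}>0$. Therefore I will run the argument in the original time $t$, using the conformal energy identity
\[
\frac{d}{dt}\Big(\tfrac14\|J(t)u\|_2^2+\tfrac{t^2}{p+2}\|u\|_{p+2}^{p+2}\Big)=\tfrac{(4-3p)t}{2(p+2)}\|u\|_{p+2}^{p+2}\ge0.
\]
The sign is bad, so this identity alone does not bound $\|Ju\|_2$; it is the supercritical regime. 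The fix is Gronwall. Put
\[
A(t)=\tfrac14\|Ju\|_2^2+\tfrac{t^2}{p+2}\|u\|_{p+2}^{p+2}.
\]
Then $A'\le\frac{(4-3p)}{2t}A$, which yields $A(t)\lesssim A(1)\,t^{(4-3p)/2}$. So $\|J(t)u\|_2$ grows at most polynomially, and the growth is integrable in the relevant Strichartz sense. Combining this with $\|u(t)\|_{p+2}^{p+2}\lesssim t^{-2}A(t)\lesssim t^{-(3p)/2}$ gives the decay $\|u(t)\|_{L^{p+2}}\lesssim t^{-\frac{3p}{2(p+2)}}$. I expect this decay to be enough to bound a scattering-type spacetime norm, e.g.\ $u\in L^q_tL^{p+2}_x([1,\infty))$ with the critical pair from the commented remark, since $q\frac{3p}{2(p+2)}>1$ iff $\frac{3p^2}{4-p}>1$, which holds for $p\ge1$. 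On $[0,1]$ the control comes from the local theory together with Theorem \ref{thm:local-1order-awayfromorigin}. The latter gives a lifespan depending only on $\|e^{-\frac12it\De}u(t)\|_{\F\dot H^{s}}$ for $s$ interpolated between $-s_c$ and $1$, and the a priori $\F\dot H^1$ bound above, combined with the $\F\dot H^{-s_c}$ bound from the next step, prevents blow-up at any finite $t>0$.

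\textbf{Critical-norm control and scattering.} It remains to show that $\|e^{-\frac12it\De}u\|_{\F\dot H^{-s_c}}$, i.e.\ $\||J(t)|^{-s_c}u\|_2$, stays bounded and converges. For this I use Duhamel,
\[
e^{-\frac12it\De}u(t)=u_0-i\int_0^te^{-\frac12is\De}(|u|^pu)\,ds,
\]
together with the Strichartz estimates for $|J|^{-s_c}$. The nonlinear estimate is bounded by $\||J|^{-s_c}u\|_{S}\|u\|_{X}^p$, where $X$ is the decaying norm obtained above. Since $\|u\|_X$ is finite globally, splitting $[1,\infty)$ into finitely many intervals on which $\|u\|_X$ is small gives a global bound for the $S$-norm. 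The same estimate shows that the Duhamel integral is Cauchy as $t\to\infty$ in $\F\dot H^{-s_c}\cap\F\dot H^1$. The $\F\dot H^1$ part of the scattering statement needs $\|J(t)u\|_{L^\infty_tL^2}$ bounded rather than merely polynomially growing. This is obtained by rerunning the $J$-Strichartz estimate with the now globally finite $X$-norm, which upgrades the Gronwall growth to uniform boundedness.

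\textbf{Main obstacle.} I expect the main obstacle to be producing a globally finite scattering norm $X$ from the weak, growing conformal-energy information, in a form compatible with the fractional operator $|J|^{-s_c}$ (fractional chain rule for $|J|^{-s_c}(|u|^pu)$ with $p\ge1$). This is where I would spend the effort, likely by interpolating the $L^{p+2}$ decay with the $\F\dot H^{-s_c}$ Strichartz norm.
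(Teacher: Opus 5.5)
Your conformal-energy identity and the Gronwall bound $A(t)\lesssim A(1)\,t^{\frac{4-3p}{2}}$ are correct. They are the paper's monotonicity $\E(\tau)\le\E(T_0)$ rewritten in the original frame, since $A(t)=t^{\frac{4-3p}{2}}\E(1/t)$.

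The gap is in the next step, and it occurs at $p=1$, which the theorem includes (the Strauss exponent). With $q=\frac{2p(p+2)}{4-p}$ your decay gives
\[
\int_1^\infty\|u(t)\|_{L^{p+2}}^q\,dt\lesssim\int_1^\infty t^{-\frac{3p^2}{4-p}}\,dt .
\]
The condition $\frac{3p^2}{4-p}>1$ is equivalent to $(3p+4)(p-1)>0$, so it holds only for $p>1$. At $p=1$ the exponent is exactly $1$ and the integral diverges logarithmically. So at $p=1$ you only get $\|u(t)\|_{L^3}\lesssim t^{-1/2}$, i.e.\ $u\in L_t^{2,\infty}L_x^3$.

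Lorentz spaces do not repair this. The weak-$L^2$ norm of $t^{-1/2}\chi_{(\tau_0,\infty)}$ equals $1$ for every $\tau_0$, so no smallness on tails follows. Your step of splitting into finitely many intervals on which $\|u\|_X$ is small therefore cannot be run. Interpolating with the $\F\dot H^{-s_c}$ Strichartz norm, as you suggest, is circular, since that is the quantity you are trying to bound. For $1<p<\frac43$ your decay route does close and is a legitimate alternative; pointwise-in-time conformal bounds are simply scale-borderline at $p=1$.

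The missing ingredient is the dissipation term you discarded when applying Gronwall. The identity is exact:
\[
\frac{d}{dt}\Big(t^{-\frac{4-3p}{2}}A(t)\Big)=-\frac{4-3p}{8}\,t^{-\frac{6-3p}{2}}\|J(t)u\|_{L^2}^2 .
\]
So besides the sup bound you also get $\int_1^\infty t^{-\frac{6-3p}{2}}\|J(t)u\|_{L^2}^2\,dt\lesssim A(1)$. In the $\U$-frame this is the paper's bound $\int_0^{T_0}t^{1-\frac{3p}{2}}\|\nabla\U\|_{L^2}^2\,dt\le\E(T_0)$, used in \eqref{estimate for E(T_0)}.

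Set $g(t)=t^{-\frac{4-3p}{4}}\|J(t)u\|_{L^2}$ and $\rho=\frac{4p}{5p-4}\in(2,4]$. Then $g\in L^\infty\cap L^2(dt/t)$, hence $g\in L^\rho(dt/t)$. Sobolev gives $\||J(t)|^{-s_c}u\|_{L_x^{\frac{3p}{2-p}}}\lesssim t^{-1-s_c}\|J(t)u\|_{L^2}$, and one checks
\[
\big(t^{-1-s_c}\|J(t)u\|_{L^2}\big)^{\rho}=t^{\kappa}\,g(t)^{\rho}\,t^{-1},\qquad \kappa=-\tfrac{(3p+4)(p-1)}{5p-4}\le0 .
\]
This yields $|J(t)|^{-s_c}u\in L_t^{\frac{4p}{5p-4}}L_x^{\frac{3p}{2-p}}$ for $t\ge t_0$, uniformly for $1\le p<\frac43$; this is Lemma \ref{lem--s_c-global-bound}.

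The formula also shows where the two routes part. For $p>1$ one has $\kappa<0$, so $g\in L^\infty$ alone suffices, which is your route. At $p=1$ one has $\kappa=0$, and the dissipation integral is indispensable.

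This norm can be made small on finitely many intervals. The nonlinear estimate then closes in Lorentz Strichartz spaces, with the time weight absorbed by $\|t^{s_cp}\|_{L_t^{-1/(s_cp),\infty}}$. That gives the global Strichartz bounds, the uniform bound on $\|J(t)u\|_{L^\infty_tL^2_x}$, and scattering, including at $p=1$.
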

	\begin{remark}
		\begin{enumerate}
			\item 
			The local well-posedness of NLS in $\F\dot H_x^{-s_c}(\R^3)$ is valid for $p<1$. We impose the restriction $p\ge1$, since the potential energy of pseudo conformal energy is required to be finite. It happens to equal the Strauss exponent $\gamma(d):=\frac{2-d+\sqrt{d^2+12d+4}}{2d}$ in 3D case.
			\item 
			In this theorem, $u_0\in\mathcal{F}\dot{H}_x^{-s_c}(\R^3)$ ensures the local well-posedness, and the assumption $u_0\in\mathcal{F}\dot{H}_x^{1}(\R^3)$ ensures the global well-posedness and scattering. By Theorem \ref{thm:local-s-order-initialdata}, $\mathcal{F}\dot{H}_x^{1}(\R^3)$ is a supercritical space for the equation. 
			\item 
			Traditionally, the global well-posedness is proved by some subcritical conservation law. In the proof of this theorem, we are able to give a global result utilizing the a priori control on $\norm{(x+it\nabla)u}_{L_x^2(\R^3)}$, which is supercritical, based on the observation in \cite{SW24quaNLS}. This is achieved in view of Theorem \ref{thm:local-1order-awayfromorigin}. 
			\item 
			We give the result in the 3D case as an example, and we believe that this result can be extended to other dimensions.
		\end{enumerate}
		
	\end{remark}
	For radial $\dot{H}_x^{s_c}(\R^3)$-data, we introduce the exponent $\gamma_1$ as the positive root of the equation $2x^2+15x-20=0$. Note that $1=\gamma(3)<\gamma_1 = 1.155\cdots$. Then, the global result is as follows:
	\begin{thm}\label{main-theorem2}
		Let $\gamma_1\leq p<\frac{4}{3}$. For any radial $u_0\in \dot{H}_x^{s_c}\cap\mathcal{F}\dot{H}_x^1(\mathbb{R}^3)$, there exists a unique solution to \eqref{NLS} such that $e^{-\frac12it\De}u\in C(\mathbb{R};\dot{H}_x^{s_c}\cap\mathcal{F}\dot{H}_x^1(\mathbb{R}^3))$. Moreover, the solution scatters in $\dot{H}_x^{s_c}\cap\mathcal{F}\dot{H}_x^1(\mathbb{R}^3)$, that is, there exist $u_{\pm}\in \dot{H}_x^{s_c}\cap\mathcal{F}\dot{H}_x^1(\R^3)$ such that 
		\begin{align*}
			\lim_{t\rightarrow \pm\infty}\|e^{-\frac12it\De}u(t)-u_{\pm}\|_{\dot{H}_x^{s_c}\cap\mathcal{F}\dot{H}_x^1(\mathbb{R}^3)}=0.
		\end{align*}  
	\end{thm}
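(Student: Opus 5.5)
The proof splits time into three regimes: a short interval near $t=0$, handled by the critical radial $\dot H_x^{s_c}$ local theory; the region away from the origin, handled by the supercritical a priori bound on $\norm{(x+it\nabla)u}_{L_x^2}$ together with Theorem \ref{thm:local-1order-awayfromorigin}; and $t\to\infty$, handled through the pseudo conformal transform. By the symmetry $u(t,x)\mapsto \wb{u(-t,x)}$ we only treat $t\ge0$.

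\textbf{Step 1 (local solution and propagation of the weight).} First I would invoke the radial local well-posedness in $\dot H_x^{s_c}(\R^3)$ of \cite{2008-Hidano}. It relies on radial-admissible Strichartz pairs such as $(q,r)=(\frac{2p(p+2)}{4-p},p+2)$, which satisfy $\frac2q+\frac3r=\frac32-s_c$. This gives a solution on $[0,T_0]$, with $u\in L_t^qL_x^{p+2}([0,T_0]\times\R^3)$ and $e^{-\frac12it\De}u\in C([0,T_0];\dot H_x^{s_c})$. Next I would commute $J(t)=x+it\nabla=e^{\frac12it\De}xe^{-\frac12it\De}$ with the equation. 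Since $J(t)$ acts on the gauge-invariant nonlinearity like a derivative (through the factor $e^{i|x|^2/2t}$), we have $|J(t)(|u|^pu)|\lesssim |u|^p|J(t)u|$. Strichartz estimates, with the $L_t^qL_x^{p+2}$ norm as the controlling norm, then propagate $\norm{J(t)u}_{L_x^2}=\norm{xe^{-\frac12it\De}u}_{L_x^2}$ on $[0,T_0]$. The constraint $p\ge\gamma_1$, i.e. $2p^2+15p-20\ge0$, should enter exactly here. The weighted Strichartz/Hölder exponents must stay within radial-admissible ranges, and $u_0\in\F\dot H^1_x$ must produce finite pseudo conformal energy. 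This requires $\int|u|^{p+2}$ to be controlled by $\norm{J(t)u}_{L_x^2}$ and $\dot H^{s_c}$-type norms via radial Sobolev/Gagliardo–Nirenberg inequalities.

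\textbf{Step 2 (a priori bound and extension).} For a fixed $t_0\in(0,T_0)$, $\mathcal U=\TT u$ solves \eqref{CNLS-I}. Since $p<\frac43$, the coefficient $2-\frac{3p}{2}$ in \eqref{derivate for P-c-e} is positive. Working instead with the original-time pseudo conformal identity
\[
\frac{d}{dt}\Big(\tfrac18\norm{J(t)u}_{L_x^2}^2+\tfrac{t^2}{p+2}\norm{u}_{L_x^{p+2}}^{p+2}\Big)=\tfrac{(4-3p)t}{2(p+2)}\norm{u}_{L^{p+2}_x}^{p+2},
\]
Gronwall on $t\ge t_0$ bounds $\norm{J(t)u}_{L_x^2}$ by at most a power of $t/t_0$. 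Hence it is finite on every bounded interval $[t_0,T]$. Theorem \ref{thm:local-1order-awayfromorigin} with $s=1$ is applicable because $p<\frac4{3-2}=4$. Its existence time depends only on this norm, so the solution extends to all $t>0$ with $e^{-\frac12it\De}u\in C(\R_+;\F\dot H_x^1)$. Persistence of the $\dot H_x^{s_c}$ regularity on compact intervals away from $0$ follows from local Strichartz bounds, since on such intervals $\F\dot H^1$ data give $L^2$-subcritical control.

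\textbf{Step 3 (scattering).} Scattering is obtained through $\mathcal U$ near $\tau=1/t\to0$. I would show that $\mathcal E(\tau)$ from \eqref{Pseudo-conformal-energy} stays bounded as $\tau\to0^+$. Because $\frac{d}{d\tau}\mathcal E\ge0$, $\mathcal E$ is monotone and bounded by its value at $\tau=1/t_0$. This gives a uniform bound $\int|\mathcal U|^{p+2}\lesssim1$ and $\norm{\nabla\mathcal U}_{L^2}\lesssim \tau^{\frac{3p}{4}-1}$. Consequently $\norm{J(t)u}_{L^2}\lesssim t^{1-\frac{3p}{4}}$ and $\norm{u(t)}_{L^{p+2}}^{p+2}\lesssim t^{-\frac{3p}{2}}$. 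The decay in $L^{p+2}$ is integrable against the Strichartz weights exactly when $p$ exceeds the Strauss-type threshold. Using the Duhamel formula on $[t_0,\infty)$, we then obtain Cauchy convergence of $e^{-\frac12it\De}u(t)$ in $\F\dot H^1_x$, and in $\dot H^{s_c}_x$ via the radial Strichartz norm $L_t^qL_x^{p+2}([t_0,\infty))$ being finite.

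I expect the main obstacle to be the scattering in $\dot H^{s_c}_x$ at large times, where only the supercritical weighted bound is a priori available. Converting the decay $t^{-\frac{3p}{2(p+2)}}$ of $\norm{u}_{L^{p+2}}$ into finiteness of the radial-admissible norm needs interpolation between this decay and the $\dot H^{s_c}$ Strichartz estimate. The exponent bookkeeping there is what I expect forces the quadratic condition $2p^2+15p-20\ge0$.
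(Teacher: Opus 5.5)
Your architecture is the paper's. Near $t=0$ you use radial local theory with the pair $\big(\frac{2p(p+2)}{4-p},p+2\big)$ and propagate $J(t)u$ by ordinary Strichartz. You derive the a priori bound $\|J(t)u\|_{L_x^2}\lesssim t^{1-\frac{3p}{4}}$; your Gronwall argument in the original time variable is equivalent to the monotonicity of $\E$, though the correct identity has $\frac14\|J(t)u\|_{L_x^2}^2$ rather than $\frac18$. You then use a blow-up alternative in $\F\dot H^1_x$ away from $t=0$, and get scattering from the global $L_t^{q_1}L_x^{p+2}$ bound supplied by the pseudo conformal energy.

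The problem is your account of where $p\ge\gamma_1$ is needed, which leads you to wave through the one step that actually uses it. Step 1 only needs $\big(\frac{2p(p+2)}{4-p},p+2\big)$ and its dual $\big(\frac{2p(p+2)}{3p^2+p-4},p+2\big)$ to be radial-admissible, which holds whenever $5p^2+2p-8\ge0$. In Step 3 the decay $\|u(t)\|_{L_x^{p+2}}^{p+2}\lesssim t^{-\frac{3p}{2}}$ already gives $\int_{t_0}^\infty\|u(t)\|_{L_x^{p+2}}^{q_1}\,dt\lesssim\int_{t_0}^\infty t^{-\frac{3p^2}{4-p}}\,dt<\infty$ for every $p>1$, with no interpolation.

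In the paper the condition is spent on the persistence of $\dot H^{s_c}_x$ on $[t_0,t]$. You dismiss this as following from local Strichartz bounds and $\F\dot H^1_x$ control, but since $s_c<0$ it is not routine. The Lebesgue control the weight provides, $\|u(t)\|_{L_x^6}\lesssim t^{-1}\|J(t)u(t)\|_{L_x^2}$, is too weak to place $|u|^pu$ in a dual space for a $\dot H^{s_c}_x$ estimate via ordinary Strichartz and Sobolev. The paper instead uses the radial gap estimate $\||u|^pu\|_{L_t^{\wt{q}_3'}L_x^{\wt{r}_3'}}\lesssim|I|^{\frac{4-p}{4}}\|u\|_{L_t^\infty L_x^6}^p\|u\|_{L_t^{q_3}L_x^{r_3}}$. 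Here $\frac1{r_3}=\frac{5p-4}{4p}$ sits at the radial-admissible endpoint, and the requirement $\frac1{\wt{q}_3}=\frac{2p^2+15p-20}{8p}\ge0$ is exactly $p\ge\gamma_1$. Alternatively, you can close the step with what your Step 2 already gives, namely $\sup_{t\ge t_0}\|u(t)\|_{L_x^{p+2}}<\infty$. Then on $[t_0,t]$ the radial estimate with the pair $\big(\frac{2p(p+2)}{4-p},p+2\big)$ bounds the Duhamel term in $L_t^\infty\dot H^{s_c}_x$ by $\|u\|_{L_t^{q_1}L_x^{p+2}}^{p+1}$, as in the paper's global-bound step; this version does not use $p\ge\gamma_1$.

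Two smaller corrections. First, finiteness of $\E$ at some time should come from choosing $t_0$ with $\|u(t_0)\|_{L_x^{p+2}}<\infty$, which $u\in L_t^{q_1}L_x^{p+2}$ near $0$ guarantees for almost every $t_0$. The Gagliardo--Nirenberg route you suggest is not available as stated: $J(t)u$ controls $\nabla\big(e^{-\frac{i|x|^2}{2t}}u\big)$ rather than $\nabla u$, the phase does not preserve $\dot H^{s_c}_x$, and $u(t)$ need not lie in $L_x^2$. Second, Theorem \ref{thm:local-1order-awayfromorigin} is stated only for $s<1$, so the $\F\dot H^1_x$ local theory away from $t=0$ must be proved directly. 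The paper does this with a contraction for $J(t)u$ in $L_t^\infty L_x^2\cap L_t^2L_x^6$ that exploits $t\ge t_0>0$.
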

	\begin{remark}
		The condition $\gamma_1\leq p$ comes from the restriction of  spacetime norm exponents in the radial Strichartz estimates (see Remark \ref{remark-radial-restriction for p} for more details). Beceanu, Deng, Soffer, and Wu's result \cite{2021-Beceanu-Deng-Soffer-Wu} requires compact support, while we impose the first weighted condition in Theorem \ref{main-theorem2}. Moreover, our result also includes the scattering result. 
	\end{remark}

	\vspace{2cm}
	
	\section{Preliminaries}\label{Preliminary}
	
	\vspace{0.5cm}
	
	\subsection{Basic notation}
	We use $\hat{f}$ or $\mathscr{F}f$ to denote the Fourier transform of $f$:
	\begin{align*}
		\hat{f}(\xi)=\mathscr{F}f(\xi):=(2\pi)^{-\frac{d}{2}}\int_{\mathbb{R}^d}f(x)e^{-ix\xi}\mathrm{d}x,
	\end{align*}
	and use $\mathscr{F}^{-1}g$ to denote the inverse Fourier transform of $g$:
	\begin{align*}
		\mathscr{F}^{-1}g(x):=(2\pi)^{-\frac{d}{2}}\int_{\mathbb{R}^d}g(\xi)e^{ix\xi}\mathrm{d}\xi.
	\end{align*}
	
	Let $e^{\frac12it\Delta}u_0$ be defined as the solution of 
	\begin{align}
		\left\{
		\begin{aligned}
			&iu_t+\frac{1}{2}\Delta u=0, \\
			&u(0,x)=u_0(x),
		\end{aligned}
		\right.
	\end{align}
	and let $S(t):=e^{\frac12it\Delta}$. Then, we have
	\begin{align*}
		S(t)u_0=\frac{1}{(2\pi it)^{d/2}} \int_{\mathbb{R}^d} e^{i\frac{|x-y|^2}{2t}} u_0(y)\mathrm{d}y = \mathscr{F}^{-1}e^{-i\frac{t|\xi|^2}{2}}\mathscr{F}u_0.
	\end{align*}
	
	\begin{defn}\label{defn:acceptable-admissible}
		Let $1\leq q,r\leq \infty$. Then we say that:
		\begin{itemize}
			\item $(q,r)$ is acceptable, if $1\leq q< \infty$ and $\frac{1}{q}<\frac{d}{2}-\frac{d}{r}$, or $(q,r)=(\infty, 2)$;
			\item $(q,r)$ is admissible, if $2\leq q,r \leq \infty$, $(q,r,d)\neq (2,\infty,2)$ and $\frac{2}{q}+\frac{d}{r}=\frac{d}{2}$;
			\item $(q,r)$ is radial-admissible, if $2\leq q,r \leq \infty$, $ (q,r)\neq(2,\frac{4d-2}{2d-3})$ and $\frac{2}{q}+\frac{2d-1}{r}\leq \frac{2d-1}{2}.$
		\end{itemize}
	\end{defn}
	
	In this paper, we will use Lorentz-modified spacetime norms. For an interval $I$, $1\leq q < \infty$, and $1\leq \alpha\leq \infty$, the Lorentz space $L_t^{q,\alpha}(I)$ is defined via the quasi-norm
	\begin{align*}
		\|f\|_{L_t^{q,\alpha}(I)} := q^{\frac{1}{\alpha}} \big\|\lambda|\{t\in I: |f(t)|>\lambda\}|^{\frac{1}{q}}\big\|_{L^{\alpha}((0,\infty), \frac{\mathrm{d}\lambda}{\lambda})}.
	\end{align*}
	We write $L_t^{q, \alpha}L_x^{r}(I\times\R^d)$ for the space of functions $u: I\times\R^d \rightarrow \mathbb{C}$ such that 
	\begin{align*}
		\|u\|_{L_t^{q, \alpha}L_x^{r}(I\times\R^d)} := \big\| \|u\|_{L_x^r(\R^d)} \big\|_{L_t^{q,\alpha}(I)} < \infty.
	\end{align*}
	
	\begin{comment}
		We define
		\begin{align*}%\label{def-S0(I)}
			\|f\|_{S^0(I)}&:=\mathop{\text{sup}}\{ \|f\|_{L_t^{q}L_x^r(I\times \mathbb{R}^d)} : (q,r) \text{ is  admissible} \},
		\end{align*}
		and
		\EQ{
			\|f\|_{S_*^0(I)}&:=\mathop{\text{sup}}\{ \|f\|_{L_t^{\infty}L_x^2 \cap L_t^{q,2}L_x^r(I\times \mathbb{R}^d)} : (q,r) \text{  is admissible and }q\ne\I \}.
		}
	\end{comment}

	\subsection{Pseudo conformal transform}
	Now, we define
	\begin{align}\label{def-M(t)}
		M(t) := e^{\frac{i|x|^2}{2t}}\text{, and } J(t):=x+it\nabla.
	\end{align}
	For $f(x): \mathbb{R}^d\rightarrow \mathbb{C}$, direct calculation yields
	\begin{align}\label{PCT-f(x)}
		\mathcal{T}S(t)f=S(t)\mathscr{F}^{-1}\bar{f},
	\end{align}
	and the following alternative representations of the vector field:
	\begin{align}\label{defn-J(t)}
		J(t)=S(t)xS(-t)=M(t)(it\nabla)M(-t).
	\end{align}
	We also define the fractional vector field for $s>0$:
	\begin{align}\label{2-|J(t)|^s}
		|J(t)|^s=S(t)|x|^sS(-t)=M(t)|t\nabla|^sM(-t).
	\end{align}
	Therefore, we have for a spacetime function $u(t,x)$,
	\begin{align}\label{PCT-u(t,x)}
		|\nabla|^s\mathcal{T}u=\mathcal{T}|J(t)|^su.
	\end{align}
	
	Now, we give two basic facts for the pseudo conformal transform:
	\begin{remark}
		For $\mathcal{U}=\mathcal{T}u$, by \eqref{PCT-u(t,x)}, we have that
		\begin{align*}
			\mathcal{U}\in C([a,b];\dot{H}_x^s(\mathbb{R}^d)) \quad &\Longleftrightarrow \quad |J(t)|^su\in C([\frac{1}{b},\frac{1}{a}];L_x^2(\mathbb{R}^d))\\
			&\Longleftrightarrow \quad S(-t)u\in C([\frac{1}{b},\frac{1}{a}];\mathcal{F}\dot{H}_x^s(\mathbb{R}^d)),
		\end{align*}
		for any $0<a<b<\infty$. Moreover, since $\mathcal{T}^{-1}=\mathcal{T}$, we also have
		\begin{align*}
			u \in C([a,b];\dot H_x^s(\R^d)) \quad \Longleftrightarrow & \quad |J(t)|^s\U \in C([\frac1b,\frac1a];L_x^2(\R^d)) \\
			\Longleftrightarrow & \quad S(-t)\U \in C([\frac1b,\frac1a];\F \dot H_x^s(\R^d)).
		\end{align*}
	\end{remark}
	\begin{remark}
		Let $\mathcal{U}=\mathcal{T}u$. Then for any $1\leq q, r\leq \infty$, we have that
		\begin{align}\label{PCT-remark-2}
			\|u(t,x)\|_{L_t^qL_x^r([a,b]\times\mathbb{R}^d)}=\|s^{\frac{d}{2}-\frac{2}{q}-\frac{d}{r}}\mathcal{U}(s,y)\|_{L_s^qL_y^r([\frac{1}{b}, \frac{1}{a}]\times\mathbb{R}^d)},
		\end{align}
		where $0<a<b<\infty.$
	\end{remark}

	\subsection{Useful lemmas}
	\begin{lem}[Kato-Ponce's inequality, \cite{2019-Li}]\label{2: Lemma-fractional chain rule 1}
		Let $0<s<1$, $1<p\leq\infty$, and $1< p_1, p_2, p_3, p_4 \leq \infty$ with $\frac{1}{p}=\frac{1}{p_1}+\frac{1}{p_2}$ and $\frac{1}{p}=\frac{1}{p_3}+\frac{1}{p_4}$. Then 
		\begin{align*}
			\big\| |\nabla|^s(fg) \big\|_{L^p}\lesssim \big\| |\nabla|^s f \big\|_{L^{p_1}}\|g\|_{L^{p_2}}+\big\| |\nabla|^s g \big\|_{L^{p_3}}\|f\|_{L^{p_4}}.
		\end{align*}
	\end{lem}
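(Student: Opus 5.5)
We prove this by the standard Littlewood--Paley paraproduct argument, treating the case $1<p<\infty$ first and the endpoint $p=\infty$ separately. Let $P_N$ be the homogeneous Littlewood--Paley projections, $P_{<N}=\sum_{M<N}P_M$, and write Bony's decomposition
\begin{align*}
fg=\sum_N P_{<N/8}f\,P_Ng+\sum_N P_Nf\,P_{<N/8}g+\sum_{N\sim M}P_Nf\,P_Mg=:\Pi_1+\Pi_2+\Pi_3 .
\end{align*}
In $\Pi_1$ each summand has Fourier support in an annulus $|\xi|\sim N$, so $|\nabla|^s$ acts on it like multiplication by $N^s$. The Littlewood--Paley square function theorem then gives
\begin{align*}
\||\nabla|^s\Pi_1\|_{L^p}\lesssim \Big\|\Big(\sum_N N^{2s}|P_{<N/8}f|^2|P_Ng|^2\Big)^{1/2}\Big\|_{L^p}\le \|Mf\|_{L^{p_4}}\Big\|\Big(\sum_N |N^sP_Ng|^2\Big)^{1/2}\Big\|_{L^{p_3}},
\end{align*}
where $M$ is the Hardy--Littlewood maximal function. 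The last factor is comparable to $\||\nabla|^sg\|_{L^{p_3}}$, and $\|Mf\|_{L^{p_4}}\lesssim\|f\|_{L^{p_4}}$ since $p_4>1$. This bound also holds when $p_4=\infty$, but it needs $p_3<\infty$; if $p_3=\infty$ then $p_4=p$, and one uses instead the vector-valued (Fefferman--Stein/Coifman--Meyer) bilinear multiplier estimate $L^{p}\times L^\infty\to L^p$ for the paraproduct. $\Pi_2$ is symmetric and yields the term $\||\nabla|^sf\|_{L^{p_1}}\|g\|_{L^{p_2}}$.

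For $\Pi_3$, the summands have Fourier support only in balls $|\xi|\lesssim N$, so $|\nabla|^s$ does not localize them. We expand $P_K\Pi_3=\sum_{N\gtrsim K}P_K(P_Nf\,\tilde P_Ng)$ and use $s>0$:
\begin{align*}
K^s|P_K\Pi_3|\lesssim\sum_{N\gtrsim K}(K/N)^s\,M\big(N^sP_Nf\,\tilde P_Ng\big).
\end{align*}
The geometric factor $(K/N)^s$ makes the sum over $K$ in $\ell^2$ harmless (Schur's test), leaving $\big\|(\sum_N|M(N^sP_Nf\,\tilde P_Ng)|^2)^{1/2}\big\|_{L^p}$. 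The Fefferman--Stein vector-valued maximal inequality and Cauchy--Schwarz in $N$, together with $\sup_N|\tilde P_Ng|\lesssim Mg$, bound this by $\||\nabla|^sf\|_{L^{p_1}}\|g\|_{L^{p_2}}$. Here $s<1$ is not really needed, only $s>0$; the restriction is harmless.

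The main obstacle is the endpoint $p=\infty$ (so $p_1=p_3=\infty$, $p_2=p_4=\infty$), where square functions fail on $L^\infty$. For this case we follow Li's approach in \cite{2019-Li}, as in Bourgain--Li and Grafakos--Oh.
\begin{itemize}
\item $\Pi_1$ and $\Pi_2$ are handled by writing $|\nabla|^s(P_{<N/8}f\,P_Ng)=P_{<N/8}f\,|\nabla|^sP_Ng+[\,|\nabla|^s,P_{<N/8}f\,]P_Ng$.
\item The commutator is a bilinear operator with a smooth symbol of Coifman--Meyer type whose kernel decays; its bound in $L^\infty$ follows by estimating the kernel in $L^1$ directly.
\item The main term is $f\,|\nabla|^sg$ minus the high part of $f$, and it is bounded pointwise.
\item In $\Pi_3$, the factor $(K/N)^s$ again gives absolute summability in $L^\infty$ without any square function.
\end{itemize}
We expect the delicate step to be the uniform-in-$N$ $L^1$ kernel bound for the commutator symbol $|\xi+\eta|^s-|\eta|^s$ restricted to $|\xi|\ll|\eta|\sim N$. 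It follows from the smoothness of $|\cdot|^s$ away from the origin and a Taylor expansion in $\xi/\eta$, after rescaling to $N=1$.
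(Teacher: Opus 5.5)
\textbf{Verdict.} The endpoint $p=\infty$ in your proposal has a genuine gap; the case $1<p<\infty$ is fine after one small repair. For comparison, the paper does not prove this lemma at all; it quotes it from \cite{2019-Li}.

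\textbf{The case $1<p<\infty$.} This is the standard Bony/Littlewood--Paley argument and is sound, with one exception. In $\Pi_3$ you put the supremum on $\tilde P_Ng$ and the square function on $N^sP_Nf$. This breaks down when $p_1=\infty$ (so $p_2=p$), because $\|(\sum_N|N^sP_Nf|^2)^{1/2}\|_{L^\infty}$ is not controlled by $\||\nabla|^sf\|_{L^\infty}$. In that case reverse the roles: use $\sup_N|N^sP_Nf|\lesssim\mathcal{M}(|\nabla|^sf)$, where $\mathcal{M}$ is the maximal function, together with $\|(\sum_N|\tilde P_Ng|^2)^{1/2}\|_{L^p}\lesssim\|g\|_{L^p}$.

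\textbf{The gap at $p=\infty$.} None of the mechanisms you list makes the dyadic sums converge.
\begin{enumerate}
\item The summed commutator $\sum_N[|\nabla|^s,P_{<N/8}f]P_Ng$, viewed as a bilinear operator in $(f,|\nabla|^sg)$, has a symbol invariant under dyadic dilations. Its kernel therefore satisfies $K(2y,2z)=2^{-2d}K(y,z)$, so it is not in $L^1(\R^{2d})$. A uniform-in-$N$ $L^1$ bound for each block only gives $O(\|f\|_{L^\infty}\||\nabla|^sg\|_{L^\infty})$ per dyadic scale, and there are infinitely many scales.
\item In $\Pi_3$, the factor $(K/N)^s$ disposes of the $K$-sum. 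It leaves $\sum_N\|N^sP_Nf\|_{L^\infty}\|\tilde P_Ng\|_{L^\infty}$, whose terms are each only $O(\||\nabla|^sf\|_{L^\infty}\|g\|_{L^\infty})$. So this sum is not summable from that bound.
\item The correction $\sum_N P_{\ge N/8}f\,|\nabla|^sP_Ng$ hidden in your ``main term'' is a high--low paraproduct of two bounded functions. In general such a paraproduct lies only in $\mathrm{BMO}$ and is not pointwise bounded.
\end{enumerate}

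\textbf{The missing idea.} At $p=\infty$ all four quantities $A_f=\|f\|_{L^\infty}$, $B_f=\||\nabla|^sf\|_{L^\infty}$, $A_g$, $B_g$ appear on the right-hand side. Every dyadic block must be estimated using both bounds $\|P_Nf\|_{L^\infty}\lesssim\min(A_f,N^{-s}B_f)$ (and the analogue for $g$), and then one takes a geometric mean. For example,
\[
\sum_N N^s\|P_Nf\|_{L^\infty}\|\tilde P_Ng\|_{L^\infty}\lesssim\sum_N\min\big(N^sA_fA_g,\,N^{-s}B_fB_g\big)\lesssim(A_fA_gB_fB_g)^{1/2}\le A_fB_g+A_gB_f .
\]
The same bookkeeping handles the remaining pieces:
\begin{itemize}
\item Commutator blocks $[|\nabla|^s,P_Mf]P_Ng$ with $M\le N/8$: the bilinear kernel has $L^1$ norm $\lesssim (M/N)N^s$, so each block is $\lesssim (M/N)\min(N^sA_fA_g,M^{-s}B_fB_g)$. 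Summing in $N$ at fixed $N/M=2^k$ leaves $2^{-k(1-s/2)}(A_fA_gB_fB_g)^{1/2}$, which is summable in $k$.
\item The correction term over pairs $M\gtrsim N/8$: summing at fixed $M/N=2^k$ gives $2^{-ks/2}$, again summable.
\end{itemize}
Alternatively, since $0<s<1$, use the identity
\[
|\nabla|^s(fg)=f|\nabla|^sg+g|\nabla|^sf-c_{d,s}\int\frac{(f(x)-f(y))(g(x)-g(y))}{|x-y|^{d+s}}\,dy
\]
together with the H\"older bound $|f(x)-f(y)|\lesssim\min(|x-y|^sB_f,A_f)$. This gives the same geometric-mean bound in a few lines.
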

	
	\begin{lem}[Fractional chain rule, \cite{1991-Christ-Weinstein}]\label{2: Lemma-fractional chain rule 2}
		Suppose that $G\in C^1(\mathbb{C})$, $s\in (0,1]$, and $1<p, p_1, p_2 <\infty$ with $\frac{1}{p}=\frac{1}{p_1}+\frac{1}{p_2}$. Then 
		\begin{align*}
			\big\| |\nabla|^sG(u) \big\|_{L^p}\lesssim \|G^{\prime}(u)\|_{L^{p_1}}\big\| |\nabla|^s u \big\|_{L^{p_2}}.
		\end{align*}
	\end{lem}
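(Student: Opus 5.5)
The plan is to work with the Littlewood--Paley square-function characterization of $|\nabla|^s$, valid for $1<p<\infty$ and $0<s<1$ (the case $s=1$ is the pointwise chain rule followed by H\"older). One has
\[
\big\||\nabla|^s G(u)\big\|_{L^p}\approx \Big\|\Big(\sum_{k\in\mathbb{Z}}2^{2ks}|P_kG(u)|^2\Big)^{1/2}\Big\|_{L^p},
\]
where $P_k$ has kernel $\check\psi_k(y)=2^{kd}\check\psi(2^ky)$ with mean zero. The mean-zero property gives
\[
P_kG(u)(x)=\int\check\psi_k(y)\big[G(u(x-y))-G(u(x))\big]\,dy .
\]
One then linearizes by the fundamental theorem of calculus along the segment from $u(x)$ to $u(x-y)$:
\[
G(u(x-y))-G(u(x))=\int_0^1 G'\big(u(x)+\theta(u(x-y)-u(x))\big)\,d\theta\,\big(u(x-y)-u(x)\big).
\]

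The increment $u(x-y)-u(x)$ is expanded as $\sum_j (P_ju(x-y)-P_ju(x))$. For $j\le k$ one uses $|P_ju(x-y)-P_ju(x)|\lesssim \min(1,2^j|y|)\,\mathcal{M}(\widetilde P_j u)(x)$, and for $j>k$ the crude bound $\mathcal{M}(P_ju)(x)+|P_ju(x)|$ weighted by the decay of $\check\psi_k$. Summing with the weights $2^{ks}$ and using $0<s<1$ gives geometric sums in $|j-k|$: the low-frequency terms supply $2^{(j-k)(1-s)}$ and the high-frequency terms supply $2^{(k-j)s}$. Granting the factor $A(x)$ below, the square function of $G(u)$ is then bounded pointwise by $A(x)\big(\sum_j 2^{2js}\mathcal{M}(P_ju)(x)^2\big)^{1/2}$. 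H\"older with $\frac1p=\frac1{p_1}+\frac1{p_2}$, followed by the Fefferman--Stein vector-valued maximal inequality, controls this by $\|A\|_{L^{p_1}}\||\nabla|^su\|_{L^{p_2}}$.

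The whole argument therefore reduces to bounding the averaged derivative factor
\[
A(x):=\sup_k\int |\check\psi_k(y)|\int_0^1\big|G'\big(u(x)+\theta(u(x-y)-u(x))\big)\big|\,d\theta\,dy
\]
by a maximal function of $G'(u)$, say $A\lesssim \mathcal{M}(G'(u))+|G'(u)|$, which lies in $L^{p_1}$ when $p_1>1$.

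\textbf{This is the main obstacle.} For the statement exactly as worded, with $G$ an arbitrary $C^1(\mathbb{C})$ function, $G'$ is evaluated at intermediate points $u(x)+\theta(u(x-y)-u(x))$ that are not values of $u$, so nothing ties $A$ to $G'(u)$. My first attempt will be to avoid the segment entirely by telescoping through values of $u$ itself. I would write $G(u)=\sum_j\big[G(P_{\le j}u)-G(P_{\le j-1}u)\big]$ and linearize each piece at $P_{\le j}u$, with the hope of comparing $|G'(P_{\le j}u)|$ to $\mathcal{M}(G'(u))$. I suspect this still needs some structural comparability of $|G'|$ along segments, namely a bound of the form $|G'(\theta a+(1-\theta)b)|\lesssim |G'(a)|+|G'(b)|$, at least on average in $\theta$. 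Such a bound holds for the power nonlinearities $G(z)=|z|^pz$ and $|z|^p$ with $p>0$ that are the only cases used in this paper.

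I therefore expect the real work, and the only possible gap relative to the literal wording, to lie in this comparison of $G'$ at intermediate points. If no argument is found for general $C^1$ functions, the proof would cover precisely the nonlinearities the paper actually uses: for them the comparison is elementary, and the Littlewood--Paley and vector-valued maximal function argument above then completes the estimate.
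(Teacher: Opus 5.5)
Your suspicion is justified, and the problem is not a gap in your method: for $0<s<1$ the statement with $G$ an arbitrary $C^1(\mathbb{C})$ function is false, so no argument can prove it. That includes the Littlewood--Paley route and your telescoping idea. The paper gives no proof of its own; it only cites Christ--Weinstein.

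Here is a counterexample. Take $p_1=p_2=4$, $p=2$, $s=\tfrac18$. Choose $\phi\in C^\infty(\mathbb{R})$ with $\phi(0)=0$, $\phi(1)=1$, $\phi'(0)=\phi'(1)=0$. Put $G(z)=\phi(\mathrm{Re}\,z)$ and $u=\chi_B$, the indicator of the unit ball.
\begin{itemize}
\item Since $u$ only takes the values $0$ and $1$, we get $G(u)=u$ and $G'(u)\equiv 0$.
\item Because $sp_2<1$, we have $\||\nabla|^s\chi_B\|_{L^{p_2}}<\infty$, so the right-hand side is $0$.
\item The left-hand side $\||\nabla|^{s}\chi_B\|_{L^2}$ is positive.
\end{itemize}
Smooth examples come from mollifying. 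Set $u_\epsilon=\chi_B*\rho_\epsilon$. Then:
\begin{itemize}
\item $\|G'(u_\epsilon)\|_{L^{p_1}}\lesssim\epsilon^{1/p_1}\to0$, since $G'(u_\epsilon)$ lives on a shell of width $2\epsilon$;
\item $\||\nabla|^su_\epsilon\|_{L^{p_2}}\le\||\nabla|^s\chi_B\|_{L^{p_2}}$;
\item $\liminf_{\epsilon\to0}\||\nabla|^sG(u_\epsilon)\|_{L^p}\ge\||\nabla|^s\chi_B\|_{L^p}>0$.
\end{itemize}
This is exactly your obstruction: $G'$ can vanish on the range of $u$ while $G$ varies between those values. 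For $s=1$ the statement is fine, by the pointwise chain rule, as you say.

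The true statement needs a structural hypothesis such as
\[
\int_0^1\big|G'(\theta a+(1-\theta)b)\big|\,d\theta\lesssim|G'(a)|+|G'(b)|\qquad(a,b\in\mathbb{C}).
\]
This holds for $G(z)=|z|^pz$ with $p>0$, which is the only nonlinearity the paper feeds into this lemma. So your fallback is the correct repair. The telescoping attempt should be dropped: it cannot work for general $G$, and it is unnecessary once the hypothesis is assumed.

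Under that hypothesis your Littlewood--Paley argument works, but one step is wrong as written. The claimed pointwise bound of the square function of $G(u)$ by $A(x)\big(\sum_j 2^{2js}\mathcal{M}(P_ju)(x)^2\big)^{1/2}$ fails for the high-frequency pieces $j>k$. There, $|P_ju(x-y)|$ cannot be controlled by $\mathcal{M}(P_ju)(x)$ uniformly over $|y|\sim 2^{-k}\gg 2^{-j}$. Nor can the $y$-dependent factor $|G'(\cdots)|$ be pulled out of $\int|\check\psi_k(y)|\,|G'(\cdots)|\,|P_ju(x-y)|\,dy$ as a separate average $A(x)$.

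The fix is as follows.
\begin{itemize}
\item First use the hypothesis to replace the $\theta$-average by $|G'(u(x-y))|+|G'(u(x))|$.
\item The piece $|G'(u(x-y))|\,|P_ju(x-y)|$ integrated against $|\check\psi_k|$ is then bounded by $\mathcal{M}(G'(u)P_ju)(x)$.
\item After summing $\sum_{j>k}2^{(k-j)s}$ in $\ell^2_k$, the Fefferman--Stein inequality reduces matters to $\big\||G'(u)|\big(\sum_j2^{2js}|P_ju|^2\big)^{1/2}\big\|_{L^p}$, and H\"older finishes.
\item The remaining pieces, with $G'$ evaluated at $x$ or with the average $\mathcal{M}(G'(u))(x)$ multiplying $|P_ju(x)|$, factor as you intended.
\end{itemize}
For $j\le k$ your factorization is fine. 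The pointwise bound is actually $|P_ju(x-y)-P_ju(x)|\lesssim 2^j|y|(1+2^j|y|)^N\mathcal{M}(P_ju)(x)$. The Peetre-type loss $(1+2^j|y|)^N$ is absorbed by the decay of $\check\psi_k$ because $j\le k$. The remaining factor is then controlled by $\mathcal{M}(G'(u))(x)+|G'(u(x))|$, again using the hypothesis.
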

	
	\begin{lem}[Fractional chain rule for H\"older continuous function, \cite{2007-Visan}]\label{2: Lemma-Holder continuous}
		Let $G$ be a H\"older continuous function of order $0<\alpha<1$. Then, for every $0<s<\alpha$, $1< p<\infty$, and $\frac{s}{\alpha}<\sigma<1$, we have
		\begin{align*}
			\big\| |\nabla|^sG(u) \big\|_{L^p} \lesssim \big\| |u|^{\alpha-\frac{s}{\sigma}} \big\|_{L^{p_1}} \big\| |\nabla|^{\sigma}u \big\|^{\frac{s}{\sigma}}_{L^{\frac{s}{\sigma}p_2}},
		\end{align*}
		provided $\frac{1}{p}=\frac{1}{p_1}+\frac{1}{p_2}$ and $(1 - \frac{s}{\alpha\sigma})p_1>1$.
	\end{lem}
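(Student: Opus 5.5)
The plan is to use the Littlewood--Paley square function characterization
$$\big\||\nabla|^s G(u)\big\|_{L^p}\sim\Big\|\Big(\sum_{N\in 2^{\mathbb Z}}N^{2s}|P_NG(u)|^2\Big)^{1/2}\Big\|_{L^p},\qquad 1<p<\infty,$$
and to estimate each piece $P_NG(u)$ pointwise by maximal functions. Since $\int\check\psi_N=0$, we can write
$$P_NG(u)(x)=\int \check\psi_N(y)\big[G(u(x-y))-G(u(x))\big]\,\mathrm{d}y .$$
Hölder continuity then gives
$$|P_NG(u)(x)|\lesssim\int|\check\psi_N(y)|\,|u(x-y)-u(x)|^{\alpha}\,\mathrm{d}y .$$

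The key step is to split the increment according to frequency. Fix $N$, a parameter $M=M(N,x)$ to be optimized, and the decomposition $u=u_{\le M}+u_{>M}$.

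For the low-frequency part, the mean value theorem together with the Hölder continuity of $G$ gives a bound of the form $\min\{|y|\,|\nabla u_{\le M}|,\,\cdot\}^{\alpha}$. For the high-frequency part we use $|u_{>M}(x-y)|+|u_{>M}(x)|$.

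Rather than using the full power $\alpha$ on the increment, we write
$$|\delta_y u|^{\alpha}=|\delta_y u|^{\alpha-\frac s\sigma}\,|\delta_y u|^{\frac s\sigma}, \qquad |\delta_y u|^{\alpha-\frac s\sigma}\lesssim |u(x-y)|^{\alpha-\frac s\sigma}+|u(x)|^{\alpha-\frac s\sigma}.$$
After integrating against $|\check\psi_N|$, the first factor produces the Hardy--Littlewood maximal function $\mathcal M(|u|^{\alpha-s/\sigma})(x)$. For the second factor we aim for a pointwise bound controlled by $|y|^{s}$ times a power of a maximal/square function of $|\nabla|^\sigma u$. Such a bound should follow from
$$|\nabla u_{\le M}|\lesssim M^{1-\sigma}\,\mathcal M(|\nabla|^\sigma u)\quad\text{and}\quad |u_{>M}|\lesssim \sum_{K>M}K^{-\sigma}|P_K|\nabla|^\sigma u|.$$
Here the fact that $s/\sigma<1$ is used.

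This leads to a pointwise estimate of the shape
$$N^s|P_NG(u)(x)|\lesssim \mathcal M\big(|u|^{\alpha-\frac s\sigma}\big)(x)\cdot\big(\text{an }\ell^2_N\text{-summable family built from } \mathcal M(P_K|\nabla|^\sigma u)\big)^{s/\sigma},$$
after choosing $M\approx N$ and summing the geometric tails in $K$. The gain $(N/K)^{\sigma}$ and $(K/N)^{1-\sigma}$ against $N^s|y|^s$ is what makes the sum converge.

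Taking the $\ell^2_N$ norm and then the $L^p$ norm, we apply Hölder with $\frac1p=\frac1{p_1}+\frac1{p_2}$, putting the first factor in $L^{p_1}$ and the second in $L^{p_2}$. The second factor is bounded using the Fefferman--Stein vector-valued maximal inequality in $L^{\frac s\sigma p_2}(\ell^2)$, which yields $\||\nabla|^\sigma u\|_{L^{\frac s\sigma p_2}}^{s/\sigma}$.

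For the first factor we need $\mathcal M(|u|^{\alpha-s/\sigma})$ bounded in $L^{p_1}$. To get $\||u|^{\alpha-s/\sigma}\|_{L^{p_1}}$ exactly (rather than a slightly different exponent), the argument should split the power $\alpha-\frac s\sigma$ further, writing
$$\alpha-\frac s\sigma=\Big(1-\frac{s}{\alpha\sigma}\Big)\alpha .$$
One then applies the maximal function to $|u|^{\alpha-\frac s\sigma}$ viewed as $(|u|^{\alpha})^{1-\frac s{\alpha\sigma}}$. This is where the hypothesis $(1-\frac{s}{\alpha\sigma})p_1>1$ enters: it guarantees that the maximal operator is bounded on the relevant Lebesgue space.

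The main obstacle will be arranging the pointwise bound so that the $\ell^2$ sum over dyadic $N$ closes with only the fractional power $s/\sigma$ of $|\nabla|^\sigma u$. In particular, the choice of $M$ and the interpolation between the low-frequency Lipschitz bound and the high-frequency size bound must produce a geometrically decaying kernel in $N/K$. I would first try $M=N$ with the exponent split above, and adjust $M=N^{\theta}$-type choices if the tails fail to sum.
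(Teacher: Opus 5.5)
You have a genuine gap, and it sits exactly at the step you flag as the main obstacle. No choice of $M$ (including $M=N^{\theta}$) repairs it. The paper gives no proof of this lemma; it quotes Visan, whose standard argument avoids the problem described below.

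You split $|\delta_yu|^\alpha=|\delta_yu|^{\alpha-\frac s\sigma}|\delta_yu|^{\frac s\sigma}$ with the same exponents for every $N$. Your pointwise bound then has the form $N^s|P_NG(u)|\lesssim(\cdots)\,F_N^{s/\sigma}$ with $F_N=\sum_K c(N/K)\,\mathcal{M}(P_K|\nabla|^\sigma u)$. But $\ell^2_N$-summability of $(F_N)$ does not give $\ell^2_N$-summability of $(F_N^{s/\sigma})$. Since $s/\sigma<1$, $\big(\sum_N F_N^{2s/\sigma}\big)^{1/2}=\|F\|_{\ell^{r}_N}^{s/\sigma}$ with $r=2s/\sigma<2$. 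After H\"older you would therefore need $\big\|\big(\sum_K|K^{\sigma}P_Ku|^{r}\big)^{1/r}\big\|_{L^q}\lesssim\||\nabla|^\sigma u\|_{L^q}$ with $q=\frac s\sigma p_2$. That is, you would need to control $\dot F^{\sigma}_{q,r}$ by $\dot F^{\sigma}_{q,2}$ for some $r<2$. Fefferman--Stein in $L^q(\ell^2)$ does not give this, and it is false in general: for $u=\sum_{k\le n}2^{-k\sigma}e^{i2^kx_1}\phi$ with $\hat\phi$ supported in a small ball, the ratio grows like $n^{\frac1r-\frac12}$. The loss is in the exponent, not in the off-diagonal tails.

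There is also a smaller error. Integrating the product against $|\check\psi_N|$ does not let you factor out $\mathcal{M}(|u|^{\alpha-s/\sigma})(x)$. H\"older in $y$ with exponents $\frac{1}{1-\vartheta},\frac1\vartheta$, where $\vartheta=\frac{s}{\alpha\sigma}$, gives $[\mathcal{M}(|u|^\alpha)(x)]^{1-\vartheta}$ instead. It is this quantity whose $L^{p_1}$ bound uses $(1-\vartheta)p_1>1$; $\mathcal{M}(|u|^{\alpha-s/\sigma})$ would only need $p_1>1$.

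The missing idea is to spend the room $s<\alpha\sigma$ through an $N$-dependent interpolation, keeping the full power $\alpha$. Set $A=\mathcal{M}(|u|^\alpha)$ and $B=[\mathcal{M}\mathcal{M}(|\nabla|^\sigma u)]^\alpha$. Then you have
\[
|P_NG(u)|\lesssim A \qquad\text{and}\qquad |P_NG(u)|\lesssim N^{-\alpha\sigma}B .
\]
The first bound follows from $|\delta_yu|^\alpha\le|u(x-y)|^\alpha+|u(x)|^\alpha$. The second follows from Jensen, $\int|\check\psi_N||\delta_yu|^\alpha\lesssim\big(\int|\check\psi_N||\delta_yu|\big)^\alpha$. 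You combine this with your own split at $M=N$ and the bounds $|\nabla P_{\le N}u|\lesssim N^{1-\sigma}\mathcal{M}(|\nabla|^\sigma u)$ and $|P_{>N}u|\lesssim N^{-\sigma}\mathcal{M}(|\nabla|^\sigma u)$. Now cut the dyadic sum at an $x$-dependent $N_0$ with $N_0^{\alpha\sigma}\sim B/A$:
\[
\sum_N N^{2s}|P_NG(u)|^2\lesssim\sum_{N\le N_0}N^{2s}A^2+\sum_{N>N_0}N^{2s-2\alpha\sigma}B^2\sim A^{2(1-\vartheta)}B^{2\vartheta}.
\]
The tail converges precisely because $s<\alpha\sigma$. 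Hence the square function is at most $[\mathcal{M}(|u|^\alpha)]^{1-\frac{s}{\alpha\sigma}}[\mathcal{M}\mathcal{M}(|\nabla|^\sigma u)]^{\frac s\sigma}$. H\"older with $\frac1p=\frac1{p_1}+\frac1{p_2}$, plus the scalar maximal theorem on $L^{(1-\vartheta)p_1}$ and on $L^{\frac s\sigma p_2}$, finishes the proof; the latter tacitly needs $\frac s\sigma p_2>1$. The exponents $1-\frac{s}{\alpha\sigma}$ and $\frac s\sigma$ come out of optimizing $N_0$, not from splitting the increment. No vector-valued maximal inequality is needed.
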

	
	\begin{lem}[Some inequalities for the Lorentz space, \cite{2001-Nakanishi}]\label{2: Lemma-Lorentz space inequality}
		We have the refinement of H\"older's inequality
		\begin{align*}
			\|fg\|_{L^{p,q}}\lesssim\|f\|_{L^{p_1,q_1}}\|g\|_{L^{p_2,q_2}},
		\end{align*}
		where $\frac{1}{p}=\frac{1}{p_1}+\frac{1}{p_2}$, $\frac{1}{q}=\frac{1}{q_1}+\frac{1}{q_2}$, and $ p$, $p_1$, $p_2 < \infty$. Similarly, we have the refinement of Young's inequality
		\begin{align*}
			\|f*g\|_{L^{p,q}}\lesssim\|f\|_{L^{p_1,q_1}}\|g\|_{L^{p_2,q_2}},
		\end{align*}
		where $\frac{1}{p}=\frac{1}{p_1}+\frac{1}{p_2}-1$, $\frac{1}{q}=\frac{1}{q_1}+\frac{1}{q_2}$, $1<p, p_1, p_2 < \infty$ and $1\leq q, q_1, q_2 \leq \infty$.
	\end{lem}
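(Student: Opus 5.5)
The statement is classical (it is O'Neil's extension of H\"older and Young to Lorentz spaces). I would prove it by passing from the distribution-function definition of $L^{p,q}$ to the equivalent description through the decreasing rearrangement $f^*(\tau)=\inf\{\lambda:|\{|f|>\lambda\}|\le\tau\}$. For $p<\infty$ one has
\begin{align*}
\|f\|_{L^{p,q}}\simeq \big\|\tau^{1/p}f^*(\tau)\big\|_{L^q((0,\infty),\frac{\mathrm d\tau}{\tau})}.
\end{align*}
This follows by the change of variables $\lambda\leftrightarrow\tau$ exchanging the distribution function and $f^*$, since they are generalized inverses of each other. When $1<p<\infty$ the same quasi-norm is also equivalent to the one with $f^{**}(\tau)=\tau^{-1}\int_0^\tau f^*$ in place of $f^*$. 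This second equivalence comes from Hardy's inequality in $L^q(\mathrm d\tau/\tau)$ with weight $\tau^{1/p}$, and it is the place where $p>1$ enters.

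\emph{H\"older part.} I would use the elementary bound $(fg)^*(\tau)\le f^*(\tau/2)\,g^*(\tau/2)$. It holds because $\{|fg|>ab\}\subset\{|f|>a\}\cup\{|g|>b\}$. Writing $\tau^{1/p}=\tau^{1/p_1}\tau^{1/p_2}$, applying H\"older's inequality in $L^q(\mathrm d\tau/\tau)$ with $\frac1q=\frac1{q_1}+\frac1{q_2}$, and rescaling $\tau\mapsto2\tau$ (which costs only a constant) gives $\|fg\|_{L^{p,q}}\lesssim\|f\|_{L^{p_1,q_1}}\|g\|_{L^{p_2,q_2}}$. The case where some $q_i=\infty$ is the same argument with a supremum.

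\emph{Young part.} The key step is O'Neil's pointwise estimate
\begin{align*}
(f*g)^{**}(\tau)\le \tau f^{**}(\tau)g^{**}(\tau)+\int_\tau^\infty f^*(\sigma)g^*(\sigma)\,\mathrm d\sigma .
\end{align*}
To prove it, I would split $f=f\mathbf 1_{\{|f|>f^*(\tau)\}}+f\mathbf 1_{\{|f|\le f^*(\tau)\}}$ and do the same for $g$. The piece where both factors are large is estimated in $L^1$, and the remaining pieces in $L^\infty$, using $\sup_{E}$-type characterizations of $h^{**}$. I would then multiply by $\tau^{1/p}$ and take the $L^q(\mathrm d\tau/\tau)$ norm.

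The first term is $\tau^{1/p_1}f^{**}\cdot\tau^{1/p_2}g^{**}$, since $\frac1p+1=\frac1{p_1}+\frac1{p_2}$. It is controlled by the H\"older step in $L^q(\mathrm d\tau/\tau)$, together with the $f^{**}$-equivalence, which is valid because $p_1,p_2>1$. For the second term, $\tau^{1/p}\int_\tau^\infty\sigma^{-1/p}\big(\sigma^{1/p_1}f^*\sigma^{1/p_2}g^*\big)\frac{\mathrm d\sigma}{\sigma}$, I would use the dual Hardy inequality, which requires $1/p>0$, i.e. $p<\infty$. This bounds it by $\|\sigma^{1/p_1}f^*\sigma^{1/p_2}g^*\|_{L^q(\mathrm d\sigma/\sigma)}$, and H\"older then finishes. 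Finally, $(f*g)^*\le(f*g)^{**}$ converts the result back to the norm of $f*g$.

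The main obstacle is O'Neil's rearrangement inequality for the convolution. The truncation bookkeeping there must be done carefully so that the two Hardy-type terms appear with the correct powers of $\tau$. Everything else is a one-dimensional Hardy or H\"older computation on $(0,\infty)$. If the direct proof becomes too messy, I would instead obtain the Young estimate by bilinear real interpolation (Marcinkiewicz style) from the endpoint Young inequalities in Lebesgue spaces, using the reiteration theorem for $L^{p,q}=(L^{r_0},L^{r_1})_{\theta,q}$.
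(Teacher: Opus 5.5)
Your proposal is correct and follows the classical route. The paper gives no proof of this lemma and only cites it (the result goes back to O'Neil), and your argument is the standard proof behind that citation: $(fg)^*(\tau)\le f^*(\tau/2)\,g^*(\tau/2)$ followed by H\"older on $(0,\infty)$ for the first inequality, and O'Neil's bound on $(f*g)^{**}$ followed by Hardy's inequality and its dual for the second. One small point: your four-piece truncation proves O'Neil's inequality with a harmless absolute constant (e.g.\ $4$) in front of $\tau f^{**}(\tau)g^{**}(\tau)$, rather than the sharp constant $1$ you wrote, but that is all the $\lesssim$ statement needs.
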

	
	\begin{lem}[Strichartz estimates, \cite{2001-Nakanishi,1998-Keel-Tao}]
		Let $a\in I\subset \mathbb{R}$. For any admissible pairs $(q,r)$ and $(\wt{q},\wt{r})$,
		\begin{align*}
			\|S(t)f\|_{L_t^{\infty}L_x^2 \cap L_t^{q}L_x^r(I\times \mathbb{R}^d)}&\lesssim\|f\|_{L_x^2(\mathbb{R}^d)},\\
			\left\|\int_{a}^{t}S(t-\tau)F(\tau)\mathrm{d} \tau \right\|_{L_t^{\infty}L_x^2 \cap L_t^{q}L_x^{r}(I\times \mathbb{R}^d)} &\lesssim \|F \|_{L_t^{\wt{q}^{\prime}} L_x^{\wt{r}^{\prime}}(I\times \mathbb{R}^d)}.
		\end{align*}
		The above estimates also hold when $L_t^q$ and $L_t^{\wt q'}$ are replaced by Lorentz spaces $L_t^{q,2}$ and $L_t^{\wt q',2}$ with $q,\wt q\ne\I$.
	\end{lem}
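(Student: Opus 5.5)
The plan is to follow the standard $TT^*$ route of Keel--Tao, and then obtain the Lorentz refinement from the Lorentz versions of Young's inequality in Lemma \ref{2: Lemma-Lorentz space inequality}, as Nakanishi does. The only input from the linear flow is the pair of estimates
\begin{align*}
\|S(t)f\|_{L_x^2}=\|f\|_{L_x^2},\qquad \|S(t)f\|_{L_x^\infty}\lesssim |t|^{-d/2}\|f\|_{L_x^1}.
\end{align*}
The first is unitarity. The second follows from the explicit kernel $(2\pi it)^{-d/2}e^{i|x-y|^2/(2t)}$. Riesz--Thorin interpolation between them gives
\begin{align*}
\|S(t)S(s)^*F\|_{L_x^r}\lesssim |t-s|^{-d(\frac12-\frac1r)}\|F\|_{L_x^{r'}}.
\end{align*}
For an admissible pair the decay exponent is $d(\frac12-\frac1r)=\frac2q$.

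\emph{Homogeneous estimate.} Set $Tf=S(t)f$. By duality it suffices to bound $T^*F=\int S(-s)F(s)\,ds$ from $L_t^{q',2}L_x^{r'}$ into $L_x^2$. For this we expand
\begin{align*}
\|T^*F\|_{L^2}^2=\iint\langle S(t-s)F(s),F(t)\rangle\,ds\,dt
\end{align*}
and apply the dispersive bound, which reduces matters to
\begin{align*}
\iint |t-s|^{-2/q}\|F(s)\|_{L^{r'}}\|F(t)\|_{L^{r'}}\,ds\,dt .
\end{align*}
For $q>2$ the kernel $|t|^{-2/q}$ lies in $L^{q/2,\infty}$. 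The Lorentz Young inequality with $(p_1,q_1)=(q/2,\infty)$ and $(p_2,q_2)=(q',2)$ gives output in $L^{q,2}$, and the exponents match: $\frac2q+\frac1{q'}-1=\frac1q$. The Lorentz H\"older pairing $L^{q,2}\times L^{q',2}\to L^1$ then closes the bound. For $q=\infty$ the claim is just unitarity. The endpoint $q=2$ with $d\ge3$ is covered by Keel--Tao's bilinear dyadic interpolation, and there the Lorentz refinement is void because $L^{2,2}=L^2$.

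\emph{Inhomogeneous estimate.} First treat the untruncated operator $\int_I S(t-\tau)F(\tau)\,d\tau=T\,\tilde T^*F$. It is controlled by composing the homogeneous bound for $(q,r)$ with the dual homogeneous bound for $(\tilde q,\tilde r)$, in the Lorentz form. To pass to the retarded integral $\int_a^t$ I would apply the Christ--Kiselev lemma. Its Lorentz variant requires $\tilde q'<q$, and this holds in every case except $q=\tilde q=2$. In that remaining double-endpoint case the Lorentz spaces coincide with the Lebesgue ones, so Keel--Tao's direct bilinear argument for the truncated form applies. The $L_t^\infty L_x^2$ component of the inhomogeneous bound follows from the dual homogeneous estimate applied to $F\mathbf 1_{[a,t]}$, uniformly in $t$.

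\emph{Main obstacle.} The delicate part is the endpoint (Keel--Tao's atomic decomposition) and the Lorentz-space Christ--Kiselev truncation. Since the paper cites \cite{2001-Nakanishi,1998-Keel-Tao}, I would quote those arguments for these steps rather than reprove them. The non-endpoint Lorentz computation above is the genuinely new bookkeeping.
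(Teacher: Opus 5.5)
\textbf{Gap: the Lorentz Christ--Kiselev step.} Your homogeneous argument is correct: the $TT^*$ expansion, the dispersive bound, $L^{q/2,\infty}*L^{q',2}\subset L^{q,2}$, and the pairing with $L^{q',2}$. The untruncated composition $T\tilde T^*$, the $L_t^\infty L_x^2$ bound and the double endpoint are also handled correctly. The paper itself gives no proof and only cites Nakanishi and Keel--Tao. The problem is the truncation step. There is no Lorentz Christ--Kiselev lemma of the form ``bounded $L_t^{\tilde q',2}\to L_t^{q,2}$ and $\tilde q'<q$ $\Rightarrow$ the time-truncation is bounded''.

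The Christ--Kiselev gain $2^{-n(1/\tilde q'-1/q)}$ at the $n$-th Whitney level comes from two facts: the input splits into $2^n$ pieces whose norms add in $\ell^{\tilde q'}$, and disjointly supported outputs recombine in $\ell^{q}$. Here $\tilde q'\le 2\le q$, and $\|f\|_{L^{s,2}}^2=s\int_0^\infty\lambda\,|\{|f|>\lambda\}|^{2/s}\,d\lambda$. For disjoint supports this only gives $\sum_k\|f_k\|_{L^{\tilde q',2}}^2\le\|\sum_k f_k\|_{L^{\tilde q',2}}^2$ and $\|\sum_j g_j\|_{L^{q,2}}^2\le\sum_j\|g_j\|_{L^{q,2}}^2$. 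Both inequalities reverse up to constants for pieces at dyadically separated heights. So the gain is $2^{-n(1/2-1/2)}=1$, and this is sharp.

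Here is a counterexample. Take intervals $A_1<B_1<\dots<A_N<B_N$ and sets $E_k\subset A_k$, $E_j'\subset B_j$ with $|E_k|=2^{-k\tilde q'}$ and $|E_j'|=2^{-jq}$. Let $g_j=2^j\mathbf 1_{E_j'}$, $\psi_k=2^{-k}|E_k|^{-1}\mathbf 1_{E_k}$, and $K(t,s)=\sum_{j\neq k}\frac{g_j(t)\psi_k(s)}{\pi(j-k)}$. The operator with kernel $K$ factors as $L^{\tilde q',2}\to\ell^2\to\ell^2\to L^{q,2}$ through the discrete Hilbert matrix, so it is bounded uniformly in $N$. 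The truncated kernel $\mathbf 1_{s<t}K$ keeps only $k<j$. It maps $F=N^{-1/2}\sum_k 2^k\mathbf 1_{E_k}$, which has norm $O(1)$, to $\sum_j e_jg_j$ with $e_j\approx N^{-1/2}\log j$. That output has $L^{q,2}$-norm $\approx\log N$. So the lemma you plan to quote is false. You rely on it in every Lorentz retarded case except the double endpoint, where the Lorentz refinement is vacuous.

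\textbf{How to repair it.} The fix has to use the structure of $S(t)$, not only the boundedness of the untruncated operator.

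For $(q,r)=(\tilde q,\tilde r)$ with $2<q<\infty$, no truncation lemma is needed. Minkowski and the dispersive bound give $\|\int_a^tS(t-\tau)F(\tau)\,d\tau\|_{L_x^r}\lesssim\int_{\mathbb R}|t-\tau|^{-2/q}\|F(\tau)\|_{L_x^{r'}}\,d\tau$. Your Lorentz Young step then yields $L_t^{q',2}\to L_t^{q,2}$ directly.

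For general pairs, fix $r,\tilde r$ and perturb $(q,\tilde q)$ slightly in both directions along the line $\frac1q+\frac1{\tilde q}=\mathrm{const}$. This gives two Lebesgue bounds for the truncated operator with acceptable, non-admissible pairs. These are exactly the Foschi--Vilela estimates of Lemma \ref{2: Lemma-inhomogeneous Strichartz estimates}. A small perturbation stays in their range, including when one of the pairs is the endpoint, because $\frac1q+\frac1{\tilde q}$ and $r/\tilde r$ do not change. Real interpolation $(\cdot,\cdot)_{\theta,2}$ in the time variable then gives $L_t^{\tilde q',2}L_x^{\tilde r'}\to L_t^{q,2}L_x^r$.
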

	
	\begin{lem}[Inhomogeneous Strichartz estimates, \cite{2005-Foschi,2007-Vilela}]\label{2: Lemma-inhomogeneous Strichartz estimates}
		Let $I\subset \mathbb{R}$. Suppose that $(q,r)$ and $(\wt{q}, \wt{r})$ are acceptable pairs and satisfy the scaling condition
		\begin{align*}
			\frac{2}{q} + \frac{d}{r} +\frac{2}{\wt{q}} + \frac{d}{\wt{r}} =d.
		\end{align*}
		Then, for $a\in I$, we have
		\begin{align*}
			\left\|\int_{a}^{t}S(t-\tau)F(\tau)\mathrm{d}\tau\right\|_{L_t^{q}L_x^r(I\times \mathbb{R}^d)}\lesssim \|F\|_{L_t^{\wt{q}^{\prime}} L_x^{\wt{r}^{\prime}}(I\times \mathbb{R}^d)},
		\end{align*}
		provided that $d, q, r, \wt{q}, \wt{r}$ satisfy one of the following conditions:
		\begin{itemize}
			\item $d=1$, $2\leq r, \wt{r} \leq \infty$; 
			\item $d=2$, $2\leq r, \wt{r} < \infty$;
			\item $d\geq 3$, $\frac{1}{q} + \frac{1}{\wt{q}}<1$, $\frac{d-2}{d} \leq \frac{r}{\wt{r}} \leq \frac{d}{d-2}$.
		\end{itemize}
	\end{lem}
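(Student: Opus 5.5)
This lemma is quoted from \cite{2005-Foschi,2007-Vilela} and is not proved in the paper. If we were to reprove it, we would follow the Keel--Tao bilinear scheme of \cite{1998-Keel-Tao} as adapted by Foschi. By duality, the estimate is equivalent to the bilinear bound
\begin{align*}
	|T(F,G)|:=\Big|\iint_{\tau<t}\langle S(t-\tau)F(\tau),G(t)\rangle\,\mathrm{d}\tau\,\mathrm{d}t\Big|\lesssim \|F\|_{L_t^{\wt q'}L_x^{\wt r'}}\|G\|_{L_t^{q'}L_x^{r'}} .
\end{align*}
Restricting to $a\le\tau<t$, or to $t<\tau\le a$, only introduces a harmless truncation. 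We would decompose the time region dyadically in the separation, $T=\sum_{j\in\mathbb Z}T_j$ with $|t-\tau|\sim 2^j$. By scaling, it suffices to bound $T_0$ and then rescale.

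For a single $T_j$ we would use two families of estimates. The first is the dispersive bound $\|S(t-\tau)\|_{L^{\rho'}\to L^{\rho}}\lesssim|t-\tau|^{-d(\frac12-\frac1\rho)}$ together with H\"older and Young in time, which is legitimate on the annulus $|t-\tau|\sim2^j$. The second is the energy-type bound obtained by localizing $F$ and $G$ to time intervals of length $\sim2^j$ that are at distance $\sim 2^j$. Writing $S(t-\tau)=S(t)S(-\tau)$ and applying the homogeneous Strichartz estimates to each factor gives $|T_j(F,G)|\lesssim 2^{j\beta(q,r,\wt q,\wt r)}\|F\|\|G\|$ with an explicit exponent $\beta$. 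Interpolating these (bilinear real interpolation, as in Keel--Tao) gives bounds for $T_j$ on a range of $(r,\wt r)$ which are no longer tied to admissible pairs. The exponent $\beta$ vanishes exactly under the scaling condition $\frac2q+\frac dr+\frac2{\wt q}+\frac d{\wt r}=d$. The acceptability condition $\frac1q<d(\frac12-\frac1r)$ ensures that $\beta$, viewed as a function of the interpolation parameters, changes sign. Hence the sum over $j$ converges after a second bilinear interpolation, which upgrades $\ell^\infty_j$ bounds with opposite-signed exponents to an $\ell^1_j$ bound. The Lorentz refinements of Lemma~\ref{2: Lemma-Lorentz space inequality} handle the borderline cases.

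The main obstacle is the dimension-dependent restriction $\frac{d-2}{d}\le\frac r{\wt r}\le\frac d{d-2}$ together with $\frac1q+\frac1{\wt q}<1$ for $d\ge3$. These conditions come from the requirement that the interpolation between the dispersive and energy regimes stays within the region where both endpoint estimates are valid. In that region the $L^{\wt r'}\to L^{r}$ dispersive decay $|t-\tau|^{-d(1-\frac1r-\frac1{\wt r})}$ must be balanced against the $L^2$-based energy bounds on each side. Verifying that the full claimed region is covered would be the delicate bookkeeping step; the first thing to check is the vertices of the region. For $d=1,2$ there is more room, and the same argument should give the full stated range except at the $r=\infty$ endpoint when $d=2$, which is excluded.
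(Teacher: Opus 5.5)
Your proposal matches the paper's treatment: the paper gives no proof of this lemma and simply quotes it from Foschi and Vilela, and your outline (duality, dyadic splitting in $|t-\tau|\sim 2^j$, dispersive and energy/homogeneous-Strichartz bounds on each piece with $\beta=0$ exactly on the scaling line, then bilinear real interpolation to sum the $\ell^\infty_j$ bounds) is the Keel--Tao-type argument by which Foschi proves it. When you write it out, do the energy step on Whitney squares $I\times J$ with $|I|=|J|\sim\mathrm{dist}(I,J)\sim 2^j$, so that the pairing genuinely factorizes as $\langle\int_I S(-\tau)F(\tau)\,\mathrm{d}\tau,\int_J S(-t)G(t)\,\mathrm{d}t\rangle$; and note that the condition $\frac{d-2}{d}\le\frac r{\wt r}\le\frac d{d-2}$ is exactly the convex hull, in the $(\frac1r,\frac1{\wt r})$-plane, of the dispersive diagonal $r=\wt r$ and the admissible square $[\frac{d-2}{2d},\frac12]^2$, which settles the spatial part of the bookkeeping you left open.
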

	
	\begin{lem}[Derivatives of differences, \cite{2011-Killip-Visan}]\label{2: Lemma-H^s,0<s<1}
		Set $F(z)=|z|^pz$, with $p >0$. Let $s\in (0,1)$ and $ 1<r_1, r_2, r_3 < \infty$ satisfy $\frac{1}{r_1} = \frac{1}{r_2} + \frac{1}{r_3}$. Then we have 
		\begin{align*}
			\||\nabla|^s(F(u)-F(v))\|_{L^{r_1}} \lesssim \|u\|_{L^{pr_2}}^p \||\nabla|^s (u-v)\|_{L^{r_3}}
			+ \||\nabla|^sv\|_{L^{r_3}}\|u-v\|^p_{L^{pr_2}}.
		\end{align*}
	\end{lem}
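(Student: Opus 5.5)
This estimate is cited from \cite{2011-Killip-Visan}, so it suffices to indicate how it is proved. Write the difference as a first-order Taylor expansion,
\begin{align*}
F(u)-F(v)=\int_0^1 \Big[F_z(w_\theta)(u-v)+F_{\bar z}(w_\theta)\overline{(u-v)}\Big]\,\mathrm{d}\theta, \qquad w_\theta:=v+\theta(u-v).
\end{align*}
Here $F_z$ and $F_{\bar z}$ are homogeneous of degree $p$ and H\"older continuous of order $\min\{p,1\}$. It is cleanest to keep $s\in(0,1)$ and work directly with the Gagliardo-type characterization of $\||\nabla|^s f\|_{L^{r_1}}$ through first differences. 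By the Strichartz-type square function characterization, for $0<s<1$ and $1<r<\infty$,
\begin{align*}
\||\nabla|^s f\|_{L^r}\sim\Big\|\Big(\int_0^\infty\Big|\int_{|y|<1}|f(x+ty)-f(x)|\,\mathrm{d}y\Big|^2\frac{\mathrm{d}t}{t^{1+2s}}\Big)^{1/2}\Big\|_{L^r_x}.
\end{align*}
This reduces everything to pointwise bounds on $G(x+h)-G(x)$, where $G:=F(u)-F(v)$.

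The key pointwise step is an elementary algebraic inequality for $F(z)=|z|^pz$. Set $a=u(x+h)$, $b=v(x+h)$, $c=u(x)$, $e=v(x)$. One wants
\begin{align*}
|(F(a)-F(b))-(F(c)-F(e))|\lesssim (|a|+|c|)^p\,|(a-b)-(c-e)|+|b-e|\,\big(|a-b|+|c-e|\big)^p .
\end{align*}
Only the two terms on the right are needed, which matches the structure of the claimed estimate. To prove it, write $F(a)-F(b)-F(c)+F(e)$ as
\begin{align*}
\big[F(a)-F(a-(c-e))\big]-\big[F(c)-F(e)\big]+\big[F(a-(c-e))-F(b)\big]
\end{align*}
and use $|F'(z)|\lesssim|z|^p$. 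The first two brackets both have increment $c-e$; compare them via the H\"older continuity of $F'$ of order $\min(p,1)$, measured in the variable $b-e$. The last bracket is controlled by $|(a-b)-(c-e)|$ times $(|a|+|c|)^p$. One also has to exchange the roles of $u$ and $v$ symmetrically so that the $p$-power falls on $u$ in the first term and on $u-v$ in the second. This case analysis, for $p<1$ versus $p\ge1$, is the main obstacle: for $p<1$ the H\"older exponent of $F'$ is exactly $p$, and one must check that $|b-e|\cdot|\text{increment}|^p$ is the right shape without losing derivatives.

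Given the pointwise bound, substitute it into the square function. The first term is bounded by $M(|u|^p)(x)$ times the difference square function of $u-v$, where $M$ denotes the Hardy--Littlewood maximal function. The second term is bounded by the difference square function of $v$ times $M(|u-v|^p)(x)$. Apply H\"older with $\frac{1}{r_1}=\frac{1}{r_2}+\frac{1}{r_3}$ and the boundedness of $M$ on $L^{r_2}$ (valid since $r_2>1$). Then $\|M(|u|^p)\|_{L^{r_2}}\lesssim\|u\|_{L^{pr_2}}^p$ and $\|M(|u-v|^p)\|_{L^{r_2}}\lesssim\|u-v\|_{L^{pr_2}}^p$. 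Finally, convert the square functions back to $\||\nabla|^s(u-v)\|_{L^{r_3}}$ and $\||\nabla|^sv\|_{L^{r_3}}$ using the equivalence above. This yields the lemma.
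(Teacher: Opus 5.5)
The paper gives no proof of this lemma; it quotes it from Killip--Visan. Your sketch therefore has to stand on its own, and it has a genuine gap at the step where the maximal function appears. After you insert your pointwise bound into the Strichartz functional, the first term produces $\int_{|y|<1}|u(x+ty)|^p\,|(u-v)(x+ty)-(u-v)(x)|\,\mathrm{d}y$. This is \emph{not} dominated by $\mathcal{M}(|u|^p)(x)\int_{|y|<1}|(u-v)(x+ty)-(u-v)(x)|\,\mathrm{d}y$, because an average of a product cannot be split into a maximal function times an average. To see this, let both factors be the indicator of the same ball of radius $\delta$ at distance $\sim t$ from $x$: the two sides are then $\sim\delta^d$ and $\sim\delta^{2d}$. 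Only the unshifted weight $|u(x)|^p$ factors out. The shifted weights $|u(x+ty)|^p$ and $|(u-v)(x+ty)|^p$ are exactly the difficulty in fractional chain rules, and they need an extra idea.

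One route that works in the full range $1<r_1,r_2,r_3<\infty$:
\begin{itemize}
\item Pass to the Littlewood--Paley square function and write $P_NG(x)=\int\check\psi_N(y)[G(x-y)-G(x)]\,\mathrm{d}y$.
\item Decompose the difference factor into pieces $P_M(u-v)$ (resp.\ $P_Mv$).
\item For $M\le N$, bound $|P_M(u-v)(x-y)-P_M(u-v)(x)|$ by $M|y|\langle My\rangle^{K}$ times a Peetre-type maximal function at $x$. This no longer depends on $y$, so $\mathcal{M}(|u|^p)(x)$ can legitimately be pulled out, and you gain $(M/N)^{1-s}$.
\item For $M>N$, bound the shifted contribution by $\mathcal{M}\big(|P_M(u-v)|\,|u|^p\big)(x)$ and gain $(N/M)^s$.
\item Conclude with the Fefferman--Stein vector-valued maximal inequality and H\"older.
\end{itemize}

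The pointwise inequality also needs repair. For $0<p\le1$ it is true, but your three-bracket decomposition does not prove it. The first two brackets have base points differing by $a-c$, not $b-e$. Moreover, bounding each bracket in absolute value loses a cancellation: for $a=\delta$, $c=-\delta$, $b=e=R\gg1$ the last bracket is $\approx2(p+1)\delta R^p$, while your right-hand side is $\sim\delta^{1+p}$. A split that works is $[F(a)-F(c+b-e)]+[F(b+(c-e))-F(b)-F(e+(c-e))+F(e)]$. The second bracket is $\lesssim|b-e|\,|c-e|^p$ by the $p$-H\"older continuity of $F_z,F_{\bar z}$. The first is $\lesssim|(a-b)-(c-e)|\,(|a|^p+|c+b-e|^p)$, and the mixed point $c+b-e$ is removed by a short case analysis.

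For $p>1$, however, no case analysis can succeed. Take real $a-b=c-e=\epsilon$, $b=1$, $e=\frac12$: the left side is $\approx(p+1)(1-2^{-p})\epsilon$, while the right side is $2^{p-1}\epsilon^p$. In fact the lemma as stated here fails for $p>1$. Let $v=\psi(x)\cos(Nx_1)$ be real and $u=v+\epsilon\phi$ with $\phi\equiv1$ on $\mathrm{supp}\,\psi$. Then $F(u)-F(v)\approx\epsilon\phi F'(v)$, so the left side is $\gtrsim\epsilon N^s$, while the right side is $O(\epsilon+\epsilon^pN^s)$. Both the statement and your argument must therefore be restricted to $0<p\le1$.
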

	
	\begin{lem}[Radial Strichartz estimates, \cite{2014-Guo-Wang,2012-Ke}]\label{RSE:radial strichartz estimate}
		Let $d\geq 3$, $\gamma \in \mathbb{R}$, $I\subset\mathbb{R}$, and let $u_0$, $F(t)$ be radial functions for all $t\in I$. Suppose that $(q,r)$ and $(\wt{q}, \wt{r})$ are radial-admissible pairs and satisfy the ``gap'' condition
		\begin{align*}
			\frac{2}{q}+\frac{d}{r}=\frac{d}{2}-\gamma; \quad \frac{2}{\wt{q}}+\frac{d}{\wt{r}}=\frac{d}{2}+\gamma.
		\end{align*}
		Then, for $a\in I$, we have
		\begin{align*}
			\|S(t)u_0\|_{L_t^{\infty}\dot{H}_x^{\gamma}\cap L_t^{q}L_x^{r}(I\times \mathbb{R}^d)} &\lesssim \|u_0\|_{\dot{H}^{\gamma}(\mathbb{R}^d)}, \\
			\left\| \int_{a}^{t}S(t-\tau)F(\tau)\mathrm{d}\tau \right\|_{L_t^{\infty}\dot{H}_x^{\gamma}\cap L_t^{q}L_x^{r}(I\times \mathbb{R}^d)} &\lesssim \|F\|_{L_t^{\wt{q}^{\prime}}L_x^{\wt{r}^{\prime}}(I\times \mathbb{R}^d)}.
		\end{align*} 
	\end{lem}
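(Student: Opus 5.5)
Since this lemma is quoted from \cite{2014-Guo-Wang,2012-Ke}, I only outline how I would prove it. The plan is to reduce to frequency-localized pieces, prove an improved dispersive-type estimate for radial data there, run a $TT^*$ argument, and then reassemble the pieces using the gap condition.

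\emph{Frequency-localized dispersive bound.} Let $P_N$ be Littlewood--Paley projections. The gap condition $\frac{2}{q}+\frac{d}{r}=\frac d2-\gamma$ is exactly the condition that the homogeneous estimate is invariant under the scaling $f\mapsto f(\lambda\cdot)$. Hence it suffices to prove, for radial $f$,
\begin{align*}
\|S(t)P_1 f\|_{L_t^qL_x^r(\R\times\R^d)}\lesssim \|f\|_{L_x^2},
\end{align*}
and to rescale to $\|S(t)P_Nf\|_{L_t^qL_x^r}\lesssim N^{\gamma}\|P_Nf\|_{L^2}$. For the frequency-one piece I would use the kernel of $S(t)P_1$ restricted to radial functions, written with Bessel functions $J_{\frac{d-2}{2}}(|x||\xi|)$. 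Their asymptotics give a gain of angular decay, in the form of an operator bound $L_x^{r'}\to L_x^{r}$ of size $\langle t\rangle^{-(d-1)(\frac12-\frac1r)}$ for $r$ in a suitable range. The exponent $(d-1)$ here, which replaces the usual $d$, is what produces the radial-admissible line $\frac2q+\frac{2d-1}{r}\le\frac{2d-1}{2}$. I take this bound as the key input from \cite{2014-Guo-Wang,2012-Ke}.

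\emph{$TT^*$ argument.} Interpolating the dispersive bound with the energy bound $\|S(t)P_1f\|_{L^2}\le\|f\|_{L^2}$ and applying Hardy--Littlewood--Sobolev in time (or plain integrability of $\langle t\rangle^{-\sigma}$ in the non-sharp range) gives the $TT^*$ estimate. This yields the homogeneous bound for all radial-admissible $(q,r)$ away from the excluded endpoint $(2,\frac{4d-2}{2d-3})$. Near that endpoint I would use the Keel--Tao bilinear interpolation trick \cite{1998-Keel-Tao} instead.

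\emph{Summation over frequencies.} To sum over $N$ when $r\ge2$, I use the square function estimate $\|u\|_{L^r}\lesssim\|(\sum_N|P_Nu|^2)^{1/2}\|_{L^r}$ together with Minkowski's inequality, since $q,r\ge2$. This gives $\|S(t)u_0\|_{L^q_tL^r_x}\lesssim (\sum_N N^{2\gamma}\|P_Nu_0\|_{L^2}^2)^{1/2}\sim\|u_0\|_{\dot H^\gamma}$. The bound in $L_t^\infty\dot H_x^\gamma$ is trivial by unitarity.

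\emph{Inhomogeneous estimate.} Dualizing the homogeneous estimate for $(\wt q,\wt r)$ with regularity $-\gamma$, which is consistent with the second gap condition, gives
\begin{align*}
\Big\|\int_{\R}S(-\tau)F(\tau)\,\mathrm{d}\tau\Big\|_{\dot H^{\gamma}}\lesssim\|F\|_{L_t^{\wt q'}L_x^{\wt r'}}.
\end{align*}
Composing this with the homogeneous estimate gives the untruncated operator $\int_{\R}S(t-\tau)F\,\mathrm{d}\tau$. The retarded version $\int_a^t$ then follows from the Christ--Kiselev lemma whenever $\wt q'<q$, and this fails only when $q=\wt q=2$.

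\emph{Main obstacle.} I expect the main obstacle to be the double-endpoint case $q=\wt q=2$. There Christ--Kiselev is unavailable, and one has to redo a Keel--Tao style bilinear decomposition for the retarded operator using the radial dispersive bound. Checking that the permitted range of $r$ and $\wt r$ is compatible with the radial-admissible condition is the delicate point. For the other cases, I would first try to avoid the problem by appealing directly to the estimates in \cite{2014-Guo-Wang}.
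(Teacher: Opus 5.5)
The paper does not prove this lemma; it imports it from Guo--Wang and Ke. Judged on whether your outline would actually prove it, there is a genuine gap in the step that carries all the content, namely the frequency-localized homogeneous estimate.

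Suppose the only input to $TT^*$ is a fixed-time bound $\|S(t)P_1\|_{L^{r'}\to L^r}\lesssim\langle t\rangle^{-\alpha}$. Then Young, HLS or Keel--Tao give $L^q_tL^r_x$ precisely when $\frac{2}{q}\le\alpha$. With your $\alpha=(d-1)(\frac12-\frac1r)$ this is $\frac2q+\frac{d-1}{r}\le\frac{d-1}{2}$, the wave-admissible range. That is strictly smaller than the ordinary range $\frac2q+\frac dr\le\frac d2$: your bound is only a weakening of the standard decay $\langle t\rangle^{-d(\frac12-\frac1r)}$, not a radial gain.

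Reaching $\frac2q+\frac{2d-1}{r}\le\frac{2d-1}{2}$ this way would require $\alpha=(2d-1)(\frac12-\frac1r)$, and that is false. For the radial Schwartz function $f=\mathscr{F}^{-1}[\chi(|\xi|)]$, stationary phase gives $|S(t)f(x)|\sim|t|^{-d/2}$ on $|x|\sim|t|$, hence $\|S(t)f\|_{L^r}\sim|t|^{-d(\frac12-\frac1r)}$. Radial symmetry does not improve fixed-time dispersion at all. So any argument that passes through $\|S(t-s)P_1F(s)\|_{L^r}\le A(t-s)\|F(s)\|_{L^{r'}}$ is capped at the non-radial range, and your inhomogeneous step inherits the defect.

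What is missing is that the radial gain is a spatially localized spacetime effect. Consider the radial kernel $K_t(\rho,\rho')=c\int e^{-\frac{i}{2}t\lambda^2}\chi(\lambda)\Phi(\rho\lambda)\Phi(\rho'\lambda)\lambda^{d-1}\,\mathrm{d}\lambda$, where $\Phi(s)=s^{-\frac{d-2}{2}}J_{\frac{d-2}{2}}(s)=O(\langle s\rangle^{-\frac{d-1}{2}})$. Bessel asymptotics plus van der Corput give $|K_t(\rho,\rho')|\lesssim\langle t\rangle^{-\frac12}\langle\rho\rangle^{-\frac{d-1}{2}}\langle\rho'\rangle^{-\frac{d-1}{2}}$. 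Non-stationary phase makes $K_t$ negligible unless $\max(\rho,\rho')\gtrsim|t|$.

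Only where $\rho,\rho'\gtrsim|t|$ does this give the rate $|t|^{-\frac{2d-1}{2}}$. That rate, interpolated with $L^2$ and put into $TT^*$, matches exactly the line $\frac2q\le(2d-1)(\frac12-\frac1r)$. When the input or output sits near the origin only $|t|^{-d/2}$ is available. That contribution has to be controlled by a separate weighted or localized argument, for instance a dyadic decomposition in $|x|$ with Schur-type bounds, or a reduction to a weighted one-dimensional estimate. This is the real content of the cited works, and your proposal neither states nor replaces it.

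The remaining steps are standard once that estimate is available: scaling, Littlewood--Paley summation, duality, and Christ--Kiselev when $\frac1q+\frac1{\wt q}<1$. Note that the square-function summation needs $r<\infty$, so $r=\infty$ requires a different summation argument. You correctly isolate $q=\wt q=2$ as the case needing a Keel--Tao-type argument.
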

	
	\vspace{2cm}
	
	\section{Local results}
	
	\vspace{0.5cm}

	\subsection{Local well-posedness}\label{section-lwp}
	Now, we give the proof of Theorem \ref{thm:local-s-order-initialdata} (1). First, note that the critical case $p=\frac{4}{d+2s}$ has already been proved by Masaki \cite{2015-Masaki}; hence, it suffices to prove the local well-posedness in the subcritical range  $\frac{4s}{d+2s}<p< \frac{4}{d+2s}$. The precise argument is given as follows:
	\begin{prop}[Local well-posedness in the subcritical case]\label{prop:lwp-subcritical}
		Assume that $1\le d\le6$, $0<s<\min \{1, \frac{d}{2}\}$, and $u_0\in\F\dot H_x^s(\R^d)$. Let $\frac{4s}{d+2s}<p < \frac{4}{d+2s}$. Then there exists some $T = T(\norm{u_0}_{\F\dot H_x^s(\R^d)})> 0$ such that \eqref{NLS} admits a unique solution $u$ with $e^{-\frac12it\De}u\in C([0,T];\F\dot H_x^s(\R^d))$.
	\end{prop}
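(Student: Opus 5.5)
Our plan is to solve the Duhamel equation
\begin{align*}
u(t)=S(t)u_0-i\int_0^t S(t-\tau)\big(|u|^pu\big)(\tau)\,\mathrm{d}\tau
\end{align*}
by a contraction argument. All estimates are expressed through the fractional vector field $|J(t)|^s=M(t)|t\nabla|^sM(-t)$ from \eqref{2-|J(t)|^s}. Since $\|S(-t)u(t)\|_{\F\dot H_x^s}=\||J(t)|^su(t)\|_{L_x^2}$ and $|J(t)|^s$ commutes with the linear flow, i.e. $|J(t)|^sS(t-\tau)=S(t-\tau)|J(\tau)|^s$, the Strichartz estimates apply directly to $|J(t)|^su$.

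We work in the space
\begin{align*}
X_T=\Big\{u:\ \||J(t)|^su\|_{L_t^\infty L_x^2\cap L_t^{q}L_x^{r}([0,T]\times\R^d)}\le 2C\|u_0\|_{\F\dot H_x^s}\Big\},
\end{align*}
with one admissible pair $(q,r)$ chosen below, and we use the metric $\|u-v\|_{L_t^qL_x^r}$ (or its $|J|^s$-analogue). The nonlinear estimate runs as follows. Since $|z|^pz$ is gauge invariant, $M(-t)\,|u|^pu=|w|^pw$ with $w=M(-t)u$. Hence
\begin{align*}
|J(t)|^s(|u|^pu)=t^sM(t)|\nabla|^s(|w|^pw).
\end{align*}
Lemma \ref{2: Lemma-fractional chain rule 2} (or Lemma \ref{2: Lemma-H^s,0<s<1} for differences) then gives
\begin{align*}
\big\||J(t)|^s(|u|^pu)\big\|_{L_x^{\wt r'}}\lesssim \|u\|_{L_x^{\rho}}^p\,\||J(t)|^su\|_{L_x^{r}}.
\end{align*}
To bound $\|u\|_{L^\rho_x}$ we use the weighted Sobolev (Gagliardo--Nirenberg) inequality in the $J$-form:
\begin{align*}
\|u(t)\|_{L_x^\rho}=\|w\|_{L^\rho}\lesssim \||\nabla|^sw\|_{L^r}=|t|^{-s}\||J(t)|^su\|_{L_x^r},\qquad \frac1\rho=\frac1r-\frac sd.
\end{align*}
The negative power $|t|^{-sp}$ then appears in the time integral. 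After Hölder in time we obtain
\begin{align*}
\big\||J|^s(|u|^pu)\big\|_{L_t^{\wt q'}L_x^{\wt r'}}\lesssim \big\||t|^{-sp}\big\|_{L_t^{\beta}([0,T])}\,\||J|^su\|_{L_t^qL_x^r}^{p+1}.
\end{align*}
The exponent bookkeeping fixes $\beta$. In the critical case $p=\frac4{d+2s}$ one has $|t|^{-sp}\in L^{\beta,\infty}$ exactly, which is the scaling identity, and this is Masaki's case. For $p<\frac4{d+2s}$ we instead get $1/\beta>sp$ strictly, so $|t|^{-sp}\in L^\beta([0,T])$ with norm $T^{1/\beta-sp}$. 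This positive power of $T$ makes the map a contraction for small $T$ depending only on $\|u_0\|_{\F\dot H^s}$.

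The lower bound $p>\frac{4s}{d+2s}$ ensures that one can choose the exponents so that $\beta\ge1$ is compatible, meaning that $sp\beta<1$ is achievable with admissible $(q,r)$ and $(\wt q,\wt r)$ and $0<s<\min\{1,d/2\}$, $\rho<\infty$. Equivalently, $p>\frac{4s}{d+2s}$ is the condition that the singularity at $t=0$ is integrable. The dimension restriction $d\le6$ enters through the requirement that the fractional chain rule needs $p$-dependent integrability together with $r<\infty$ and $\wt r'>1$. Where ordinary Strichartz is too rigid, we plan to use Lemma \ref{2: Lemma-inhomogeneous Strichartz estimates} with acceptable pairs, and we switch to Lorentz time norms (Lemma \ref{2: Lemma-Lorentz space inequality}) near the endpoints.

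We expect the main obstacle to be this exponent selection: finding $(q,r)$, $(\wt q,\wt r)$ and $\rho$ that simultaneously satisfy admissibility (or acceptability plus the Foschi--Vilela conditions), the chain-rule constraints, and strict integrability of $t^{-sp}$, across the whole range $\frac{4s}{d+2s}<p<\frac4{d+2s}$. We would first try the symmetric choice $(\wt q,\wt r)=(q,r)$ and solve for $r$ from $p$. Continuity of $t\mapsto S(-t)u(t)$ in $\F\dot H^s_x$ and uniqueness then follow from the same estimates in the standard way.
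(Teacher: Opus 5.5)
Your scheme works for $p\ge 1$. There it is the paper's argument written for $u$ instead of $\U=\TT u$: the symmetric choice gives $r=\frac{d(p+2)}{d+sp}$ and $q=\frac{4(p+2)}{(d-2s)p}$. Your $\frac1\beta=1-\frac{(d-2s)p}{4}$ is the paper's $\frac1{q_1}$, and your $T^{\frac{4-(d+2s)p}{4}}$ is its $T_0^{\frac{(d+2s)p-4}{4}}$. Differences close because $d(u,v)^p\lesssim_R d(u,v)$ on the ball.

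The gap is the case $p<1$. This is not a corner case: for $d\ge4$ one has $\frac{4}{d+2s}<1$, so every $p$ in the statement is below $1$. Lemma \ref{2: Lemma-H^s,0<s<1}, applied to $M(-t)u$ and $M(-t)v$, leaves the term $\||J(t)|^sv\|_{L^r_x}\|u-v\|_{L^\rho_x}^p$. In the $|J|^s$-metric you therefore only get $d(\Phi u,\Phi v)\lesssim T^{\frac{4-(d+2s)p}{4}}\big(R^p d(u,v)+R\,d(u,v)^p\big)$, which is not a contraction when $p<1$.

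Your other option, the derivative-free metric $\|u-v\|_{L^q_tL^r_x}$ with $(q,r)$ admissible, does not fix this. Since $u_0\in\F\dot H^s_x$ need not lie in $L^2_x$, this metric can be infinite on $X_T$. On the affine set $S(t)u_0+L^q_tL^r_x$ you would need the Duhamel term itself at the $L^2$ level. With all $p+1$ factors placed in $|t|^{-s}L^\rho_x$, the time exponent becomes $1-\frac{pd}{4}-\frac{(p+1)s}{2}$, which is positive only for $p<\frac{4-2s}{d+2s}$. Rescaling concentrated data indicates this is a genuine obstruction, not a bad choice of exponents. So whenever $d+4s>4$ (in particular for all $d\ge4$), neither of your metrics covers $\max\{\frac{4-2s}{d+2s},\frac{4s}{d+2s}\}<p<\min\{1,\frac{4}{d+2s}\}$.

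The missing idea is to measure differences at an intermediate regularity. On the $\U$-side, the paper uses the metric $\||\nabla|^\theta(\U-\V)\|_{L^{q_0}_tL^{r_0}_x}$, which on your side would be $|J(t)|^\theta$. Here $\theta\in\big(s-\frac{(d-2s)p}{4},\,sp\big)$, and the pairs are those of Definition \ref{defn:lwp-parameters}: acceptable but not admissible, with gaps $\mp(s-\theta)$ as in \eqref{eq:lwp-parameters-scaling}. The scheme then runs as follows:
\begin{itemize}
\item By Sobolev, this metric is finite on the ball $\||\nabla|^s\U\|_{L^{q_0}_tL^{\wt r_0}_x}\le 2R$.
\item The Duhamel difference is controlled by the Foschi--Vilela estimate, Lemma \ref{2: Lemma-inhomogeneous Strichartz estimates}.
\item One writes $F(u)-F(v)$ as in \eqref{eq:FzFw}. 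The factors $F_z,F_{\bar z}$ are only H\"older of order $p$, and are handled by Lemma \ref{2: Lemma-Holder continuous}, which needs $\theta<sp$.
\end{itemize}
Acceptability of $(\wt q,\wt r)$ forces $\theta>s-\frac{(d-2s)p}{4}$. The window for $\theta$ is nonempty exactly when $p>\frac{4s}{d+2s}$, and that is the real role of the lower bound.

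Your explanation of the lower bound is incorrect. By your own bookkeeping, $sp\beta<1$ is equivalent to $p<\frac{4}{d+2s}$, and no lower bound on $p$ enters anywhere in your admissible-pair scheme. That is itself a sign that the $p<1$ difference estimate has been skipped. Likewise, $d\le6$ does not come from the fractional chain rule. It enters through the ratio condition $\frac{d-2}{d}\le\frac{r_0}{\wt r}\le\frac{d}{d-2}$ and the positivity checks in the remark after Definition \ref{defn:lwp-parameters}, all in the $p<1$ regime.
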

	We apply the pseudo conformal transform $\mathcal{T}$ to \eqref{NLS}, and then $\mathcal{U}$ satisfies \eqref{CNLS-I}
	%	\begin{align}\label{d-CNLS}
		%		i \pd_t \mathcal{U}+\frac{1}{2}\Delta \mathcal{U}=t^{\frac{dp}{2}-2}|\mathcal{U}|^p \mathcal{U},
		%	\end{align}
	with the asymptotic condition 
	\begin{align*}
		\lim_{t\rightarrow +\infty}\|S(-t)\mathcal{U}(t)- \mathcal{U}_+ \|_{\dot{H}_x^s(\mathbb{R}^d)}=0,
	\end{align*}
	where $$\mathcal{U}_+:=\mathcal{F}^{-1}\wb{u}_0\in \dot{H}_x^s(\mathbb{R}^d).$$
	Then, we reduce Proposition \ref{prop:lwp-subcritical} to the related result after applying the pseudo conformal transform:
	\begin{prop}\label{prop:lwp-subcritical-pseudo conformal transform}
		Let the assumptions in Proposition \ref{prop:lwp-subcritical} hold and $\mathcal{U}_{+}\in \dot{H}_x^s(\mathbb{R}^d)$. Then there exists some $T_0=T_0(\|\mathcal{U}_+\|_{\dot{H}_x^s(\R^d)})>0$ such that \eqref{CNLS-I} admits a unique solution $\mathcal{U}$ with $\mathcal{U} \in C([T_0,\infty);\dot{H}_x^s(\R^d))$.
	\end{prop}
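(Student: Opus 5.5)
We solve the final-state problem for \eqref{CNLS-I} by a contraction argument on $[T_0,\infty)$ applied to the Duhamel formulation from infinity:
\begin{align*}
\Phi(\U)(t)=S(t)\U_+ + i\int_t^{\infty} S(t-\tau)\big(\tau^{\frac{dp}{2}-2}|\U|^p\U\big)(\tau)\,\mathrm{d}\tau .
\end{align*}
The key observation is that in the regime $p<\frac{4}{d+2s}$ we have $\frac{dp}{2}-2<-sp<0$. Hence the time weight $\tau^{\frac{dp}{2}-2}$ decays at infinity, and it decays faster than what scaling in $\dot H_x^s$ requires. This extra decay should make the Duhamel term small on $[T_0,\infty)$ once $T_0$ is large, with $T_0$ depending only on $\|\U_+\|_{\dot H^s_x}$. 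The profile-dependent smallness of critical theories is not needed.

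\textbf{Choice of spaces.} Work in
\begin{align*}
X=\big\{\U:\ \|\U\|_{L_t^\infty\dot H^s_x}+\||\nabla|^s\U\|_{L_t^{q}L_x^{r}}\le 2C\|\U_+\|_{\dot H^s_x}\big\}
\end{align*}
on $[T_0,\infty)$, with $(q,r)$ admissible. The metric is $d(\U,\mathcal V)=\||\nabla|^s(\U-\mathcal V)\|_{L^q_tL^r_x}$, or an $L^2$-level Strichartz norm if the difference estimate is more convenient there; completeness then follows from weak-$*$ lower semicontinuity.

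\textbf{Nonlinear estimate.} By Strichartz and the fractional chain rule (Lemma \ref{2: Lemma-fractional chain rule 2}), which applies since $p>0$ and $s<1$, we bound
\begin{align*}
\Big\||\nabla|^s\!\int_t^\infty\!\cdots\Big\|\lesssim \big\|\tau^{\frac{dp}{2}-2}\|\U\|_{L^{r_1}_x}^p\||\nabla|^s\U\|_{L^{r_2}_x}\big\|_{L^{\wt q'}_\tau}.
\end{align*}
Then Sobolev embedding $\dot W^{s,r_2}\subset L^{r_1}$ or $\dot H^s\subset L^{\frac{2d}{d-2s}}$ controls $\|\U\|_{L^{r_1}_x}$ by $\||\nabla|^s\U\|$, and H\"older in time places $\tau^{\frac{dp}{2}-2}$ in some $L^{\rho}_\tau([T_0,\infty))$. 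That norm is finite and equals $c\,T_0^{\frac{dp}{2}-2+\frac1\rho}$, which is a negative power of $T_0$. A scaling count shows the exponent $\frac{dp}{2}-2+\frac1\rho$ equals $-(\frac{4}{d+2s}-p)\cdot\frac{d+2s}{2}\cdot(\text{positive})$, so it is negative exactly in the subcritical range. We therefore obtain $\|\Phi(\U)\|_X\le C\|\U_+\|+CT_0^{-\delta}\|\U\|_X^{p+1}$ with $\delta>0$.

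\textbf{Difference estimate.} For differences we use Lemma \ref{2: Lemma-H^s,0<s<1}, or for $p<1$ an $L^2$-level estimate with $\big||u|^pu-|v|^pv\big|\lesssim(|u|^p+|v|^p)|u-v|$. This gives a contraction for $T_0$ large depending only on $\|\U_+\|_{\dot H^s}$. Uniqueness and continuity in $\dot H^s$ follow in the standard way.

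\textbf{Main obstacle.} The hard step is choosing exponents. We need $(q,r)$ and $(\wt q,\wt r)$ admissible, or acceptable via the inhomogeneous estimates of Lemma \ref{2: Lemma-inhomogeneous Strichartz estimates}, together with Sobolev indices such that three things hold at once. First, all H\"older and Sobolev relations close. Second, the lower bound $p>\frac{4s}{d+2s}$ is respected, since it is what makes $\|\U\|^p_{L^{r_1}}$ controllable when $p$ is small. Third, the exponents stay within $d\le6$, so that $r_1,r_2\in(1,\infty)$ and the chain rule applies. I would first try $r_1=\frac{dr}{d-sr}$ with $\wt q=q$ and $\wt r=r$, and pick $r$ to satisfy $\frac{p}{r_1}+\frac1r=\frac1{\wt r'}$. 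When this forces a non-admissible pair, I would fall back on the acceptable pairs of Lemma \ref{2: Lemma-inhomogeneous Strichartz estimates}.
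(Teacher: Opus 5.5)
Your a priori bound, and your contraction for $p\ge 1$, are exactly the paper's argument. For $p\ge 1$ that means the metric $\||\nabla|^s(\mathcal{U}-\mathcal{V})\|$ with Lemma \ref{2: Lemma-H^s,0<s<1}, the choice $\wt q=q_0=\frac{4(p+2)}{(d-2s)p}$, and the gain $T_0^{\frac{(d+2s)p-4}{4}}$.

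The gap is the contraction for $0<p<1$. This case always occurs, because $\frac{4s}{d+2s}<1$. There Lemma \ref{2: Lemma-H^s,0<s<1} only produces a term $\||\nabla|^s\mathcal{V}\|\,\|\mathcal{U}-\mathcal{V}\|^p$, which is not Lipschitz. Your fallback, an $L^2$-level metric, fails because $\mathcal{U}_+$ lies only in $\dot H^s_x$:
\begin{itemize}
\item Take $\mathcal{U}=S(t)f$ and $\mathcal{V}=S(t)g$ in your ball with $f-g\in\dot H^s_x\setminus L^2_x$. Their $L^2$-level distance is infinite.
\item Restricting to $\mathcal{U}-S(t)\mathcal{U}_+$ in $L^2$-Strichartz spaces does not rescue this. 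The ball only controls $\mathcal{U}$ in $L^b_x$ with $\frac1b\le\frac12-\frac sd$. Hence $|\mathcal{U}|^{p+1}$ reaches a dual exponent $\wt r'\le 2$ only if $p\ge\frac{2s}{d-2s}$. At this scaling the weight $\tau^{\frac{dp}{2}-2}$ is integrable only if $p<\frac{4-2s}{d+2s}$. For $d=3$, $s=\frac{9}{10}$, both conditions fail on the whole range $(\frac34,\frac56)$.
\item A derivative-free metric at $\dot H^s$-scaling, $\|\mathcal{U}-\mathcal{V}\|_{L^{q_0}_tL^{r_0}_x}$ with $\frac{2}{q_0}+\frac{d}{r_0}=\frac d2-s$, also fails. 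It forces the dual pair to satisfy $\frac{2}{\wt q}+\frac{d}{\wt r}=\frac d2+s$. H\"older then makes that pair acceptable only when $p>\frac{4s}{d-2s}$.
\end{itemize}

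The missing idea is the paper's intermediate-regularity metric
$d_2(\mathcal{U},\mathcal{V})=\||\nabla|^{\theta}(\mathcal{U}-\mathcal{V})\|_{L^{q_0}_tL^{r_0}_x}$, with $\frac{2}{q_0}+\frac{d}{r_0}=\frac d2-(s-\theta)$.
\begin{itemize}
\item By Sobolev, $d_2$ is finite on the ball $\||\nabla|^s\mathcal{U}\|_{L^{q_0}_tL^{\wt r_0}_x}\le 2R$.
\item The Duhamel difference is estimated in $d_2$ by the inhomogeneous Strichartz estimates for the non-admissible acceptable pairs of Lemma \ref{2: Lemma-inhomogeneous Strichartz estimates}.
\item The $\theta$ derivatives land on the merely H\"older-continuous factors $F_z,F_{\bar z}$ and are handled by Lemma \ref{2: Lemma-Holder continuous}.
\end{itemize}
This requires $\max\{0,s-\frac{(d-2s)p}{4}\}<\theta<sp$. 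The lower end makes $(\wt q,\wt r)$ acceptable, and the upper end is the constraint of the H\"older chain rule. The window is nonempty exactly when $p>\frac{4s}{d+2s}$. That is the true role of the lower bound on $p$. It is not needed to control $\|\mathcal{U}\|_{L^{r_1}_x}^p$, as you suggest: the a priori bound does not use it.
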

	
	Note that $\frac{4s}{d+2s}<1$, and $\frac{4}{d+2s}$ may be larger than $1$. We consider two cases: $p<1$ and $p\geq 1$. When $p\geq 1$, we are able to obtain the local well-posedness by the standard contraction mapping argument. However, the nonlinearity fails to be $C^2$ continuous when $0<p<1$, which leads to an extra difficulty. Therefore, we need some auxiliary spacetime exponents when $p<1$:
	\begin{defn}\label{defn:lwp-parameters}
		Let $0<s<\min\{1, \frac{d}{2}\}$ and $\frac{4s}{d+2s}<p < \min \{ 1, \frac{4}{d-2s} \}$. 
		\begin{enumerate}
			\item When $p\leq \frac{2}{d-2s}$, we define $\theta:=\frac{1}{2} sp +\frac{1}{2}\max \{s-\frac{(d-2s)p}{4}, 0\}$, and $(q_0, r_0)$, $(\wt{q}, \wt{r})$ are given by
			\begin{equation}\label{eq:parameters-small}
				\begin{aligned}
					\frac{1}{q_0}&=\frac{(d-2s)p+4\theta-4s}{4(p+2)}, \quad \frac{1}{r_0}=\frac{d+\theta p}{d(p+2)}, \\
					\frac{1}{\wt{q}}&=\frac{(d+2s-4\theta)p+4(s-\theta)}{4(p+2)}, \quad \frac{1}{\wt{r}}=\frac{d+\theta p}{d(p+2)}.
				\end{aligned}
			\end{equation}
			\item 
			When $p>\frac{2}{d-2s}$, we define $\theta:=\frac{1}{2}sp+\frac{1}{2}\max \{s-\frac{1}{2}, 0\}$, and $(q_0, r_0)$, $(\wt{q}, \wt{r})$ are given by
			\begin{equation}\label{eq:parameters-large}
				\begin{aligned}
					\frac{1}{q_0}&=\frac{(d-2s)p-2(s-\theta)-1}{4(p+1)}, \quad \frac{1}{r_0}=\frac{d+2\theta p+1}{2d(p+1)}, \\
					\frac{1}{\wt{q}}&=\frac{1+2(s-\theta)}{4}, \quad \frac{1}{\wt{r}}=\frac{d-1}{2d}.
				\end{aligned}
			\end{equation}
			\item 
			Let $\theta$, $(q_0, r_0)$, and $(\wt{q}, \wt{r})$ be given above. We denote
			\begin{align}\label{defn:x1y1}
				\|f\|_{X_1(I)}:=\||\nabla|^{\theta}f\|_{L_t^{q_0}L_x^{r_0}(I\times\mathbb{R}^d)},\quad \|f\|_{Y_1(I)}:=\||\nabla|^{\theta}f\|_{L_t^{\wt{q}^{\prime}}L_x^{\wt{r}^{\prime}}(I\times\mathbb{R}^d)}.
			\end{align}
			\item 
			Let $\wt{r}_0$ satisfy
			\begin{align}
				\frac{2}{q_0}+\frac{d}{\wt{r}_0}=\frac{d}{2}.
			\end{align}
		\end{enumerate}
	\end{defn}
	\begin{remark}
		Now, we show that the pairs $(q_0, r_0)$ and $(\wt{q}, \wt{r})$ defined above are acceptable in the sense of Definition \ref{defn:acceptable-admissible}, and satisfy $2 < q_0,\wt{q} < \infty$, $0<\th<sp$, and
		\EQn{\label{eq:lwp-parameters-scaling}
			\frac{2}{q_0}+\frac{d}{r_0}=\frac{d}{2}-(s-\theta); \quad \frac{2}{\wt{q}}+\frac{d}{\wt{r}}=\frac{d}{2}+(s-\theta).
		} 
		First note that \eqref{eq:lwp-parameters-scaling} follows by direct computation. Moreover, we also show that $q_0, \wt{q}>4$ when $d=1$, which implies the pair $(q_0, \wt{r}_0)$ is admissible.
		\begin{enumerate}
			\item
			We consider the case when $p\le \frac{2}{d-2s}$. We observe that $s-\frac{(d-2s)p}{4}< sp$ is equivalent to $\frac{4s}{d+2s}<p$. Then, by the choice of $\th$, we have that $s-\frac{(d-2s)p}{4}<\theta<sp$. It is easy to see that $\frac{1}{q_0}>0$. Then \eqref{eq:lwp-parameters-scaling} yields
			\EQ{
				\frac{1}{q_0} + \frac{d}{r_0} = \frac{d}{2} - (s-\th) -\frac{1}{q_0} <\frac d2.
			}
			Thus, $(q_0, r_0)$ is acceptable.
			Furthermore, since $(d-2s)p\leq2$ and $\theta<s$, we have
			$0<(d-2s)p+4\theta-4s<2(p+2)$, and hence $2<q_0<\infty$. We also observe that  $\frac{1}{\wt q}<\frac{d}{2}-\frac{d}{\wt r}$ is equivalent to $s-\frac{(d-2s)p}{4}<\theta$. This shows that $(\wt{q}, \wt{r})$ is acceptable. Note that $s-\theta <\frac{(d-2s)p}{4}\leq \frac{1}{2}$. Then, we have that $(d+2s-4\theta)p+4(s-\theta)= (d-2s)p+4(s-\theta)(p+1)< 2+2(p+1)=2(p+2)$. This implies $2< \wt{q}<\infty$.
			\item 
			We next consider the case when $p> \frac{2}{d-2s}$. By a direct calculation, we have that
			$1-\frac{1}{2s}<\frac{4s}{d+2s}$ is equivalent to
			\EQ{
				4s^2-(2d-2)s +d>0.
			}
			We can check that this inequality holds for $1\le d\le 6$ and $\frac{1}{2}<s<1$. Therefore, we have that $s-\frac12<\frac{4s^2}{d+2s}<sp$, and then the choice of $\th$ is well-defined. Thus, we have that $s-\frac{1}{2}<\theta<sp$, and $2< \wt{q}<\infty$. 
			
			In order to obtain $2 < q_0< \infty$, it suffices to check that $0<(d-2s)p-2(s-\theta)-1 <2(p+1)$. $(d-2s)p-2(s-\th)-1>0$ follows from $p>\frac{2}{d-2s}$ and $s-\th<\frac12$. Moreover, by $p\leq \frac{4}{d-2s}$, for any $1\le d\leq 6$ and $0<s<1$, we have that $(d-2s-2)p\leq 3$, which implies $(d-2s)p-2(s-\theta)-1 <2(p+1)$. This shows that $2 < q_0< \infty$. 
			
			By a direct calculation, we can reduce the condition $\frac{d-2}{d}\leq \frac{r_0}{\wt{r}}\leq \frac{d}{d-2}$ to
			\EQ{
				(d^2-3d+2)p \le 2\th dp +4d-2;\quad (2\th(d-2)-(d^2-d))p\le2.
			}
			The first inequality follows from $p<1$ and
			$(d-1)(d-2)\leq4d-2$, while the second follows from $\theta<1$.
			\item We finally show that $q_0, \wt{q}>4$ when $d=1$. It is obvious that $q_0$ and $\wt{q}$ are given by \eqref{eq:parameters-small}. It follows from $p<1$ and $\theta<s$ that $(1-2s)p+4\theta-4s<p$, which implies $q_0>4$. We observe that $\wt{q}>4$ is equivalent to $(s-2\theta)p+2(s-\theta)<1$. It is a consequence of $s-\theta<\frac{(1-2s)p}{4}<\frac{1}{4}$ and $sp<\frac{p}{2}<\frac{1}{2}$.
		\end{enumerate}
	\end{remark}
	
	\begin{comment}
		\begin{remark}
			In order to ensure $\wt{q}\geq 2$ and $(\wt{q}, \wt{r})$ acceptable, we need the condition $d\leq 6$. However, if we only consider $\wt{q}\geq 1$, we can remove the restriction of $d$, see Remark \ref{remark:lwp-d<=6}.
		\end{remark}
	\end{comment}
	
	Next, we derive some estimates that will help us control the nonlinearity. 
	\begin{lem}[Nonlinear estimates]\label{Lemma: Nonlinear estimate}
		Let $\frac{4s}{d+2s}<p< \min \{1, \frac{4}{d+2s}\}$, $F(z):=|z|^pz$, and $I=[T, \infty )$. Assume that $\theta$, $(q_0, r_0)$, $(\wt{q}, \wt{r})$, $\wt{r}_0$, $X_1$, and $Y_1$ are given in Definition \ref{defn:lwp-parameters}. Then,
		\begin{align}\label{Nonlinear estimate -I}
			\left\|\int_{\infty}^{t}S(t-\tau)(\tau^{\frac{dp}{2}-2}F(\mathcal{U}(\ta)))\mathrm{d} \tau \right\|_{X_1(I)} \lesssim T^{\frac{(d+2s)p-4}{4}}\|\mathcal{U}\|^{p+1}_{X_1(I)},
		\end{align}
		and 
		\begin{equation}\label{Nonlinear estimate -II}
			\begin{aligned}
				&\left\|\int_{\infty}^{t}S(t-\tau)(\tau^{\frac{dp}{2}-2} F(\mathcal{U}(\ta)) - \tau^{\frac{dp}{2}-2}F(\mathcal{V}(\ta)) ) \mathrm{d} \tau \right\|_{X_1(I)} \\
				&\qquad \lesssim T^{\frac{(d+2s)p-4}{4}} \big( \||\nabla|^s \mathcal{U}\|^p_{L_t^{q_0}L_x^{\wt{r}_0}} + \||\nabla|^s(\mathcal{U}-\mathcal{V}) \|^p_{L_t^{q_0}L_x^{\wt{r}_0}}\big)\|\mathcal{U}-\mathcal{V}\|_{X_1(I)}.
			\end{aligned}
		\end{equation}
		
	\end{lem}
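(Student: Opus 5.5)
Both estimates follow one scheme: apply the inhomogeneous Strichartz estimate of Lemma \ref{2: Lemma-inhomogeneous Strichartz estimates} at regularity level $\theta$ (the operator $|\nabla|^\theta$ commutes with $S(t)$), then handle the dual norm with H\"older in time, Sobolev embedding in space, and the fractional chain rules. First check that $(q_0,r_0)$ and $(\wt q,\wt r)$ are acceptable and satisfy the gap condition \eqref{eq:lwp-parameters-scaling}. Adding the two relations gives $\frac{2}{q_0}+\frac{d}{r_0}+\frac{2}{\wt q}+\frac{d}{\wt r}=d$, and the remark after Definition \ref{defn:lwp-parameters} supplies the side conditions ($2<q_0,\wt q<\infty$, the ratio condition on $r_0/\wt r$ when $d\ge3$). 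Hence
\begin{align*}
\Big\|\int_\infty^t S(t-\tau)\tau^{\frac{dp}{2}-2}F(\mathcal U)\,\mathrm d\tau\Big\|_{X_1(I)}\lesssim \big\|\tau^{\frac{dp}{2}-2}|\nabla|^\theta F(\mathcal U)\big\|_{L_t^{\wt q'}L_x^{\wt r'}(I\times\R^d)} .
\end{align*}
Applying this on $I=[T,\infty)$ with base point $\infty$ needs a truncation/limiting argument, which is routine.

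Next, estimate $|\nabla|^\theta F(\mathcal U)$ in $L_x^{\wt r'}$. Since $\theta<sp<p<1$ and $F\in C^1$, Lemma \ref{2: Lemma-fractional chain rule 2} gives $\||\nabla|^\theta F(\mathcal U)\|_{L_x^{\wt r'}}\lesssim \|\mathcal U\|_{L_x^{a}}^p\||\nabla|^\theta\mathcal U\|_{L_x^{r_0}}$, where $\frac1{\wt r'}=\frac pa+\frac1{r_0}$. The exponents were chosen so that Sobolev embedding gives $\|\mathcal U\|_{L_x^a}\lesssim\||\nabla|^\theta\mathcal U\|_{L_x^{r_0}}$, i.e. $\frac1a=\frac1{r_0}-\frac\theta d$; in case (1) this is consistent with $\frac1{\wt r}=\frac1{r_0}$. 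For the time integral, H\"older gives $\frac{1}{\wt q'}=\frac{p+1}{q_0}+\frac1\sigma$, with $\tau^{\frac{dp}{2}-2}\in L^\sigma_\tau([T,\infty))$. Indeed $\frac{dp}2-2<0$, and the scaling bookkeeping in \eqref{eq:lwp-parameters-scaling} yields $(\frac{dp}{2}-2)+\frac1\sigma=\frac{(d+2s)p-4}{4}<0$. So the power is integrable at infinity and contributes exactly $T^{\frac{(d+2s)p-4}{4}}$, which proves \eqref{Nonlinear estimate -I}. I will verify this exponent identity directly from \eqref{eq:parameters-small} and \eqref{eq:parameters-large}, and I also need $1\le\sigma<\infty$.

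For the difference estimate \eqref{Nonlinear estimate -II}, the obstacle is that $F'$ is only $p$-H\"older with $p<1$. So one cannot take $\theta$ derivatives of $F(\mathcal U)-F(\mathcal V)$ and place them on $\mathcal U-\mathcal V$ at regularity $\theta$ in the usual way. I would use Lemma \ref{2: Lemma-H^s,0<s<1} with regularity $\theta$:
\begin{align*}
\||\nabla|^\theta(F(\mathcal U)-F(\mathcal V))\|_{L_x^{\wt r'}}\lesssim \|\mathcal U\|_{L_x^{pr_2}}^p\||\nabla|^\theta(\mathcal U-\mathcal V)\|_{L_x^{r_0}}+\||\nabla|^\theta\mathcal V\|_{L_x^{r_0}}\|\mathcal U-\mathcal V\|_{L_x^{pr_2}}^p .
\end{align*}
The $L_x^{pr_2}$ norms are then converted by Sobolev embedding into $\||\nabla|^s\cdot\|_{L_x^{\wt r_0}}$, with $\frac1{pr_2}=\frac1{\wt r_0}-\frac sd$, which explains why the bound involves the admissible $L_t^{q_0}L_x^{\wt r_0}$ norm at full regularity $s$. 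The same time H\"older as before produces the factor $T^{\frac{(d+2s)p-4}{4}}$. The second term still carries $\||\nabla|^\theta\mathcal V\|$ multiplied by a $p$-th power of the difference, which does not have the stated form. To fix this I would either write $\mathcal V=\mathcal U-(\mathcal U-\mathcal V)$ and redistribute, or reverse the roles of $\mathcal U$ and $\mathcal V$ in the lemma. The main difficulty is arranging that only the single factor $\|\mathcal U-\mathcal V\|_{X_1}$ appears linearly, and checking that all the Sobolev and H\"older exponents close within the acceptable range; the computations on the parameter remark are where I expect to spend most of the effort.
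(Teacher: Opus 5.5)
Your argument for \eqref{Nonlinear estimate -I} is the paper's: the inhomogeneous Strichartz estimate at level $\theta$, then Lemma \ref{2: Lemma-fractional chain rule 2}, Sobolev with $\frac1{r_1}=\frac1{r_0}-\frac\theta d$ (your $a$), and H\"older in time with $\frac1{q_1}=\frac{4-(d-2s)p}{4}$.

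The difference estimate \eqref{Nonlinear estimate -II} has a genuine gap. Lemma \ref{2: Lemma-H^s,0<s<1} at level $\theta$ leaves the term $\||\nabla|^\theta\mathcal V\|_{L_x^{r_0}}\|\mathcal U-\mathcal V\|_{L_x^{r_1}}^p$. For $p<1$ this is only H\"older of order $p$ in the difference, and neither of your fixes removes it. Swapping $\mathcal U,\mathcal V$ gives $\||\nabla|^\theta\mathcal U\|\,\|\mathcal U-\mathcal V\|^p$. Writing $\mathcal V=\mathcal U-(\mathcal U-\mathcal V)$ still leaves $\||\nabla|^\theta\mathcal U\|_{L_x^{r_0}}\|\mathcal U-\mathcal V\|^p_{L_x^{r_1}}$. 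Fix $\mathcal U$ and set $\mathcal U-\mathcal V=\varepsilon W$: this term is of size $\varepsilon^p$, while the right side of \eqref{Nonlinear estimate -II} is $O(\varepsilon)$. So no choice of exponents can close the argument. The step $\|\mathcal V\|\,\|\mathcal U-\mathcal V\|^p\lesssim(\|\mathcal U\|^p+\|\mathcal V\|^p)\|\mathcal U-\mathcal V\|$ holds only for $p\ge1$, which is the only case where the paper uses Lemma \ref{2: Lemma-H^s,0<s<1}. If your route worked for $p<1$, the auxiliary regularity $\theta<sp$ and the weaker metric $X_1$ would serve no purpose.

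The missing idea is to factor out the difference \emph{before} differentiating. By \eqref{eq:FzFw}, $F(\mathcal U)-F(\mathcal V)=(\mathcal U-\mathcal V)G_1+\overline{(\mathcal U-\mathcal V)}\,G_2$. Here $G_1=\int_0^1F_z(\mathcal U+(\lambda-1)(\mathcal U-\mathcal V))\,\mathrm d\lambda$, and $G_2$ is the same with $F_{\bar z}$; both $F_z$ and $F_{\bar z}$ are H\"older continuous of order $p$. Apply Kato--Ponce (Lemma \ref{2: Lemma-fractional chain rule 1}) to each product.

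\textbf{Derivative on $\mathcal U-\mathcal V$.} You get $\|G_j\|_{L_t^{q_0/p}L_x^{r_1/p}}\|\mathcal U-\mathcal V\|_{X_1}$, which is linear in the difference.

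\textbf{Derivative on $G_j$.} Use the H\"older-continuous chain rule (Lemma \ref{2: Lemma-Holder continuous}) with $\alpha=p$ and $\sigma=s$, for each fixed $\lambda$. Its hypothesis $\theta/p<s$ is exactly $\theta<sp$. With the exponents $q_2,r_2,r_3$ of \eqref{defn:q2r2r3}, it bounds $\||\nabla|^\theta G_j\|_{L_t^{q_2}L_x^{r_2}}$ by $\|\cdot\|^{p-\theta/s}_{L_t^{q_0}L_x^{r_1}}\||\nabla|^s\cdot\|^{\theta/s}_{L_t^{q_0}L_x^{\wt r_0}}$, evaluated at $\mathcal U+(\lambda-1)(\mathcal U-\mathcal V)$. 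By Sobolev ($\frac1{r_1}=\frac1{\wt r_0}-\frac sd$) and $p<1$, this is $\lesssim\||\nabla|^s\mathcal U\|^p_{L_t^{q_0}L_x^{\wt r_0}}+\||\nabla|^s(\mathcal U-\mathcal V)\|^p_{L_t^{q_0}L_x^{\wt r_0}}$. The undifferentiated factor $\mathcal U-\mathcal V$ then sits in $L_t^{q_0}L_x^{r_1}$ and is controlled by $\|\mathcal U-\mathcal V\|_{X_1}$ via Sobolev.

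This is where the full-regularity norms in \eqref{Nonlinear estimate -II} come from, and why the difference enters only to the first power.
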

	\begin{proof}
		By the inhomogeneous Strichartz estimate in Lemma \ref{2: Lemma-inhomogeneous Strichartz estimates}, we have that
		\begin{align}\label{lwp-s-inhomogeneous-X1-Y1}
			\left\|\int_{\infty}^{t}S(t-\tau)f(\tau, \cdot )\mathrm{d} \tau \right\|_{X_1(I)} \lesssim \|f\|_{Y_1(I)}.
		\end{align}
		Let $q_1$ and $r_1$ be defined by
		\begin{align}\label{defn:q1r1}
			\frac{1}{q_1}=\frac{4-(d-2s)p}{4}; \quad \frac{1}{r_1}=\frac{1}{r_0}-\frac{\theta}{d}.
		\end{align}
		A direct calculation shows that 
		\begin{align*}
			1-\frac{1}{\wt{q}}=\frac{1}{q_1}+\frac{p+1}{q_0}; \quad 1-\frac{1}{\wt{r}} =\frac{p}{r_1}+\frac{1}{r_0}.
		\end{align*}
		In the following, we restrict the spacetime variable $(t,x)\in I\times\mathbb{R}^d$. By \eqref{lwp-s-inhomogeneous-X1-Y1}, Lemma \ref{2: Lemma-fractional chain rule 2}, H\"older's and Sobolev's inequalities,
		\begin{align*}
			\left\|\int_{\infty}^{t}|\nabla|^{\theta}S(t-\tau)(\tau^{\frac{dp}{2}-2}F(\mathcal{U}))\mathrm{d} \tau \right\|_{L_t^{q_0}L_x^{r_0}} & \lesssim \|t^{\frac{dp}{2}-2}|\nabla|^{\theta}F(\mathcal{U})\|_{L_t^{\wt{q}^{\prime}}L_x^{\wt{r}^{\prime}}}\\
			&\lesssim \|t^{\frac{dp}{2}-2}\|_{L_t^{q_1}}\|\mathcal{U}\|^p_{L_t^{q_0}L_x^{r_1}} \||\nabla|^{\theta}\mathcal{U}\|_{L_t^{q_0}L_x^{r_0}}\\
			&\lesssim T^{\frac{(d+2s)p-4}{4}}\|\mathcal{U}\|^{p+1}_{X_1(I)}.
		\end{align*}
		This gives \eqref{Nonlinear estimate -I}. 
		
		We next prove \eqref{Nonlinear estimate -II}. Note that
		\begin{align} \label{eq:FzFw}
			F(z)-F(w)=(z-w)\int_{0}^{1}F_z(w+s(z-w))\mathrm{d}s+(\bar{z}-\bar{w})\int_{0}^{1}F_{\bar{z}}(w+s(z-w))\mathrm{d}s,
		\end{align}
		where $F_z:=\frac{p+2}{2}|z|^p$ and $F_{\bar{z}}:=\frac{p}{2}z^{\frac{p+2}{2}}\bar{z}^{\frac{p-2}{2}}$, which are both H\"older continuous of order $p$. Therefore, by \eqref{lwp-s-inhomogeneous-X1-Y1} and \eqref{eq:FzFw}, we can reduce the proof of \eqref{Nonlinear estimate -II} to the following inequality
		\begin{equation}\label{Lemma: Nonlinear estimate II 3}
			\begin{aligned}
				&\big\|t^{\frac{dp}{2}-2}|\nabla|^{\theta}(F_z(u+v)w)\big\|_{L_t^{\wt{q}^{\prime}}L_x^{\wt{r}^{\prime}}}\\
				&\lesssim T^{\frac{(d+2s)p-4}{4}}\big(\||\nabla|^{s}u\|^p_{L_t^{q_0}L_x^{\wt{r}_0}} + \||\nabla|^{s}v\|^p_{L_t^{q_0}L_x^{\wt{r}_0}}\big)\||\nabla|^{\theta}w\|_{L_t^{q_0}L_x^{r_0}}.
			\end{aligned}
		\end{equation}
		
		Now, it suffices to prove \eqref{Lemma: Nonlinear estimate II 3}. Take $q_2$, $r_2$, and $r_3$ such that
		\begin{align}\label{defn:q2r2r3}
			\frac{1}{q_2}=\frac{p}{q_0}; \quad \frac{1}{r_2}=\frac{p}{r_1}+\frac{\theta}{d}; \quad \frac{1}{r_3}=\frac{1}{r_2}-\frac{\theta}{s\wt{r}_0}=(p- \frac{\theta}{s})\frac{1}{r_1}.
		\end{align}
		Then, by Lemma \ref{2: Lemma-fractional chain rule 1},
		\EQn{\label{Lemma: Nonlinear estimate II 3-1}
			\big\|t^{\frac{dp}{2}-2}|\nabla|^{\theta}(F_z(u+v)w)\big\|_{L_t^{\wt{q}^{\prime}}L_x^{\wt{r}^{\prime}}}
			& \lesssim \|t^{\frac{dp}{2}-2}\|_{L_t^{q_1}} \||\nabla|^{\theta}F_z(u+v)\|_{L_t^{q_2}L_x^{r_2}}\|w\|_{L_t^{q_0}L_x^{r_1}}\\
			& \qquad +\|t^{\frac{dp}{2}-2}\|_{L_t^{q_1}} \|u+v\|^p_{L_t^{q_0}L_x^{r_1}}\||\nabla|^{\theta}w\|_{L_t^{q_0}L_x^{r_0}}.
		}
		By Lemma \ref{2: Lemma-Holder continuous}, H\"older's and Sobolev's inequalities,
		\EQn{\label{Lemma: Nonlinear estimate II 3-2}
			\||\nabla|^{\theta}F_z(u+v)\|_{L_t^{q_2}L_x^{r_2}}
			&\lesssim \big\|\||u+v|^{p-\frac{\theta}{s}} \|_{L_x^{r_3}} \||\nabla|^{s}(u+v) \|^{\frac{\theta}{s}}_{L_x^{\wt{r}_0}} \big\|_{L_t^{q_2}}\\
			&\lesssim \|u+v\|^{p-\frac{\theta}{s}}_{L_t^{q_0}L_x^{r_1}} \| |\nabla|^{s}(u+v) \|^{\frac{\theta}{s}}_{L_t^{q_0}L_x^{\wt{r}_0}} \\
			&\lesssim \||\nabla|^{s}u\|^p_{L_t^{q_0}L_x^{\wt{r}_0}} + \||\nabla|^{s}v\|^p_{L_t^{q_0}L_x^{\wt{r}_0}},
		}
		Therefore, \eqref{Lemma: Nonlinear estimate II 3} follows from \eqref{Lemma: Nonlinear estimate II 3-1} and \eqref{Lemma: Nonlinear estimate II 3-2}.
	\end{proof}
	We are now in a position to prove Proposition \ref{prop:lwp-subcritical-pseudo conformal transform}.
	\begin{proof}[Proof of Proposition \ref{prop:lwp-subcritical-pseudo conformal transform}]
		We denote $I:=[T_0,\infty)$. Define 
		\begin{align*}
			\Phi(\mathcal{U}):=S(t)\mathcal{U}_{+}-i\int_{\infty}^{t}S(t-\tau)(\tau^{\frac{dp}{2}-2}|\mathcal{U}|^p\mathcal{U})\mathrm{d}\tau.
		\end{align*}
		If $p\geq 1$, we set $q_0=\wt{q} = \frac{4(p+2)}{(d-2s)p}$. If $p<1$, $(q_0, r_0)$ and $(\wt{q}, \wt{r})$ are defined in Definition \ref{defn:lwp-parameters}. We choose $\wt{r}_0$ and $\wt{r}_1$ such that 
		\begin{align*}
			\frac{2}{q_0}+\frac{d}{\wt{r}_0}=\frac{d}{2}, \quad \frac{2}{\wt{q}}+\frac{d}{\wt{r}_1}=\frac{d}{2}.
		\end{align*}
		Since $\wt{q} > \max\{2,\frac{4}{d}\}$, the pair $(\wt{q}, \wt{r}_1)$ is admissible. In the following, we only prove the case when $\frac{4}{d+2s}>1$, and the method is  similar when $\frac{4}{d+2s}\le1$.  
		
		For $1\leq p <\frac{4}{d+2s}$, we consider the resolution space
		\begin{align*}
			E_1(R):=\{\mathcal{U}\in C(I;\dot{H}_x^s(\mathbb{R}^d)): \||\nabla|^s \mathcal{U}\|_{ L_t^{\infty}L_x^{2}\cap L_t^{q_0}L_x^{\wt{r}_0}(I\times\R^d)}\leqslant 2R \},
		\end{align*}
		under the metric 
		\begin{align*}
			d_1(\mathcal{U},\mathcal{V}) := \||\nabla|^s( \mathcal{U}-\mathcal{V} )\|_{L_t^{\infty}L_x^{2}\cap L_t^{q_0}L_x^{\wt{r}_0}(I\times\R^d)}.
		\end{align*}
		In the following, we restrict the spacetime variable $(t,x) \in I\times\mathbb{R}^d$. By the Strichartz estimate, Lemma \ref{2: Lemma-fractional chain rule 2}, H\"older's and Sobolev's inequalities,
		\begin{equation}\label{proof of lwp s-order-1}
			\begin{aligned}
				\||\nabla|^s \Phi(\mathcal{U})\|_{L_t^{\infty}L_x^{2}\cap L_t^{q_0}L_x^{\wt{r}_0}} &\lesssim \|\mathcal{U}_+\|_{\dot{H}_x^s(\R^d)} + \| t^{\frac{dp}{2}-2} |\nabla|^s (|\mathcal{U}|^p \mathcal{U}) \|_{L_t^{\wt{q}^{\prime}}L_x^{\wt{r}_1^{\prime}}}\\
				&\lesssim \|\mathcal{U}_+\|_{\dot{H}_x^s(\R^d)} + \|t^{\frac{dp}{2}-2}\|_{L_t^{q_1}} \||\nabla|^{s}\mathcal{U}\|^{p+1}_{L_t^{q_0}L_x^{\wt{r}_0}}\\
				&\lesssim \|\mathcal{U}_+\|_{\dot{H}_x^s(\R^d)} + T_0^{\frac{(d+2s)p-4}{4}} \||\nabla|^{s}\mathcal{U}\|^{p+1}_{L_t^{q_0}L_x^{\wt{r}_0}}.
			\end{aligned}
		\end{equation}
		By Lemma \ref{2: Lemma-H^s,0<s<1} and modifying the above nonlinear estimates,
		\begin{align*}
			&\||\nabla|^s(\Phi(\mathcal{U})-\Phi(\mathcal{V}))\|_{L_t^{\infty}L_x^{2}\cap L_t^{q_0}L_x^{\wt{r}_0}}\\
			&\lesssim T_0^{\frac{(d+2s)p-4}{4}} \big( \||\nabla|^s\mathcal{U}\|^p_{L_t^{q_0}L_x^{\wt{r}_0}} \||\nabla|^s(\mathcal{U} -\mathcal{V})\|_{L_t^{q_0}L_x^{\wt{r}_0}} \\
			&\qquad \qquad\qquad + \||\nabla|^s \mathcal{V}\|_{L_t^{q_0}L_x^{\wt{r}_0}} \||\nabla|^s(\mathcal{U}-\mathcal{V})\|^p_{L_t^{q_0}L_x^{\wt{r}_0}} \big) \\
			&\lesssim T_0^{\frac{(d+2s)p-4}{4}} (\||\nabla|^s\mathcal{U}\|^p_{L_t^{q_0}L_x^{\wt{r}_0}} + \||\nabla|^s\mathcal{V} \|^p_{L_t^{q_0}L_x^{\wt{r}_0}}) \||\nabla|^s(\mathcal{U}-\mathcal{V})\|_{L_t^{q_0}L_x^{\wt{r}_0}}.
		\end{align*}
		Let $R:=C\|\mathcal{U}_+\|_{\dot{H}_x^s(\R^d)}$. Then we have that 
		\begin{align}\label{eq:lwp-casea-1-contraction-1}
			\||\nabla|^s \Phi (\mathcal{U}) \|_{L_t^{\infty}L_x^{2}\cap L_t^{q_0}L_x^{\wt{r}_0}}&\leq R + C T_0^{\frac{(d+2s)p-4}{4}} R^{p+1},
		\end{align}
		and
		\begin{align}\label{eq:lwp-casea-1-contraction-2}
			d_1(\Phi (\mathcal{U}), \Phi (\mathcal{V}))& \leq C T_0^{\frac{(d+2s)p-4}{4}} R^p d_1(\mathcal{U}, \mathcal{V}).
		\end{align}

		For $\frac{4s}{d+2s} < p < 1$, we consider the resolution space
		\begin{align*}
			E_2(R):=\{|\nabla|^s\mathcal{U}\in L_t^{q_0}L_x^{\wt{r}_0}(I\times \mathbb{R}^d): \||\nabla|^s \mathcal{U}\|_{L_t^{q_0}L_x^{\wt{r}_0}(I\times \mathbb{R}^d)}\leqslant 2R \},
		\end{align*}
		under the metric
		\begin{align*}
			d_2(\mathcal{U},\mathcal{V}):=\|\mathcal{U}-\mathcal{V}\|_{X_1(I)}= \||\nabla|^{\theta}( \mathcal{U}-\mathcal{V} )\|_{L_t^{q_0}L_x^{r_0}(I\times\mathbb{R}^d)}.
		\end{align*}
		Thus, $E_2(R)$ is a complete metric space. By the same argument as in \eqref{proof of lwp s-order-1},
		\begin{equation}\label{proof of lwp s-order-2}
			\begin{aligned}
				\||\nabla|^s \Phi(\mathcal{U})\|_{L_t^{\infty} L_x^{2} \cap L_t^{q_0}L_x^{\tilde{r}_0}}
				&\lesssim \|\mathcal{U}_+\|_{\dot{H}_x^s(\R^d)} + T_0^{\frac{(d+2s)p-4}{4}} \||\nabla|^{s}\mathcal{U}\|^{p+1}_{L_t^{q_0}L_x^{\wt{r}_0}}.
			\end{aligned}
		\end{equation}
		Let $R:= C\|\mathcal{U}_+\|_{\dot{H}_x^s(\R^d)}$. Then
		\begin{align}\label{eq:lwp-casea-2-contraction-1}
			\||\nabla|^s \Phi(\mathcal{U})\|_{L_t^{\infty} L_x^{2} \cap L_t^{q_0}L_x^{\tilde{r}_0}} \leq R+ C T_0^{\frac{(d+2s)p-4}{4}} R^{p+1}.
		\end{align}
		By Lemma \ref{Lemma: Nonlinear estimate}, 
		\begin{align}\label{eq:lwp-casea-2-contraction-2}
			d_2(\Phi(\mathcal{U}), \Phi(\mathcal{V})) \leq C T_0^{\frac{(d+2s)p-4}{4}} R^p d_2(\mathcal{U}, \mathcal{V}).
		\end{align}
		
		A simple calculation shows that $\frac{(d+2s)p-4}{4}< 0$. By \eqref{eq:lwp-casea-1-contraction-1}, \eqref{eq:lwp-casea-1-contraction-2}, \eqref{eq:lwp-casea-2-contraction-1}, and \eqref{eq:lwp-casea-2-contraction-2}, we are able to choose $T_0=T_0(\norm{\U_+}_{\dot H_x^s(\R^d)})>0$ sufficiently large such that the map $\Phi : E_j(R) \rightarrow E_j(R)$ $(j=1,2)$ is a contraction. It follows from the fixed-point theorem that the equation \eqref{CNLS-I} admits a local solution. Uniqueness follows from an argument similar to those used to prove \eqref{eq:lwp-casea-1-contraction-2} and \eqref{eq:lwp-casea-2-contraction-2}.
	\end{proof}
	
	\begin{remark}\label{remark:lwp-d<=6}
		In \cite{2015-Masaki}, Masaki proved the equation \eqref{NLS} is locally well posed when $p=\frac{4}{d+2s}$ by using the Lorentz type inhomogeneous Strichartz estimate and the stability argument from \cite{2005-Tao-Visan}. %They establish a short-time perturbations result, and combined with the local well posedness of inhomogeneous weight space $\mathcal{F}H^s$ to obtain the local well posedness of $\mathcal{F}\dot{H}^s$. 
		In our case, $p<\frac{4}{d+2s}$; we may follow their argument to remove the condition $d\leq 6$. We do not pursue this issue here and just give some references \cite{2011-Li-Zhang,2013-Killip-Visan}.
	\end{remark}

	\subsection{Ill-posedness}\label{section-illposed}
	Now, we give the ill-posedness in Theorem \ref{thm:local-s-order-initialdata} (2) in the following sense:
	
	\begin{prop}\label{ill-main theorem1}
		Let the assumptions in Theorem \ref{thm:local-s-order-initialdata} (2) hold. Then for any $0< \varepsilon ,\delta <1$, and for any sufficiently small $t>0$, there exist solutions $u_1$, $u_2$ of \eqref{NLS} with initial data $u_1(0),u_2(0)\in \mathcal{S}(\mathbb{R}^d)$ respectively, such that 
		\begin{align*}
			\|u_1(0)\|_{\mathcal{F}\dot{H}_x^s(\R^d)}+\|u_2(0)\|_{\mathcal{F}\dot{H}_x^s(\R^d)} &\leq C \varepsilon,\\
			\|u_1(0)-u_2(0)\|_{\mathcal{F}\dot{H}_x^s(\R^d)}&< C \delta,\\
			\|S(-t)u_1(t)-S(-t)u_2(t)\|_{\mathcal{F}\dot{H}_x^s(\R^d)}&>c\varepsilon.
		\end{align*}
		Thus, the solution operator fails to be uniformly continuous on $\mathcal{F}\dot{H}^s(\R^d)$.
	\end{prop}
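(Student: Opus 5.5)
We transfer the problem to the Fourier side via the pseudo conformal transform and adapt the small-dispersion construction of Christ, Colliander and Tao. By \eqref{PCT-f(x)} and \eqref{2-|J(t)|^s}, $\|S(-t)u(t)\|_{\F\dot H_x^s}=\||J(t)|^s u(t)\|_{L_x^2}=\||\nabla|^s\U(1/t)\|_{L_x^2}$, where $\U=\TT u$ solves \eqref{CNLS-I} with $S(-\tau)\U(\tau)\to\U_+=\F^{-1}\bar u_0$ as $\tau\to\infty$. So it suffices to produce solutions $\U_1,\U_2$ of \eqref{CNLS-I} with the following properties: their scattering data $\U_{1,+},\U_{2,+}$ are Schwartz, have $\dot H_x^s$ norm at most $C\ep$, and are $C\delta$-close in $\dot H_x^s$; and at the large time $\tau=1/t$ we have $\||\nabla|^s(\U_1-\U_2)(\tau)\|_{L^2}>c\ep$. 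We will prove ill-posedness for \eqref{CNLS-I} "from infinity" in $\dot H^s$, which is $\dot H^s$-supercritical in the scaling sense because $p>\frac{4}{d+2s}$.

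\textbf{Step 1 (small dispersion ansatz).} We take $\U_{j,+}=a_j\,\phi_\lambda$ with $\phi_\lambda(x)=\lambda^{\frac d2-s}\phi(\lambda x)$ and amplitudes $a_1=1$, $a_2=1+\delta$ (up to the factor $\ep$), with $\lambda\to0$ chosen so that the frequency is low. We then approximate $\U_j$ on a long time window $[\tau_0,\infty)$ by the ODE solution
\[
V_j(\tau,x)=S(\tau)\phi_\lambda\ \text{replaced by}\ \phi_\lambda(x)\exp\Big(-i\,|\phi_\lambda(x)|^p\int_\infty^\tau \sigma^{\frac{dp}{2}-2}d\sigma\Big).
\]
The integral diverges when $\frac{dp}{2}-2\ge-1$. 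Since $p\le 1$ we have $\frac{dp}2-1<0$ in most relevant dimensions, so we instead start the flow at a large finite $\tau_1$ rather than at $\infty$ and set the data there. A cleaner route is to work directly with \eqref{NLS} at short time $t$, which is equivalent. For data $\ep\lambda^{\frac d2+s}\,\check\phi$-type concentrations in physical space, with $u_0=\ep\lambda^{-\frac d2-s}\phi(x/\lambda)$ scaled to have $\F\dot H^s$ norm $\ep$ and $\lambda\to\infty$ (spread out, so that dispersion is negligible over times $t\ll\lambda^2$), the solution is well approximated by $u_0e^{-it|u_0|^p}$. The phase $t|u_0|^p\sim t\ep^p\lambda^{-(\frac d2+s)p}$ must be of order $1$ while $t\ll\lambda^2$. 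This is possible as $\lambda\to\infty$ exactly when $(\frac d2+s)p>2$, i.e. $p>\frac4{d+2s}$, which is the key scaling input.

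\textbf{Step 2 (decoherence).} Taking amplitudes $\ep$ and $\ep(1+\delta')$ gives initial difference of size $\ep\delta'\le C\delta$. The phases differ by $t|u_0|^p((1+\delta')^p-1)\sim1$ once $t\sim\delta'^{-1}\ep^{-p}\lambda^{(\frac d2+s)p}$, which is still $\ll\lambda^2$. At that time the two approximate profiles are decorrelated, so $\|x|^s(u_{1}^{app}-u_{2}^{app})\|_{L^2}\gtrsim\ep$. Since $S(-t)$ acts nearly as the identity on these slowly varying profiles, the same holds for $S(-t)u_j$. The condition "any sufficiently small $t$" is arranged by rescaling via \eqref{illposedness-scale transform}: this transformation changes $t$ but, because $p\neq\frac4{d+2s}$, it also changes the $\F\dot H^s$ norm. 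To compensate we also adjust $\lambda$, which is admissible because the family is parametrized freely by $\lambda$.

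\textbf{Step 3 (main obstacle: justifying the approximation).} We must show that the true solution $u_j$ stays close to $u_j^{app}=u_0e^{-it|u_0|^p}$ in the $\F\dot H^s$ sense. The error satisfies $iw_t+\frac12\De w=(F(u^{app}+w)-F(u^{app}))+\frac12\De u^{app}$. The forcing term is small since $\|\De u^{app}\|\lesssim\lambda^{-2}(1+t\ep^p\lambda^{-(\frac d2+s)p})^2$ times the appropriate norm. We will control $w$ by energy estimates in $H^k$ together with weighted $L^2$ on the time interval $[0,t]$, treating the nonlinearity with a Lipschitz bound, where $\|u\|_{L^\infty}^p$ is small, times $t$. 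The difficulty is that $F$ is only $C^{1,p}$ for $p\le1$, so higher-derivative energy estimates fail. Following \cite{2003-Christ-Colliander-Tao}, we avoid them by choosing $\phi$ with $|\phi|$ bounded below on its support region, so that $|u^{app}|^p$ is smooth there, and by carrying out the estimate in $L^2$ and $\F\dot H^1$ only, interpolating to $\F\dot H^s$. The restriction $d+2s>4$ is used to ensure $\frac4{d+2s}<1$, so that the range of $p$ is nonempty with $p\le1$.
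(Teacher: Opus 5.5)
You have the same strategy as the paper: an ODE (small-dispersion) approximation for two data of slightly different amplitudes, whose phases decohere. But the key scaling step is inverted. With $u_0=\ep\lambda^{-\frac d2-s}\phi(x/\lambda)$, the decoherence time is $t\sim\ep^{-p}\lambda^{(\frac d2+s)p}$ and the dispersive time is $\lambda^2$. Their ratio is $\ep^{-p}\lambda^{(\frac d2+s)p-2}$, which tends to $0$ as $\lambda\to\infty$ only when $p<\frac4{d+2s}$, the subcritical case. In the range $p>\frac4{d+2s}$ you must take $\lambda\to0$, i.e. concentrating data of large amplitude. Then $t\to0$ automatically, and that is where the small times in the statement come from.

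Your Step 2 device, rescaling by \eqref{illposedness-scale transform} and ``adjusting $\lambda$'', cannot rescue the $\lambda\to\infty$ version. The NLS scaling maps the family $\{\ep\lambda^{-\frac d2-s}\phi(\cdot/\lambda)\}$ into itself, so at fixed $\F\dot H^s$-size the decoherence time is rigidly tied to $\lambda$ as above. In the paper's family $u^{(a,\mu,\lambda)}(0,x)=\lambda^{-2/p}aw(\lambda^{-1}\mu x)$, the spatial scale is $\lambda/\mu\to0$ and the decoherence time is $\lambda^2T\to0$. The small parameter $\mu^2=\lambda^2/(\lambda/\mu)^2$ is exactly your ratio of nonlinear to dispersive time.

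Step 3 does not close as written, and it targets more than is needed.
\begin{itemize}
\item The quantity $t\|u^{app}\|_{L^\infty}^p$ is not small. It is the phase, which is of order one (or $\delta'^{-1}$) by construction. What you need is Gronwall with an $O(1)$ exponent together with a small forcing $\frac{\mu^2}{2}\Delta\phi^{(0)}$.
\item The difference $F(u^{app}+w)-F(u^{app})$ is not Lipschitz in $w$ on $L^2$, because of the $|w|^{p+1}$ contribution, unless you control $\|w\|_{L^\infty}$. That control requires exactly the higher-order energy estimates you note fail for $p\le1$.
\item An $\F\dot H^1$ estimate for $xw$ produces the commutator term $\mu^2\nabla w$, which needs $H^1$ control you have not supplied.
\end{itemize}

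The paper avoids all of this with two observations.
\begin{itemize}
\item Closeness in $\F\dot H^s$ is never needed. Since $|x|^s\ge1$ on $\{|x|\ge1\}$ (in rescaled variables), $\||x|^sS(-\mu^2T)(\phi^{(a,\mu)}-\phi^{(a',\mu)})(T)\|_{L^2}$ is bounded below by an exterior $L^2$ norm. That norm is stable under $L^2$ perturbations because $S$ is unitary, and $S(-\mu^2T)\to\mathrm{Id}$ strongly on the fixed function $\phi^{(a,0)}(T)-\phi^{(a',0)}(T)$.
\item The $L^2$ error closes through the imaginary-part cancellation. Split into $\{|w|\le|\phi^{(0)}|\}$ and its complement, and use $\big||\phi^{(0)}+w|^p-|\phi^{(0)}|^p\big|\le|w|^p$, which is where $p\le1$ enters. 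This gives $\big|\im\int(F(\phi^{(0)}+w)-F(\phi^{(0)}))\bar w\,dx\big|\lesssim\|\phi^{(0)}\|_{L^\infty}^p\|w\|_{L^2}^2$, with no information on $w$ beyond $L^2$. Hence $\|w(T)\|_{L^2}\lesssim\mu^2e^{C(1+T)^C}$.
\end{itemize}

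Finally, no nonzero Schwartz function has $|\phi|$ bounded below on its support. What is actually needed is a nonvanishing profile such as $e^{-|x|^2}$, so that $|\phi_0|^p$ is smooth and $\|\Delta\phi^{(0)}(t)\|_{L^2}$ grows only polynomially in $t$.
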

	
	We apply the method in \cite{2003-Christ-Colliander-Tao}, relying on some quantitative analysis of the NLS equation
	\begin{align}\label{NLS: phi equation}
		\left\{
		\begin{aligned}
			&i\phi_t+\frac{\mu^2}{2}\Delta \phi=|\phi|^p \phi, \\
			&\phi(0,x)=\phi_0(x),
		\end{aligned}
		\right.
	\end{align}
	in the small dispersion regime $\mu \rightarrow 0$, where 
	\begin{align}\label{illposedness-phi transform}
		u(t,x)=\phi(t,\mu x),
	\end{align}
	and $u(t,x)$ is the solution of \eqref{NLS}. 
	
	Formally, as $\mu \rightarrow 0$ the equation \eqref{NLS: phi equation} approaches the ODE
	\begin{align}
		\left\{
		\begin{aligned}
			&i\phi_t=|\phi|^p \phi, \\
			&\phi(0,x)=\phi_0(x),
		\end{aligned}
		\right.
	\end{align}
	which has an explicit solution $\phi^{(0)}$ defined by
	\begin{align}
		\phi^{(0)}(t,x):=\phi_0(x)e^{-it|\phi_0(x)|^p}.
	\end{align}
	Moreover, one may choose a suitable function $\phi_0(x)$ such that for every $t\geq 0$, $\phi^{(0)}(t,x) \in \mathcal{S}(\R^d)$. For the ill-posedness construction below, we fix $w(x)=e^{-|x|^2}$ and take $\phi_0=aw$ with $a\in[\frac{1}{2},1]$; then $\phi^{(0)}(t)\in\mathcal S(\mathbb R^d)$ for every $t$.
	
	The following lemma is our main quantitative result.
	
	\begin{lem}\label{illposedness-Lem2}
		Let the assumptions in Proposition \ref{ill-main theorem1} hold. Then there exist constants $C,c>0$ depending on all the above parameters, such that if $0<\mu\leq c$ is a sufficiently small real number, then for $T=c|\log \mu|^{c}$ there exists a solution $\phi\in C([-T,T];L_x^2(\mathbb{R}^d))$ of \eqref{NLS: phi equation} satisfying 
		\begin{align*}
			\sup_{|t|\leqslant c|\log \mu|^{c}} \|\phi(t)-\phi^{(0)}(t)\|_{L_x^2(\mathbb{R}^d)}\leq C\mu.
		\end{align*}
	\end{lem}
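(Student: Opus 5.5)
We follow the small-dispersion approximation argument of Christ--Colliander--Tao \cite{2003-Christ-Colliander-Tao}. We construct a better approximate solution than $\phi^{(0)}$, show it nearly solves \eqref{NLS: phi equation}, and then close an energy estimate for the difference.

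\emph{Step 1: approximate solution.} Set $\phi^{(0)}(t,x)=aw(x)e^{-it a^p w(x)^p}$. Since $p\le 1$ the map $z\mapsto |z|^pz$ is not smooth at $0$. However, $w=e^{-|x|^2}>0$, so $|\phi^{(0)}|^p=a^pe^{-p|x|^2}$ is smooth and all of its derivatives have Gaussian decay. Consequently $\|\partial_x^\alpha\phi^{(0)}(t)\|_{L^2\cap L^\infty}\lesssim\langle t\rangle^{|\alpha|}$. Plugging $\phi^{(0)}$ into \eqref{NLS: phi equation} leaves the error $\frac{\mu^2}{2}\Delta\phi^{(0)}$, whose size is $O(\mu^2\langle t\rangle^2)$ in $L^2\cap L^\infty$. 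If a rate $\mu$ on times $|t|\lesssim|\log\mu|^c$ is all that is required, this error is already sufficient and no higher-order correction is needed.

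\emph{Step 2: difference equation.} Write $\phi=\phi^{(0)}+v$. Then $v$ solves
\begin{align*}
i v_t+\tfrac{\mu^2}{2}\Delta v = F(\phi^{(0)}+v)-F(\phi^{(0)}) - \tfrac{\mu^2}{2}\Delta\phi^{(0)}, \qquad v(0)=0,
\end{align*}
with $F(z)=|z|^pz$. Taking the $L^2$ inner product with $v$ and the imaginary part, the dispersive term drops out. The difference $F(\phi^{(0)}+v)-F(\phi^{(0)})$ is bounded pointwise by $C(|\phi^{(0)}|^p+|v|^p)|v|$, and $\|\phi^{(0)}\|_{L^\infty}\le1$.

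\emph{Step 3: the main obstacle.} The main obstacle is controlling $\|v\|_{L^\infty}$, which is needed so that $|v|^p$ stays bounded (or, for $p<1$, so that the nonlinearity is Lipschitz near $\phi^{(0)}$). The first thing I would try is to work in $H^k$ with $k>d/2$ and run energy estimates for $\partial^\alpha v$, $|\alpha|\le k$. Because the Gaussian is strictly positive, $F$ is smooth on a neighbourhood of the range of $\phi^{(0)}$ as long as $|v|\ll|\phi^{(0)}|$. This is a problem at spatial infinity, where $\phi^{(0)}$ is small. To get around it I would instead estimate $v$ in weighted norms $\|e^{\beta|x|^2}\partial^\alpha v\|_{L^2}$ for a small $\beta$; alternatively, since $p\le1$ is assumed, the Hölder bound $|F(z_1)-F(z_2)|\lesssim(|z_1|^p+|z_2|^p)|z_1-z_2|$ handles the undifferentiated part. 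Commuting $\partial^\alpha$ through the nonlinearity loses powers of $t$ coming from derivatives of the phase. A bootstrap then gives
\begin{align*}
\frac{d}{dt}\|v\|_{H^k}\lesssim \langle t\rangle^{C}\|v\|_{H^k}+\mu^2\langle t\rangle^{C},
\end{align*}
valid as long as $\|v\|_{H^k}\le\mu$. Grönwall's inequality yields $\|v(t)\|_{H^k}\lesssim \mu^2 e^{C\langle t\rangle^{C}}$. This is at most $\mu$ provided $e^{C\langle t\rangle^{C}}\le\mu^{-1}$, that is, for $|t|\le c|\log\mu|^{c}$, which is exactly the stated time scale. The bootstrap therefore closes on that interval.

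\emph{Step 4: existence.} Local existence in $H^k\subset L^2$ for \eqref{NLS: phi equation} with Schwartz data comes from the standard theory. The a priori bound from Step 3 then continues the solution up to time $T$. Restricting to the $L^2$ norm gives the claimed estimate $\sup_{|t|\le T}\|\phi(t)-\phi^{(0)}(t)\|_{L_x^2}\le C\mu$.
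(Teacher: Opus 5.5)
Your Step 1, the difference equation in Step 2, and the final Gr\"onwall bookkeeping ($\mu^2e^{C\langle t\rangle^C}\le\mu$ for $|t|\le c|\log\mu|^c$) agree with the paper. Step 3, however, does not close: it hits exactly the obstruction you flag, and your proposed fixes do not resolve it.

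Under the hypotheses $d+2s>4$, $s<\min\{1,\frac d2\}$ one has $d\ge3$, so $k>d/2$ forces $k\ge2$. But $F(z)=|z|^pz$ with $p\le1$ is only $C^{1,p}$ at the origin. Its second derivatives are homogeneous of degree $p-1\le0$, hence unbounded at $0$ when $p<1$ and discontinuous there when $p=1$. Two derivatives of $F(\phi^{(0)}+v)$ therefore produce factors like $|\phi^{(0)}+v|^{p-1}$. Nothing keeps $\phi^{(0)}+v$ away from zero, because $|\phi^{(0)}|=ae^{-|x|^2}$ drops below $\|v\|_{L^\infty}\sim\mu$ once $|x|^2\gtrsim|\log\mu|$.

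The weighted fix does not help. A bound $\|e^{\beta|x|^2}\partial^\alpha v\|_{L^2}\lesssim\mu$ with small $\beta$ only gives $|v|\lesssim\mu e^{-\beta|x|^2}$, which is far larger than $|\phi^{(0)}|$ at infinity. A weight with $\beta\ge1$ is not propagated by the flow: the free evolution of $e^{-|x|^2}$ under $i\partial_t+\frac{\mu^2}{2}\Delta$ has modulus comparable to $e^{-|x|^2/(1+4\mu^4t^2)}$. For the same reason, the ``standard'' $H^k$ local theory you invoke in Step 4 is not available for this non-smooth nonlinearity.

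The missing observation is that you need no $L^\infty$ or derivative control of $v$ at all; the bare $L^2$ identity closes. Since the dispersive term drops out,
\begin{align*}
\frac{\mathrm{d}}{\mathrm{d}t}\|v\|_{L^2}^2=2\,\mathrm{Im}\int\big(F(\phi^{(0)}+v)-F(\phi^{(0)})\big)\bar v\,\mathrm{d}x-\mu^2\,\mathrm{Im}\int\Delta\phi^{(0)}\,\bar v\,\mathrm{d}x .
\end{align*}
Write $F(\phi^{(0)}+v)-F(\phi^{(0)})=|\phi^{(0)}+v|^pv+\big(|\phi^{(0)}+v|^p-|\phi^{(0)}|^p\big)\phi^{(0)}$. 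The first piece contributes the real quantity $|\phi^{(0)}+v|^p|v|^2$, which vanishes under $\mathrm{Im}$. This is what removes the $|v|^{p+2}$ term that made you think $L^\infty$ control was needed.

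For the second piece, $p\le1$ gives $\big||\phi^{(0)}+v|^p-|\phi^{(0)}|^p\big|\le|v|^p$.
\begin{itemize}
\item If $|v|\ge\frac12|\phi^{(0)}|$, then $|v|^{p+1}|\phi^{(0)}|\le2^{1-p}|\phi^{(0)}|^p|v|^2$.
\item If $|v|<\frac12|\phi^{(0)}|$, the mean value theorem gives $\big||\phi^{(0)}+v|^p-|\phi^{(0)}|^p\big|\lesssim|\phi^{(0)}|^{p-1}|v|$.
\end{itemize}
Either way the integrand is $\lesssim|\phi^{(0)}|^p|v|^2\le|v|^2$; the paper organizes this by splitting according to whether $|v|\le|\phi^{(0)}|$. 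Hence $\frac{\mathrm{d}}{\mathrm{d}t}\|v\|_{L^2}\lesssim\mu^2\langle t\rangle^2+\|v\|_{L^2}$, and your Gr\"onwall step finishes the argument.

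Existence of $\phi$ on $[-T,T]$ then comes from the global $H^1$ theory for the defocusing equation with Schwartz data, as in the paper. That theory also justifies the $L^2$ identity, so no $H^k$ continuation argument is needed.
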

	\begin{proof}
		We define 
		\begin{align*}
			F(z):=|z|^pz;\quad w:=\phi-\phi^{(0)};
		\end{align*}
		then $w$ is a solution of the Cauchy problem
		\begin{align}\label{Lem: Cauchy problem}
			\left\{
			\begin{aligned}
				&iw_t+\frac{\mu^2}{2}\Delta w=-\frac{\mu^2}{2}\Delta\phi^{(0)}+F(\phi^{(0)}+w)-F(\phi^{(0)}), \\
				&w(0,x)=0.
			\end{aligned}
			\right.
		\end{align}
		By the global well-posedness theory in the energy space \cite{1987-Kato,2007-Visan}, $w=\phi-\phi^{(0)}$ is globally defined. Thus, it suffices to show that 
		\begin{align}\label{Lem- bound}
			\sup_{|t|\leq T}\|w(t)\|_{L_x^{2}(\mathbb{R}^d)}\leq C\mu,
		\end{align} 
		where $0\leq T\leq c|\log\mu|^c$.
		
		By \eqref{Lem: Cauchy problem} and H\"older's inequality,
		\begin{equation}\label{Lem-derivate}
			\begin{aligned}
				&\frac{\mathrm{d}}{\mathrm{d}t}\|w(t)\|^2_{L_x^2(\mathbb{R}^d)}=2\re \int_{\mathbb{R}^d}w_t\bar{w}\mathrm{d}x\\
				&=-2 \im\int_{\mathbb{R}^d}\big(\frac{\mu^2}{2}\Delta w+\frac{\mu^2}{2}\Delta \phi^{(0)}-(F(\phi^{(0)}+w)-F(\phi^{(0)}))\big)\bar{w}\mathrm{d}x\\
				&\leq C \mu^2 \|w(t)\|_{L_x^2(\mathbb{R}^d)} \|\Delta\phi^{(0)}\|_{L_x^2(\mathbb{R}^d)} +C\left|\im\int_{\mathbb{R}^d}(F(\phi^{(0)}+w)-F(\phi^{(0)}))\bar{w}\mathrm{d}x\right|.
			\end{aligned}
		\end{equation}
		Denote
		\begin{align*}
			A(t):=\{ x\in\mathbb{R}^d: |w(t,x)|\leq |\phi^{(0)}(t,x)| \}; \quad A^{c}(t):=\mathbb{R}^d\backslash A(t).
		\end{align*}
		Then,
		\begin{equation}\label{Lem: Im F-1}
			\begin{aligned}
				&\left|\im\int_{\mathbb{R}^d}\chi_{A(t)}(F(\phi^{(0)}+w)-F(\phi^{(0)}))\bar{w}\mathrm{d}x\right| \\
				&\leq C \int_{\mathbb{R}^d}\chi_{A(t)}|w|(|\phi^{(0)}|^p+|w|^p)|\bar{w}|\mathrm{d}x\\
				&\leq C \|w(t)\|_{L_x^2(\mathbb{R}^d)}\|\chi_{A(t)}|\phi^{(0)}(t)|^pw(t)\|_{L_x^2(\mathbb{R}^d)},
			\end{aligned}
		\end{equation}
		and 
		\begin{equation}\label{Lem: Im F-2}
			\begin{aligned}
				&\left|\im\int_{\mathbb{R}^d}\chi_{A^c(t)}(F(\phi^{(0)}+w)-F(\phi^{(0)}))\bar{w}\mathrm{d}x\right|\\
				&=\left|\im\int_{\mathbb{R}^d}\chi_{A^c(t)}(|\phi^{(0)}+w|^p-|\phi^{(0)}|^p)\phi^{(0)}\bar{w}\mathrm{d}x\right|\\
				&\leq  \|w(t)\|_{L_x^2(\mathbb{R}^d)} \|\chi_{A^{c}(t)}|w(t)|^p\phi^{(0)}(t)\|_{L_x^2(\mathbb{R}^d)}.
			\end{aligned}
		\end{equation}
		Since $p \leq 1$ and $\phi_0$ is a Schwartz function, by \eqref{Lem: Im F-1} and \eqref{Lem: Im F-2} we have that
		\begin{align}\label{Lem-Im F}
			\left|\im\int_{\mathbb{R}^d}(F(\phi^{(0)}+w)-F(\phi^{(0)}))\bar{w}\mathrm{d}x\right|\leq C\|w(t)\|^2_{L_x^2(\mathbb{R}^d)}.
		\end{align}
		By \eqref{Lem-derivate} and \eqref{Lem-Im F},
		\begin{align*}
			\frac{\mathrm{d}}{\mathrm{d}t}\|w(t)\|_{L_x^2(\mathbb{R}^d)}\leq C\mu^2(1+|t|)^C+C\|w(t)\|_{L_x^2(\mathbb{R}^d)}.
		\end{align*}
		By Gronwall's inequality and the initial condition $w(0)=0$, 
		\begin{align*}
			\|w(t)\|_{L_x^2(\mathbb{R}^d)}\leq C\mu^2e^{C(1+|t|)^C}.
		\end{align*}
		Thus, if $|t|<c|\log\mu|^c$ for suitably chosen $c$ and $\mu$ is sufficiently small, we obtain \eqref{Lem- bound}.
	\end{proof}
	
	By Lemma \ref{illposedness-Lem2}, it follows that for $\mu\leq c$ there exists a solution $\phi^{(a,\mu)}(t,x)$ to the equation \eqref{NLS: phi equation} with initial data
	\begin{align*}
		\phi^{(a,\mu)}(0,x):=aw(x),
	\end{align*}
	where $a\in[\frac{1}{2},1]$ and $w(x)=e^{-|x|^2}\in \mathcal{S}(\mathbb{R}^d)$. Moreover, 
	\begin{equation}\label{illposedness-estimate12}
		\sup_{|t|\leq c|\log \mu|^{c}} \|\phi^{(a,\mu)}(t)-\phi^{(a,0)}(t)\|_{L_x^2(\mathbb{R}^d)}\leq C\mu,
	\end{equation}
	where 
	\begin{align}\label{ill-initial data}
		\phi^{(a,0)}(t,x)=aw(x)e^{-ita^p|w(x)|^p}.
	\end{align}
	
	By \eqref{illposedness-scale transform} and \eqref{illposedness-phi transform}, we obtain a three parameter family of solutions $u^{(a,\mu,\lambda)}$ to the NLS equation \eqref{NLS}, where $\frac{1}{2}\leq a\leq 1$, $0<\lambda < \mu \ll 1$, and 
	\begin{align*}
		u^{(a,\mu,\lambda)}(t,x)=\lambda^{-\frac{2}{p}} \phi^{(a,\mu)}(\lambda^{-2}t,\lambda^{-1}\mu x).
	\end{align*}
	We prove the ill-posedness result by considering the values of the three parameters.
	\begin{proof}[Proof of Proposition \ref{ill-main theorem1}]
		The initial data satisfy
		\begin{align*}
			u^{(a,\mu,\lambda)}(0,x)=\lambda^{-\frac{2}{p}} a w(\lambda^{-1}\mu x).
		\end{align*} 
		By a simple calculation,
		\begin{equation}\label{illpose-result-1}
			\begin{aligned}
				\|u^{(a,\mu,\lambda)}(0,x)\|_{\mathcal{F}\dot{H}_x^s(\mathbb{R}^d)}&=a\lambda^{\frac{d}{2}-\frac{2}{p}+s}\mu^{-\frac{d}{2}-s}\|w(x)\|_{\mathcal{F}\dot{H}_x^s(\mathbb{R}^d)},\\
				\|u^{(a,\mu,\lambda)}(0,x)-u^{(a^{\prime},\mu,\lambda)}(0,x)\|_{\mathcal{F}\dot{H}_x^s(\mathbb{R}^d)}&=|a-a^{\prime}|\lambda^{\frac{d}{2}-\frac{2}{p}+s}\mu^{-\frac{d}{2}-s}\|w(x)\|_{\mathcal{F}\dot{H}_x^s(\mathbb{R}^d)}.
			\end{aligned}
		\end{equation}
		Let $0<\varepsilon<1$ be fixed. Since $\frac{d}{2}-\frac{2}{p}+s>0$, we set
		\begin{align*}
			\lambda^{\frac{d}{2}-\frac{2}{p}+s}\mu^{-\frac{d}{2}-s}=\varepsilon.
		\end{align*}
		On the other hand,
		\begin{equation}\label{illposedness-dis-uniformal-continue}
			\begin{aligned}
				&\big\||x|^sS(-\lambda^2 t)\big( u^{(a,\mu,\lambda)}(\lambda^2 t ,y)-u^{(a^{\prime},\mu,\lambda)}(\lambda^2 t,y) \big)(x)\big\|_{L_x^2(\mathbb{R}^d)}\\
				&=\varepsilon\big\||x|^sS(-\mu^2t)\big(\phi^{(a,\mu)}(t,y)-\phi^{(a^{\prime},\mu)}(t,y)\big)(x)\big\|_{L_x^2(\mathbb{R}^d)}\\
				&\geq \varepsilon\big\|\chi_{|x|\geq 1} S(-\mu^2t)\big(\phi^{(a,\mu)} (t,y) -\phi^{(a^{\prime},\mu)}(t,y)\big)(x)\big\|_{L_x^2(\mathbb{R}^d)}\\
				&\geq \varepsilon\big\|\chi_{|x|\geq 1}S(-\mu^2 t)(\phi^{(a,0)} -\phi^{(a^{\prime},0)})\big\|_{L_x^2(\mathbb{R}^d)} \\
				&\qquad - \varepsilon\big\|\chi_{|x|\geq 1}S(-\mu^2 t)(\phi^{(a,\mu)}-\phi^{(a,0)})\big\|_{L_x^2(\mathbb{R}^d)}\\
				&\qquad -\varepsilon\big\|\chi_{|x|\geq 1}S(-\mu^2 t)(\phi^{(a^{\prime},\mu)}-\phi^{(a^{\prime},0)})\big\|_{L_x^2(\mathbb{R}^d)}.
			\end{aligned}
		\end{equation}
		From an inspection of \eqref{ill-initial data}, we see that there exists a time $T=T(a,a^{\prime})>0$ such that 
		\begin{align*}
			\|\chi_{|x|\geq 1} (\phi^{(a,0)}(T) - \phi^{(a^{\prime},0)}(T)) \|_{L_x^2(\mathbb{R}^d)}\geq c,
		\end{align*}
		where $c>0$ is independent of $a$ and $a^{\prime}$. Thus, by choosing $\mu$ sufficiently small, we have that
		\begin{align}\label{illpose-1}
			\big\|\chi_{|x|\geq 1}S(-\mu^2 T) (\phi^{(a,0)}(T) - \phi^{(a^{\prime},0)}(T))\big\|_{L_x^2(\mathbb{R}^d)}\geq \frac{c}{2}.
		\end{align}
		By \eqref{illposedness-estimate12},
		\begin{equation}\label{illpose-2}
			\begin{aligned}
				\big\|\chi_{|x|\geq 1}S(-\mu^2 T) (\phi^{(a,\mu)}(T) -\phi^{(a,0)}(T)) \big\|_{L_x^2(\mathbb{R}^d)}&\leq \|\phi^{(a,\mu)}(T) -\phi^{(a,0)}(T)\|_{L_x^2(\mathbb{R}^d)}\\
				&\leq C\mu.
			\end{aligned}
		\end{equation}
		It follows from \eqref{illposedness-dis-uniformal-continue}, \eqref{illpose-1}, and \eqref{illpose-2} that
		\begin{align}\label{illpose-result-2}
			\big\||x|^sS(-\lambda^2 T)\big( u^{(a,\mu,\lambda)}(\lambda^2 T )-u^{(a^{\prime},\mu,\lambda)}(\lambda^2 T) \big)\big\|_{L_x^2(\mathbb{R}^d)}>c\varepsilon.
		\end{align}
		Let $|a-a^{\prime}|<\delta$. Then $\mu\rightarrow 0$ forces the time $\lambda^2 T$ to approach zero as well. By \eqref{illpose-result-1} and \eqref{illpose-result-2}, we finish the proof of Proposition \ref{ill-main theorem1}.
	\end{proof}

	\subsection{Local well-posedness away from the origin}
	Similarly, applying the pseudo conformal transform $\mathcal{T}$ to \eqref{NLS-t_0}, we find that $\mathcal{U}=\mathcal{T}u$ satisfies the following equation:
	\begin{align}\label{CNLS-t_0}
		\left\{
		\begin{aligned}
			&i\pd_t\mathcal{U}+\frac{1}{2}\Delta \mathcal{U}=t^{\frac{dp}{2}-2}|\mathcal{U}|^p \mathcal{U}, \\
			&S(-t_0^{-1})\mathcal{U}(t_0^{-1})=\mathcal{F}^{-1}\overline{S(-t_0)u_1}.
		\end{aligned}
		\right.
	\end{align}

	\begin{proof}[Proof of Theorem \ref{thm:local-1order-awayfromorigin}]
		We denote $I:=[t_0^{-1}-T_0,t_0^{-1}+T_0]$ and $\mathcal{U}_+:=\mathcal{F}^{-1}\overline{S(-t_0)u_1}\in \dot{H}_x^s(\mathbb{R}^d)$. Set $0< T_0<\frac{1}{2t_0}$ temporarily.  The crucial observation is that $0,\infty \notin I$. If $p\geq 1$, we set $(q_0, \wt{r}_0)=\big( \frac{4(p+2)}{(d-2s)p}, \frac{d(p+2)}{d+sp}\big)$. If $p<1$, $(q_0,\wt r_0)$ is given in Definition \ref{defn:lwp-parameters}.
		By modifying \eqref{proof of lwp s-order-1} and \eqref{proof of lwp s-order-2}, we have that
		\begin{align*}
			\||\nabla|^s \mathcal{U}\|_{L_t^{\infty}L_x^2\cap L_t^{q_0}L_x^{\wt{r}_0}(I\times\R^d)}
			&\lesssim \|\mathcal{U}_+\|_{\dot{H}_x^s(\mathbb{R}^d)} + t_0^{2-\frac{dp}{2}} |I|^{\frac{1}{q_1}} \||\nabla|^{s}\mathcal{U}\|^{p+1}_{L_t^{q_0}L_x^{\wt{r}_0}(I\times\mathbb{R}^d)},
		\end{align*}
		where $\frac{1}{q_1}=\frac{4-(d-2s)p}{4}$.
		
		Note that $\frac{1}{q_1} > 0$. We are able to choose $T_0= T_0(\|\mathcal{U}_+\|_{\dot{H}_x^s(\R^d)})$  sufficiently small such that $|I|^{\frac{1}{q_1}}\ll 1$, and the rest of the proof is similar to Proposition \ref{prop:lwp-subcritical-pseudo conformal transform}. 
	\end{proof}
	\begin{remark}
		Similarly, we can also obtain a local solution when $p= \frac{4}{d-2s}$. However, the time $T_0$ will depend on the profile of $\mathcal{U}_+$, and not only on its norm. Indeed, if $p= \frac{4}{d-2s} $, then $\frac{1}{q_1} = 0$. Since $2<q_0<\infty$, we consider the space
		\begin{align*}
			E(\delta)= \{ |\nabla|^s\mathcal{U}\in L_t^{q_0}L_x^{\wt{r}_0}(I\times \mathbb{R}^d): \||\nabla|^s \mathcal{U}\|_{L_t^{q_0}L_x^{\wt{r}_0}(I\times\mathbb{R}^d)}\leq 2 \delta \}.
		\end{align*} 
		Choose $T_0=T_0(\mathcal{U}_+)$ sufficiently small such that $\delta =C \|S(t)|\nabla|^s \mathcal{U}_+\|_{L_t^{q_0}L_x^{\wt{r}_0}(I\times\mathbb{R}^d)} \ll 1$, and the rest of the proof is similar to Proposition \ref{prop:lwp-subcritical-pseudo conformal transform}.
	\end{remark}

	\vspace{2cm}
	
	\section{Proof of Theorem \ref{main-theorem1}}
	
	\vspace{0.5cm}

	We apply the transform $u(t,x)\rightarrow \overline{u(-t, x)}$, and then it suffices to prove Theorem \ref{main-theorem1} for $t>0$.

	\subsection{Local theory I: near the origin}
	We first study the local well-posedness  with initial data in $\mathcal{F}\dot{H}_x^{-s_c}\cap\mathcal{F}\dot{H}_x^1(\mathbb{R}^3)$ at the origin.
	\begin{prop}[Local well-posedness at $t=0$]\label{LWP1}
		Let the assumptions in Theorem \ref{main-theorem1} hold. Then, there exists  $I=[0,t_0]$ such that the equation \eqref{NLS} admits a unique solution $ S(-t)u\in C(I;\mathcal{F}\dot{H}^{-s_c}_x\cap\mathcal{F}\dot{H}_x^{1}(\mathbb{R}^3))$. Moreover, for any admissible pair $(q,r)$ with $q\neq\infty$, it holds that
		\begin{align*}
			\||J(t)|^{-s_c}u\|_{L_t^{\infty}L_x^2 \cap L_t^{q, 2}L_x^r(I\times\R^3)}+ \|J(t)u\|_{L_t^{\infty}L_x^2 \cap L_t^{q, 2}L_x^r(I\times\R^3)}  \leq C \|u_0\|_{\mathcal{F}\dot{H}^{-s_c}_x\cap\mathcal{F}\dot{H}_x^{1}(\R^3)}.
		\end{align*}
	\end{prop}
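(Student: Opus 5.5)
We work on the pseudo conformal side. Setting $\U=\TT u$ and $\U_+=\F^{-1}\bar u_0$, the hypothesis $u_0\in\F\dot H_x^{-s_c}\cap\F\dot H_x^1$ becomes $\U_+\in\dot H_x^{-s_c}\cap\dot H_x^1(\R^3)$. By \eqref{PCT-u(t,x)}, the claimed bounds on $|J(t)|^{-s_c}u$ and $J(t)u$ over $[0,t_0]$ are equivalent to bounds on $|\nabla|^{-s_c}\U$ and $\nabla\U$ over $[T_0,\infty)$, with $T_0=t_0^{-1}$. Since $p<\frac43$, we have $s_c<0$, so the weight $t^{\frac{3p}{2}-2}$ decays. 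I would therefore prove the analogue of Proposition \ref{prop:lwp-subcritical-pseudo conformal transform} at the critical level $s=-s_c$ and simultaneously propagate the $\dot H^1$ regularity.

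\textbf{Step 1 (critical part).} The exponent $p=\frac{4}{3+2s}$ with $s=-s_c$ is exactly the critical case, so $T_0^{\frac{(d+2s)p-4}{4}}=1$ and no power of $T_0$ is gained. Smallness must instead come from the tail of the free evolution. Because $(q,r)$ is admissible with $q<\infty$, the Lorentz norm $\||\nabla|^{-s_c}S(t)\U_+\|_{L_t^{q,2}L_x^r([T_0,\infty))}\to0$ as $T_0\to\infty$. Following Masaki \cite{2015-Masaki}, I would use the Lorentz-modified inhomogeneous Strichartz estimate of Lemma \ref{2: Lemma-inhomogeneous Strichartz estimates} combined with Lemma \ref{2: Lemma-Lorentz space inequality}. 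The critical weight satisfies $t^{\frac{3p}{2}-2}\in L_t^{q_1,\infty}$ with $\frac1{q_1}=\frac{4-3p-2sp}{4}\cdot$(adjusted), and this weight is placed in a weak Lebesgue space rather than a strong one. This closes a contraction in a small ball $\||\nabla|^{-s_c}\U\|_{L_t^{q_0,2}L_x^{\wt r_0}}\le2\delta$, with $T_0=T_0(\U_+)$ depending on the profile. For $p\ge1$, the nonlinearity is $C^1$ with a Lipschitz derivative, so the difference estimates follow from Lemma \ref{2: Lemma-H^s,0<s<1} without any need for Definition \ref{defn:lwp-parameters}.

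\textbf{Step 2 ($\dot H^1$ part).} With the critical norm fixed and small, I would estimate $\nabla\U$ in $L_t^\infty L_x^2\cap L_t^{q,2}L_x^r$. Using Strichartz and the chain rule (Lemma \ref{2: Lemma-fractional chain rule 2}), the problem reduces to bounding $\|t^{\frac{3p}{2}-2}|\U|^p\nabla\U\|$ in a dual Lorentz-Strichartz norm. This can be done by $\|t^{\frac{3p}{2}-2}\|_{L^{q_1,\infty}}$ times $\|\U\|^p$ in a Sobolev-embedded critical norm (controlled by Step 1, hence small) times $\|\nabla\U\|$ in an admissible norm. Choosing exponents so that the $\U$ factor lands exactly in the critical scaling, the estimate is linear in $\nabla\U$ with a small coefficient $\delta^p$. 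It therefore closes on the same interval, giving $\|\nabla\U\|\lesssim\|\U_+\|_{\dot H^1}$. Persistence of regularity in this form also gives uniqueness and continuity of $S(-t)\U$ into $\dot H^{-s_c}\cap\dot H^1$, hence of $S(-t)u$ after applying $\TT$ back.

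\textbf{Main obstacle.} The main obstacle is Step 1: choosing exponents in dimension three so that the weight $t^{\frac{3p}{2}-2}$ lies exactly in a weak Lorentz space and the resulting nonadmissible pairs satisfy the acceptability and ratio conditions of Lemma \ref{2: Lemma-inhomogeneous Strichartz estimates} for all $1\le p<\frac43$. I would first try the pairs from the $p\ge1$ case of Proposition \ref{prop:lwp-subcritical-pseudo conformal transform}, namely $q_0=\wt q=\frac{4(p+2)}{(3-2s)p}$, with $L^{q_0,2}$ in time. I would then check that the time Hölder exponents match with $q_1$ finite, which holds since $p>\frac{4s}{3+2s}$ is automatic. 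The constant $C$ in the final bound comes from $R=C\|\U_+\|$, and $t_0=T_0^{-1}$ depends on $u_0$ itself, not only on its norm.
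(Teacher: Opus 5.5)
You prove the pseudo-conformal analogue of the proposition, not the proposition itself. The opening reduction fails for the $L_t^{q,2}$ bounds, which are part of the statement.

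Your estimates are the paper's estimates, transported by $\TT$. The weight $t^{\frac{3p}{2}-2}$ you meet on $[T_0,\infty)$ is the same function $t^{s_cp}$ ($s_cp=\frac{3p}{2}-2$) that the paper meets on $[0,t_0]$. Both of you place it in weak $L_t^{\frac{2}{4-3p},\infty}$ and get smallness from the tail of the free flow in a Lorentz--Strichartz norm. Both carry the $\dot H^1$ (resp.\ $J(t)$) component linearly with coefficient $\delta^p$. In effect your Steps 1--2 prove Corollary \ref{cor:local-weighted-1}.

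The gap is the claim that bounds over $[T_0,\infty)$ are equivalent to bounds over $I=[0,t_0]$. For admissible $(q,r)$, the definition of $\TT$ gives $\|\TT f(t)\|_{L_x^r}=t^{-2/q}\|f(1/t)\|_{L_x^r}$, so time profiles transform by $g\mapsto t^{-2/q}g(1/t)$. This map is an isometry of $L_t^q$ (that is \eqref{PCT-remark-2}), so your $L_t^\infty L_x^2$ and $L_t^qL_x^r$ bounds do transfer. It is \emph{unbounded} on $L_t^{q,2}$ for $q>2$. Take $g=\chi_E$ with $E=\bigcup_{k=K}^{K+N}[2^k,2^k+1]$ and $2^K\ge T_0$; then $\|g\|_{L^{q,2}}\sim N^{1/q}$. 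The image is $\approx 4^{k/q}$ on sets of measure $\approx 4^{-k}$, so $\lambda|\{t:\,t^{-2/q}g(1/t)>\lambda\}|^{1/q}\approx1$ for $\sim N$ dyadic values of $\lambda$, and its $L^{q,2}$ norm is $\sim N^{1/2}$.

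Hence the stated Lorentz bounds for $|J(t)|^{-s_c}u$ and $J(t)u$ on $I$ do not follow from what you prove; they need an argument on the $u$ side. This is why the paper proves the proposition on the $u$ side and obtains Corollary \ref{cor:local-weighted-1} by rerunning the contraction rather than by transport. For the same reason it later derives \eqref{esti-u-global-bound} from the $u$-side Duhamel formula.

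The repair is to run your scheme on the $u$ side, which is the paper's proof:
\begin{itemize}
\item With $\wt u=M(-t)u$, write $|J(t)|^{-s_c}(|u|^pu)=M(t)|t\nabla|^{-s_c}(|\wt u|^p\wt u)$ and $J(t)(|u|^pu)=M(t)(it\nabla)(|\wt u|^p\wt u)$.
\item Use Sobolev in the form $\|\wt u\|_{L_x^\rho}\lsm t^{s_c}\||J(t)|^{-s_c}u\|_{L_x^{\wt r_0}}$. This produces $t^{s_cp}\in L_t^{\frac{2}{4-3p},\infty}(0,\infty)$.
\item Take smallness from the Lorentz--Strichartz norm of $S(t)u_0$ on $[0,t_0]$ as $t_0\to0$.
\item Keep the $J(t)u$ bound in the same contraction ball. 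This also supplies the a priori finiteness your Step 2 needs in order to absorb.
\end{itemize}

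Your exponents work verbatim there, and your ``main obstacle'' is not one. The pair you propose, $q_0=\frac{4(p+2)}{(3-2s)p}=\frac{2(p+2)}{3p-2}$ with $\wt r_0=\frac{6(p+2)}{10-3p}$, is admissible. It satisfies $\frac{4-3p}{2}+\frac{p+2}{q_0}=1$ and $\frac{p+2}{\wt r_0}+\frac{s_cp}{3}=1$. So only the ordinary Lorentz--Strichartz estimate is needed, with no non-admissible pairs and no inhomogeneous estimates.

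Your $\frac{1}{q_1}=\frac{4-3p-2sp}{4}$ has a sign error: it vanishes at criticality. The correct value is $\frac{4-(3-2s)p}{4}=2-\frac{3p}{2}$, which is positive because $p<\frac43$.

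This symmetric pair also keeps $q_0'>1$ at $p=1$. The paper's choice $L_t^{4,2}L_x^3$ puts the nonlinearity in $L_t^{\frac{4}{9-5p},2}L_x^{\frac{6}{5p-2}}$. At $p=1$ that is $L_t^{1,2}L_x^2$, corresponding to the dual endpoint $\wt q=\infty$, which the Lorentz Strichartz lemma excludes.
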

	
	\begin{proof}
		For all positive constants $R$ and $\delta$, we define $$E(R,\delta):=\{S(-t)u\in C(I;\mathcal{F} \dot{H}^{-s_c}_x \cap \mathcal{F} \dot{H}_x^{1}(\mathbb{R}^3)): \|u\|_{X_2(I)} \leq 2\delta, \|u\|_{Y_2(I)} \leq 2R \},$$
		under the metric
		\begin{align*}
			d(u, v) = \|u-v\|_{X_2(I)} + \|u-v\|_{Y_2(I)},
		\end{align*} 
		where
		\begin{align*}
			&\|u\|_{X_2(I)}:=\||J(t)|^{-s_c}u\|_{L_t^{4,2}L_x^3(I\times \mathbb{R}^3)}+\|J(t)u\|_{L_t^{4,2}L_x^3(I\times \mathbb{R}^3)};\\
			&\|u\|_{Y_2(I)}:=\||J(t)|^{-s_c}u\|_{L_t^{\infty}L_x^2(I\times\R^3)}+\|J(t)u\|_{L_t^{\infty}L_x^2(I\times\R^3)}.
		\end{align*}
		Define the solution map $$\Phi(u):=S(t)u_0-i\int_{0}^{t}S(t-\tau)(|u|^pu)\mathrm{d}\tau.$$
		By the Strichartz estimate  and $u_0\in \mathcal{F}\dot{H}^{-s_c}_x\cap\mathcal{F}\dot{H}_x^{1}(\mathbb{R}^3)$, for any $\delta>0$, there exists $t_0^{\prime}>0$ such that 
		\EQ{
			\|S(t)u_0\|_{X_2([0, t_0'])}&\leq \delta,
		}
		and
		\EQ{
			\|S(t)u_0\|_{Y_2([0,t'_0])}&\leq C\|u_0\|_{\mathcal{F}\dot{H}^{-s_c}_x\cap\mathcal{F}\dot{H}_x^{1}(\R^3)} .
		}
		Let 
		\begin{align*}
			R:=C\|u_0\|_{\mathcal{F}\dot{H}^{-s_c}_x\cap\mathcal{F}\dot{H}_x^{1}(\R^3)}.
		\end{align*}
		Set $t_0< t_0^{\prime}$ temporarily. In the following, we restrict the spacetime variable $(t,x) \in I\times\mathbb{R}^3$. Let $\wt{u}=M(-t)u$. Note that $|J(t)|^sS(t-\tau)=S(t-\tau)|J(\tau)|^s$. It follows from the Strichartz estimate, Lemma \ref{2: Lemma-fractional chain rule 2}, H\"older's and Sobolev's inequalities that for any admissible pair $(q,r)$ with $q\neq\infty$,
		\begin{equation}\label{estimate-J(t)-inhomogeneous}
			\begin{aligned}
				&\left\||J(t)|^{-s_c}\int_{0}^{t}S(t-\tau)(|u|^pu)(\tau)\mathrm{d}\tau\right\|_{L_t^{\infty}L_x^2 \cap L_t^{q, 2}L_x^r}\\
				&\lesssim \||J(t)|^{-s_c}(|u|^pu)\|_{L_t^{\frac{4}{9-5p},2}L_x^{\frac{6}{5p-2}}}\\
				&= \| |t\nabla|^{-s_c} (|\wt{u}|^p\wt{u}) \|_{L_t^{\frac{4}{9-5p},2} L_x^{\frac{6}{5p-2}}}\\
				&\lesssim \big\| \|\wt{u}\|^p_{L_x^{\frac{6p}{5p-4}}} \||J(t)|^{-s_c}u\|_{L_x^3} \big\|_{L_t^{\frac{4}{9-5p},2}}\\
				&\lesssim \|t^{s_cp}\|_{L_t^{-\frac{1}{s_cp},\infty}} \||J(t)|^{-s_c}u\|^p_{L_t^{4,\infty}L_x^3} \||J(t)|^{-s_c}u\|_{L_t^{4,2}L_x^3}\\
				&\lesssim \|u\|^{p+1}_{X_2(I)}.
			\end{aligned}
		\end{equation}
		Similarly,
		\begin{align*}
			\left\|J(t)\int_{0}^{t}S(t-\tau)(|u|^pu)(\tau)\mathrm{d}\tau\right\|_{L_t^{\infty}L_x^2 \cap L_t^{q, 2}L_x^r} \lesssim \|u\|^{p+1}_{X_2(I)}.
		\end{align*}
		Thus, we have that
		\begin{align*}
			\|\Phi(u)\|_{X_2(I)}\leq \delta+C\delta^{p+1},
		\end{align*}
		and
		\begin{align*}
			\|\Phi(u)\|_{Y_2(I)}&\leq R+C\delta^{p+1}.
		\end{align*}
		Let $\alpha=1$ or $-s_c$, by modifying the above nonlinear estimates, and by Lemma \ref{2: Lemma-H^s,0<s<1},
		\begin{equation}\label{estimate-J(t)(u-v)}
			\begin{aligned}
				\||J(t)|^{\alpha}(\Phi(u) - \Phi(v))\|_{L_t^{\infty}L_x^2 \cap L_t^{4, 2}L_x^3} &\lesssim \||J(t)|^{\alpha} (|u|^pu - |v|^pv) \|_{L_t^{\frac{4}{9-5p},2}L_x^{\frac{6}{5p-2}}}\\
				&\lesssim\big\| \|\wt{u}\|^p_{L_x^{\frac{6p}{5p-4}}} \||J(t)|^{\alpha} (u-v)\|_{L_x^3} \big\|_{L_t^{\frac{4}{9-5p},2}}\\
				&\qquad +\big\|\||J(t)|^{\alpha} v\|_{L_x^3}\|\wt{u}- \wt{v}\|^p_{L_x^{\frac{6p}{5p-4}}} \big\|_{L_t^{\frac{4}{9-5p},2}}\\
				%&\lesssim \||J(t)|^{-s_c} u\|^p_{L_t^{4,2}L_x^3} \||J(t)|^{\alpha} (u-v)\|_{L_t^{4,2}L_x^3} \\
				%&\qquad + \||J(t)|^{\alpha} v \|_{L_t^{4,2}L_x^3} \||J(t)|^{-s_c} (u-v) \|^p_{L_t^{4,2}L_x^3}\\
				&\lesssim \delta^p\|u-v\|_{X_2(I)}.
			\end{aligned}
		\end{equation}
		Therefore, 
		\begin{align*}
			d(\Phi(u), \Phi(v))\leq C \delta^p d(u,v).
		\end{align*}
		By choosing $\delta>0$ sufficiently small, we prove that $\Phi$ maps $E(R,\delta)$ into itself and is a contraction. This finishes the proof.
	\end{proof}
	\begin{remark}
		In the following, we will regard $t_0$ and $\norm{u_0}_{\mathcal{F}\dot{H}^{-s_c}_x\cap\mathcal{F}\dot{H}_x^{1}(\R^3)}$ as constants, and omit their dependence for simplicity. 
	\end{remark}
	
	We apply the pseudo conformal transform $\mathcal{T}$ to \eqref{NLS}, and then $\mathcal{U}=\TT u$ satisfies \eqref{CNLS-I}
	%	\begin{align}\label{CNLS}
		%		i \pd_t \mathcal{U}+\frac{1}{2}\Delta \mathcal{U}=t^{\frac{3p}{2}-2}|\mathcal{U}|^p \mathcal{U},
		%	\end{align}
	with asymptotic condition 
	\begin{align*}
		\lim_{t\rightarrow +\infty}\|S(-t)\mathcal{U}(t)- \mathcal{U}_+ \|_{\dot{H}_x^{-s_c}\cap \dot{H}_x^1(\mathbb{R}^3)}=0,
	\end{align*}
	where $\mathcal{U}_+:=\mathcal{F}^{-1}\bar{u}_0\in \dot{H}_x^{-s_c}\cap \dot{H}_x^1(\mathbb{R}^3)$.
	
	Repeating the contraction argument of Proposition \ref{LWP1} for the transformed Duhamel formula, using the Lorentz-space Strichartz estimates, yields the following corollary.
	\begin{cor}\label{cor:local-weighted-1}
		Let the assumptions in Theorem \ref{main-theorem1} hold. There exists $T_0>0$ such that the equation \eqref{CNLS-I} admits a unique solution $ \mathcal{U}\in C([T_0,\infty);\dot{H}^{-s_c}_x\cap\dot{H}_x^{1}(\mathbb{R}^3)). $ Moreover, for any admissible pair $(q,r)$ with $q\neq\infty$,
		\begin{align}\label{esti-bound for (T_0, infty)}
			\||\nabla|^{-s_c} \mathcal{U} \|_{L_t^{\infty}L_x^2 \cap L_t^{q, 2}L_x^r([T_0,\infty)\times\R^3)} + \|\nabla \mathcal{U} \|_{L_t^{\infty}L_x^2 \cap L_t^{q, 2}L_x^r([T_0,\infty)\times\R^3)}\lesssim 1.
		\end{align}
	\end{cor}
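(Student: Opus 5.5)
The plan is to run a fixed-point argument directly for the Duhamel formula of \eqref{CNLS-I} with data at $t=+\infty$,
\begin{align*}
\Phi(\mathcal{U})(t):=S(t)\mathcal{U}_+-i\int_{\infty}^{t}S(t-\tau)\big(\tau^{\frac{3p}{2}-2}|\mathcal{U}|^p\mathcal{U}\big)(\tau)\,\mathrm{d}\tau,\qquad t\in I:=[T_0,\infty),
\end{align*}
with $\mathcal{U}_+=\mathcal{F}^{-1}\bar u_0\in\dot H_x^{-s_c}\cap\dot H_x^1(\R^3)$. Alternatively one could transfer Proposition \ref{LWP1} through \eqref{PCT-u(t,x)} and \eqref{PCT-remark-2}. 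However, Lorentz norms in time do not transform cleanly under $t\mapsto 1/t$, so I will redo the contraction on the transformed side. This mirrors Proposition \ref{LWP1}, with $|J(t)|^{\alpha}$ replaced by $|\nabla|^{\alpha}$, $\alpha\in\{-s_c,1\}$.

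\textbf{Function spaces.} I take
\begin{align*}
\|\mathcal{U}\|_{X(I)}&:=\||\nabla|^{-s_c}\mathcal{U}\|_{L_t^{4,2}L_x^3(I\times\R^3)}+\|\nabla\mathcal{U}\|_{L_t^{4,2}L_x^3(I\times\R^3)},\\
\|\mathcal{U}\|_{Y(I)}&:=\||\nabla|^{-s_c}\mathcal{U}\|_{L_t^{\infty}L_x^2(I\times\R^3)}+\|\nabla\mathcal{U}\|_{L_t^{\infty}L_x^2(I\times\R^3)}.
\end{align*}
The contraction takes place in the ball $\{\|\mathcal{U}\|_{X(I)}\le2\delta,\ \|\mathcal{U}\|_{Y(I)}\le2R\}$, with $R=C\|\mathcal{U}_+\|_{\dot H^{-s_c}\cap\dot H^1}$. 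The Lorentz Strichartz estimate gives $\|S(t)\mathcal{U}_+\|_{X(\R)}<\infty$. Since $L^{4,2}$ has absolutely continuous quasi-norm, this yields $\|S(t)\mathcal{U}_+\|_{X([T_0,\infty))}\le\delta$ once $T_0$ is large.

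\textbf{Nonlinear estimate.} By the dual Lorentz Strichartz estimate, Lemma \ref{2: Lemma-fractional chain rule 2} and Sobolev ($\dot W^{-s_c,3}\hookrightarrow L^{\frac{6p}{5p-4}}$, the same exponents as in \eqref{estimate-J(t)-inhomogeneous}), and then Lemma \ref{2: Lemma-Lorentz space inequality}, I get
\begin{align*}
\big\||\nabla|^{\alpha}\textstyle\int_\infty^t S(t-\tau)\tau^{\frac{3p}{2}-2}F(\mathcal{U})\,\mathrm{d}\tau\big\|_{L_t^\infty L_x^2\cap L_t^{q,2}L_x^r}
&\lesssim\|t^{\frac{3p}{2}-2}\|_{L_t^{-\frac{1}{s_cp},\infty}(I)}\||\nabla|^{-s_c}\mathcal{U}\|^p_{L_t^{4,\infty}L_x^3}\||\nabla|^{\alpha}\mathcal{U}\|_{L_t^{4,2}L_x^3}\\
&\lesssim\|\mathcal{U}\|_{X(I)}^{p+1}.
\end{align*}
This works because $\frac{3p}{2}-2=s_cp$, so the weight $t^{s_cp}$ lies exactly in $L^{-1/(s_cp),\infty}(0,\infty)$. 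The time exponents are admissible: $\frac{9-5p}{4}=1-\frac14+s_cp\cdot(-1)\cdot\ldots$ reduces to $1-\frac{p+1}{4}+s_cp\cdot$ after checking $\frac{9-5p}{4}=\frac{p+1}{4}-s_cp$, which holds. For differences I use Lemma \ref{2: Lemma-H^s,0<s<1} when $\alpha=-s_c\in(0,1)$. When $\alpha=1$ I use the pointwise bound on $\nabla(F(u)-F(v))$, valid since $p\ge1$ makes $F\in C^{1,p-1}$ with $p-1<1$; the problematic piece $|\nabla v|\,|u-v|^{p}$-type terms are handled as in \eqref{estimate-J(t)(u-v)}. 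This yields contraction in $d(\mathcal U,\mathcal V)=\|\mathcal U-\mathcal V\|_{X}+\|\mathcal U-\mathcal V\|_{Y}$ with constant $C\delta^p$.

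\textbf{Conclusion and main obstacle.} Choosing $\delta$ small gives $\Phi$ mapping the ball to itself as a contraction. Continuity in $\dot H^{-s_c}\cap\dot H^1$ follows from the Strichartz estimate. Bound \eqref{esti-bound for (T_0, infty)} for all admissible $(q,r)$ with $q\ne\infty$ follows by reapplying the Strichartz estimate to the fixed point, and uniqueness follows from the difference estimate. The main obstacle I expect is the $\alpha=1$ difference estimate with only Hölder continuous $F'$ when $1\le p<\frac43$. If Lemma \ref{2: Lemma-H^s,0<s<1} (stated for $s<1$) does not suffice, I would first try estimating differences only in the weaker metric $\||\nabla|^{-s_c}(\mathcal U-\mathcal V)\|_{L^{4,2}_tL^3_x\cap L^\infty_tL^2_x}$. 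Closedness of the ball under this metric then follows by weak-$*$ compactness, as in $E_2(R)$ of Proposition \ref{prop:lwp-subcritical-pseudo conformal transform}.
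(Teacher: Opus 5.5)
Your proposal is correct and is essentially the paper's proof. The paper obtains Corollary~\ref{cor:local-weighted-1} by rerunning the contraction of Proposition~\ref{LWP1} on the transformed Duhamel formula from $t=+\infty$ with Lorentz-space Strichartz estimates, and your scheme does exactly this: you replace $|J(t)|^{\alpha}$ by $|\nabla|^{\alpha}$, use $\tau^{\frac{3p}{2}-2}=\tau^{s_cp}\in L_t^{-\frac{1}{s_cp},\infty}$ with the exponents of \eqref{estimate-J(t)-inhomogeneous}, and get smallness from the tail of $S(t)\mathcal{U}_+$ in $L_t^{4,2}L_x^3$.

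One caveat is shared with the paper: at $p=1$ the dual exponent $\frac{4}{9-5p}$ equals $1$ (i.e.\ $\widetilde q=\infty$), where the Lorentz Strichartz estimate is unavailable, so there you should replace $L_t^{4,2}L_x^3$ by $L_t^{q,2}L_x^{r}$ for an admissible pair with $q$ slightly larger than $4$, which places the nonlinearity in $L_t^{\widetilde q',2}$ with $\widetilde q'>1$.
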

	Now, we reduce the problem to extending the solution in Corollary \ref{cor:local-weighted-1} from $T_0$ backward to $0$.

	\subsection{Local theory II: away from the origin}
	Since Proposition \ref{LWP1} is not sufficient to extend the solution globally, we need the following local result with initial time less than $ T_0$, only requiring the initial data in $ \dot H_x^1(\mathbb{R}^3)$. This result is also equivalent to the blow-up criterion by $\dot H_x^1(\mathbb{R}^3)$-norm.
	\begin{prop}[Local well-posedness away from $t=0$]\label{GWP-I:prop H1}
		Let the assumptions in Theorem \ref{main-theorem1} hold, and fix any $T>0$. Assume that there exists $M>0$ and some $t_1\in (T,T_0)$ such that
		\EQ{
			\norm{\mathcal{U}(t_1)}_{\dot{H}_x^1(\mathbb{R}^3)} \le M.
		}  
		Then, there exists some $0<t_2\leq \min\{1,\frac{T}{2}\}$ depending on $T$ and $M$ such that the equation \eqref{CNLS-I} admits a unique solution $ \mathcal{U}\in C([t_1-t_2,t_1];\dot{H}^1_x(\mathbb{R}^3)).$ 
	\end{prop}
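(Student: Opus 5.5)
We solve \eqref{CNLS-I} backward from $t_1$ with data $\mathcal{U}(t_1)\in\dot H_x^1(\mathbb{R}^3)$ by a contraction argument on a short interval $I:=[t_1-t_2,t_1]$. The point is that $I\subset[\frac{T}{2},T_0]$, which is bounded away from both $0$ and $\infty$. Hence the time weight $t^{\frac{3p}{2}-2}$ is a harmless bounded factor on $I$:
\[
\sup_{t\in I} t^{\frac{3p}{2}-2}\le C(T,T_0),
\]
using $\frac{3p}{2}-2<0$ (since $p<\frac43$) and $t\ge \frac T2$. Since $p<\frac43<4$, the equation is $\dot H^1$-subcritical in 3D ($s_c<1$), so the existence time will depend only on the norm $M$ (and $T$), as in Theorem \ref{thm:local-1order-awayfromorigin} with $s=1$. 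The only new feature is that Theorem \ref{thm:local-1order-awayfromorigin} was stated for $s<\min\{1,\frac d2\}$, so I redo the estimate at $s=1$ directly.

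Write the Duhamel map
\[
\Phi(\mathcal{U})(t):=S(t-t_1)\mathcal{U}(t_1)-i\int_{t_1}^{t}S(t-\tau)\big(\tau^{\frac{3p}{2}-2}|\mathcal{U}|^p\mathcal{U}\big)(\tau)\,\mathrm{d}\tau .
\]
I work in the ball
\[
E:=\Big\{\|\nabla\mathcal{U}\|_{L_t^\infty L_x^2\cap L_t^{q_0}L_x^{r_0}(I\times\mathbb{R}^3)}\le 2CM\Big\}
\]
for a fixed admissible pair $(q_0,r_0)$. The metric is $\|\mathcal{U}-\mathcal{V}\|_{L_t^\infty L_x^2\cap L_t^{q_0}L_x^{r_0}}$, with no derivative, which sidesteps the failure of $C^2$ regularity when $p<1$ (here $p\ge1$ anyway, but the lower-order metric is convenient). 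The ball is complete in this metric by weak lower semicontinuity.

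For the nonlinear bound I plan to use Strichartz with dual pair $(2,6/5)$:
\[
\|\nabla(t^{\frac{3p}{2}-2}|\mathcal{U}|^p\mathcal{U})\|_{L_t^{2}L_x^{6/5}}\lesssim C(T)\,\big\|\|\mathcal{U}\|_{L_x^{6}}^p\|\nabla \mathcal{U}\|_{L_x^{r}}\big\|_{L_t^2}.
\]
Here the Hölder relation $\frac56=\frac p6+\frac1r$ gives $r=\frac{6}{5-p}\in[\frac32,\frac95)$, a valid exponent in $[2,6]$? Not quite, so I will instead take the $L_t^1L_x^2$ dual. By Sobolev, $\|\mathcal{U}\|_{L_x^6}\lesssim\|\nabla\mathcal{U}\|_{L_x^2}$. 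I place $\nabla\mathcal{U}$ in $L_x^{r}$ with $\frac12=\frac p{r'}+\frac1r$, using a mixed choice: $\mathcal{U}$ is put into $L_x^{\infty}$-type spaces via $\dot H^1\cap$ Strichartz bounds, or more simply via the $L_t^\infty\dot H^1$, Gagliardo–Nirenberg and Hölder in time. This yields
\[
\|\Phi(\mathcal{U})\|_{E}\le CM+C(T)\,t_2^{\sigma}(2CM)^{p+1}
\]
for some $\sigma>0$ coming from subcriticality (e.g. $\sigma=1-\frac{p}{4}$ from the Sobolev embedding $\dot H^1\subset L^6$, with $\mathcal{U}\in L_t^4L_x^\infty$ obtained from the Strichartz pair $(4,3)$ with one derivative, i.e. $\|\mathcal{U}\|_{L^\infty}\lesssim\|\nabla\mathcal{U}\|_{L^3}^{1/2}\|\nabla\mathcal{U}\|_{L^6}^{1/2}$-type bounds). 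The difference estimate $|\,|u|^pu-|v|^pv\,|\lesssim(|u|^p+|v|^p)|u-v|$ gives the analogous contraction bound in the weaker metric.

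Choosing $t_2\le\min\{1,\frac T2\}$ small so that $C(T)t_2^\sigma(CM)^p\ll1$ closes the contraction, giving existence, uniqueness, and continuity in $\dot H^1$ on $[t_1-t_2,t_1]$. Uniqueness in the full class $C(I;\dot H^1)$ follows by the same difference estimate on a possibly smaller subinterval, then iteration.

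The main obstacle is choosing exponents so that a \emph{positive} power of $t_2$ appears while only $\dot H^1$-level Strichartz norms are used. This is where subcriticality ($p<4$) enters. Lacking $L^2$ control, I must avoid any mass-type norm and keep every factor homogeneous in $\nabla\mathcal{U}$. The first thing I would try is the pair $(q_0,r_0)=(\frac{4(p+2)}{3p},\frac{3(p+2)}{3+p})$ used earlier with $s=1$, mimicking the proof of Theorem \ref{thm:local-1order-awayfromorigin}, where $|I|^{1/q_1}$ with $\frac1{q_1}=\frac{4-p}{4}>0$ supplies the smallness.
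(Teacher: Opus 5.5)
Your overall scheme matches the paper's: Duhamel backward from $t_1$ on $I=[t_1-t_2,t_1]\subset[\frac T2,T_0]$, Strichartz for $\nabla\mathcal U$, the weight $t^{\frac{3p}{2}-2}$ bounded on $I$, and a positive power of $t_2$ from $\dot H^1$-subcriticality. However, the contraction step fails as you set it up. You measure distances in $L_t^\infty L_x^2\cap L_t^{q_0}L_x^{r_0}$, which is an $L^2$-level norm, while $\mathcal U(t_1)$ is only assumed to lie in $\dot H_x^1$. In this paper $\mathcal U$ typically has infinite mass, since $\|\mathcal U(t)\|_{L_x^2}=\|u(1/t)\|_{L_x^2}$ and $\mathcal F\dot H_x^{-s_c}\cap\mathcal F\dot H_x^1\not\subset L_x^2$. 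So distances between elements of your ball are in general $+\infty$, and $(E,d)$ is not a metric space.

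Passing to the affine set where $\mathcal U-S(\cdot-t_1)\mathcal U(t_1)$ has finite $L^2$-level norm does not help, because $\Phi$ does not preserve it. $\dot H^1$-level Strichartz control only places $\mathcal U$ in $L_x^b$ with $b\ge6$, hence $|\mathcal U|^p\mathcal U\in L_x^{b/(p+1)}$ with $b/(p+1)>2$ when $p<2$, and no dual exponent $\tilde r'\in[\frac65,2]$ is ever reached. Concretely, for $\mathcal U(t_1)=\langle x\rangle^{-\frac12-\varepsilon}\in\dot H^1(\mathbb{R}^3)$ with $\varepsilon$ small, $|\mathcal U(t_1)|^p\mathcal U(t_1)\notin L^2$, and the short-time Duhamel integral inherits this low-frequency singularity. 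Thus already $d(\Phi(\mathcal U_0),\mathcal U_0)=\infty$ for the free evolution $\mathcal U_0=S(\cdot-t_1)\mathcal U(t_1)$, and the Picard iteration does not converge in your metric. This is exactly the ``mass-type norm'' you said you must avoid.

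The remedy, which is the paper's, is to contract in the $\dot H^1$-level metric $\|\nabla(\mathcal U-\mathcal V)\|_{L_t^\infty L_x^2\cap L_t^2L_x^6}$, and this is precisely where $p\ge1$ is needed. One splits $\nabla(|\mathcal U|^p\mathcal U-|\mathcal V|^p\mathcal V)$ into $|\mathcal U|^p\nabla(\mathcal U-\mathcal V)+(|\mathcal U|^p-|\mathcal V|^p)\nabla\mathcal V$ plus analogous conjugate terms. One then uses $\big||\mathcal U|^p-|\mathcal V|^p\big|\lesssim(|\mathcal U|^{p-1}+|\mathcal V|^{p-1})|\mathcal U-\mathcal V|$ and bounds $\|\mathcal U-\mathcal V\|_{L_x^{6p}}$ by a gradient norm via Sobolev. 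So $p\ge1$ is not incidental; it is what makes a finite metric available.

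Your nonlinear estimate also has not actually been closed. The bound $\|\mathcal U\|_{L^\infty}\lesssim\|\nabla\mathcal U\|_{L^3}^{1/2}\|\nabla\mathcal U\|_{L^6}^{1/2}$ is false by scaling (the scaling-correct version pairs $L^2$ with $L^6$), and $(\frac{4(p+2)}{3p},\frac{3(p+2)}{3+p})$ is not admissible. The pair from the proof of Theorem~\ref{thm:local-1order-awayfromorigin} at $d=3$, $s=1$ is $(\frac{4(p+2)}{p},\frac{3(p+2)}{3+p})$. With it, Sobolev $\|\mathcal U\|_{L_x^{3(p+2)}}\lesssim\|\nabla\mathcal U\|_{L_x^{r_0}}$ and H\"older give $\|\nabla(|\mathcal U|^p\mathcal U)\|_{L_t^{q_0'}L_x^{r_0'}}\lesssim|I|^{\frac{4-p}{4}}\|\nabla\mathcal U\|_{L_t^{q_0}L_x^{r_0}}^{p+1}$. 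This is a valid substitute for the paper's $L_t^{8/7}L_x^{12/7}$ estimate built on the auxiliary admissible pair $(\frac{4p}{p-1},\frac{6p}{2p+1})$.
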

	\begin{proof}
		Define the solution map
		\begin{align*}
			\Phi(\mathcal{U}):=S(t-t_1)\mathcal{U}(t_1)-i\int_{t_1}^{t}S(t-\tau)(\tau^{\frac{3p}{2}-2}|\mathcal{U}|^p\mathcal{U})\mathrm{d}\tau.
		\end{align*}
		Take $t_2$ that will be defined later, and set $0<t_2\leq \min\{1,\frac{T}{2}\}$ temporarily. Set $I:=[t_1-t_2,t_1]$. Then, $t_2\le \frac T2$ implies that $t_1-t_2>t_2>0$. By the Strichartz estimate,
		\begin{align*}
			\|\nabla S(t-t_1)\mathcal{U}(t_1)\|_{L_t^{\infty}L_x^2 \cap L_t^2L_x^6(I\times\R^3)} \leq  C\|\nabla\mathcal{U}(t_1)\|_{L_x^2(\mathbb{R}^3)}\le CM.
		\end{align*}
		We denote the right hand side of the above estimate as  $R:=CM$. Define the resolution space
		\begin{align*}
			E(R):=\{\mathcal{U}(t)\in C(I;\dot{H}^1_x):\|\nabla\mathcal{U}(t)\|_{L_t^{\infty}L_x^2 \cap L_t^2L_x^6(I\times\R^3)}\leq 2R\},
		\end{align*}
		with
		\begin{align*}
			d(\mathcal{U}, \mathcal{V}):=\|\nabla\mathcal{U} - \nabla\mathcal{V}\|_{L_t^{\infty}L_x^2 \cap L_t^2L_x^6(I\times\R^3)}.
		\end{align*}
		In the following, we restrict the spacetime variable $(t,x) \in I\times\mathbb{R}^3$. By the Strichartz estimate, H\"older's and Sobolev's inequalities,
		\begin{equation}\label{proof-Proposition-away from 0-1}
			\begin{aligned}
				\left\|\nabla\int_{t_1}^{t}S(t-\tau)(\tau^{\frac{3p}{2}-2}|\mathcal{U}|^p\mathcal{U})\mathrm{d}\tau\right\|_{L_t^{\infty}L_x^2 \cap L_t^2L_x^6}&\lesssim\|\nabla(t^{\frac{3p}{2}-2}|\mathcal{U}|^p\mathcal{U})\|_{L_t^{\frac{8}{7}}L_x^{\frac{12}{7}}}\\
				&\lesssim (t_1-t_2)^{\frac{3p}{2}-2}\big\|\|\nabla\mathcal{U}\|_{L_x^{\frac{12}{5}}}\|\mathcal{U}\|^p_{L_x^{6p}}\big\|_{L_t^{\frac{8}{7}}}\\
				&\lesssim t_2^{\frac{3p}{2}-2}|I|^{\frac{4-p}{4p}\cdot p}\|\nabla\mathcal{U}\|_{L_t^{8}L_x^{\frac{12}{5}}}\|\nabla\mathcal{U}\|^p_{L_t^{\frac{4p}{p-1}}L_x^{\frac{6p}{2p+1}}}\\
				&\lesssim t_2^{\frac{5p-4}{4}}R^{p+1}.
			\end{aligned}
		\end{equation}
		Thus, 
		\begin{align*}
			\|\nabla\Phi(\mathcal{U})\|_{L_t^{\infty}L_x^2\cap L_t^2L_x^6}\leq R+C t_2^{\frac{5p-4}{4}}R^{p+1}.
		\end{align*}
		Similarly,
		\begin{align*}
			\|\nabla(\Phi(\mathcal{U})-\Phi(\mathcal{V}))\|_{L_t^{\infty}L_x^2 \cap L_t^2L_x^6} \lesssim \| t^{\frac{3p}{2}-2} \nabla(|\mathcal{U}|^p\mathcal{U}-|\mathcal{V}|^p\mathcal{V}) \|_{L_t^{\frac{8}{7}}L_x^{\frac{12}{7}}}.
		\end{align*}
		Note that the gradient of $|u|^pu$ can be written as $F_1(u)\nabla u+F_2(u)\overline{\nabla u}$, where
		\begin{align*}
			F_1(u)=\frac{p+2}{2}|u|^p, \quad F_2(u)=\frac{p}{2}|u|^{p-2}u^2.
		\end{align*}
		We only consider the term $F_1(u)\nabla u$, since $F_2(u)\overline{\nabla u}$ is treated in the same way. Note that
		\begin{align}\label{H1:u-v}
			F_1(u)\nabla u-F_1(v)\nabla v=\frac{p+2}{2} |u|^p\nabla(u-v) + \frac{p+2}{2} (|u|^p-|v|^p)\nabla v.
		\end{align}
		By modifying the above nonlinear estimates,
		\begin{align*}
			&\|t^{\frac{3p}{2}-2}(F_1(\mathcal{U})\nabla \mathcal{U}- F_1(\mathcal{V})\nabla \mathcal{V})\|_{L_t^{\frac{8}{7}}L_x^{\frac{12}{7}}}\\
			&\lesssim (t_1-t_2)^{\frac{3p}{2}-2}  \big\| \|\nabla (\mathcal{U} - \mathcal{V})\|_{L_x^{\frac{12}{5}}} \|\mathcal{U}\|^p_{L_x^{6p}}\big\|_{L_t^{\frac{8}{7}}}\\
			&\qquad + (t_1-t_2)^{\frac{3p}{2}-2}\big\|\big(\|\mathcal U\|_{L_x^{6p}}^{p-1}+\|\mathcal V\|_{L_x^{6p}}^{p-1}\big)\|\mathcal U-\mathcal V\|_{L_x^{6p}}\|\nabla\mathcal V\|_{L_x^{12/5}} \big\|_{L_t^{\frac{8}{7}}}\\
			&\lesssim t_2^{\frac{5p-4}{4}}R^p \|\nabla(\mathcal{U}-\mathcal{V})\|_{L_t^{\infty}L_x^2 \cap L_t^2L_x^6}.
		\end{align*}
		Therefore,
		\begin{align*}
			d(\Phi(\mathcal{U}), \Phi(\mathcal{V})) \leq C t_2^{\frac{5p-4}{4}} R^{p} d(\mathcal{U}, \mathcal{V}).
		\end{align*}
		Now, by choosing $t_2$ sufficiently small, depending on $T$ and $M$, we obtain that $\Phi$ is a contraction. This finishes the proof.
	\end{proof}
	
	In view of Proposition \ref{GWP-I:prop H1}, we are able to assume that $(T^*,+\infty)$ denotes the maximal existence lifespan for \eqref{CNLS-I} such that $ \mathcal{U}\in C((T^*,\infty);\dot{H}_x^{1}(\mathbb{R}^3))$.

	\subsection{Global well-posedness}
	The global well-posedness relies on the following a priori estimate.  Under the assumptions of Theorem \ref{main-theorem1}, we obtain that for any $t\in (T^*,T_0)$,
	\begin{align}\label{derivate of pce}
		\dfrac{\mathrm{d}}{\mathrm{d}t}\mathcal{E}(t)=\frac{1}{4}(2-\frac{3p}{2})t^{1-\frac{3p}{2}}\int_{\mathbb{R}^3}|\nabla\mathcal{U}|^2\mathrm{d}x\geq 0,
	\end{align}
	where the pseudo conformal energy $\mathcal{E}(t)$ is defined in \eqref{Pseudo-conformal-energy}.
	
	By the Sobolev embedding $\dot{H}_x^{\frac{3p}{2p+4}}(\mathbb{R}^3)\hra L_x^{p+2}(\mathbb{R}^3)$, $-s_c\leq \frac{3p}{2p+4} < 1$, interpolation, and Corollary \ref{cor:local-weighted-1}, we obtain that
	\begin{align}\label{L p+2 estimate}
		\|\mathcal{U}(T_0)\|_{L_x^{p+2}(\mathbb{R}^3)} \lsm \|\mathcal{U}(T_0)\|_{\dot{H}^{-s_c}_x\cap\dot{H}_x^{1}(\mathbb{R}^3)} \lsm 1.
	\end{align}
	By \eqref{Pseudo-conformal-energy}, \eqref{derivate of pce}, \eqref{L p+2 estimate}, and Corollary \ref{cor:local-weighted-1}, we have  that for any $t\in (T^*,T_0)$,
	\EQ{
		0\le \E(t) \le \E(T_0) \lsm 1,
	}
	which implies that for any  $t\in (T^*,T_0)$,
	\begin{align}\label{a prior estimate for H1}
		\|\mathcal{U}(t)\|_{\dot H_x^1(\mathbb{R}^3)}\lsm t^{\frac{3p}{4}-1}.
	\end{align}

	%	For any $T^*<T<T_0$, by \eqref{a prior estimate for H1}, we have
	%	\begin{align*}
		%		\|\nabla\mathcal{U}(t)\|_{L_x^2}\lesssim T^{\frac{3p-4}{4}} ,\quad T<t<T_0.
		%	\end{align*}
	
	\begin{prop}[Global well-posedness in $\dot H_x^1(\mathbb{R}^3)$]\label{prop:gwp-weighted-1}
		Let the assumptions in Theorem \ref{main-theorem1} hold. Then, there exists a unique solution $\U$ of \eqref{CNLS-I} such that $\mathcal{U}\in C((0,\infty);\dot{H}^1_x(\mathbb{R}^3))$. Moreover, we have that
		\begin{align}\label{energy estimate E(t)}
			\sup_{0< t\le T_0} \E(t) \lsm 1. 
		\end{align}
	\end{prop}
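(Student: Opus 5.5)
We argue by contradiction on the maximal lifespan. Let $(T^*,\infty)$ be the maximal interval on which $\mathcal{U}\in C((T^*,\infty);\dot H_x^1(\mathbb{R}^3))$, as set up after Proposition \ref{GWP-I:prop H1}. It suffices to show $T^*=0$. The bound \eqref{energy estimate E(t)} then follows from monotonicity: by \eqref{derivate of pce}, $\mathcal{E}$ is nondecreasing on $(0,T_0)$ since $p<\frac43$, and $\mathcal{E}(t)\ge 0$. So $\sup_{0<t\le T_0}\mathcal{E}(t)\le \mathcal{E}(T_0)\lesssim 1$ by \eqref{L p+2 estimate} and Corollary \ref{cor:local-weighted-1}. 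Uniqueness is inherited from the uniqueness statements in Corollary \ref{cor:local-weighted-1} and Proposition \ref{GWP-I:prop H1}, applied on overlapping intervals.

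Suppose $T^*>0$, and fix $T:=T^*/2>0$ in Proposition \ref{GWP-I:prop H1}. For every $t_1\in(T^*,T_0)$ the a priori estimate \eqref{a prior estimate for H1} gives
\[
\|\mathcal{U}(t_1)\|_{\dot H_x^1}\lesssim t_1^{\frac{3p}{4}-1}\le (T^*)^{\frac{3p}{4}-1}=:M,
\]
since $\frac{3p}{4}-1<0$ and $t_1>T^*$. This is uniform in $t_1$. Proposition \ref{GWP-I:prop H1} then provides $t_2=t_2(T,M)>0$, independent of $t_1$, such that the solution extends to $[t_1-t_2,t_1]$ in $C(\,\cdot\,;\dot H_x^1)$. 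Choosing $t_1\in(T^*,\min\{T^*+t_2/2,T_0\})$, the extension reaches $t_1-t_2<T^*-t_2/2$. Uniqueness glues it to the existing solution, contradicting maximality. Hence $T^*=0$ and $\mathcal{U}\in C((0,\infty);\dot H_x^1)$.

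The step needing care is the justification of \eqref{derivate of pce} and \eqref{a prior estimate for H1} for $\dot H^1$ solutions on $(T^*,T_0)$. The derivative identity needs some regularity, and the potential energy must be finite. I would get $\mathcal{U}(t)\in L^{p+2}$ on this interval via Sobolev and the persistence of the $\dot H^{-s_c}$ norm, where $-s_c\le\frac{3p}{2p+4}<1$. Here $\dot H^{-s_c}$ is propagated alongside $\dot H^1$ by the same Strichartz argument, the $\dot H^{-s_c}$ estimate being linear given the $\dot H^1$ control on a time interval bounded away from $0$. For the identity itself I would regularize the data and use continuous dependence from the contraction in Proposition \ref{GWP-I:prop H1}, or alternatively verify the integrated form of the identity directly. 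With $\mathcal{E}$ monotone and nonnegative, each term of $\mathcal{E}(t)\le\mathcal{E}(T_0)$ is controlled, which yields \eqref{a prior estimate for H1} and completes the argument.
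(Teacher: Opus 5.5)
Your proposal is correct and follows the paper's proof essentially verbatim. Both argue by contradiction on the maximal lifespan $T^*$, use the uniform bound $M\sim (T^*)^{\frac{3p}{4}-1}$ from \eqref{a prior estimate for H1}, and take the $t_1$-independent step $t_2$ from Proposition \ref{GWP-I:prop H1}; your choice $T=T^*/2$ in place of the paper's $T=T^*$ is immaterial. Your explicit derivation of \eqref{energy estimate E(t)} from monotonicity, and your remark that \eqref{derivate of pce} can be justified by propagating the $\dot H^{-s_c}$ norm on compact subintervals of $(T^*,T_0]$, fill in details that the paper leaves implicit.
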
	
	\begin{proof}
		We prove this proposition by contradiction. Assume that $T^*>0$. Then, by \eqref{a prior estimate for H1}, for any $t\in(T^*,T_0)$,
		\EQ{
			\norm{\U(t)}_{\dot H_x^1(\R^3)} \le C_1(T^*)^{\frac{3p}{4}-1},
		}
		for some $C_1>0$. For any $\ep>0$, Proposition \ref{GWP-I:prop H1} with $T=T^*$, $t_1=T^*+\ep$, and $M=C_1(T^*)^{\frac{3p}{4}-1}$ yields that there exists $t_2$ depending on $T^*$ and $M$ such that $\mathcal{U}\in C([T^*+ \varepsilon - t_2, T^* + \varepsilon]; \dot{H}_x^1(\R^3))$. Notice that $t_2$ is actually independent of $\ep$. Taking $\varepsilon=\frac{t_2}{2}$, we obtain $T^*+\varepsilon - t_2 < T^*$, which contradicts the definition of $T^*$. Thus, $T^*=0$.
	\end{proof}

	\subsection{Additional regularity}
	In this section, we establish additional regularity of the solution $\mathcal{U}$ in $\dot{H}_x^{-s_c}(\mathbb{R}^3)$.
	\begin{comment}
		We also prove that $|\nabla|^{-s_c}\mathcal{U} \in L_t^{q} L_x^{r} ((0,\infty)\times \mathbb{R}^3)$ for any admissible pair $(q,r)$ with $q\neq\infty$.
	\end{comment}
	\begin{prop}\label{P2}
		Let the assumptions in Theorem \ref{main-theorem1} hold. Let also $\mathcal{U}$ be the global solution given in the former step such that $\mathcal{U}\in C((0,\infty);\dot{H}_x^1(\mathbb{R}^3))$. Then we have that $\mathcal{U}\in C((0,\infty);\dot{H}^{-s_c}_x\cap\dot{H}^{1}_x(\mathbb{R}^3))$.
	\end{prop}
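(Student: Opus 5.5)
We need to show that $\mathcal{U}\in C((0,\infty);\dot H^{-s_c}_x)$, using the $\dot H^1$ control that is already available. The regularity $-s_c = \frac{2}{p}-\frac32$ lies in $(0,\frac12]$ for $1\le p<\frac43$, so it is lower than $1$. However, $\dot H^1$ alone does not control $\dot H^{-s_c}$ at low frequencies, so we must propagate $\dot H^{-s_c}$ directly.

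The plan is to propagate the regularity backward from $T_0$, where Corollary~\ref{cor:local-weighted-1} gives $\mathcal{U}(T_0)\in\dot H^{-s_c}_x$. Fix an arbitrary $T\in(0,T_0)$ and work on $[T,T_0]$. There Proposition~\ref{prop:gwp-weighted-1} and \eqref{a prior estimate for H1} give $\sup_{t\in[T,T_0]}\|\nabla\mathcal{U}(t)\|_{L^2_x}\lesssim T^{\frac{3p}{4}-1}$. Repeating the contraction of Proposition~\ref{GWP-I:prop H1} on short subintervals of uniform length (depending only on $T$ and this bound) shows that $\|\nabla\mathcal{U}\|_{L^2_tL^6_x([T,T_0]\times\mathbb{R}^3)}$ is finite. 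The time weight $t^{\frac{3p}{2}-2}$ is bounded by $T^{\frac{3p}{2}-2}$ on this interval.

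Next, apply $|\nabla|^{-s_c}$ to the Duhamel formula started at $T_0$ and use Strichartz estimates on a subinterval $I\subset[T,T_0]$:
\[
\||\nabla|^{-s_c}\mathcal{U}\|_{L^\infty_tL^2_x\cap L^2_tL^6_x(I)}\lesssim \||\nabla|^{-s_c}\mathcal{U}(t_*)\|_{L^2_x}+T^{\frac{3p}{2}-2}\big\||\nabla|^{-s_c}(|\mathcal{U}|^p\mathcal{U})\big\|_{L^{q'}_tL^{r'}_x(I)}.
\]
The nonlinear term is estimated with the fractional chain rule (Lemma~\ref{2: Lemma-fractional chain rule 2}). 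It is bounded by $\|\mathcal{U}\|^p_{L^{a}_tL^{b}_x}\||\nabla|^{-s_c}\mathcal{U}\|_{L^{q_1}_tL^{r_1}_x}$ for a Strichartz pair $(q_1,r_1)$, with the factor $\|\mathcal{U}\|_{L^b_x}$ controlled through Sobolev by $\|\nabla\mathcal{U}\|$ (for instance $\|\mathcal{U}\|_{L^{6p}_x}\lesssim\|\nabla\mathcal{U}\|_{L^{6p/(2p+1)}_x}$, as in \eqref{proof-Proposition-away from 0-1}). Choosing exponents as in that proof gives
\[
\|\cdot\|_{L^{8/7}_tL^{12/7}_x}\lesssim |I|^{\frac{5p-4}{4}}\,\|\nabla\mathcal{U}\|^p_{S^1(I)}\,\||\nabla|^{-s_c}\mathcal{U}\|_{L^8_tL^{12/5}_x(I)},
\]
which is \emph{linear} in the unknown $\dot H^{-s_c}$ quantity, with a small factor once $|I|$ is small.

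Split $[T,T_0]$ into finitely many intervals on which $CT^{\frac{3p}{2}-2}|I|^{\frac{5p-4}{4}}\|\nabla\mathcal{U}\|^p\le\frac12$. Their number depends only on $T$. Absorbing the nonlinear term on each interval and iterating backward then yields $\sup_{[T,T_0]}\|\mathcal{U}\|_{\dot H^{-s_c}}<\infty$. Continuity in time follows from the Duhamel formula together with these spacetime bounds. Since $T$ was arbitrary, together with Corollary~\ref{cor:local-weighted-1} on $[T_0,\infty)$ this proves the proposition.

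The main obstacle is rigor: a priori we do not know that $|\nabla|^{-s_c}\mathcal{U}$ is finite on $[T,T_0]$, so the absorption argument needs justification. I would handle this by running a contraction in the intersection space $\dot H^{-s_c}\cap\dot H^1$ (norms $L^\infty_t L^2_x\cap L^2_tL^6_x$ of both $|\nabla|^{-s_c}\mathcal{U}$ and $\nabla\mathcal{U}$) on each short interval. The local existence time depends only on the $\dot H^1$ bound and $T$, since the estimate is linear in the $\dot H^{-s_c}$ part. The solution thus obtained coincides with the $\dot H^1$ solution by uniqueness in Proposition~\ref{GWP-I:prop H1}. A secondary check is that the fractional chain rule and H\"older exponents remain valid for $-s_c\in(0,\frac12]$ and $1\le p<\frac43$.
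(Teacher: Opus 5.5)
Your proposal is correct and is essentially the paper's proof: on $[T,T_0]$ you use \eqref{a prior estimate for H1} to get short-interval $\dot H^1$ Strichartz bounds, estimate the Duhamel term for $|\nabla|^{-s_c}\mathcal U$ in $L_t^{8/7}L_x^{12/7}$ by the fractional chain rule, linearly in $\||\nabla|^{-s_c}\mathcal U\|_{L_t^8L_x^{12/5}}$ with a small time factor, absorb, and iterate backward over finitely many intervals whose length depends only on $T$. Two small remarks: once the weight is bounded by $T^{\frac{3p}{2}-2}$, H\"older in time gives the factor $|I|^{\frac{4-p}{4}}$ (the paper's exponent $\frac{5p-4}{4}$ already includes the weight), though either factor is small for short intervals; and your contraction in the intersection space justifies the absorption step, which the paper leaves implicit by simply assuming $\mathcal U(b)\in\dot H_x^{-s_c}$.
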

	\begin{proof}
		It suffices to prove that for any $0<T<T_0$,
		\begin{align}\label{GWP-II:Hs_c}
			\||\nabla|^{-s_c}\mathcal{U}\|_{L_t^{\infty}L_x^{2}\cap L_t^2L_x^6([T,T_0]\times \mathbb{R}^3)}\lesssim_{T}1.
		\end{align}
		Note that $T$ can be viewed as a constant, so we omit its dependence.
		
		Denote $I:=[b-a,b]$, where $T<b\leq T_0$. Set $0<a<\frac{T}{2}$ temporarily. Assume $\mathcal{U}(b)\in\dot{H}_x^{-s_c}(\mathbb{R}^3)$. In the following, we restrict the spacetime variable $(t,x)\in I\times\R^3$. Recall the integral equation
		\begin{align*}
			\mathcal{U}(t)=S(t-b)\mathcal{U}(b)-i\int_{b}^{t}S(t-\tau)(\tau^{\frac{3p}{2}-2}|\mathcal{U}|^p\mathcal{U})\mathrm{d}\tau.
		\end{align*}
		By \eqref{a prior estimate for H1} and the same argument as in \eqref{proof-Proposition-away from 0-1}, 
		\begin{align*}
			\|\nabla\mathcal{U}\|_{L_t^{\infty}L_x^2 \cap L_t^2L_x^6}& \leq  C\|\nabla\mathcal{U}(b)\|_{L_x^2(\mathbb{R}^3)} + C a^{\frac{5p-4}{4}}\|\nabla\mathcal{U}\|_{L_t^{\infty}L_x^2 \cap L_t^2L_x^6}^{p+1}\\
			&\leq C T^{\frac{3p-4}{4}} + C a^{\frac{5p-4}{4}} \|\nabla\mathcal{U}\|_{L_t^{\infty}L_x^2 \cap L_t^2L_x^6}^{p+1}.
		\end{align*}
		Temporarily set $a=a(T)$ sufficiently small so that
		\begin{align}\label{thm1-gwp2-1order}
			\|\nabla\mathcal{U}\|_{L_t^{\infty}L_x^2 \cap L_t^2L_x^6}\lesssim 1.
		\end{align}
		For the inhomogeneous term, applying again the same argument as in \eqref{proof-Proposition-away from 0-1} and using \eqref{thm1-gwp2-1order}, we obtain
		\begin{align*}
			&\left\||\nabla|^{-s_c}\int_{b}^{t}S(t-\tau)(\tau^{\frac{3p}{2}-2}|\mathcal{U}|^p\mathcal{U})\mathrm{d}\tau\right\|_{L_t^{\infty}L_x^2\cap L_t^2L_x^6}\\
			&\lesssim|a|^{\frac{5p-4}{4}}\|\nabla\mathcal{U}\|^p_{L_t^{\frac{4p}{p-1}}L_x^{\frac{6p}{2p+1}}}\||\nabla|^{-s_c}\mathcal{U}\|_{L_t^{8}L_x^{\frac{12}{5}}}\\
			&\lesssim |a|^{\frac{5p-4}{4}}\||\nabla|^{-s_c}\mathcal{U}\|_{L_t^{\infty}L_x^2 \cap L_t^2L_x^6}.
		\end{align*}
		By the Strichartz estimate, 
		\begin{align*}
			\||\nabla|^{-s_c}\mathcal{U}\|_{L_t^{\infty}L_x^2 \cap L_t^2L_x^6} \leq  C\||\nabla|^{-s_c}\mathcal{U}(b)\|_{L_x^2(\mathbb{R}^3)} + C|a|^{\frac{5p-4}{4}} \||\nabla|^{-s_c}\mathcal{U}\|_{L_t^{\infty}L_x^2 \cap L_t^2L_x^6}. 
		\end{align*}
		Thus, there exists $a=a(T)<1$ such that $C|a|^{\frac{5p-4}{4}}< \frac{1}{2}$ and
		\begin{align*}
			\||\nabla|^{-s_c}\mathcal{U}\|_{L_t^{\infty}L_x^2\cap L_t^2L_x^6([b-a, b]\times\R^3)}\lesssim \||\nabla|^{-s_c}\mathcal{U}(b)\|_{L_x^2(\mathbb{R}^3)}.
		\end{align*} 
		
		Split $[T,T_0]$ into small consecutive intervals $I_j$, with $j=1,2,\cdots,m=m(T)$, such that $|I_j|\leq a$. By induction on $I_j$, we are able to prove \eqref{GWP-II:Hs_c}.
	\end{proof}
	Therefore, we obtain that \eqref{CNLS-I} admits a global solution $\mathcal{U}\in C((0,\infty); \dot{H}^{-s_c}_x\cap\dot{H}^{1}_x(\mathbb{R}^3))$. The preceding estimate is not uniform as $T\downarrow0$. We next use the pseudo conformal energy to obtain a bound on the entire interval $(0,T_0)$. More precisely, we are able to prove that 
	\begin{align}\label{esti-global bound for (0,T_0)}
		\||\nabla|^{-s_c}\mathcal{U}\|_{L_t^{\infty}L_x^{2}\cap L_t^2L_x^6((0,T_0)\times \mathbb{R}^3)} \lesssim 1.
	\end{align}
	
	We first prove that $|\nabla|^{-s_c}\U$ has some spacetime estimate through pseudo conformal energy: $|\nabla|^{-s_c}\mathcal{U} \in L_t^{\frac{4p}{5p-4}} L_x^{\frac{3p}{2-p}} ((0,\infty)\times \mathbb{R}^3)$, where $(\frac{4p}{5p-4}, \frac{3p}{2-p})$ is admissible.
	
	\begin{lem}\label{lem--s_c-global-bound}
		Let the assumptions in Theorem \ref{main-theorem1} hold. Let also $\mathcal{U}$ be the global solution given in the former step such that $\mathcal{U}\in C((0,\infty);\dot{H}_x^{-s_c} \cap  \dot{H}_x^1(\mathbb{R}^3))$. Then 
		\begin{align*}
			\||\nabla|^{-s_c}\mathcal{U}\|_{L_t^{\frac{4p}{5p-4}} L_x^{\frac{3p}{2-p}} ((0,\infty)\times \mathbb{R}^3)} \lesssim 1.
		\end{align*}
	\end{lem}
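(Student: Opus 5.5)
The plan is to split $(0,\infty)=(0,T_0]\cup[T_0,\infty)$. On $[T_0,\infty)$ the bound is exactly \eqref{esti-bound for (T_0, infty)} of Corollary \ref{cor:local-weighted-1}, since $(\frac{4p}{5p-4},\frac{3p}{2-p})$ is admissible. So the work is on $(0,T_0]$. There I will run a Strichartz bootstrap for the Duhamel formula written backward from $T_0$:
\[
\mathcal U(t)=S(t-T_0)\mathcal U(T_0)-i\int_{T_0}^tS(t-\tau)\big(\tau^{\frac{3p}{2}-2}|\mathcal U|^p\mathcal U\big)\,\mathrm d\tau .
\]
The key step is to get a coefficient in front of the nonlinear term that is \emph{small} on short intervals $(T,T_0']$ near the origin, uniformly as $T\downarrow 0$. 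Proposition \ref{P2} only gives bounds that degenerate as $T\to0$, but it does guarantee that every quantity below is finite on $[T,T_0]$ for each fixed $T>0$. That finiteness is what makes the continuity/bootstrap argument legitimate. I will prove a bound uniform in $T$ and then let $T\downarrow0$.

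The a priori inputs come from the pseudo conformal energy. By Proposition \ref{prop:gwp-weighted-1}, $\sup_{0<t\le T_0}\mathcal E(t)\lesssim1$, so $\sup_{0<t\le T_0}\|\mathcal U(t)\|_{L_x^{p+2}}\lesssim1$. Moreover, since $\mathcal E\ge0$, integrating \eqref{derivate of pce} over $(0,T_0)$ gives the spacetime bound
\[
\int_0^{T_0}t^{1-\frac{3p}{2}}\|\nabla\mathcal U(t)\|_{L_x^2}^2\,\mathrm dt\lesssim \mathcal E(T_0)\lesssim1 .
\]
Thus $g(t):=t^{\frac12-\frac{3p}{4}}\|\nabla\mathcal U(t)\|_{L_x^2}$ lies in $L_t^2(0,T_0)$, and $\|g\|_{L^2_t(0,\varepsilon)}\to0$ as $\varepsilon\to0$. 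This tail smallness is the source of the needed smallness. Note that a pure weak-Lorentz bound on $t^{\frac{3p}{2}-2}$ would not do, because it is scale invariant and never small near $0$.

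For the nonlinear estimate I will use the dual Strichartz estimate (Lorentz version) with a dual admissible pair $(\wt q',\wt r')$. By Lemma \ref{2: Lemma-fractional chain rule 2},
\[
\big\||\nabla|^{-s_c}(|\mathcal U|^p\mathcal U)\big\|_{L_x^{\wt r'}}\lesssim \|\mathcal U\|_{L_x^{a}}^p\,\big\||\nabla|^{-s_c}\mathcal U\big\|_{L_x^{\frac{3p}{2-p}}} .
\]
I then interpolate
\[
\|\mathcal U\|_{L_x^a}^p\le\|\mathcal U\|_{L_x^{p+2}}^{p-\theta}\|\mathcal U\|_{L_x^6}^{\theta}\lesssim\|\nabla\mathcal U\|_{L_x^2}^{\theta},
\]
where $a$ and $\theta\in(0,p]$ are to be chosen. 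The time weight then becomes
\[
t^{\frac{3p}{2}-2}\|\nabla\mathcal U\|_{L_x^2}^{\theta}=t^{-\alpha}g(t)^{\theta},\qquad \alpha=2-\tfrac{3p}{2}+\theta\big(\tfrac12-\tfrac{3p}{4}\big).
\]
Using Lemma \ref{2: Lemma-Lorentz space inequality}, I place $t^{-\alpha}$ in $L_t^{1/\alpha,\infty}$, $g^\theta$ in $L_t^{2/\theta}$, and $|\nabla|^{-s_c}\mathcal U$ in $L_t^{\frac{4p}{5p-4},2}$. The exponents $a,\theta,\wt q,\wt r$ are to be fixed by scaling so that $(\wt q,\wt r)$ is admissible; the check at $p=1$, $\theta=0$ already gives $(\wt q,\wt r)=(4,3)$. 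On any interval $I\subset(0,\varepsilon]$ this yields
\[
\big\||\nabla|^{-s_c}\mathcal U\big\|_{L_t^{\frac{4p}{5p-4},2}L_x^{\frac{3p}{2-p}}(I)}\lesssim \big\||\nabla|^{-s_c}\mathcal U(\sup I)\big\|_{L_x^2}+\|g\|^{\theta}_{L_t^2(0,\varepsilon)}\,\big\||\nabla|^{-s_c}\mathcal U\big\|_{L_t^{\frac{4p}{5p-4},2}L_x^{\frac{3p}{2-p}}(I)},
\]
together with the same bound for $L_t^\infty\dot H^{-s_c}$.

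To close, I fix $\varepsilon$ so small that $C\|g\|_{L^2(0,\varepsilon)}^\theta<\frac12$. On $[\varepsilon,T_0]$ the required bound, including $\||\nabla|^{-s_c}\mathcal U(\varepsilon)\|_{L^2}\lesssim1$, follows from Proposition \ref{P2} with $T=\varepsilon$ (equivalently, from \eqref{GWP-II:Hs_c}). On $[T,\varepsilon]$ I absorb the nonlinear term, which gives a bound independent of $T$, and then let $T\to0$.

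I expect the main obstacle to be the exponent bookkeeping. One must find $\theta>0$ and $a\in[p+2,6]$ with $\alpha>0$, with all Lorentz/H\"older exponents in the admissible ranges, and with the fractional chain rule applicable ($1<\wt r'<\infty$), for the whole range $1\le p<\frac43$. If a single $\theta$ does not cover all $p$, I would split into subranges of $p$. Failing that, I would replace $\|\mathcal U\|_{L^6}$ by an intermediate $\dot H^{\sigma}$-norm interpolated between $\|\mathcal U\|_{\dot H^{-s_c}}$ and $\|\nabla\mathcal U\|_{L^2}$, which moves the powers of $t$ and $g$ continuously.
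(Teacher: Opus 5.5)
Your bootstrap is a genuinely different route from the paper's, and it can be completed. However, the exponent scheme you commit to fails for every $p\in(1,\frac43)$.

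\textbf{Where the scheme fails.} You place $t^{-\alpha}$ in $L_t^{1/\alpha,\infty}$, $g^\theta$ in $L_t^{2/\theta}$, $|\nabla|^{-s_c}\mathcal U$ in $L_t^{\frac{4p}{5p-4},2}$, and take $\frac{1}{\tilde r'}=\frac{p-\theta}{p+2}+\frac{\theta}{6}+\frac{2-p}{3p}$. A direct computation then gives
\[
\frac{2}{\tilde q'}+\frac{3}{\tilde r'}=\frac72-\frac{(3p+4)(p-1)(2+\theta)}{2(p+2)},
\]
whereas admissibility of $(\tilde q,\tilde r)$ in $\mathbb R^3$ requires exactly $\frac72$. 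So fixing the exponents by scaling works only at $p=1$. There any $\theta\in(0,1)$ does work, with $\frac1{\tilde q}=\frac{1-\theta}{4}$ and $\frac1{\tilde r}=\frac13+\frac\theta6$.

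Changing $\theta$ or splitting the $p$-range cannot repair this, because the obstruction is structural. For $p>1$ your inputs $\sup_t\|\mathcal U\|_{L^{p+2}_x}$ and $\|g\|_{L^2_t}$ are not invariant under the scaling $\mathcal U\mapsto\lambda^{\frac2p-3}\mathcal U(\lambda^{-2}t,\lambda^{-1}x)$ of \eqref{CNLS-I}. Indeed, $\mathcal E$ picks up the factor $\lambda^{-(3p+4)(p-1)/p}$ under this scaling.

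\textbf{How to repair it.} Break scaling on the bounded interval: put $t^{-\alpha}$ in the strong space $L^\rho_t(0,\varepsilon)$ with $\frac1\rho=\alpha+\frac{(3p+4)(p-1)(2+\theta)}{4(p+2)}$. The mismatch has the favorable sign, so the weight is integrable at $0$ and $\|t^{-\alpha}\|_{L^\rho(0,\varepsilon)}\lesssim\varepsilon^{1/\rho-\alpha}\to0$. In particular, for $p>1$ you can take $\theta=0$ and never use $g$: then $\frac1\rho=\frac{4-p}{2(p+2)}$ and you gain $\varepsilon^{\frac{(3p+4)(p-1)}{2(p+2)}}$. Your remark that the time weight is never small near $0$ is true only at the endpoint $p=1$.

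\textbf{Comparison with the paper.} The paper needs none of this machinery. Since $\frac{2-p}{3p}=\frac12-\frac{1+s_c}{3}$, Sobolev gives $\||\nabla|^{-s_c}\mathcal U\|_{L_x^{\frac{3p}{2-p}}}\lesssim\|\nabla\mathcal U\|_{L_x^2}$. So on $(0,T_0)$ the lemma reduces to showing $\|\nabla\mathcal U\|_{L^2_x}\in L_t^{\frac{4p}{5p-4}}$. Write $\frac{5p-4}{4p}=\frac14+\frac{p-1}{p}$ and use that $t^{\frac{3p-3}{4}}$ is bounded on $(0,T_0)$. H\"older in time then reduces everything to
\[
\int_0^{T_0}\big(t^{1-\frac{3p}{4}}\|\nabla\mathcal U\|_{L^2_x}\big)^4\frac{dt}{t}\lesssim1 ,
\]
which is \eqref{estimate for E(T_0)} with $q=4$. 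This is just the product of your two energy facts: $t^{2-\frac{3p}{2}}\|\nabla\mathcal U\|_{L^2}^2\le4\mathcal E(T_0)$ and $g\in L^2_t(0,T_0)$.

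The paper's route therefore uses no Duhamel formula, no fractional chain rule, no Proposition \ref{P2}, and no smallness. Your repaired bootstrap buys a stronger conclusion: uniform Strichartz bounds for $|\nabla|^{-s_c}\mathcal U$ on $(0,T_0)$, including $L^\infty_t\dot H^{-s_c}_x$. The paper obtains that only afterwards, in Lemma \ref{Lemma-short interval bound} and Proposition \ref{prop--s_c-1-gloabl bound}, using the present lemma as the source of smallness.
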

	\begin{proof}
		Since $(\frac{4p}{5p-4}, \frac{3p}{2-p})$ is an admissible pair, it follows from Corollary \ref{cor:local-weighted-1} that 
		\begin{align}\label{estimate for (T_0, infty)}
			\||\nabla|^{-s_c}\mathcal{U}\|_{L_t^{\frac{4p}{5p-4}} L_x^{\frac{3p}{2-p}} ([T_0, \infty) \times \mathbb{R}^3)} \lesssim \||\nabla|^{-s_c}\mathcal{U}\|_{L_t^{\frac{4p}{5p-4},2} L_x^{\frac{3p}{2-p}} ([T_0, \infty) \times \mathbb{R}^3)}  \lesssim 1.
		\end{align}
		For all $2\leq q<\infty$, by \eqref{derivate of pce} and \eqref{energy estimate E(t)},
		\begin{equation}\label{estimate for E(T_0)}
			\begin{aligned}
				&\int_{0}^{T_0}\left(t^{\frac{1}{2}(2-\frac{3p}{2})}\|\nabla\mathcal{U}\|_{L_x^2(\R^3)}\right)^q\frac{\mathrm{d}t}{t}\\
				&=\int_{0}^{T_0}\left(t^{2-\frac{3p}{2}}\|\nabla\mathcal{U}\|^2_{L_x^2(\R^3)}\right)^{\frac{q-2}{2}}\left(t^{1-\frac{3p}{2}}\|\nabla\mathcal{U}\|^2_{L_x^2(\R^3)}\right)\mathrm{d}t\\
				&\lesssim \mathcal{E}(T_0)^{\frac{q-2}{2}} \int_{0}^{T_0}t^{1-\frac{3p}{2}}\|\nabla\mathcal{U}\|^2_{L_x^2(\R^3)}\mathrm{d}t \lesssim \mathcal{E}(T_0)^{\frac{q-2}{2}} (\mathcal{E}(T_0) - \mathcal{E}(0))\lesssim 1.
			\end{aligned}
		\end{equation}
		Thus, by H\"older's inequality, Sobolev's inequality, and \eqref{estimate for E(T_0)} for $q=4$, 
		\begin{equation}\label{estimate for (0, T_0)}
			\begin{aligned}
				&\||\nabla|^{-s_c}\mathcal{U}\|_{L_t^{\frac{4p}{5p-4}} L_x^{\frac{3p}{2-p}} ((0,T_0) \times \mathbb{R}^3)}\\
				&\lesssim \big\|t^{-\frac{1}{4}+\frac{1}{2}(2-\frac{3p}{2})}|\nabla|^{-s_c}\mathcal{U}\big\|_{L_t^{4}L_x^{\frac{3p}{2-p}}((0,T_0)\times \mathbb{R}^3)}\big\|t^{\frac{1}{4}-\frac{1}{2}(2-\frac{3p}{2})}\big\|_{L_t^{\frac{p}{p-1}}(0,T_0)}\\
				&\lesssim \big\|t^{-\frac{1}{4}+\frac{1}{2}(2-\frac{3p}{2})}\|\nabla\mathcal{U}\|_{L_x^2(\mathbb{R}^3)}\big\|_{L_t^{4}(0,T_0)}\lesssim 1.
			\end{aligned}
		\end{equation}
		This lemma follows from \eqref{estimate for (T_0, infty)} and \eqref{estimate for (0, T_0)}.
	\end{proof}
	
	\begin{lem}\label{Lemma-short interval bound}
		Let $\mathcal{U}\in C((0,\infty); \dot{H}^{-s_c}_x\cap\dot{H}^{1}_x(\mathbb{R}^3))$ be a solution of \eqref{CNLS-I}. Assume that $\varepsilon>0$ is sufficiently small and that
		\begin{align*}
			\||\nabla|^{-s_c} \mathcal{U}\|_{L_t^{\frac{4p}{5p-4}} L_x^{\frac{3p}{2-p}} (I \times \mathbb{R}^3)}<\varepsilon,
		\end{align*}
		where $I=(0, \tau_0)$ or $(\tau_{1}, \tau_0)$. Then, for any admissible pair $(q,r)$ with $q\neq \infty$, 
		\begin{align}\label{esti-lem-(0,tau_0)}
			\||\nabla|^{-s_c}\mathcal{U}\|_{L_t^{\infty}L_x^2\cap L_t^{q,2}L_x^r(I\times\R^3)} + \|\nabla\mathcal{U}\|_{L_t^{\infty}L_x^2\cap L_t^{q,2}L_x^r(I\times\R^3)}\lesssim_{\tau_0} 1.
		\end{align}
	\end{lem}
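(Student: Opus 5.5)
We argue by a bootstrap on the Duhamel formula for \eqref{CNLS-I}, posed at the right endpoint $\tau_0$ of $I$. For $I=(0,\tau_0)$ we may assume $\tau_0\le T_0$; otherwise we split $I$ at $T_0$ and use Corollary \ref{cor:local-weighted-1} on the part beyond $T_0$. By Proposition \ref{P2}, $\mathcal{U}(\tau_0)\in\dot H_x^{-s_c}\cap\dot H_x^1$ with norm depending only on $\tau_0$. We write
\begin{align*}
\mathcal{U}(t)=S(t-\tau_0)\mathcal{U}(\tau_0)-i\int_{\tau_0}^{t}S(t-\tau)\big(\tau^{\frac{3p}{2}-2}|\mathcal{U}|^p\mathcal{U}\big)\mathrm{d}\tau ,
\end{align*}
and apply the Lorentz-space Strichartz estimates (Lemma on Strichartz estimates with $L_t^{q,2}$) to $|\nabla|^{\alpha}\mathcal{U}$ for $\alpha\in\{-s_c,1\}$. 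The linear term is bounded by $C\|\mathcal{U}(\tau_0)\|_{\dot H_x^{-s_c}\cap\dot H_x^1}\lesssim_{\tau_0}1$.

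The key nonlinear estimate mimics \eqref{estimate-J(t)-inhomogeneous}, with the role of $t^{s_cp}$ now played by the explicit weight $t^{\frac{3p}{2}-2}$. Put the dual norm in $L_t^{\frac{4}{9-5p},2}L_x^{\frac{6}{5p-2}}$ (dual of $(4,3)$). By Lemma \ref{2: Lemma-fractional chain rule 2}, H\"older in Lorentz spaces (Lemma \ref{2: Lemma-Lorentz space inequality}) and Sobolev,
\begin{align*}
\big\|t^{\frac{3p}{2}-2}|\nabla|^{\alpha}(|\mathcal{U}|^p\mathcal{U})\big\|_{L_t^{\frac{4}{9-5p},2}L_x^{\frac{6}{5p-2}}}\lesssim \big\|t^{\frac{3p}{2}-2}\|\mathcal{U}\|_{L_x^{\frac{6p}{5p-4}}}^p\big\|_{L_t^{\frac{4}{5-5p+\cdots},\infty}}\,\||\nabla|^{\alpha}\mathcal{U}\|_{L_t^{4,2}L_x^3}.
\end{align*}
The point is to absorb $t^{\frac{3p}{2}-2}$ by exchanging spatial for temporal integrability. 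Since $\frac{3p}{2}-2=s_cp$, we use $\|\mathcal{U}\|_{L_x^{\frac{6p}{5p-4}}}\lesssim\||\nabla|^{-s_c}\mathcal{U}\|_{L_x^{\frac{3p}{2-p}}}$ (Sobolev; exponents chosen so that $(\frac{4p}{5p-4},\frac{3p}{2-p})$ is exactly the pair used in Lemma \ref{lem--s_c-global-bound}). Then $t^{s_cp}\in L_t^{-\frac{1}{s_cp},\infty}$, and with this the H\"older exponents close: $\frac{9-5p}{4}=-s_cp+\frac{5p-4}{4p}\cdot p+\frac14$. The check is $-s_cp=2-\frac{3p}{2}$, and $2-\frac{3p}{2}+\frac{5p-4}{4}+\frac14=\frac{9-p}{4}$, which does not match. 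So the weight should not be taken globally in $L^{\cdot,\infty}$. Instead, on $I\subset(0,\tau_0)$ we keep the weight together with $\mathcal{U}$, using the smallness hypothesis for $p$ factors and a Strichartz norm for the remaining one. The nonlinearity is therefore bounded by $C_{\tau_0}\,\varepsilon^{p}\,\||\nabla|^{\alpha}\mathcal{U}\|_{L_t^{q,2}L_x^r}$ for a suitable admissible $(q,r)$. Pinning down this exponent bookkeeping, so that the $t$-power near $t=0$ is exactly integrable, is what I expect to be the main obstacle. If the direct choice fails, I would fall back on a weighted dual pair, using the inhomogeneous Strichartz estimate of Lemma \ref{2: Lemma-inhomogeneous Strichartz estimates}, or on the $L^{4}$-in-time pseudo-conformal bound \eqref{estimate for E(T_0)}, which controls $t^{\frac14-\frac{3p}{4}+\frac12}\|\nabla\mathcal{U}\|_{L^2}$ in $L_t^4$ uniformly down to $0$.

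Granting the nonlinear estimate, we obtain, with $X$ the left-hand side of \eqref{esti-lem-(0,tau_0)} restricted to $[\tau,\tau_0]$,
\begin{align*}
X(\tau)\le C_{\tau_0}+C\varepsilon^{p}X(\tau).
\end{align*}
Here $X(\tau)$ is finite for each $\tau>0$ by Proposition \ref{P2} and Proposition \ref{prop:gwp-weighted-1}, and it is continuous in $\tau$. For $\varepsilon$ small the second term is absorbed, giving $X(\tau)\lesssim_{\tau_0}1$ uniformly in $\tau$, and we let $\tau\downarrow0$ (or $\tau\downarrow\tau_1$). Continuity of the truncated norms, which makes the absorption legitimate, is routine via monotone convergence.
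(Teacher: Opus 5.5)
There is a genuine gap. Your skeleton matches the paper's: Duhamel from $\tau_0$, Lorentz Strichartz, absorbing an $\varepsilon^p$ term, then the remaining pairs and $\nabla\mathcal{U}$. But the step you defer is the whole content of the lemma, and the computation that made you abandon the direct route is wrong.

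You imported the exponents of the $u$-side estimate \eqref{estimate-J(t)-inhomogeneous}. There, $L_x^{6p/(5p-4)}$ is the Sobolev target of $|J(t)|^{-s_c}u\in L_x^3$, and the $p$ factors sit in $L_t^{4,\infty}$; that is what produces $\frac{9-5p}{4}$. Here the hypothesis gives $\|\mathcal{U}\|_{L_x^{6}}\lesssim\||\nabla|^{-s_c}\mathcal{U}\|_{L_x^{3p/(2-p)}}$, because $\frac{2-p}{3p}+\frac{s_c}{3}=\frac16$. It does not give $L_x^{6p/(5p-4)}$; the two agree only at $p=1$. Your sum is also miscomputed: it equals $\frac{5-p}{4}$, not $\frac{9-p}{4}$.

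So the mismatch only shows you chose the wrong dual pair. It does not show that the weight cannot go globally into weak $L^{-1/(s_cp)}$; that placement is exactly what works. Your replacement ("keep the weight together with $\mathcal{U}$", giving $C_{\tau_0}\varepsilon^p$) is unsupported:
\begin{itemize}
\item the hypothesis controls only the unweighted norm, and $t^{\frac{3p}{2}-2}$ is unbounded near $t=0$;
\item on $(\tau_1,\tau_0)$ any pointwise bound on the weight is governed by $\tau_1$, not $\tau_0$;
\item an interval-dependent smallness threshold would make the partition in Proposition \ref{prop--s_c-1-gloabl bound} circular, since there $\varepsilon$ is fixed before the $\tau_j$ are chosen.
\end{itemize}

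The missing estimate is as follows. Since $t^{\frac{3p}{2}-2}=t^{s_cp}\in L_t^{\frac{2}{4-3p},\infty}(0,\infty)$, Lemma \ref{2: Lemma-fractional chain rule 2}, Lorentz H\"older and the Sobolev embedding above give
\[
\big\||\nabla|^{-s_c}(t^{\frac{3p}{2}-2}|\mathcal{U}|^p\mathcal{U})\big\|_{L_t^{\frac{8}{8-p},2}L_x^{\frac{12}{p+6}}}\lesssim\|t^{s_cp}\|_{L_t^{\frac{2}{4-3p},\infty}}\||\nabla|^{-s_c}\mathcal{U}\|^p_{L_t^{\frac{4p}{5p-4}}L_x^{\frac{3p}{2-p}}}\||\nabla|^{-s_c}\mathcal{U}\|_{L_t^{\frac{8}{p},2}L_x^{\frac{12}{6-p}}}.
\]
The exponents close because $\frac{8-p}{8}=\frac{4-3p}{2}+\frac{5p-4}{4}+\frac{p}{8}$ and $\frac{p+6}{12}=\frac{p}{6}+\frac{6-p}{12}$. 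By scaling they close for any admissible $(q,r)$ with $q>\frac4p$. With your pair $(4,3)$ the dual time exponent is $\frac{4}{5-p}$, which at $p=1$ hits the excluded endpoint $\widetilde q=\infty$; the symmetric pair $(\frac8p,\frac{12}{6-p})$ avoids this.

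The constant in front of $\varepsilon^p$ is absolute. Absorption then gives $\||\nabla|^{-s_c}\mathcal{U}\|_{L_t^{8/p,2}L_x^{12/(6-p)}(I)}\lesssim\|\mathcal{U}(\tau_0)\|_{\dot H_x^{-s_c}}$. One more Strichartz step yields every admissible pair, and putting $\nabla$ on the last factor instead of $|\nabla|^{-s_c}$ handles $\nabla\mathcal{U}$. With this inserted, the rest of your argument goes through. Your continuity argument on $[\tau,\tau_0]$ is in fact more careful than the paper, which absorbs without checking finiteness.
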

	\begin{proof}
		We consider the following integral equation
		\begin{align*}
			\mathcal{U}(t)=S(t-\tau_0)\mathcal{U}(\tau_0)-i\int_{\tau_0}^{t}S(t-\tau)(\tau^{\frac{3p}{2}-2}|\mathcal{U}|^p\mathcal{U})\mathrm{d}\tau.
		\end{align*}
		In the following, we restrict the spacetime variable $(t,x) \in I\times\mathbb{R}^3$. By the Strichartz estimate, H\"older's and Sobolev's inequalities,
		\begin{equation}\label{estimate-Lemma-J(t)-s_c-1}
			\begin{aligned}
				&\||\nabla|^{-s_c}\mathcal{U}\|_{L_t^{\frac{8}{p},2}L_x^{\frac{12}{6-p}}}\\
				&\lesssim\|\mathcal{U}(\tau_0)\|_{\dot{H}_x^{-s_c}(\R^3)}+\||\nabla|^{-s_c} (t^{\frac{3p}{2}-2}|\mathcal{U}|^p\mathcal{U})\|_{L_t^{\frac{8}{8-p},2}L_x^{\frac{12}{p+6}}}\\
				&\lesssim\|\mathcal{U}(\tau_0)\|_{\dot{H}_x^{-s_c}(\R^3)}+\|t^{s_cp}\|_{L_t^{-\frac{1}{s_cp},\infty}}\||\nabla|^{-s_c}\mathcal{U}\|^p_{L_t^{\frac{4p}{5p-4}} L_x^{\frac{3p}{2-p}}}\||\nabla|^{-s_c}\mathcal{U}\|_{L_t^{\frac{8}{p},2}L_x^{\frac{12}{6-p}}}\\
				&\lesssim \|\mathcal{U}(\tau_0)\|_{\dot{H}_x^{-s_c}(\R^3)}+\varepsilon^{p} \||\nabla|^{-s_c}\mathcal{U}\|_{L_t^{\frac{8}{p},2}L_x^{\frac{12}{6-p}}}.
			\end{aligned}
		\end{equation}
		Note that $0<\varepsilon\ll 1$ is sufficiently small. Thus, 
		\begin{align}\label{est-lemma-11111}
			\||\nabla|^{-s_c}\mathcal{U}\|_{L_t^{\frac{8}{p},2}L_x^{\frac{12}{6-p}}} \lesssim \|\mathcal{U}(\tau_0)\|_{\dot{H}_x^{-s_c}(\mathbb{R}^3)}.
		\end{align}
		Applying the same argument as in \eqref{estimate-Lemma-J(t)-s_c-1} and using \eqref{est-lemma-11111}, we obtain
		\begin{align}\label{esti-lem-(0,tau_0)--s_c}
			\||\nabla|^{-s_c}\mathcal{U}\|_{L_t^{\infty}L_x^2\cap L_t^{q,2}L_x^r} \lesssim \|\mathcal{U}(\tau_0)\|_{\dot{H}_x^{-s_c}(\mathbb{R}^3)}.
		\end{align}
		Similarly,
		\begin{align}\label{esti-lem-(0,tau_0)-1}
			\|\nabla\mathcal{U}\|_{L_t^{\infty}L_x^2\cap L_t^{q,2}L_x^r} \lesssim \|\mathcal{U}(\tau_0)\|_{\dot{H}_x^1(\mathbb{R}^3)}.
		\end{align}
		This lemma follows from \eqref{esti-lem-(0,tau_0)--s_c} and \eqref{esti-lem-(0,tau_0)-1}.
	\end{proof}
	
	We are now in a position to prove \eqref{esti-global bound for (0,T_0)}. Indeed, estimate \eqref{esti-global bound for (0,T_0)} follows immediately from the proposition stated below:
	\begin{prop}\label{prop--s_c-1-gloabl bound}
		Let the assumptions in Theorem \ref{main-theorem1} hold. Let $\mathcal U$ be the global solution constructed above. Then, for any admissible pair $(q,r)$ with $q\neq\infty$, 
		\begin{align}\label{estimate-radial-global-1 and s_c}
			\||\nabla|^{-s_c}\mathcal{U}\|_{L_t^{\infty}L_x^2\cap L_t^{q,2}L_x^r((0,\infty)\times\R^3)} + \|\nabla\mathcal{U}\|_{L_t^{\infty}L_x^2\cap L_t^{q,2}L_x^r((0,\infty)\times\R^3)} \lesssim 1.
		\end{align}
	\end{prop}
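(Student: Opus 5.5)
The estimate on $[T_0,\infty)$ is already given by Corollary \ref{cor:local-weighted-1}, so it remains to bound the same norms on $(0,T_0)$ uniformly. The plan is to combine the global spacetime bound of Lemma \ref{lem--s_c-global-bound} with the short-interval estimate of Lemma \ref{Lemma-short interval bound}, running a finite decomposition of $(0,T_0)$.

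\textbf{Step 1: decomposition.} Fix the small $\varepsilon>0$ from Lemma \ref{Lemma-short interval bound}. By Lemma \ref{lem--s_c-global-bound}, the quantity
\[
\||\nabla|^{-s_c}\mathcal{U}\|_{L_t^{\frac{4p}{5p-4}}L_x^{\frac{3p}{2-p}}((0,T_0)\times\R^3)}^{\frac{4p}{5p-4}}
\]
is bounded by a constant $A\lesssim 1$. Since $\frac{4p}{5p-4}<\infty$, we can split $(0,T_0)$ into consecutive intervals $I_1=(0,\tau_1)$, $I_2=(\tau_1,\tau_2)$, $\dots$, $I_m=(\tau_{m-1},T_0)$ with $m\lesssim A\varepsilon^{-\frac{4p}{5p-4}}+1$. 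On each $I_j$ the norm is less than $\varepsilon$. This is possible because the integral is absolutely continuous in the endpoints, and in particular the first interval $I_1$ may be taken of the form $(0,\tau_1)$. Crucially, $m$ depends only on $A$ and $\varepsilon$, not on the solution.

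\textbf{Step 2: backward induction.} Start from $I_m$ with $\tau_0=T_0$. Apply Lemma \ref{Lemma-short interval bound} on each $I_j$, proceeding from right to left. Its proof actually gives the bound \eqref{esti-lem-(0,tau_0)--s_c}--\eqref{esti-lem-(0,tau_0)-1} in the form
\[
\lesssim \|\mathcal{U}(\tau_0)\|_{\dot H_x^{-s_c}}+\|\mathcal{U}(\tau_0)\|_{\dot H_x^1}.
\]
The implicit constant comes only from Strichartz estimates and the bound $\|t^{s_cp}\|_{L_t^{-\frac{1}{s_cp},\infty}}$, which is scale invariant and hence independent of the interval. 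For the $\dot H^{-s_c}$ part this is fine as written. For the $\nabla\mathcal U$ part, the nonlinear term is estimated through $|\nabla|^{-s_c}\mathcal U$ in the smallness norm times $\nabla\mathcal U$ in a Lorentz Strichartz norm, again using the factor $t^{s_cp}$. At the right endpoint of $I_m$, $\|\mathcal U(T_0)\|_{\dot H^{-s_c}\cap\dot H^1}\lesssim 1$ by Corollary \ref{cor:local-weighted-1}. The $L_t^\infty L_x^2$ bound on $I_j$ then controls the data at $\tau_{j-1}$, the right endpoint of $I_{j-1}$, with constant growth $C^{m}$. After $m$ steps we reach $I_1=(0,\tau_1)$, for which Lemma \ref{Lemma-short interval bound} also applies. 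Summing the $L_t^{q,2}$ bounds over finitely many intervals (Lorentz quasi-norms sum up to a constant depending on $m$) and adding $[T_0,\infty)$ gives \eqref{estimate-radial-global-1 and s_c}.

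\textbf{Main obstacle.} The key point to verify is uniformity: that the constant in Lemma \ref{Lemma-short interval bound} does not depend on $\tau_0$ or on the length of $I$, despite the notation $\lesssim_{\tau_0}$. I would check this by re-running \eqref{estimate-Lemma-J(t)-s_c-1}, where the only time weight is $t^{s_cp}\in L_t^{-1/(s_cp),\infty}(0,\infty)$, a global scale-invariant bound.

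For the $\dot H^1$ estimate, the Hölder/Sobolev pairing must keep $|\nabla|^{-s_c}\mathcal U$ in $L_t^{\frac{4p}{5p-4}}L_x^{\frac{3p}{2-p}}$ to absorb $|\mathcal U|^p$ via Sobolev, with $\nabla\mathcal U$ in an admissible Lorentz norm. The exponents should be the same as in the $-s_c$ case, with the fractional chain rule (Lemma \ref{2: Lemma-fractional chain rule 2}) replaced by the ordinary product rule. If this check fails, I would instead bound $\nabla\mathcal U$ on $(0,\tau_1)$ using the uniform $-s_c$ bounds already obtained together with the Lorentz-norm smallness, which yields a linear inequality for $\nabla\mathcal U$.
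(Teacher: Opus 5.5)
Your proposal is correct and follows the paper's proof. You split $(0,T_0)$ into finitely many intervals on which the $L_t^{\frac{4p}{5p-4}}L_x^{\frac{3p}{2-p}}$ norm of $|\nabla|^{-s_c}\mathcal U$ is below $\varepsilon$ (Lemma \ref{lem--s_c-global-bound}), apply Lemma \ref{Lemma-short interval bound} on each piece, sum, and add $[T_0,\infty)$ from Corollary \ref{cor:local-weighted-1}; the one difference is minor: the paper controls the endpoint data $\mathcal U(\tau_{j+1})$ directly from $\mathcal U\in C((0,\infty);\dot H^{-s_c}_x\cap\dot H^1_x)$ (Proposition \ref{P2}) and treats the $\tau_j$ as constants, whereas your backward induction from $T_0$ gives a slightly more quantitative bound of size $C^m$ with $m$ depending only on $A$ and $\varepsilon$.
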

	\begin{proof}
		For any $0<\varepsilon\ll 1$, it follows from Lemma \ref{lem--s_c-global-bound} that we can split $(0,T_0)$ into $0=\tau_0<\tau_1<\cdots<\tau_{m(\varepsilon)-1}<\tau_{m(\varepsilon)}=T_0$ such that 
		\begin{align*}
			\||\nabla|^{-s_c} \mathcal{U} \|_{L_t^{\frac{4p}{5p-4}} L_x^{\frac{3p}{2-p}} (I_j \times \mathbb{R}^3)}<\varepsilon,
		\end{align*}
		where $I_j=(\tau_j, \tau_{j+1}]$, $0\leq j < m(\varepsilon)$. By Lemma \ref{Lemma-short interval bound},
		\begin{align*}
			\||\nabla|^{-s_c}\mathcal{U}\|_{L_t^{\infty}L_x^2\cap L_t^{q,2}L_x^r(I_j\times\R^3)} + \|\nabla\mathcal{U}\|_{L_t^{\infty}L_x^2\cap L_t^{q,2}L_x^r(I_j\times\R^3)}\lesssim_{\tau_{j+1}} 1.
		\end{align*}
		Summing over these intervals $I_j$, we obtain
		\begin{align}\label{esti-s_c-1-(0,T_0)}
			\||\nabla|^{-s_c}\mathcal{U}\|_{L_t^{\infty}L_x^2\cap L_t^{q,2}L_x^r((0,T_0)\times\R^3)} + \|\nabla\mathcal{U}\|_{L_t^{\infty}L_x^2\cap L_t^{q,2}L_x^r((0,T_0)\times\R^3)}\lesssim_{\tau_{1},\cdots,\tau_{m(\varepsilon)}} 1.
		\end{align}
		Note that $\tau_{j+1}$, $0\leq j < m(\varepsilon)$, can be viewed as constants. Thus, by \eqref{esti-bound for (T_0, infty)} and \eqref{esti-s_c-1-(0,T_0)}, we obtain \eqref{estimate-radial-global-1 and s_c}.
	\end{proof}
	
	Therefore, applying the inverse pseudo conformal transform $u=\mathcal{T}^{-1}\mathcal{U}$, we obtain that \eqref{NLS} admits a unique solution $S(-t)u\in C((0,\infty);\mathcal{F}\dot{H}_x^{-s_c}\cap\mathcal{F}\dot{H}_x^1(\R^3))$ and for any admissible pair $(q,r)$ with $q\neq\infty$,
	\begin{align}\label{esti-u-global-bound}
		\||J(t)|^{-s_c}u\|_{L_t^{\infty}L_x^2\cap L_t^{q,2}L_x^r((0,\infty)\times\R^3)} + \|J(t)u\|_{L_t^{\infty}L_x^2\cap L_t^{q,2}L_x^r((0,\infty)\times\R^3)}\lesssim 1.
	\end{align}
	The Lorentz-space bounds follow by applying the Lorentz-space Strichartz estimates to the Duhamel formula for $u$ and partitioning the time interval according to the spacetime bounds obtained above.

	\subsection{Scattering} 
	Next, we prove the scattering. Since our case includes the 3D Strauss exponent $p=1$, we basically follow the argument by Nakanishi and Ozawa \cite{2002-Nakanishi-Ozawa}.
	\begin{prop}[Scattering]\label{scattering}
		Let $u$ be the unique global solution to \eqref{NLS} given in Theorem \ref{main-theorem1}. Then the solution scatters in $\mathcal{F}\dot{H}_x^{-s_c}\cap\mathcal{F}\dot{H}_x^{1}(\mathbb{R}^3)$, that is, there exist $u_{\pm}\in \mathcal{F}\dot{H}_x^{-s_c}\cap\mathcal{F}\dot{H}_x^{1}(\mathbb{R}^3)$ such that 
		\begin{align}\label{scattering expression1}
			\lim_{t\rightarrow \pm\infty}\|S(-t)u-u_{\pm}\|_{\mathcal{F}\dot{H}_x^{-s_c}\cap\mathcal{F}\dot{H}_x^{1}(\mathbb{R}^3)}=0.
		\end{align}  
	\end{prop}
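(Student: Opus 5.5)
The plan is to prove the Cauchy property of $S(-t)u(t)$ as $t\to+\infty$ directly from the global spacetime bound \eqref{esti-u-global-bound}; the case $t\to-\infty$ then follows from the time-reversal symmetry $u(t,x)\mapsto\overline{u(-t,x)}$. For $0<t_1<t_2$, Duhamel's formula gives
\[
S(-t_2)u(t_2)-S(-t_1)u(t_1)=-i\int_{t_1}^{t_2}S(-\tau)\big(|u|^pu\big)(\tau)\,\mathrm{d}\tau .
\]
Its $\F\dot H_x^{\alpha}$ norm, for $\alpha\in\{-s_c,1\}$, equals the $L_x^2$ norm of $|J(t_2)|^{\alpha}\int_{t_1}^{t_2}S(t_2-\tau)(|u|^pu)\,\mathrm{d}\tau$, because of $|J(t)|^{\alpha}=S(t)|x|^{\alpha}S(-t)$ and the commutation $|J(t)|^{\alpha}S(t-\tau)=S(t-\tau)|J(\tau)|^{\alpha}$. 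By the dual Lorentz Strichartz estimate, this is bounded by
\[
\big\||J(t)|^{\alpha}(|u|^pu)\big\|_{L_t^{\frac{4}{9-5p},2}L_x^{\frac{6}{5p-2}}([t_1,t_2]\times\R^3)}.
\]

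I will estimate this exactly as in \eqref{estimate-J(t)-inhomogeneous} and \eqref{estimate-J(t)(u-v)}, writing $\wt u=M(-t)u$, so that $|J(t)|^{\alpha}=M(t)|t\nabla|^{\alpha}M(-t)$. The fractional chain rule (or the ordinary chain rule when $\alpha=1$) together with Sobolev $\||\,t\nabla|^{-s_c}\wt u\|_{L^3}\gtrsim t^{-s_c}\|\wt u\|_{L^{6p/(5p-4)}}$ gives
\[
\lesssim \|t^{s_cp}\|_{L_t^{-\frac{1}{s_cp},\infty}}\,\||J(t)|^{-s_c}u\|^p_{L_t^{4,\infty}L_x^3([t_1,t_2])}\,\||J(t)|^{\alpha}u\|_{L_t^{4,2}L_x^3([t_1,t_2])}.
\]
The factor $\|t^{s_cp}\|_{L^{-1/(s_cp),\infty}}$ is bounded uniformly in $t_1,t_2$, since $|t|^{s_cp}$ lies in weak $L^{-1/(s_cp)}(\R)$. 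The factor $\||J(t)|^{\alpha}u\|_{L_t^{4,2}L_x^3([t_1,t_2])}$ is finite on $(0,\infty)$ by \eqref{esti-u-global-bound}, and $L^{4,\infty}\lesssim L^{4,2}$.

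The key point, and the main obstacle, is smallness on the tail. Because the Lorentz index $2$ is finite, the $L_t^{4,2}$ quasi-norm of a fixed function restricted to $[t_1,\infty)$ tends to $0$ as $t_1\to\infty$, by dominated convergence applied to the distribution function. Hence the whole product tends to $0$, so $S(-t)u(t)$ is Cauchy in $\F\dot H_x^{-s_c}\cap\F\dot H_x^1$ and converges to some $u_+$ in that space.

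The delicate case is the endpoint $p=1$, where $s_cp=-\frac12$ and the time weight sits exactly at the weak-$L^2$ endpoint, so no extra decay in $t$ is available. This is the reason for using Lorentz spaces, following Nakanishi–Ozawa: all decay must come from the tail of the $L^{4,2}$ norm. As a check, the exponents must still satisfy $\frac{9-5p}{4}=-s_cp+\frac{p+1}{4}$ (using $s_c=\frac32-\frac2p$) and the corresponding spatial Hölder relation. With these the argument closes uniformly for $1\le p<\frac43$.
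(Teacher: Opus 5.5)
Your scheme matches the paper's: the Duhamel difference, the identity $|x|^{\alpha}S(-\tau)=S(-\tau)|J(\tau)|^{\alpha}$, the chain rule plus Sobolev producing $t^{s_cp}\in L_t^{-1/(s_cp),\infty}$, and smallness from the tail of a Lorentz norm with second index $2$. For $1<p<\frac43$ it works. It breaks, however, exactly at the endpoint $p=1$ that you single out, because the exponents you import from \eqref{estimate-J(t)-inhomogeneous} degenerate there. At $p=1$ one has $\frac{4}{9-5p}=1$ and $\frac{6}{5p-2}=2$, so your dual norm is $L_t^{1,2}L_x^2$, the dual of the pair $(\infty,2)$. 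The Lorentz-refined Strichartz estimate is only available when $\wt q\neq\infty$. In fact the bound $\|\int_{t_1}^{t_2}S(-\tau)F(\tau)\,\mathrm{d}\tau\|_{L_x^2}\lesssim\|F\|_{L_t^{1,2}L_x^2}$ is false: take $F(\tau)=g(\tau)S(\tau)f$ with $0\le g\in L_t^{1,2}\setminus L_t^{1}$ (e.g.\ truncations of $g(\tau)=(\tau\log\tau)^{-1}$ on $\tau\ge e$). Since you place the $p$-th power factor only in $L_t^{4,\infty}$, your H\"older step gives $L_t^{2,\infty}\cdot L_t^{4,\infty}\cdot L_t^{4,2}\subset L_t^{1,2}$ and nothing better. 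So the claim that the argument ``closes uniformly for $1\le p<\frac43$'' fails at $p=1$.

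The repair is easy, in either of two ways. First, you can keep both factors in $L_t^{4,2}$, which \eqref{esti-u-global-bound} provides. At $p=1$ you then have $t^{-1/2}\in L_t^{2,\infty}$ and $L_t^{4,2}\cdot L_t^{4,2}\subset L_t^{2,1}$, so the product lies in $L_t^{2,\infty}\cdot L_t^{2,1}\subset L_t^{1}$, and the trivial $L_t^1L_x^2$ (Minkowski) bound replaces the Strichartz step. Second, as the paper does, you can move to a non-endpoint dual pair. Bound $\||J(t)|^{\alpha}(|u|^pu)\|_{L_t^{\frac{8}{8-p},2}L_x^{\frac{12}{p+6}}}$, which is dual to the admissible pair $(\frac8p,\frac{12}{6-p})$, by $\|t^{s_cp}\|_{L_t^{-\frac{1}{s_cp},\infty}}\||J(t)|^{-s_c}u\|^p_{L_t^{\frac{4p}{5p-4}}L_x^{\frac{3p}{2-p}}}\||J(t)|^{\alpha}u\|_{L_t^{\frac8p,2}L_x^{\frac{12}{6-p}}}$, with Sobolev now landing in $L_x^6$. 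Here the dual time exponent $\frac{8}{8-p}$ stays strictly above $1$ on the whole range, and the tails of both factors on $(\tau_0,\infty)$ are small.
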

	
	\begin{proof}
		We consider only the case $t\rightarrow +\infty$, since the case $t\rightarrow -\infty$ is treated in the same way. It follows from \eqref{esti-u-global-bound} that for any $0<\varepsilon\ll 1$, there exists $\tau_0>1$ such that
		\begin{align}\label{scattering: estimate for H1-s_c}
			\||J(t)|^{-s_c} u\|_{L_t^{\frac{4p}{5p-4}} L_x^{\frac{3p}{2-p}} ((\tau_0,\infty) \times \mathbb{R}^3)} + \||J(t)|^{-s_c}u\|_{L_t^{\frac{8}{p},2}L_x^{\frac{12}{6-p}}((\tau_0,\infty)\times \mathbb{R}^3)} <\varepsilon,
		\end{align}
		and
		\begin{align}\label{scattering: estimate for H1-s_c-2}
			\|J(t)u\|_{L_t^{\frac{8}{p},2}L_x^{\frac{12}{6-p}}((\tau_0,\infty)\times \mathbb{R}^3)} <\varepsilon.
		\end{align}
		Note that for any $1<\tau_0<\tau_1<\tau_2$,
		\begin{align}\label{scattering-S(-t)u(t)}
			S(-\tau_2)u(\tau_2)-S(-\tau_1)u(\tau_1) = -i \int_{\tau_1}^{\tau_2}S(-\tau)(|u|^pu)\mathrm{d}\tau.
		\end{align}
		Then, by the same argument as in \eqref{estimate-J(t)-inhomogeneous},
		\begin{align*}
			&\|S(-\tau_2)u(\tau_2)-S(-\tau_1)u(\tau_1)\|_{\mathcal{F}\dot{H}^{-s_c}_x(\mathbb{R}^3)}\\
			&= \left\|\int_{\tau_1}^{\tau_2}S(-\tau)|J(\tau)|^{-s_c}(|u|^pu)\mathrm{d}\tau\right\|_{L_x^2(\mathbb{R}^3)} \\
			&\lesssim\||J(t)|^{-s_c}(|u|^pu)\|_{L_t^{\frac{8}{8-p},2}L_x^{\frac{12}{p+6}}((\tau_1,\tau_2)\times\mathbb{R}^3)}\\
			&\lesssim  \||J(t)|^{-s_c}u\|^p_{L_t^{\frac{4p}{5p-4}} L_x^{\frac{3p}{2-p}}((\tau_1,\tau_2)\times\mathbb{R}^3)}\||J(t)|^{-s_c}u\|_{L_t^{\frac{8}{p},2}L_x^{\frac{12}{6-p}}((\tau_1,\tau_2)\times\mathbb{R}^3)}.
		\end{align*}
		Similarly, 
		\begin{align*}
			&\|S(-\tau_2)u(\tau_2)-S(-\tau_1)u(\tau_1)\|_{\mathcal{F}\dot{H}^{1}_x(\mathbb{R}^3)}\\
			&\lesssim \||J(t)|^{-s_c}u\|^p_{L_t^{\frac{4p}{5p-4}} L_x^{\frac{3p}{2-p}}((\tau_1,\tau_2)\times\mathbb{R}^3)}\|J(t)u\|_{L_t^{\frac{8}{p},2}L_x^{\frac{12}{6-p}}((\tau_1,\tau_2)\times\mathbb{R}^3)}.
		\end{align*}
		Therefore, by \eqref{scattering: estimate for H1-s_c} and \eqref{scattering: estimate for H1-s_c-2},
		\begin{align*}
			\|S(-\tau_2)u(\tau_2)-S(-\tau_1)u(\tau_1)\|_{\mathcal{F}\dot{H}_x^{-s_c}\cap\mathcal{F}\dot{H}^{1}_x(\mathbb{R}^3)} \lesssim \varepsilon.
		\end{align*}
		This finishes the proof of \eqref{scattering expression1}.
	\end{proof}
	
	\vspace{2cm}
	
	\section{Proof of Theorem \ref{main-theorem2}}
	
	\vspace{0.5cm}

	Similar to Theorem \ref{main-theorem1}, we only prove Theorem \ref{main-theorem2} for $t>0$.

	\subsection{Local theory I: near the origin} For radial data, we first study the local well-posedness in $ \dot{H}_x^{s_c}\cap\mathcal{F}\dot{H}_x^1(\mathbb{R}^3)$ at the origin. Recall the integral equation
	\begin{align}\label{radial: integral equation}
		u=S(t)u_0-i\int_{0}^{t}S(t-\tau)(|u|^pu)\mathrm{d}\tau.
	\end{align}
	\begin{prop}[Local well-posedness at $t=0$]\label{radial:LWP-I}
		Let the assumptions in Theorem \ref{main-theorem2} hold. Then, there exists $I=[0,t_0]$ such that the equation \eqref{NLS} admits a unique solution $S(-t)u\in C(I;\dot{H}_x^{s_c}\cap\mathcal{F}\dot{H}_x^1(\mathbb{R}^3))$. Moreover, for any radial-admissible pair $(q,r)$ with $\frac{2}{q}+\frac{3}{r}=\frac{3}{2}-s_c$,
		\begin{align}\label{prop-radial-lwp-I}
			\|u\|_{L_t^{\infty}\dot{H}_x^{s_c}\cap L_t^{q}L_x^{r}(I\times \mathbb{R}^3)} + \|J(t)u\|_{L_t^{\infty}L_x^2\cap L_t^{2}L_x^6(I\times\R^3)}\lesssim \|u_0\|_{\dot{H}_x^{s_c}\cap\mathcal{F}\dot{H}_x^1(\mathbb{R}^3)}.
		\end{align}
	\end{prop}
	\begin{proof}
		We denote
		\begin{align}\label{radial-LWP-I-(q1,r1)}
			q_1= \frac{2p(p+2)}{4-p};\ \wt{q}_1=\frac{2p(p+2)}{3p^2+p-4}; \ q_2 = \frac{4(p+2)}{3p}; \ r_1=\wt{r}_1=r_2=p+2.  
		\end{align}
		A direct calculation shows that $(q_1,r_1)$ and $(\wt{q}_1, \wt{r}_1)$ are radial-admissible, and $(q_2,r_2)$ is admissible. In addition,
		\begin{align*}
			\frac{2}{q_1}+\frac{3}{r_1} = \frac{3}{2} - s_c;\quad \frac{2}{\wt{q}_1}+\frac{3}{\wt{r}_1} = \frac{3}{2}+s_c.
		\end{align*}
		In the following, we restrict the spacetime variable $(t,x) \in I\times\mathbb{R}^3$. By Lemma \ref{RSE:radial strichartz estimate}, we have
		\begin{align*}
			&\|u\|_{L_t^{\infty}\dot{H}_x^{s_c}\cap L_t^{q}L_x^{r}}\leq C \|u_0\|_{\dot{H}_x^{s_c}(\R^3)}+ C\||u|^pu\|_{L_t^{\wt{q}_1^{\prime}}L_x^{\wt{r}_1^{\prime}}}\leq C\|u_0\|_{\dot{H}_x^{s_c}(\R^3)}+C\|u\|^{p+1}_{L_t^{q_1}L_x^{r_1}},\\
			&\|u\|_{ L_t^{q_1}L_x^{r_1}}\leq C \|S(t)u_0\|_{L_t^{q_1}L_x^{r_1}}+C\||u|^pu\|_{L_t^{\wt{q}_1^{\prime}}L_x^{\wt{r}_1^{\prime}}}\leq C \|S(t)u_0\|_{L_t^{q_1}L_x^{r_1}}+ C\|u\|^{p+1}_{L_t^{q_1}L_x^{r_1}},
		\end{align*}
		and
		\begin{align}\label{radial-LWP-I-estimate-1}
			\|u-v\|_{L_t^{\infty}\dot{H}_x^{s_c}\cap L_t^{q_1}L_x^{r_1}}&\leq C  (\|u\|^p_{L_t^{q_1}L_x^{r_1}}+\|v\|^p_{L_t^{q_1}L_x^{r_1}})\|u-v\|_{L_t^{q_1}L_x^{r_1}}.
		\end{align}
		We consider the resolution space
		\begin{align*}
			E(R,\delta):= \{ u\in C(I;\dot{H}_x^{s_c}(\R^3)):\|u\|_{L_t^{q_1}L_x^{r_1}}\leq 2\delta, \|u\|_{L_t^{\infty}\dot{H}_x^{s_c}}\leq 2R \}.
		\end{align*}
		Let $R:=C\|u_0\|_{\dot{H}_x^{s_c}(\R^3)}$ and choose $\delta>0$ sufficiently small. It follows from the contraction mapping argument that the equation \eqref{NLS} admits a unique solution $u\in C(I;\dot{H}_x^{s_c}(\mathbb{R}^3))$.
		
		Next, we prove that $J(t)u \in L_t^{\infty}L_x^2\cap L_t^{2}L_x^6(I\times\R^3)$, and in particular, $S(-t)u\in C(I;\dot{H}_x^{s_c}\cap\mathcal{F}\dot{H}_x^1(\mathbb{R}^3))$. By \eqref{radial: integral equation} and the Strichartz estimate,
		\begin{equation}\label{radial-LWP-J(t)}
			\begin{aligned}
				\|J(t)u\|_{L_t^{\infty}L_x^2\cap L_t^{2}L_x^6}&\lesssim \|u_0\|_{\mathcal{F}\dot{H}_x^1(\R^3)}+ \|J(t)(|u|^pu)\|_{L_t^{q'_2}L_x^{r'_2}}\\
				&\lesssim \|u_0\|_{\mathcal{F}\dot{H}_x^1(\R^3)}+\|u\|^p_{L_t^{q_1}L_x^{r_1}}\|J(t)u\|_{L_t^{q_2}L_x^{r_2}}\\
				&\lesssim \|u_0\|_{\mathcal{F}\dot{H}_x^1(\R^3)}+\delta^p \|J(t)u\|_{L_t^{\infty}L_x^2\cap L_t^{2}L_x^6}.
			\end{aligned}
		\end{equation}
		Note that $\delta$ is sufficiently small. Thus, $\|J(t)u\|_{L_t^{\infty}L_x^2\cap L_t^{2}L_x^6(I\times\R^3)}\lesssim \|u_0\|_{\mathcal{F}\dot{H}_x^1(\R^3)}$.
	\end{proof}
	\begin{remark}
		In the following, we will regard $t_0$, $\|u_0\|_{\dot{H}^{s_c}_x(\mathbb{R}^3)}$, and $\norm{u_0}_{\mathcal{F}\dot{H}_x^{1}(\R^3)}$ as constants, and omit dependence on these quantities for simplicity.
	\end{remark}
	
	We apply the pseudo conformal transform $\mathcal{T}$ to \eqref{NLS}, and then $\mathcal{U}=\TT u$ satisfies \eqref{CNLS-I}
	with asymptotic condition
	\begin{align*}
		\lim_{t\rightarrow +\infty}\|S(-t)\mathcal{U}(t)- \mathcal{U}_+ \|_{\mathcal{F}\dot{H}_x^{s_c}\cap \dot{H}_x^1(\mathbb{R}^3)}=0,
	\end{align*}
	where $\mathcal{U}_+:=\mathcal{F}^{-1}\bar{u}_0\in \mathcal{F}\dot{H}_x^{s_c}\cap \dot{H}_x^1(\mathbb{R}^3)$.
	
	As a consequence of Proposition \ref{radial:LWP-I}, we have
	\begin{cor}\label{cor-radial:local-weighted-1}
		There exists $T_0=t_0^{-1}>0$ such that the equation \eqref{CNLS-I} admits a unique solution $ S(-t)\mathcal{U}\in C([T_0,\infty);\mathcal{F}\dot{H}_x^{s_c}\cap\dot{H}_x^1(\mathbb{R}^3)). $
	\end{cor}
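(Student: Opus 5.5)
We obtain Corollary \ref{cor-radial:local-weighted-1} by transferring the solution of Proposition \ref{radial:LWP-I} through the pseudo conformal transform, exactly as Corollary \ref{cor:local-weighted-1} follows from Proposition \ref{LWP1}.

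\textbf{Step 1: the transformed function.} Let $u$ be the solution on $I=[0,t_0]$ given by Proposition \ref{radial:LWP-I}, with $S(-t)u\in C(I;\dot H_x^{s_c}\cap\F\dot H_x^1(\R^3))$. For $t\in[T_0,\infty)$ with $T_0=t_0^{-1}$, set $\U=\TT u$. Since $1/t\in(0,t_0]$, the function $\U$ is defined on $[T_0,\infty)$. By the computation leading to \eqref{CNLS-I}, $\U$ solves \eqref{CNLS-I} there. Its limit as $t\to\infty$ corresponds to $t=0$ for $u$, which gives the asymptotic condition with $\U_+=\F^{-1}\bar u_0$.

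\textbf{Step 2: the $\dot H_x^1$ part.} By \eqref{PCT-u(t,x)} with $s=1$, $\nabla\U=\TT(J(t)u)$ up to the usual notation. Hence
\[
\|\nabla\U(t)\|_{L_x^2}=\|J(1/t)u(1/t)\|_{L_x^2}=\|x\,S(-1/t)u(1/t)\|_{L_x^2}.
\]
The first remark after \eqref{PCT-u(t,x)} then gives $\U\in C([T_0,\infty);\dot H_x^1)$ from $S(-t)u\in C((0,t_0];\F\dot H_x^1)$. Since $\dot H^1$ is invariant under $S(-t)$, this is equivalent to $S(-t)\U\in C([T_0,\infty);\dot H_x^1)$.

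\textbf{Step 3: the $\F\dot H_x^{s_c}$ part.} Here we use the second remark, in its version for negative exponents. We need the identity
\[
\bigl\||x|^{s_c}S(-t)\U(t)\bigr\|_{L_x^2}=\bigl\||J(t)|^{s_c}\U(t)\bigr\|_{L_x^2}=\|u(1/t)\|_{\dot H_x^{s_c}}.
\]
It follows from $\TT^{-1}=\TT$ together with \eqref{2-|J(t)|^s} and \eqref{PCT-u(t,x)}, which are purely algebraic: $M(t)$ is a unimodular multiplier and the dilation is explicit. These identities therefore remain valid for $s=s_c<0$, with $|x|^{s_c}$ read as multiplication and $|\nabla|^{s_c}$ as a Fourier multiplier. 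Continuity in $t$ on $[T_0,\infty)$ comes from the continuity of $u$ in $\dot H_x^{s_c}$ on $(0,t_0]$, because the map $t\mapsto1/t$ and the unitary factors are continuous.

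\textbf{Step 4: uniqueness.} Any other solution $\mathcal V$ in this class transforms back under $\TT$ to a solution of \eqref{NLS} on $(0,t_0]$ lying in the same class as $u$. Uniqueness in Proposition \ref{radial:LWP-I} then gives $\mathcal V=\U$.

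The main obstacle is this uniqueness step. The contraction in Proposition \ref{radial:LWP-I} is run in $L_t^{q_1}L_x^{r_1}$, so uniqueness there is only stated in that auxiliary space. I would first show that any solution with $S(-t)u\in C(I;\dot H_x^{s_c})$ satisfies $u\in L_t^{q_1}L_x^{r_1}(I)$ with small norm on short intervals. This follows from Lemma \ref{RSE:radial strichartz estimate} applied to the Duhamel formula \eqref{radial: integral equation}, together with a continuity argument in the right endpoint. With that in hand, the difference estimate \eqref{radial-LWP-I-estimate-1} closes uniqueness.
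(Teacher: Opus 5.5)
Your Steps 1--3 and the first paragraph of Step 4 are exactly the argument behind the paper's one-line derivation of this corollary ``as a consequence of Proposition \ref{radial:LWP-I}''. You transfer $u$ through $\mathcal{T}$ and read off $\|\nabla\mathcal{U}(t)\|_{L_x^2}$ and $\||J(t)|^{s_c}\mathcal{U}(t)\|_{L_x^2}$ from $\|J(1/t)u(1/t)\|_{L_x^2}$ and $\|u(1/t)\|_{\dot H_x^{s_c}}$. These identities do hold for $s_c<0$. You are also right that the contraction in Proposition \ref{radial:LWP-I} only yields uniqueness in the auxiliary class. The paper does not address this; it simply inherits whatever uniqueness that proposition provides.

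The repair you propose for unconditional uniqueness does not work as written, for two reasons.

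\textbf{Radiality.} Lemma \ref{RSE:radial strichartz estimate} and the difference estimate \eqref{radial-LWP-I-estimate-1} apply only to radial functions. A competitor $\mathcal{V}$ with $S(-t)\mathcal{V}\in C([T_0,\infty);\mathcal{F}\dot H_x^{s_c}\cap\dot H_x^1(\mathbb{R}^3))$ is not assumed radial. Its radiality would itself follow from uniqueness, since rotations preserve both the equation and the radial data, so it cannot be used to prove uniqueness.

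\textbf{A priori bound.} The continuity argument in the right endpoint needs to know beforehand that $\|v\|_{L_t^{q_1}L_x^{r_1}([0,T]\times\mathbb{R}^3)}$ is finite and tends to $0$ as $T\to0$. Nothing in the class provides this. For $s_c<0$ there is no embedding $\dot H_x^{s_c}\hookrightarrow L_x^{p+2}$. The weighted part only yields $\|v(t)\|_{L_x^6}\lesssim t^{-1}\|J(t)v(t)\|_{L_x^2}$, which degenerates as $t\to0$. The paper itself has to use the Strichartz bound from Proposition \ref{radial:LWP-I} just to find one time at which $u\in L_x^{p+2}$; see the argument leading to \eqref{radial: L p+2 estimate}. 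So your bootstrap has no starting point.

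What you can actually prove is uniqueness among radial solutions with $u\in L_t^{q_1}L_x^{r_1}([0,t_0]\times\mathbb{R}^3)$. By \eqref{PCT-remark-2}, this is equivalent to $t^{s_c}\mathcal{U}\in L_t^{q_1}L_x^{p+2}([T_0,\infty)\times\mathbb{R}^3)$. That is the uniqueness which transfers through $\mathcal{T}$, and you should state it in that class rather than claim it unconditionally.
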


	\subsection{Local theory II: away from the origin} 
	There is a slight difference from the proof of Theorem \ref{main-theorem1}. When applying the radial Strichartz estimate, it is more convenient to deal with the solution $u$ rather than $\U$. %We are not plan to extend the solution in Corollary \ref{cor-radial:local-weighted-1} from $T_0$ backward to 0. If we follow the steps of the proof of Theorem \ref{main-theorem1}, we need to prove that $S(-t)\mathcal{U}\in C((0,\infty); \mathcal{F}\dot{H}_x^{s_c}(\mathbb{R}^3))$. However, it is not obvious to obtain by the radial Strichartz estimate. We expect to use the radial Strichartz estimate to deal with the case $\dot{H}_x^{s_c}(\mathbb{R}^3)$, rather than $\mathcal{F}\dot{H}_x^{s_c}(\mathbb{R}^3)$.
	
	Therefore, we reduce the problem to extending the solution in Proposition \ref{radial:LWP-I} from $t_0$ to $\infty$. We need the following local result with initial time greater than $t_0$, only requiring the initial data in $ \mathcal{F} \dot H_x^1(\mathbb{R}^3)$. This result is also equivalent to the blow-up criterion by $\mathcal{F} \dot H_x^1(\mathbb{R}^3)$-norm.
	
	\begin{prop}\label{GWP-I:radial-prop H1}
		Let the assumptions in Theorem \ref{main-theorem2} hold, and fix any $t<\infty$. Assume that there exists $M>0$ and some $t_1\in(t_0, t)$ such that
		\begin{align*}
			\|S(-t_1)u(t_1)\|_{\mathcal{F}\dot{H}_x^1(\mathbb{R}^3)} \leq M.
		\end{align*}
		Then there exists some $t_2>0$ depending on $M$ such that the equation \eqref{NLS} admits a unique solution $ S(-t) u \in C([t_1, t_1+t_2];\mathcal{F}\dot{H}^1_x(\mathbb{R}^3))$.
	\end{prop}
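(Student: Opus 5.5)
The plan is to run a contraction argument directly in the original variable $u$, with the vector field $J(t)$ in place of $\nabla$. This mirrors the proof of Proposition \ref{GWP-I:prop H1}, which is the same statement after the pseudo conformal transform. The reason it works is that the time interval $[t_1,t_1+t_2]$ stays inside $[t_0,\infty)$, so it is bounded away from $0$. Equivalently, under $\mathcal{T}$ the interval $[\frac{1}{t_1+t_2},\frac{1}{t_1}]$ stays away from $0$ and $\infty$, and Proposition \ref{GWP-I:prop H1} could be applied verbatim with $T=\frac1t$. Working directly with $u$ is more convenient for the radial argument that follows.

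Let $I:=[t_1,t_1+t_2]$ with $0<t_2\le1$ to be chosen. The solution map is
\begin{align*}
\Phi(u):=S(t-t_1)u(t_1)-i\int_{t_1}^{t}S(t-\tau)(|u|^pu)\,\mathrm{d}\tau .
\end{align*}
I would use the commutation $J(t)S(t-\tau)=S(t-\tau)J(\tau)$ together with $\|J(t_1)u(t_1)\|_{L_x^2}=\|S(-t_1)u(t_1)\|_{\mathcal{F}\dot H_x^1}\le M$. The Strichartz estimate then gives $\|J(t)S(t-t_1)u(t_1)\|_{L_t^\infty L_x^2\cap L_t^2L_x^6(I\times\R^3)}\le CM=:R$. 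The resolution space is
\begin{align*}
E(R):=\{u:\ \|J(t)u\|_{L_t^\infty L_x^2\cap L_t^2L_x^6(I\times\R^3)}\le 2R\},
\end{align*}
with the metric given by the same norm applied to $u-v$.

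For the nonlinear estimate, set $\wt u=M(-t)u$, so that $J(t)u=M(t)\,it\nabla\wt u$. Then $|J(t)(|u|^pu)|\lesssim t|u|^p|\nabla\wt u|=|u|^p|J(t)u|$. Sobolev's inequality applied to $\wt u$ gives
\begin{align*}
\|u\|_{L_x^{6p}}=\|\wt u\|_{L_x^{6p}}\lesssim\|\nabla\wt u\|_{L_x^{\frac{6p}{2p+1}}}=t^{-1}\|J(t)u\|_{L_x^{\frac{6p}{2p+1}}}\le t_0^{-1}\|J(t)u\|_{L_x^{\frac{6p}{2p+1}}}.
\end{align*}
The exponents are then exactly those in \eqref{proof-Proposition-away from 0-1}, namely the dual pair $L_t^{8/7}L_x^{12/7}$ with $J(t)u\in L_t^8L_x^{12/5}$ and $J(t)u\in L_t^{\frac{4p}{p-1}}L_x^{\frac{6p}{2p+1}}$. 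This yields
\begin{align*}
\|J(t)\Phi(u)\|_{L_t^\infty L_x^2\cap L_t^2L_x^6}\le R+C\,t_0^{-p}\,t_2^{\frac{4-p}{4}}R^{p+1}.
\end{align*}
The power of $t_2$ comes from Hölder in time and is positive because $p<\frac43$, and $t_0$ is a fixed constant. For the difference I would split $J$ of the nonlinearity exactly as in \eqref{H1:u-v}, i.e. $|u|^pJ(u-v)+(|u|^p-|v|^p)Jv$ plus the conjugate term. Since $p\ge\gamma_1>1$, we have $||u|^p-|v|^p|\lesssim(|u|^{p-1}+|v|^{p-1})|u-v|$, and the same Sobolev trick on $\wt u-\wt v$ gives a contraction factor $Ct_0^{-p}t_2^{\frac{4-p}{4}}R^p$. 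Choosing $t_2=t_2(M)$ small makes $\Phi$ a contraction on $E(R)$. Continuity of $S(-t)u$ in $\mathcal{F}\dot H_x^1$ then follows from the Duhamel formula and dominated convergence, and uniqueness follows from the same difference estimate.

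The main point to check carefully is the bookkeeping of the $t$-weights, so that every factor is a nonnegative power of $t^{-1}\le t_0^{-1}$. In particular I need to confirm that no negative power of $t_1+t_2$ enters and that the constant depends only on $M$, and not on $t_1$, as the statement requires. The second delicate point is the difference estimate for the conjugate term $F_2$, where $|u|^{p-2}u^2$ is only Hölder continuous of order $p-1$. If the Lipschitz bound fails there, I would fall back on running the contraction in a weaker metric, for instance $\|u-v\|_{L_t^2L_x^6}$ without $J$, as is done for $p<1$ in Section \ref{section-lwp}.
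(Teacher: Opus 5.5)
Your proposal is correct and follows the paper's proof essentially line by line: the same Duhamel map on $[t_1,t_1+t_2]$, the same space $E(R)$ built on $\|J(t)u\|_{L_t^\infty L_x^2\cap L_t^2L_x^6}$, the same dual pair $L_t^{8/7}L_x^{12/7}$, and the same bound $C\,t_0^{-p}t_2^{\frac{4-p}{4}}R^{p+1}$. As in the paper, that bound comes from $\|u\|_{L_x^{6p}}\lesssim t^{-1}\|J(t)u\|_{L_x^{6p/(2p+1)}}$ with $t\ge t_1>t_0$.

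Your worry about $F_2$ is unfounded, so no fallback metric is needed: for $p>1$ both $F_1$ and $F_2$ are $C^1$ with derivatives $O(|z|^{p-1})$. Hence $|F_2(z)-F_2(w)|\lesssim(|z|^{p-1}+|w|^{p-1})|z-w|$ holds exactly as for $F_1$, which is why the paper treats the two terms identically.
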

	\begin{proof}
		We consider the map
		\begin{align*}
			\Psi(u):=S(t-t_1)u(t_1)-i\int_{t_1}^{t}S(t-\tau)(|u|^pu)\mathrm{d}\tau.
		\end{align*}
		Take $t_2$ that will be defined later. Set $I:=[t_1, t_1+t_2]$. In the following, we restrict the spacetime variable $(t,x)\in I\times\R^3$. By the Strichartz estimate, 
		\begin{align*}
			\|J(t)S(t-t_1)u(t_1)\|_{L_t^{\infty}L_x^2\cap L_t^2L_x^6}\leq C\|S(-t_1)u(t_1)\|_{\mathcal{F}\dot{H}_x^1(\mathbb{R}^3)} \leq CM.
		\end{align*}
		We denote the right hand side of the above estimate as  $R:=CM$. Define the resolution space
		\begin{align*}
			E(R):=\{S(-t)u(t)\in C(I;\mathcal{F}\dot{H}^1_x(\R^3)):\|J(t)u(t)\|_{L_t^{\infty}L_x^2\cap L_t^2L_x^6}\leq 2R\}.
		\end{align*}
		Applying the same argument as in \eqref{estimate-J(t)-inhomogeneous} and \eqref{proof-Proposition-away from 0-1},
		\begin{equation}\label{radial-inhomogeneous-J(t)u}
			\begin{aligned}
				\left\|J(t)\int_{t_1}^{t}S(t-\tau)(|u|^pu)\mathrm{d}\tau\right\|_{L_t^{\infty}L_x^2\cap L_t^2L_x^6} &\lesssim \|J(t)(|u|^pu)\|_{L_t^{\frac{8}{7}}L_x^{\frac{12}{7}}}\\
				&\lesssim t_1^{-p}t_2^{\frac{4-p}{4}} \|J(t)u\|^{p+1}_{L_t^{\infty}L_x^2\cap L_t^2L_x^6}\\
				&\lesssim t_0^{-p}t_2^{\frac{4-p}{4}} \|J(t)u\|^{p+1}_{L_t^{\infty}L_x^2\cap L_t^2L_x^6}.
			\end{aligned}
		\end{equation}
		Thus,
		\begin{align*}
			\|J(t)\Psi(u)\|_{L_t^{\infty}L_x^2\cap L_t^2L_x^6} \leq R + Ct_2^{\frac{4-p}{4}} R^{p+1}.
		\end{align*}
		Similarly,
		\begin{align*}
			\|J(t)(\Psi(u)-\Psi(v))\|_{L_t^{\infty}L_x^2\cap L_t^2L_x^6} \leq Ct_2^{\frac{4-p}{4}} R^{p}\|J(t)(u-v)\|_{L_t^{\infty}L_x^2\cap L_t^2L_x^6}.
		\end{align*}
		Therefore, by choosing $t_2$ sufficiently small, depending on $M$, and applying the contraction mapping argument, we finish the proof.
	\end{proof}

	\subsection{Global well-posedness}
	In view of Proposition \ref{GWP-I:radial-prop H1}, we are able to assume that $(0,t^*)$ denotes the maximal existence lifespan for \eqref{NLS} such that $ S(-t)u(t)\in C((0,t^*);\mathcal{F}\dot{H}_x^{1}(\mathbb{R}^3))$. Moreover, we assume that $(T^*,\infty)$ denotes the maximal existence lifespan for \eqref{CNLS-I} such that $ \mathcal{U}(t)\in C((T^*,\infty);\dot{H}_x^{1}(\mathbb{R}^3))$, where $T^*=\frac{1}{t^*}$. 
	
	A direct calculation shows that
	\begin{align}\label{RGPE}
		\|\mathcal{U}(t,\cdot)\|_{L_x^r(\mathbb{R}^3)}=t^{-\frac{3}{2}+\frac{3}{r}}\|u(t^{-1},\cdot)\|_{L_x^r(\mathbb{R}^3)},
	\end{align}
	and
	\begin{align}\label{RGPE-2}
		|\nabla|^s\mathcal{U} = |\nabla|^s\mathcal{T}u = \mathcal{T}|J(t)|^su, \quad s>0.
	\end{align}
	
	The global well-posedness relies on the following a priori estimate for $u$. First, we give an a priori estimate for $\mathcal{U}$, then use \eqref{RGPE} and \eqref{RGPE-2} to obtain the estimate for $u$.

	It follows from Proposition \ref{radial:LWP-I} that
	\begin{align*}
		\|u\|_{L_t^{\frac{2p(p+2)}{4-p}}L_x^{p+2}((0,t_0)\times \mathbb{R}^3)} \lesssim 1.
	\end{align*}
	Then, there exists $0<\wt{t}_0\leq t_0$ such that $\|u(\wt{t}_0,\cdot)\|_{L_x^{p+2}(\mathbb{R}^3)}\lesssim 1$. By \eqref{RGPE},
	\begin{align}\label{radial: L p+2 estimate}
		\|\mathcal{U}(\widetilde{T}_0)\|_{L_x^{p+2}(\mathbb{R}^3)}\lesssim 1,
	\end{align}
	where $\widetilde{T}_0=\wt{t}_0^{-1}\geq T_0$. 
	Under the assumptions of Theorem \ref{main-theorem2}, \eqref{derivate of pce} also holds. Thus, by \eqref{Pseudo-conformal-energy}, \eqref{derivate of pce}, \eqref{radial: L p+2 estimate}, and Corollary \ref{cor-radial:local-weighted-1}, we have that for any $t\in (T^*,\wt{T}_0)$,
	\begin{align}\label{radial-E(t)}
		0\leq \mathcal{E}(t) \leq \mathcal{E}(\wt{T}_0) \lesssim 1,
	\end{align}
	which implies that for any $t\in (T^*,T_0)$,
	\begin{align}\label{RG1}
		\|\nabla\mathcal{U}(t)\|_{L_x^2(\mathbb{R}^3)}\lesssim t^{\frac{3p}{4}-1}.
	\end{align}
	By \eqref{RGPE} and \eqref{RGPE-2},
	\begin{align}\label{RG2}
		\|S(-t)u(t)\|_{\mathcal{F}\dot{H}_x^1(\mathbb{R}^3)} = \|J(t)u(t)\|_{L_x^2(\mathbb{R}^3)} = \|\mathcal{T}J(t^{-1})u(t^{-1})\|_{L_x^2(\mathbb{R}^3)} = \|\nabla\mathcal{U}(t^{-1})\|_{L_x^2(\mathbb{R}^3)}.
	\end{align}
	It follows from \eqref{RG1} and \eqref{RG2} that for any $t\in (t_0, t^*)$,
	\begin{align}\label{radial-a priori estimate for u}
		\|S(-t)u(t)\|_{\mathcal{F}\dot{H}_x^1(\mathbb{R}^3)} \lesssim t^{1-\frac{3p}{4}}.
	\end{align}

	\begin{prop}[Global well-posedness in $\F\dot{H}_x^1(\R^3)$]
		Let the assumptions in Theorem \ref{main-theorem2} hold. Then, there exists a unique solution $u$ of \eqref{NLS} such that $$S(-t)u\in C((0,\infty); \mathcal{F}\dot{H}_x^1(\mathbb{R}^3)).$$ Moreover,
		\begin{align}\label{radial-energy estimate E(t)}
			\sup_{0< t\le T_0} \mathcal{E}(t) \lesssim 1.
		\end{align}
	\end{prop}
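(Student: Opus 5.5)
We argue by contradiction, mirroring the proof of Proposition \ref{prop:gwp-weighted-1} but working on the $u$ side, where Proposition \ref{GWP-I:radial-prop H1} supplies the forward extension. Let $(0,t^*)$ be the maximal lifespan of $S(-t)u\in C((0,t^*);\mathcal{F}\dot H_x^1(\mathbb{R}^3))$ obtained from Proposition \ref{radial:LWP-I} and Proposition \ref{GWP-I:radial-prop H1}, and suppose $t^*<\infty$. Equivalently, $T^*=1/t^*>0$ for the transformed solution $\mathcal{U}$.

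The key input is the a priori bound \eqref{radial-a priori estimate for u}. It follows from the monotonicity \eqref{derivate of pce} of $\mathcal{E}$ on $(T^*,\wt T_0)$, from \eqref{radial-E(t)}, and from the identity \eqref{RG2}. It gives
\begin{align*}
\|S(-t)u(t)\|_{\mathcal{F}\dot{H}_x^1(\mathbb{R}^3)}\le C_1 (t^*)^{1-\frac{3p}{4}}=:M \qquad \text{for all } t\in(t_0,t^*),
\end{align*}
since $1-\frac{3p}{4}>0$ for $p<\frac43$. We then apply Proposition \ref{GWP-I:radial-prop H1} with this $M$ and with $t_1=t^*-\varepsilon$. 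It yields a solution on $[t^*-\varepsilon,t^*-\varepsilon+t_2]$, where $t_2=t_2(M)$ does not depend on $\varepsilon$. Choosing $\varepsilon=t_2/2$ extends the solution beyond $t^*$, which contradicts maximality. Hence $t^*=\infty$ and $T^*=0$.

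Uniqueness comes from the uniqueness statements in Proposition \ref{radial:LWP-I} and Proposition \ref{GWP-I:radial-prop H1}, applied on overlapping intervals. Continuity of $S(-t)u$ in $\mathcal{F}\dot H^1_x$ on $(0,\infty)$ is inherited piecewise from the same local results.

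For the energy bound \eqref{radial-energy estimate E(t)}, the domain of $\mathcal{U}$ is now all of $(0,\infty)$. Monotonicity \eqref{derivate of pce} gives $0\le\mathcal{E}(t)\le\mathcal{E}(\wt T_0)$ for $t\in(0,\wt T_0]$. The value $\mathcal{E}(\wt T_0)$ is finite: its kinetic part is controlled by $\|\nabla\mathcal{U}(\wt T_0)\|_{L^2}=\|J(\wt t_0)u(\wt t_0)\|_{L^2}\lesssim1$ via \eqref{prop-radial-lwp-I}, and its potential part by \eqref{radial: L p+2 estimate}. Since $T_0\le\wt T_0$, the supremum over $(0,T_0]$ is $\lesssim1$.

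The main point needing care is to check that the a priori estimate really holds on the whole interval $(t_0,t^*)$. This requires \eqref{derivate of pce} to be justified for $\mathcal{U}\in C((T^*,\infty);\dot H^1_x)$, which needs only $\dot H^1$ regularity plus a standard regularization argument. It also requires the time $t_2$ in Proposition \ref{GWP-I:radial-prop H1} to depend only on $M$ and on the fixed lower bound $t_1>t_0$, not on $t_1$ otherwise. Both appear to be satisfied by the stated results.
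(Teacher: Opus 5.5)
Your proposal is correct and follows the paper's proof essentially step for step: assume $t^*<\infty$, use \eqref{radial-a priori estimate for u} to get the uniform bound $M=C_1(t^*)^{1-\frac{3p}{4}}$, apply Proposition \ref{GWP-I:radial-prop H1} at $t_1=t^*-\varepsilon$ with $t_2$ independent of $\varepsilon$ and take $\varepsilon=\frac{t_2}{2}$, then read off \eqref{radial-energy estimate E(t)} from \eqref{radial-E(t)} once $T^*=0$. Your additions (uniqueness by gluing local solutions, and finiteness of $\E(\wt T_0)$ via \eqref{prop-radial-lwp-I} instead of Corollary \ref{cor-radial:local-weighted-1}) are consistent with the paper, with one minor caveat: justifying the monotonicity of $\E$ uses not only $\dot H_x^1$ regularity but also $\U(\wt T_0)\in L_x^{p+2}$, because $\dot H_x^1(\R^3)$ alone does not control $\int|\U|^{p+2}\,\mathrm{d}x$, and the paper leaves this point implicit as well.
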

	\begin{proof}
		It suffices to show $t^*=\infty$. Assume by contradiction that $t^*<\infty$. Then, by \eqref{radial-a priori estimate for u}, for any $t\in(t_0, t^*)$,
		\begin{align*}
			\|S(-t)u(t)\|_{\mathcal{F}\dot{H}_x^1(\mathbb{R}^3)} \leq C_1(t^*)^{1-\frac{3p}{4}},
		\end{align*}
		for some $C_1>0$. For any $\ep>0$, Proposition \ref{GWP-I:radial-prop H1} with $t=t^*$, $t_1=t^*-\ep$, and $M=C_1(t^*)^{1-\frac{3p}{4}}$ yields that there exists $t_2$ depending on $M$ such that $S(-t)u\in C([t^* - \varepsilon, t^* - \varepsilon + t_2]; \mathcal{F}\dot{H}_x^1(\R^3))$. Notice that $t_2$ is actually independent of $\ep$. Taking $\varepsilon= \frac{t_2}{2}$, we obtain $t^*-\varepsilon + t_2 > t^*$, which contradicts the definition of $t^*$. Thus, $t^*=\infty$. 
		
		The estimate \eqref{radial-energy estimate E(t)} follows from $T^*=\frac{1}{t^*}=0$ and \eqref{radial-E(t)}.
	\end{proof}

	\subsection{Additional regularity}
	From the above argument, we obtain a unique solution of \eqref{NLS} such that
	\begin{align*}
		S(-t)u\in C((0,\infty);\mathcal{F}\dot{H}_x^1(\mathbb{R}^3)).
	\end{align*}
	Next, we improve the regularity of the solution $u$ to $\dot{H}_x^{s_c}(\mathbb{R}^3)$. 
	\begin{prop}
		Let the assumptions in Theorem \ref{main-theorem2} hold. Let also $u$ be the global solution given in the former step such that $S(-t)u\in C((0,\infty); \mathcal{F}\dot{H}_x^1(\mathbb{R}^3))$. Then we have that $S(-t)u\in C((0,\infty); \dot{H}_x^{s_c}\cap \mathcal{F} \dot{H}_x^1(\mathbb{R}^3)) $.
		
	\end{prop}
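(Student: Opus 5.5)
I will mimic the proof of Proposition \ref{P2}, but work with $u$ directly and use the radial Strichartz estimates of Lemma \ref{RSE:radial strichartz estimate}. This is natural because $\dot H_x^{s_c}$ is the radial critical space and $\dot H_x^{s_c}$-regularity of $S(-t)u$ is the same as that of $u$. Since $S(-t)$ is unitary on $\dot H_x^{s_c}$, it suffices to show that for every $t_0<T<\infty$,
\begin{align*}
\|u\|_{L_t^\infty\dot H_x^{s_c}\cap L_t^{q_1}L_x^{r_1}([t_0,T]\times\R^3)}\lesssim_T 1,
\end{align*}
with $(q_1,r_1)=(\frac{2p(p+2)}{4-p},p+2)$ as in \eqref{radial-LWP-I-(q1,r1)}. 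Proposition \ref{radial:LWP-I} already covers $[0,t_0]$. Continuity in time then follows from the Duhamel formula in the usual way.

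\textbf{Step 1: a controlling norm on $[t_0,T]$ from the a priori $\F\dot H^1$ bound.} By \eqref{radial-a priori estimate for u}, we have $\|J(t)u\|_{L_x^2}\lesssim T^{1-\frac{3p}{4}}$ on $[t_0,T]$. Writing $J(t)=M(t)(it\nabla)M(-t)$, this gives $\|\nabla(M(-t)u)\|_{L_x^2}\lesssim t^{-1}T^{1-3p/4}$. We also have mass-type control: $\|u\|_{L_x^{p+2}}$ is bounded via the pseudo conformal energy, since $\|u(t)\|_{L^{p+2}}$ equals a power of $t$ times $\|\U(1/t)\|_{L^{p+2}}$ by \eqref{RGPE}, and $\mathcal E\lesssim1$ by \eqref{radial-energy estimate E(t)}. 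Hence $\|u\|_{L_t^{q_1}L_x^{r_1}([t_0,T])}\lesssim_T 1$ simply by integrating the pointwise-in-time $L^{p+2}$ bound over a finite interval. This is the key input: the nonlinearity estimate in Proposition \ref{radial:LWP-I} only uses the norm $L_t^{q_1}L_x^{r_1}$.

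\textbf{Step 2: short-interval iteration.} Split $[t_0,T]$ into consecutive intervals $I_j=[a_j,a_{j+1}]$ on which $\|u\|_{L_t^{q_1}L_x^{r_1}(I_j)}<\delta$, with $\delta$ small. The number of such intervals depends only on $T$, by Step 1. On each $I_j$, write the Duhamel formula from $a_j$. Applying Lemma \ref{RSE:radial strichartz estimate} with the gap pairs $(q_1,r_1)$ and $(\wt q_1,\wt r_1)$ gives
\begin{align*}
\|u\|_{L_t^\infty\dot H_x^{s_c}(I_j)}\le C\|u(a_j)\|_{\dot H_x^{s_c}}+C\|u\|^{p+1}_{L_t^{q_1}L_x^{r_1}(I_j)}\le C\|u(a_j)\|_{\dot H_x^{s_c}}+C\delta^{p+1}.
\end{align*}
Here $\dot H^{s_c}$ is never needed on the right-hand side. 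Induction over $j$ then yields the bound, with constants growing geometrically in the number of intervals; this is harmless since that number is fixed by $T$.

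\textbf{Main obstacle.} The delicate point is Step 1: converting the pseudo conformal energy bound into a genuine spacetime bound $u\in L_t^{q_1}L_x^{p+2}$ on $[t_0,T]$. If the $L^{p+2}$ transfer through \eqref{RGPE} produces an unfavourable power of $t$, it is still bounded on compact time intervals away from $0$, so it should suffice. I would double-check that the radial-admissibility constraints, which are where $\gamma_1\le p$ enters, are exactly those already verified in Proposition \ref{radial:LWP-I}. If instead one wanted bounds uniform as $T\to\infty$, as for scattering, one would adapt the argument of Lemma \ref{lem--s_c-global-bound} using $\int t^{1-3p/2}\|\nabla\U\|_2^2\,dt\lesssim1$. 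That is not needed for this proposition.
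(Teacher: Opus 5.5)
Your argument is correct, but it takes a different route from the paper's proof of this proposition.

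\textbf{The paper's route.} It runs a short-time bootstrap that relies only on the $\F\dot H_x^1$ a priori bound \eqref{radial-a priori estimate for u}, i.e.\ the kinetic part of $\E$. On $I=[a,a+b]$ it controls $\|u\|_{L_t^\infty L_x^6}\lesssim t_0^{-1}\|J(t)u\|_{L_t^\infty L_x^2}$. With the special radial-admissible pairs $(q_3,r_3)$, $(\wt q_3,\wt r_3)$ of \eqref{est-radial-restrict}, this gives an estimate linear in $u$ with the small factor $b^{\frac{4-p}{4}}$. That estimate closes on short intervals and is then iterated. This is exactly where $p\ge\gamma_1$ is needed (Remark \ref{remark-radial-restriction for p}).

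\textbf{Your route.} You use the potential part of the pseudo conformal energy instead. By \eqref{RGPE} and \eqref{radial-energy estimate E(t)}, $\|u(t)\|_{L_x^{p+2}}\lesssim t^{-\frac32+\frac{3}{p+2}}$ for $t\ge t_0$, so $u\in L_t^{q_1}L_x^{p+2}$. The radial Strichartz estimate with the pairs of Proposition \ref{radial:LWP-I} then gives $\dot H_x^{s_c}$ at once.

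\textbf{Two simplifications.} First, $\dot H_x^{s_c}$ never appears on the right-hand side, so your partition into $\delta$-small intervals is unnecessary; one application on $[t_0,T]$ suffices. Second, $\int_{t_0}^\infty t^{-\frac{3p^2}{4-p}}\,dt<\infty$ for $p>1$, so the pointwise bound already gives $\|u\|_{L_t^{q_1}L_x^{p+2}((t_0,\infty)\times\R^3)}\lesssim 1$. No integrated kinetic term is needed for uniformity in $T$. In this form your proof is essentially Lemma \ref{lem-radial-global bound} combined with \eqref{est-radial-s_c-global bound}; that lemma assumes the additional regularity but its proof never uses it. So your argument yields this proposition and the $\dot H_x^{s_c}$ half of Proposition \ref{prop-radial-s_c-1-global bound} in one step.

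\textbf{Comparison.} The paper's route has the merit of using only the $\F\dot H_x^1$ quantity that drives its blow-up criterion. Yours is shorter and avoids the exponents $(q_3,r_3)$, $(\wt q_3,\wt r_3)$ altogether.

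\textbf{One correction to your side remark.} $\gamma_1$ does not enter through $(q_1,r_1)$ and $(\wt q_1,\wt r_1)$. For $1\le p<\frac43$, their radial-admissibility reduces to $5p^2+2p-8\ge 0$, which is weaker than $p\ge\gamma_1$. The $\gamma_1$ threshold comes only from the paper's exponents in \eqref{est-radial-restrict}, which you bypass.
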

	\begin{proof}
		It suffices to prove that for any $0<t_0<t$,
		\begin{align}\label{radial-GWP-II:Hs_c}
			\||\nabla|^{s_c}u\|_{L_t^{\infty}L_x^{2}([t_0, t]\times \mathbb{R}^3)}\lesssim_{t}1.
		\end{align}
		Note that $t$ can be viewed as a constant, so we omit its dependence.
		
		We basically follow the argument as in Proposition \ref{P2}. Set $I:=[a, a+b]$, where $t_0\leq a< t$. Assume $u(a)\in \dot{H}_x^{s_c}(\mathbb{R}^3)$. We consider the integral equation
		\begin{align*}
			u(t)=S(t-a)u(a)-i\int_{a}^{t}S(t-\tau)(|u|^pu)\mathrm{d}\tau.
		\end{align*}
		In the following, we restrict the spacetime variable $(t,x)\in I\times \mathbb{R}^3$. By \eqref{radial-a priori estimate for u} and the same argument as in \eqref{radial-inhomogeneous-J(t)u},
		\begin{align*}
			\|J(t)u\|_{L_t^{\infty}L_x^{2}\cap L_t^{2}L_x^6} &\leq C\|S(-a)u(a)\|_{\mathcal{F}\dot{H}_x^1(\R^3)} + Ct_0^{-p}b^{\frac{4-p}{4}}\|J(t)u\|^{p+1}_{L_t^{\infty}L_x^{2}\cap L_t^{2}L_x^6}\\
			&\leq Ct^{1-\frac{3p}{4}} + C b^{\frac{4-p}{4}} \|J(t)u\|^{p+1}_{L_t^{\infty}L_x^{2}\cap L_t^{2}L_x^6}.
		\end{align*}
		Choose $b>0$ sufficiently small, depending on $t$, so that
		\begin{align}\label{radial-GWP-II-estimate-H^s_c}
			\|J(t)u\|_{L_t^{\infty}L_x^{2}\cap L_t^{2}L_x^6} \lesssim 1.
		\end{align}
		For the inhomogeneous term, by Lemma \ref{RSE:radial strichartz estimate} and \eqref{radial-GWP-II-estimate-H^s_c},
		\begin{equation}\label{est-radial-restrict}
			\begin{aligned}
				\left\|\int_{a}^{t}S(t-\tau)(|u|^pu)\mathrm{d}\tau\right\|_{L_t^{\infty}\dot{H}_x^{s_c}\cap L_t^{q_3}L_x^{r_3}} &\lesssim \||u|^pu\|_{L_t^{\wt{q}_3'}L_x^{\wt{r}_3^{\prime}}}\\
				&\lesssim |I|^{\frac{4-p}{4}} \|u\|^p_{L_t^{\infty}L_x^{6}}\|u\|_{L_t^{q_3}L_x^{r_3}}\\
				&\lesssim b^{\frac{4-p}{4}} t_0^{-p} \|J(t)u\|^p_{L_t^{\infty}L_x^{2}}\|u\|_{L_t^{q_3}L_x^{r_3}}\\
				&\lesssim b^{\frac{4-p}{4}}\|u\|_{L_t^{q_3}L_x^{r_3}},
			\end{aligned}
		\end{equation}
		where
		\begin{align*}
			\frac{1}{q_3}=\frac{20-15p}{8p}; \ \ \frac{1}{r_3}=\frac{5p-4}{4p}; \ \ \frac{1}{\wt{q}_3}=\frac{2p^2+15p-20}{8p}; \ \ \frac{1}{\wt{r}_3}=\frac{-2p^2-3p+12}{12p}.
		\end{align*}
		The above exponents are reasonable, since a direct calculation shows that $(q_3, r_3)$ and $(\wt{q}_3,\wt{r}_3)$ are radial-admissible, and satisfy
		\begin{align*}
			\frac{2}{q_3}+\frac{3}{r_3}=\frac{3}{2}-s_c; \quad \frac{2}{\wt{q}_3} + \frac{3}{\wt{r}_3} = \frac{3}{2}+s_c; \quad \frac{1}{\wt{r}_3'}=\frac{p}{6}+\frac{1}{r_3}; \quad \frac{1}{\wt{q}_3'}=\frac{4-p}{4}+\frac{1}{q_3}.
		\end{align*}
		Therefore, 
		\begin{align*}
			\|u\|_{L_t^{\infty}\dot{H}_x^{s_c}\cap L_t^{q_3}L_x^{r_3}}\leq C \|u(a)\|_{\dot{H}_x^{s_c}(\R^3)} + Cb^{\frac{4-p}{4}}\|u\|_{L_t^{\infty}\dot{H}_x^{s_c}\cap L_t^{q_3}L_x^{r_3}}.
		\end{align*}
		Thus, there exists $b$ depending on $t$ such that $Cb^{\frac{4-p}{4}}<\frac{1}{2}$ and 
		\begin{align*}
			\||\nabla|^{s_c}u\|_{L_t^{\infty} L_x^2}\lesssim \|u(a)\|_{\dot{H}_x^{s_c}(\R^3)}.
		\end{align*}
		
		Split $[t_0,t]$ into small consecutive intervals $I_j$, with $j=1,2,\cdots, m$, such that $|I_j|\leq b$. \eqref{radial-GWP-II:Hs_c} follows by induction on $I_j$.
	\end{proof}
	\begin{remark}\label{remark-radial-restriction for p}
		When estimating \eqref{est-radial-restrict}, we expect to bound the term $\||u|^p u\|_{L_t^{\wt{q}^{\prime}}L_x^{\wt{r}^{\prime}}}$ by $|I|^{\frac{1}{q_0}}\|J(t) u\|^p_{L_t^{q}L_x^r}\|u\|_{L_t^{q_1}L_x^{r_1}}$, where $(\wt{q}, \wt{r})$ and $(q_1, r_1)$ are radial-admissible, $(q, r)$ is admissible, and $q_0\geq 1$. To ensure these conditions hold, we require that $p$ satisfy $2p^2+15p-20\geq 0$, that is, $p\geq \gamma_1$.
	\end{remark}
	
	Similar to Lemma \ref{lem--s_c-global-bound}, we have the following spacetime estimate through pseudo conformal energy.
	\begin{lem}\label{lem-radial-global bound}
		Let the assumptions in Theorem \ref{main-theorem2} hold. Let also $u$ be the global solution given in the former step such that $S(-t)u\in C((0,\infty);\dot{H}_x^{s_c}\cap \mathcal{F}\dot{H}_x^1(\mathbb{R}^3))$. Assume that $(q_1, r_1)$ is given in \eqref{radial-LWP-I-(q1,r1)}. Then
		\begin{align}\label{radial-global-estimate-p+2}
			\|u\|_{L_t^{q_1} L_x^{r_1} ((0,\infty)\times \mathbb{R}^3)} \lesssim 1.
		\end{align}
	\end{lem}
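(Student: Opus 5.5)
The plan is to split the time axis at $t_0$, the endpoint of the local interval in Proposition \ref{radial:LWP-I}. Recall $T_0=t_0^{-1}$ from Corollary \ref{cor-radial:local-weighted-1}, so $[t_0,\infty)$ corresponds to $(0,T_0]$ on the pseudo conformal side.

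\emph{Near the origin.} On $(0,t_0)$ the bound is already contained in \eqref{prop-radial-lwp-I}. Indeed, $(q_1,r_1)$ is radial-admissible with $\frac{2}{q_1}+\frac{3}{r_1}=\frac32-s_c$, so
\[
\|u\|_{L_t^{q_1}L_x^{r_1}((0,t_0)\times\R^3)}\lesssim \|u_0\|_{\dot H_x^{s_c}\cap\F\dot H_x^1(\R^3)}\lesssim 1 .
\]

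\emph{Away from the origin.} On $[t_0,\infty)$ I will transfer the norm to $\U=\TT u$ via \eqref{PCT-remark-2}, letting $b\to\infty$ with $a=t_0$:
\[
\|u\|_{L_t^{q_1}L_x^{p+2}([t_0,\infty)\times\R^3)}=\big\|s^{\alpha}\,\U(s)\big\|_{L_s^{q_1}L_y^{p+2}((0,T_0]\times\R^3)},
\qquad \alpha=\frac32-\frac{2}{q_1}-\frac{3}{p+2}.
\]
A direct computation gives $\alpha=\frac{3p^2+2p-8}{2p(p+2)}$.

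The only input on $\U$ is the uniform pseudo conformal energy bound \eqref{radial-energy estimate E(t)}, $\sup_{0<s\le T_0}\E(s)\lesssim 1$. Since both terms of $\E$ are nonnegative, this gives $\|\U(s)\|_{L_y^{p+2}}^{p+2}\le (p+2)\E(s)\lesssim 1$ uniformly for $s\in(0,T_0]$. Hence
\[
\big\|s^{\alpha}\U\big\|_{L_s^{q_1}L_y^{p+2}((0,T_0]\times\R^3)}^{q_1}\lesssim \int_0^{T_0}s^{\alpha q_1}\,\mathrm ds ,
\]
and it remains to check that $\alpha q_1>-1$. We have $\alpha q_1=\frac{3p^2+2p-8}{4-p}$, and since $4-p>0$, the condition $\alpha q_1>-1$ is equivalent to $3p^2+p-4>0$, i.e.\ $(3p+4)(p-1)>0$. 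This holds because $p\ge\gamma_1>1$. Therefore the integral is finite, bounded by a constant depending only on $T_0$ and $p$, which we regard as constants.

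Combining the two pieces gives \eqref{radial-global-estimate-p+2}. I do not expect a genuine obstacle here. The point needing care is the exponent bookkeeping in \eqref{PCT-remark-2}, in particular confirming that the weight singularity at $s=0$ is integrable precisely because $p>1$. One should also note that the uniform potential-energy bound on the whole of $(0,T_0]$ is available only because global existence, i.e.\ $T^*=0$, has already been established.
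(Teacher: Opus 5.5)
Your proof is correct and has the same structure as the paper's proof: the bound on $(0,t_0)$ from \eqref{prop-radial-lwp-I}, the transfer to $\U$ on $(0,T_0]$ via \eqref{PCT-remark-2} with weight $t^{\frac32-\frac2p}$, the uniform bound \eqref{radial-energy estimate E(t)}, and integrability of $t^{(\frac32-\frac2p)q_1}$ at $t=0$, which is exactly $(3p+4)(p-1)>0$. The one difference is that you drop the kinetic part of $\E$ by positivity, whereas the paper keeps it through a (quasi-)triangle inequality and then bounds $\big\|t^{3-\frac4p}\|\nabla\U\|_{L_x^2}^2\big\|_{L_t^{\frac{2p}{4-p}}}$ separately in \eqref{radial-global-estimate->t_0-3}, using the dissipation integral \eqref{radial-estimate for E(T_0)} and the extra condition $3p^2+4p-8>0$; your observation makes that step unnecessary, so within this lemma the only exponent condition actually used is $p>1$.
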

	\begin{proof}
		It follows from Proposition \ref{radial:LWP-I} that 
		\begin{align}
			\|u\|_{L_t^{q_1} L_x^{r_1} ((0,t_0)\times \mathbb{R}^3)} \lesssim 1.
		\end{align}
		\eqref{radial-global-estimate-p+2} will hold once we prove that
		\begin{align}\label{radial-bound:> t_0}
			\|u\|_{L_t^{q_1} L_x^{r_1} ((t_0,\infty)\times \mathbb{R}^3)} \lesssim 1.
		\end{align}
		By \eqref{PCT-remark-2}, \eqref{radial-bound:> t_0} is equivalent to 
		\begin{align}\label{radial-bound:< T_0}
			\|t^{\frac{3}{2}-\frac{2}{p}}\mathcal{U}\|_{L_t^{q_1} L_x^{r_1} ((0,T_0)\times \mathbb{R}^3)} \lesssim 1,
		\end{align}
		where $T_0=t_0^{-1}$. Denote $I:=(0,T_0).$ Multiply both sides of \eqref{Pseudo-conformal-energy} by $t^{(\frac{3}{2}-\frac{2}{p})r_1}$,  
		\begin{align*}
			t^{(\frac{3}{2}-\frac{2}{p})r_1} \mathcal{E}(t) = \frac{1}{4}t^{3 - \frac{4}{p}}\|\nabla\mathcal{U}\|^2_{L_x^2(\R^3)} + \frac{1}{p+2} \big(t^{\frac{3}{2}-\frac{2}{p}}\|\mathcal{U}\|_{L_x^{r_1}(\R^3)}\big)^{r_1} .
		\end{align*}
		Therefore, 
		\begin{align}\label{radial-global-estimate->t_0-1}
			\|t^{\frac{3}{2}-\frac{2}{p}}\mathcal{U}\|^{r_1}_{L_t^{q_1} L_x^{r_1} (I\times \mathbb{R}^3)} \lesssim \|t^{(\frac{3}{2}-\frac{2}{p})r_1} \mathcal{E}(t) \|_{L_t^{\frac{2p}{4-p}}(I)} + \| t^{3 - \frac{4}{p}}\|\nabla\mathcal{U}\|^2_{L_x^2(\R^3)} \|_{L_t^{\frac{2p}{4-p}}(I)}.
		\end{align}
		By \eqref{radial-energy estimate E(t)} and the fact that $(\frac{3}{2}-\frac{2}{p})r_1\cdot \frac{2p}{4-p} >-1$,
		\begin{align}\label{radial-global-estimate->t_0-2}
			\|t^{(\frac{3}{2}-\frac{2}{p})r_1} \mathcal{E}(t) \|_{L_t^{\frac{2p}{4-p}}(I)} \lesssim \|t^{(\frac{3}{2}-\frac{2}{p})r_1} \|_{L_t^{\frac{2p}{4-p}}(I)} \lesssim 1.
		\end{align}
		Under the assumptions of Theorem \ref{main-theorem2}, \eqref{estimate for E(T_0)} also holds. We will use a special version of \eqref{estimate for E(T_0)}, that is 
		\begin{align}\label{radial-estimate for E(T_0)}
			\int_{0}^{T_0}\left(t^{\frac{1}{2}(2-\frac{3p}{2})}\|\nabla\mathcal{U}\|_{L_x^2(\R^3)}\right)^2\frac{\mathrm{d}t}{t} = \int_{0}^{T_0}\big( t^{\frac{2-3p}{4}}\|\nabla\mathcal{U}\|_{L_x^2(\R^3)} \big)^2\mathrm{d} t \lesssim 1.
		\end{align}
		By H\"older's inequality, \eqref{radial-estimate for E(T_0)}, and the fact that $3p^2+4p-8>0$ under the assumptions of Theorem \ref{main-theorem2},
		\begin{equation}\label{radial-global-estimate->t_0-3}
			\begin{aligned}
				\big\| t^{3 - \frac{4}{p}}\|\nabla\mathcal{U}\|^2_{L_x^2(\R^3)} \big\|^{\frac{2p}{4-p}}_{L_t^{ \frac{2p}{4-p}}(I)} &= \int_{0}^{T_0} t^{\frac{3p^2+4p-8}{4-p}} \big( t^{\frac{2-3p}{4}}\|\nabla\mathcal{U}\|_{L_x^2(\R^3)} \big)^{\frac{4p}{4-p}} \mathrm{d} t\\
				&\lesssim \int_{0}^{T_0}\big( t^{\frac{2-3p}{4}}\|\nabla\mathcal{U}\|_{L_x^2(\R^3)} \big)^2\mathrm{d} t \lesssim 1.
			\end{aligned}
		\end{equation}
		By \eqref{radial-global-estimate->t_0-1}, \eqref{radial-global-estimate->t_0-2} and \eqref{radial-global-estimate->t_0-3}, we obtain \eqref{radial-bound:< T_0} and hence \eqref{radial-global-estimate-p+2}.
	\end{proof}
	
	\begin{prop}\label{prop-radial-s_c-1-global bound}
		Let the assumptions in Theorem \ref{main-theorem2} hold. Let also $u$ be the global solution given in the former step. Then for any radial-admissible pair $(q,r)$ satisfying $\frac{2}{q}+\frac{3}{r}= \frac{3}{2}-s_c$,
		\begin{align}\label{est-radial-s_c-1-global bound}
			\|u\|_{L_t^{\infty}\dot{H}_x^{s_c}\cap L_t^{q}L_x^{r}((0,\infty)\times\mathbb{R}^3)}+\|J(t)u\|_{L_t^{\infty}L_x^2\cap L_t^2L_x^6((0,\infty)\times\R^3)} \lesssim 1.
		\end{align}
	\end{prop}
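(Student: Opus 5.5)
We will follow the argument of Proposition \ref{prop--s_c-1-gloabl bound}, now using the global spacetime bound from Lemma \ref{lem-radial-global bound} as the controlling quantity. Let $(q_1,r_1)=(\frac{2p(p+2)}{4-p},p+2)$ as in \eqref{radial-LWP-I-(q1,r1)}. By Lemma \ref{lem-radial-global bound}, $\|u\|_{L_t^{q_1}L_x^{r_1}((0,\infty)\times\R^3)}\lesssim 1$. Hence for any fixed $0<\varepsilon\ll1$ we can split $(0,\infty)$ into finitely many consecutive intervals $I_j=(\tau_j,\tau_{j+1}]$, $0\le j<m(\varepsilon)$, with $\tau_0=0$ and $\tau_{m}=\infty$, such that $\|u\|_{L_t^{q_1}L_x^{r_1}(I_j\times\R^3)}<\varepsilon$ on each one. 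The number $m(\varepsilon)$ depends only on the global bound, and hence only on the data.

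\emph{Estimate on a single interval.} Write the Duhamel formula starting from the left endpoint $\tau_j$. For $j=0$ this is \eqref{radial: integral equation}; for $j\ge1$ we use $u(\tau_j)\in\dot H_x^{s_c}$, which holds by the additional regularity proposition.
\begin{itemize}
\item For the $\dot H^{s_c}$ part, apply Lemma \ref{RSE:radial strichartz estimate} with the dual pair $(\wt q_1,\wt r_1)$ exactly as in Proposition \ref{radial:LWP-I}. This gives
\[
\|u\|_{L_t^\infty\dot H_x^{s_c}\cap L_t^qL_x^r(I_j)}\le C\|u(\tau_j)\|_{\dot H_x^{s_c}}+C\varepsilon^p\|u\|_{L_t^{q_1}L_x^{r_1}(I_j)}
\]
for every radial-admissible $(q,r)$ on the $s_c$-gap line. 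Taking first $(q,r)=(q_1,r_1)$ and absorbing, and then general $(q,r)$, yields $\|u\|_{L_t^\infty\dot H_x^{s_c}\cap L_t^qL_x^r(I_j)}\lesssim\|u(\tau_j)\|_{\dot H_x^{s_c}}$.
\item For the vector field, argue as in \eqref{radial-LWP-J(t)}:
\[
\|J(t)u\|_{L_t^\infty L_x^2\cap L_t^2L_x^6(I_j)}\lesssim\|S(-\tau_j)u(\tau_j)\|_{\F\dot H_x^1}+\varepsilon^p\|J(t)u\|_{L_t^{q_2}L_x^{r_2}(I_j)}.
\]
Here $(q_2,r_2)$ is admissible, so interpolation lets us absorb the last term.
\end{itemize}
Both right-hand sides are finite, and this estimate carries no dependence on $|I_j|$. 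That is the point: smallness comes from $\varepsilon$ rather than from the length of the interval, so the unbounded last interval $(\tau_{m-1},\infty)$ is allowed.

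\emph{Iteration.} Proceed by induction on $j$. The $L_t^\infty$ bounds on $I_j$ control $\|u(\tau_{j+1})\|_{\dot H^{s_c}}$ and $\|S(-\tau_{j+1})u(\tau_{j+1})\|_{\F\dot H^1}$, so after $m(\varepsilon)$ steps the bound constants grow at most like $C^{m}$, a constant. Summing over the finitely many intervals in the $L_t^q$ norms gives \eqref{est-radial-s_c-1-global bound}. For the initial interval we may alternatively use \eqref{prop-radial-lwp-I} directly.

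\emph{Main obstacle.} The step requiring care is the $J(t)u$ estimate on intervals far from the origin. In Proposition \ref{GWP-I:radial-prop H1} the nonlinearity was handled with a factor $t_1^{-p}t_2^{(4-p)/4}$, which does not work on an unbounded interval. We must instead place the $p$ factors of $u$ in $L_t^{q_1}L_x^{r_1}$, using the product rule $J(t)(|u|^pu)\sim|u|^pJ(t)u$ since $J=M(it\nabla)M(-t)$ commutes with gauge-invariant nonlinearities, together with Hölder matching $\frac1{q_2'}=\frac p{q_1}+\frac1{q_2}$ and $\frac1{r_2'}=\frac p{r_1}+\frac1{r_2}$. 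One checks that these relations hold with $q_2=\frac{4(p+2)}{3p}$, since $\frac{4-p}{2(p+2)}+\frac{3p}{4(p+2)}\cdot2=1$ fits. If the exponents did not close, we would instead use the $(q_3,r_3)$ pairs of the additional regularity proof, together with the bound $\|J(t)u\|_{L^\infty_tL^2_x}$ already obtained.
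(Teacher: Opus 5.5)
Your proposal is correct and matches the paper's proof. You split time so that $\|u\|_{L_t^{q_1}L_x^{r_1}}<\varepsilon$ on each piece, using Lemma \ref{lem-radial-global bound}. You then close the $J(t)u$ bound on each piece with the H\"older pairing $\frac{1}{q_2'}=\frac{p}{q_1}+\frac{1}{q_2}$, exactly as in \eqref{radial-LWP-J(t)}, including on the unbounded last interval. There, absorption should be done on $(\tau_{m-1},T)$ with $T\to\infty$, since $\|J(t)u\|_{L_t^\infty L_x^2}$ is not known a priori to be finite on $(\tau_{m-1},\infty)$; the paper skips this step too.

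The one difference is the $L_t^\infty\dot H_x^{s_c}\cap L_t^qL_x^r$ bound. The paper gets it in a single global step, since the Duhamel term is already bounded by $\|u\|_{L_t^{q_1}L_x^{r_1}((0,\infty)\times\mathbb{R}^3)}^{p+1}\lesssim 1$. So your partition and induction for that part are unnecessary, though harmless.
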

	\begin{proof}
		Let $(q_1, r_1)$, $(\wt{q}_1, \wt{r}_1)$ and $(q_2, r_2)$ be as defined in \eqref{radial-LWP-I-(q1,r1)}. By Lemmas \ref{RSE:radial strichartz estimate} and \ref{lem-radial-global bound},
		\begin{equation}
			\begin{aligned}\label{est-radial-s_c-global bound}
				\|u\|_{L_t^{\infty}\dot{H}_x^{s_c}\cap L_t^{q}L_x^{r}((0,\infty)\times\mathbb{R}^3)} &\lesssim \|u_0\|_{\dot{H}_x^{s_c}(\R^3)} +  \||u|^pu\|_{L_t^{\wt{q}'_1}L_x^{\wt{r}'_1}((0,\infty)\times\mathbb{R}^3)}\\
				&\lesssim \|u_0\|_{\dot{H}_x^{s_c}(\R^3)} + \|u\|^{p+1}_{L_t^{q_1} L_x^{r_1}((0,\infty)\times\mathbb{R}^3)}\lesssim 1. 
			\end{aligned}
		\end{equation}
		For $I=(\tau_0, \tau_1)$ or $(\tau_0,\infty)$, applying the same argument as in \eqref{radial-LWP-J(t)},
		\begin{align*}
			\|J(t)u\|_{L_t^{\infty}L_x^2\cap L_t^2L_x^6(I\times\R^3)}&\lesssim \|S(-\tau_0)u(\tau_0)\|_{\mathcal{F}\dot{H}_x^1(\mathbb{R}^3)} \\
			&\qquad + \|u\|^{p}_{L_t^{q_1} L_x^{r_1}(I\times\mathbb{R}^3)}\|J(t)u\|_{L_t^{\infty}L_x^2\cap L_t^2L_x^6(I\times\R^3)}.
		\end{align*}
		Similar to Lemma \ref{Lemma-short interval bound} and Proposition \ref{prop--s_c-1-gloabl bound}, there exists a partition $t_0=\tau_0<\tau_1<\cdots<\tau_{m-1}<\tau_{m}=\infty$ such that 
		\begin{align}\label{prop-radial-global bound-1}
			\|J(t)u\|_{L_t^{\infty}L_x^2\cap L_t^2L_x^6(I_j\times\R^3)}\lesssim_{\tau_j} 1,
		\end{align}
		where $I_j=[\tau_j, \tau_{j+1})$, $0\leq j < m$. Note that $\tau_j$, $0\leq j<m$ can be viewed as constants. Thus, by \eqref{prop-radial-lwp-I} and \eqref{prop-radial-global bound-1},
		\begin{align}\label{est-radial-1-global bound}
			\|J(t)u\|_{L_t^{\infty}L_x^2\cap L_t^2L_x^6((0,\infty)\times\R^3)}\lesssim 1,
		\end{align}
		\eqref{est-radial-s_c-1-global bound} follows from \eqref{est-radial-s_c-global bound} and \eqref{est-radial-1-global bound}.
	\end{proof}

	\subsection{Scattering} 
	\begin{prop}[Scattering]
		Let $u$ be the unique global solution to \eqref{NLS} given in Theorem \ref{main-theorem2}. Then the solution $u$ scatters in $\dot{H}_x^{s_c}\cap\mathcal{F}\dot{H}_x^1(\mathbb{R}^3)$, that is, there exist $u_{\pm}\in \dot{H}_x^{s_c}\cap\mathcal{F}\dot{H}_x^1(\mathbb{R}^3)$ such that 
		\begin{align}\label{radial-scattering}
			\lim_{t\rightarrow \pm\infty}\|S(-t)u-u_{\pm}\|_{\dot{H}_x^{s_c}\cap\mathcal{F}\dot{H}_x^1(\mathbb{R}^3)}=0.
		\end{align}  
	\end{prop}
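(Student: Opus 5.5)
We restrict to $t\to+\infty$; the case $t\to-\infty$ follows from the transform $u(t,x)\mapsto\overline{u(-t,x)}$. The plan is to show that $S(-t)u(t)$ is Cauchy in $\dot H_x^{s_c}\cap\F\dot H_x^1(\R^3)$ as $t\to\infty$, using the Duhamel identity \eqref{scattering-S(-t)u(t)}. By completeness this yields a limit $u_+$ in that space, and $u_+$ is exactly the limit in \eqref{radial-scattering}.

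\textbf{Step 1: smallness of the tails.} By Lemma \ref{lem-radial-global bound} and Proposition \ref{prop-radial-s_c-1-global bound}, the norms $\|u\|_{L_t^{q_1}L_x^{r_1}((0,\infty)\times\R^3)}$, $\|J(t)u\|_{L_t^2L_x^6((0,\infty)\times\R^3)}$ and $\|u\|_{L_t^qL_x^r((0,\infty)\times\R^3)}$ are all finite. Here $(q,r)$ is any radial-admissible pair with $\frac2q+\frac3r=\frac32-s_c$. Since $q_1,q<\infty$, the monotone convergence theorem gives the following: for any $\ep>0$ there exists $\tau_0$ such that
\[
\|u\|_{L_t^{q_1}L_x^{r_1}((\tau_0,\infty)\times\R^3)}<\ep .
\]

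\textbf{Step 2: the $\dot H^{s_c}$ part.} For $\tau_0<\tau_1<\tau_2$ we write
\[
S(-\tau_2)u(\tau_2)-S(-\tau_1)u(\tau_1)=-i\int_{\tau_1}^{\tau_2}S(-\tau)(|u|^pu)\,\mathrm d\tau .
\]
Because $S(-t)$ is unitary on $\dot H^{s_c}$, the $\dot H^{s_c}$-norm of this difference equals the norm of $\int_{\tau_1}^{\tau_2}S(t-\tau)(|u|^pu)\,\mathrm d\tau$ evaluated at $t=\tau_2$. We bound it with the dual (inhomogeneous) part of Lemma \ref{RSE:radial strichartz estimate} at $\gamma=s_c$, applied to the pair $(\wt q_1,\wt r_1)$ from \eqref{radial-LWP-I-(q1,r1)}. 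This gives the bound $\||u|^pu\|_{L_t^{\wt q_1'}L_x^{\wt r_1'}}\lesssim\|u\|^{p+1}_{L_t^{q_1}L_x^{r_1}((\tau_1,\tau_2)\times\R^3)}$, exactly as in \eqref{est-radial-s_c-global bound}. Hence the difference is $\lesssim\ep^{p+1}$.

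\textbf{Step 3: the $\F\dot H^1$ part.} Here we use $\|S(-t)f\|_{\F\dot H^1}=\|J(t)f\|_{L^2}$ together with the commutation $J(t)S(t-\tau)=S(t-\tau)J(\tau)$. The standard dual Strichartz estimate then bounds the difference by $\|J(t)(|u|^pu)\|_{L_t^{q_2'}L_x^{r_2'}((\tau_1,\tau_2)\times\R^3)}$. Following \eqref{radial-LWP-J(t)}, we write $J(t)=M(t)(it\nabla)M(-t)$ and apply the chain rule, which gives
\[
\lesssim \|u\|^p_{L_t^{q_1}L_x^{r_1}(\tau_1,\tau_2)}\,\|J(t)u\|_{L_t^{q_2}L_x^{r_2}(\tau_1,\tau_2)}\lesssim \ep^p .
\]
The last inequality holds because $(q_2,r_2)$ is admissible, so by interpolation $\|J(t)u\|_{L_t^{q_2}L_x^{r_2}}$ is controlled by the global bound \eqref{est-radial-1-global bound}.

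The main obstacle is to confirm that the exponent bookkeeping from the local theory carries over to unbounded intervals without picking up a factor $|I|^{\theta}$. The exponent relations in \eqref{radial-LWP-I-(q1,r1)} are scale-exact, so no length factor appears. The point to check carefully is that the radial inhomogeneous estimate still holds when evaluated at the single time $t=\tau_2$ with the integral taken over $(\tau_1,\tau_2)$. This follows because the $L_t^\infty\dot H^{s_c}$ bound in Lemma \ref{RSE:radial strichartz estimate} is uniform in $t$. Combining Steps 2 and 3, the differences are $\lesssim\ep^{p}$ in $\dot H^{s_c}\cap\F\dot H^1$, so $S(-t)u(t)$ is Cauchy, and setting $u_+=\lim_{t\to\infty} S(-t)u(t)$ gives \eqref{radial-scattering}.
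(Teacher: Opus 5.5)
Your proposal is correct and follows the paper's proof essentially verbatim. You use the Cauchy criterion through the Duhamel identity, the radial inhomogeneous Strichartz estimate with $(\wt q_1,\wt r_1)$ for the $\dot H^{s_c}$ part, and the standard Strichartz estimate with the admissible pair $(q_2,r_2)$ together with the pointwise bound $|J(t)(|u|^pu)|\lesssim |u|^p|J(t)u|$ for the $\F\dot H^1$ part; the only cosmetic difference is that you make just the $L_t^{q_1}L_x^{r_1}$ tail small and use global boundedness of $\|J(t)u\|_{L_t^{q_2}L_x^{r_2}}$, whereas the paper makes both tails small, which is immaterial.
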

	\begin{proof}
		The proof is similar to Proposition \ref{scattering}. We consider only the case $t\rightarrow +\infty$, since the case $t\rightarrow -\infty$ is treated in the same way. Let $(q_1, r_1)$, $(\wt{q}_1, \wt{r}_1)$ and $(q_2, r_2)$ be as defined in \eqref{radial-LWP-I-(q1,r1)}.
		
		It follows from Proposition \ref{prop-radial-s_c-1-global bound} that for any $0<\varepsilon \ll 1$, there exists $\tau_0>1$ such that 
		\begin{align}\label{radial-scattering-bound for u}
			\|u\|_{L_t^{q_1}L_x^{r_1}((\tau_0,\infty)\times\mathbb{R}^3)} + \|J(t)u\|_{L_t^{q_2}L_x^{r_2}((\tau_0,\infty)\times\mathbb{R}^3)} < \varepsilon.
		\end{align}
		For any $\tau_0<\tau_1<\tau_2$, by \eqref{scattering-S(-t)u(t)} and Lemma \ref{RSE:radial strichartz estimate},
		\begin{align*}
			\|S(-\tau_2)u(\tau_2)-S(-\tau_1)u(\tau_1)\|_{\dot{H}_x^{s_c}(\mathbb{R}^3)}\lesssim\||u|^pu\|_{L_t^{\wt{q}'_1}L_x^{\wt{r}'_1}((\tau_1,\tau_2)\times\mathbb{R}^3)}\lesssim \|u\|^{p+1}_{L_t^{q_1} L_x^{r_1}((\tau_1,\tau_2)\times\mathbb{R}^3)}. 
		\end{align*}
		By the Strichartz estimate,
		\begin{align*}
			\|S(-\tau_2)u(\tau_2)-S(-\tau_1)u(\tau_1)\|_{\mathcal{F}\dot{H}_x^{1}(\mathbb{R}^3)}&=\left\|\int_{\tau_1}^{\tau_2}S(-\tau)J(\tau)(|u|^pu)\mathrm{d}\tau\right\|_{L_x^2(\mathbb{R}^3)}\\
			&\lesssim\|J(t)(|u|^pu)\|_{L_t^{q'_2}L_x^{r'_2}((\tau_1,\tau_2)\times\mathbb{R}^3)}\\
			&\lesssim \|u\|^{p}_{L_t^{q_1} L_x^{r_1}((\tau_1,\tau_2)\times\mathbb{R}^3)}  \|J(t)u\|_{L_t^{q_2}L_x^{r_2}((\tau_1,\tau_2)\times\mathbb{R}^3)}.
		\end{align*}
		Therefore, by \eqref{radial-scattering-bound for u}, 
		\begin{align}\label{radial-scattering-FH1}
			\|S(-\tau_2)u(\tau_2)-S(-\tau_1)u(\tau_1)\|_{\dot{H}_x^{s_c}\cap\mathcal{F}\dot{H}_x^1(\mathbb{R}^3)}\lesssim \varepsilon.
		\end{align}
		\eqref{radial-scattering} follows from \eqref{radial-scattering-FH1}.
	\end{proof}
	
	\bigskip
	\section*{Acknowledgment}
	J. Shen is partially supported by the National Key R$\&$D Program of China, 2025YFA1018500; NSFC 12501617; Fundamental Research Funds for the Central Universities, Nankai University (63261060); Natural Science Foundation of Tianjin (24JCQNJC02060).

\end{document}